\documentclass{article}
\usepackage{authblk}
\usepackage{fancyhdr}
\usepackage{amscd}
\usepackage[dvipdfmx]{graphicx}
\usepackage{amsmath}
\usepackage{amssymb}
\usepackage{amsthm}
\usepackage{mathrsfs}
\newtheorem{dfn}{Definition}[section]
\newtheorem{thm}[dfn]{Theorem}
\newtheorem{prop}[dfn]{Proposition}
\newtheorem{lem}[dfn]{Lemma}

\newtheorem{rem}[dfn]{Remark}

\newtheorem{ass}[dfn]{Assumption}
\newtheorem{ex}[dfn]{Example}

\newcommand{\KMc}[1]{{\color{black} #1}}
\newcommand{\KMd}[1]{{\color{black} #1}}
\newcommand{\KMe}[1]{{\color{black} #1}}
\newcommand{\KMf}[1]{{\color{black} #1}}

\newcommand{\KMg}[1]{{\color{black} #1}}
\newcommand{\KMh}[1]{{\color{black} #1}}
\newcommand{\KMi}[1]{{\color{black} #1}}
\newcommand{\KMj}[1]{{\color{black} #1}}
\newcommand{\KMk}[1]{{\color{black} #1}}
\newcommand{\KMl}[1]{{\color{black} #1}}

\newcommand{\KMm}[1]{{\color{black} #1}}
\newcommand{\KMn}[1]{{\color{black} #1}}

\numberwithin{equation}{section}

\setlength{\topmargin}{-30pt}
\setlength{\oddsidemargin}{0.5cm}
\setlength{\evensidemargin}{0.5cm}
\setlength{\textwidth}{15cm}

\usepackage{color}
\begin{document}

\title{Multi-order asymptotic expansion of blow-up solutions for autonomous ODEs. I - 
Method and Justification}

\author[1]{Taisei Asai}
\author[2]{Hisatoshi Kodani}
\author[2,3]{Kaname Matsue\thanks{(Corresponding author, {\tt kmatsue@imi.kyushu-u.ac.jp})}}
\author[2]{\\ Hiroyuki Ochiai}
\author[4,5]{Takiko Sasaki}

\affil[1]{
\normalsize{
Graduate School of Fundamental Science and Engineering, Waseda University, 3-4-1 Okubo, Shinjuku, Tokyo 169-8555, Japan
}
}
\affil[2]{Institute of Mathematics for Industry, Kyushu University, Fukuoka 819-0395, Japan}
\affil[3]{International Institute for Carbon-Neutral Energy Research (WPI-I$^2$CNER), Kyushu University, Fukuoka 819-0395, Japan}
\affil[4]{Department of Mathematical Engineering, Faculty of Engineering, Musashino University, 3-3-3 Ariake, Koto-ku, Tokyo 135-8181, Japan}
\affil[5]{Mathematical Institute, Tohoku University, Aoba, Sendai 980-8578, Japan}
\maketitle

\begin{abstract}
\KMh{In this paper, we provide a systematic methodology for calculating multi-order asymptotic expansion of blow-up solutions near blow-up for autonomous ordinary differential equations (ODEs).}
Under the specific form of the principal term of blow-up solutions for a class of vector fields, we extract algebraic objects determining all possible orders in the asymptotic expansions.
Examples for calculating concrete multi-order asymptotic expansions of blow-up solutions are finally collected.
\end{abstract}

{\bf Keywords:} blow-up solutions, asymptotic expansion
\par
\bigskip
{\bf AMS subject classifications : } 34A26, 34D05, 34E05, 34E10, 58K55

\tableofcontents

\section{Introduction}
\label{section-intro}

Asymptotic expansion is an important and useful tool for analyzing concrete behavior of functions or solutions of differential equations at infinity \KMh{and/or} near given points (e.g. \cite{BO1999}).
Its importance is widely understood in physical and \KMe{mathematical problems in analyzing functions.
A famous investigation} involving calculations of asymptotic expansion around given points, including singularities, \KMe{in the field of differential equations} is the {\em Painlev\'{e} test} (e.g. \cite{FP1991, K1992, KJH1997}), which roughly aims at verifying whether all solutions of differential equations of the form
\begin{equation}
\label{ODE-complex}
\frac{du}{dz} = f(z,u),\quad z\in \mathbb{C}
\end{equation}
are single valued and all possible \KMm{(movable)} singularities $z$ are poles.
Note that the differential equation whose solutions satisfy the above \KMh{properties} is often said to possess the {\em Painlev\'{e} property}.
In wide investigations for the criterion of the Painlev\'{e} property, a concept of {\em resonance} \KMn{(e.g. \cite{KJH1997})} has successfully played a key role in determining asymptotic expansion of solutions with their \KMh{dependence on initial points}.
The presence of resonance is algebraically verified and is applicable even when solutions \KMh{possess} {\em movable} singularities.
Conceptually, multi-order asymptotic expansion of solutions is systematically determined once all resonances (around a given singularity $z_0$, possibly at infinity $z_0 = \infty$) are determined.
\par
Our main interest in the present paper is {\em blow-up solutions} of (autonomous) ordinary differential equations (ODEs for short) depending on the real-time variable $t$:
\begin{equation}
\label{ODE-intro}
\frac{d{\bf y}}{dt} = f({\bf y}),\quad {\bf y}(\KMi{t_0}) = {\bf y}_0\in \mathbb{R}^n,
\end{equation}
where $f: \mathbb{R}^n\to \mathbb{R}^n$ is a smooth function.
A solution ${\bf y}(t)$ is said to \KMi{\em blow up} at $t_{\max} < \infty$ if \KMh{its modulus} diverges as $t\to t_{\max}-0$.
The finite value $t_{\max}$ is known as the {\em blow-up time}.
Properties of blow-up solutions such as the existence and asymptotic behavior are of great importance in the field of (ordinary, partial, delay, etc.) differential equations, and are widely investigated in decades  \KMn{from many aspects (e.g. \cite{BFGK2011, BK1988, CHO2007, FM2002, GV2002})}.
Asymptotic behavior of blow-up solutions is often referred to as determination of {\em blow-up rates}, which is characterized as the following form:
\begin{equation*}
{\bf y}(t) \sim {\bf u}(\theta(t))\quad \text{ as }\quad t\to t_{\max},
\end{equation*}
for a function ${\bf u}$, where 
\begin{equation*}
\theta(t) = t_{\max}-t
\end{equation*}
and $t_{\max}$ is assumed to \KMh{be} a finite value and to be known a priori.
In many studies, however, the function ${\bf u}$ is assumed to \KMh{consist of a} single term, and investigations of the concrete form of ${\bf u}$ as a function with multiple terms are limited. 
One of the reasons could be the difficulty \KMf{even} for deriving the principal term, namely the first term, of ${\bf u}$ except special cases (e.g. \cite{D1985}).
One would expect that the similar approach to the Painlev\'{e} test can be applied to determining multi-order asymptotic expansion of blow-ups. 
It should be mentioned here that the Painlev\'{e} test typically begins with the \KMg{anzatz} of solutions being the {\em Frobenius series}\footnote{
In the linear homogeneous system, it is proved by Fuchs that the system admits \KMn{solutions} expressed by the Frobenius series around {\em regular singular points}.
This ansatz is typically applied to linear inhomogeneous equations or nonlinear equations (cf. \cite{FP1991, K1992, KJH1997}).
There is also a technique for calculating asymptotic expansions of solutions around {\em irregular singular points} (e.g. \cite{BO1999}), which is omitted here since it is beyond our scope in the present study.
}, namely
\begin{equation}
\label{Frobenius-cpx}
\KMe{
u(z) = (z-z_0)^{-\KMi{\rho}}\sum_{m=0}^\infty a_m (z-z_0)^{m},\quad a_m\in \mathbb{C}\KMi{,}
}
\end{equation}
when \KMe{$z_0$} is a regular singular point (cf. \cite{BO1999}).
On the other hand, there is no guarantee that blow-up solutions always admit \KMe{the similar form to (\ref{Frobenius-cpx}), namely
\begin{equation}
\label{Frobenius}
y(t) = \theta(t)^{-\KMi{\rho}}\sum_{m=0}^\infty a_m \theta(t)^{m},\quad a_m\in \mathbb{R}.
\end{equation}
In particular,} no information about the successive terms of blow-up solutions is provided a priori in general.
Nevertheless, the blow-up time $t_{\max}$ is interpreted as a movable singularity in $\mathbb{C}$ \KMf{because}\KMn{, as far as the (complex) analytic continuation is admitted,} the system (\ref{ODE-intro}) is also considered as the system depending on complex time variable $z$ restricted to $\{t \equiv {\rm Re}\,z > 0\}$\KMi{,} and $t_{\max}$ depends on initial points ${\bf y}_0$, and the multi-order asymptotic expansion of blow-up solutions contributes \KMe{to understanding the blow-up behavior itself in detail, and} the classification of $t_{\max}$ as singularities, as investigated in the Painlev\'{e} test.
\par
In recent years, the third author and his collaborators have developed a framework to characterize blow-up solutions from the viewpoint of dynamics at infinity (e.g. \cite{Mat2018, Mat2019}) based on preceding works (e.g. \cite{EG2006}) and have derived machineries of computer-assisted proofs for the existence of blow-up solutions as well as their qualitative and quantitative features (e.g. \cite{LMT2021, MT2020_1, MT2020_2, TMSTMO2017}).
\KMh{As in} the present paper, \KMh{finite-dimensional} vector fields with scale invariance in an asymptotic sense, {\em asymptotic quasi-homogeneity} defined precisely in Definition \ref{dfn-AQH}, are mainly concerned.
In this framework, dynamics at infinity is appropriately characterized, and blow-up solutions are shown to be characterized \KMk{by} dynamical properties of invariant sets, such as equilibria, \lq\lq at infinity".
In particular, {\em hyperbolicity} of such invariant sets \KMk{induces} the leading asymptotic approximation of blow-up solutions of the form $a_0(t_{\max} - t)^{-\rho}$ \KMk{as} in (\ref{Frobenius}) uniquely determined by the original vector field.
By means of the terminology in the field of (partial) differential equations, such blow-ups are said to be {\em type-I}.

One would then expect that multi-order asymptotics of blow-up solutions can be clarified by the dynamical property of invariant sets \lq\lq at infinity".
In particular, some characteristics determining multi-order asymptotics, like the resonance in the Painlev\'{e} test, can be extracted from the dynamical property of blow-up solutions.
\par
\bigskip 
The present subject, consisting of two papers, aims at  providing a systematic methodology to calculate multi-order asymptotic expansions of blow-up solutions for autonomous ODEs with their justifications \KMk{(Part I)}, and investigating the correspondence of the characterization of these asymptotic expansions to dynamical properties of blow-up solutions  \KMk{(Part II)}.
We pay attention to multi-order asymptotics of type-I blow-ups and, with \KMm{general asymptotic series} of blow-up solutions under an appropriate assumption, derive the associated \KMn{system of } differential \KMn{equations which all asymptotic terms are inductively solved}.
\par
The main result in the present paper is, under the above setting, to observe that the following algebraic objects essentially determine \KMm{all terms} appeared in asymptotic expansions of blow-up solutions:
\begin{itemize}
\item The {\em balance law} determining coefficients of the leading terms of blow-up solutions (Definition \ref{dfn-balance}).
\item The {\em blow-up power-determining matrix} determined by the quasi-homogeneous part of vector fields and blow-up solutions (Definition \ref{dfn-blow-up-power-ev}).
\item The {\em blow-up power eigenvalues} defined by eigenvalues of the blow-up power-determining matrix (Definition \ref{dfn-blow-up-power-ev}).
\end{itemize}
The combination of standard integration of ODEs with the above algebraic objects, \KMn{lower order terms if exist,} and typical assumptions in asymptotic expansions provide a systematic algorithm to calculate asymptotic expansions of blow-up solutions of an arbitrary order.
In particular, all possible terms and parameter dependence of blow-up solutions are essentially determined by blow-up power eigenvalues, and hence these eigenvalues play a role like resonance in the Painlev\'{e} test.
\par

\par
\bigskip
The rest of this paper is organized as follows.
In Section \ref{section-preliminaries-1}, \KMj{the precise definition of the class of vector fields we mainly treat is presented\KMm{.}
We then introduce a function measuring the asymptotic degree of functions with respect to $\theta(t)$, which plays a role in estimating asymptotic relations among terms appeared in our asymptotic expansions of blow-up solutions.}
\par
In Section \ref{section-asym}, we derive a systematic methodology to calculate multi-order asymptotic expansion of \KMm{(type-I) blow-up solutions under a mild assumption.} 
Quasi-homogeneity of vector fields in an asymptotic sense extracts algebraic objects essential to determine asymptotic expansions, which shall be called {\em blow-up power eigenvalues} associated with blow-up solutions.
More precisely, we show that \KMn{the} vector fields along type-I blow-up solutions induce \KMn{the} matrices determined by the quasi-homogeneous part of vector fields and coefficients of principal terms of blow-up solutions, which are called {\em blow-up power-determining matrices}.
Eigenvalues of such matrices, which are \KMf{exactly blow-up power eigenvalues} in our terminology, and lower order terms of vector fields are then shown to determine all possible terms in asymptotic expansion of blow-up solutions.
Once \KMn{these objects} are determined, standard integrator of \KMe{linear} ODEs yields asymptotic expansions of blow-up solutions of \KMj{an arbitrary order} with the extraction of their parameter dependence in a formal sense.
Justification of such formal expansions as the asymptotic expansion of the original blow-up solutions is also discussed \KMj{under mild assumptions}.

In Section \ref{section-examples}, examples of multi-order asymptotic expansions of blow-up solutions are presented for showing the applicability of our present methodology.
Examples below aim at demonstrating the following significance, respectively:
\begin{itemize}
\item Section \ref{section-ex-1dim} collects the simplest cases: the one-dimensional ODEs.
We shall provide the basic strategy based on the presented approach to determine multi-order asymptotic expansions of blow-up solutions.
\item Section \ref{section-ex-2phase} demonstrates the asymptotic expansion of blow-up solutions for a {\em rational} vector field \KMn{containing a component independent of scaling.}
In particular, an asymptotically quasi-homogeneous vector field is treated.
\item Sections \ref{section-ex-Andrews1} and \ref{section-ex-Andrews2} demonstrate blow-up solutions such that {\em parameters in coefficients of vector fields can affect the order of \KMg{$\theta(t)$ in} asymptotic expansions}.
In Section \ref{section-ex-Andrews1}, a rational vector field is treated.
In Section \ref{section-ex-Andrews2}, the third order asymptotic expansion is also calculated.
\item Section \ref{section-ex-KK} treats {\em asymptotically quasi-homogeneous} vector fields possessing two different blow-up solutions \KMg{whose orders of $\theta(t)$ in the first terms are identical}.
\item Section \ref{section-ex-log} treats asymptotically quasi-homogeneous vector fields possessing the similar type of blow-ups to Section \ref{section-ex-KK}, while {\em blow-up power-determining matrix admits a nontrivial Jordan normal form}.
\KMg{This section also exhibits an example of the presence of qualitatively different asymptotic behavior of solutions among components. 
In the field of partial differential equations, this phenomenon is recognized as {\em nonsimultaneous blow-up} (e.g. \cite{Ha2016, Ha2017}).}
\end{itemize}
Note that the first terms of all blow-up solutions listed here are already calculated in \cite{Mat2018, Mat2019} or their references\KMj{, while a criterion for determining the first terms of blow-ups is briefly reviewed in Part II \cite{asym2}}.
\par
In Appendix, proofs of several statements in Section \ref{section-asym} are provided, \KMe{where} advanced machineries in the theory of dynamical systems, such as {\em time-variable compactifications for asymptotically autonomous systems} (e.g. \cite{WXJ2021}), are applied. 
\par
\KMj{Finally, several examples showing multi-order asymptotic expansions of blow-up solutions are revisited in Part II \cite{asym2}, where the correspondence to dynamical properties obtained through desingularized vector fields are also described.}


\section{Setting and the asymptotic degree of functions}
\label{section-preliminaries-1}

Our main concern is the Cauchy problem of an autonomous system of ODEs
\begin{equation}
\label{ODE-original}
{\bf y}' = \frac{d{\bf y}(t)}{dt}=f ({\bf y}(t)),\quad {\bf y}(\KMi{t_0}) = {\bf y}_0,
\end{equation}
where $t\in[\KMi{t_0},T)$ with $\KMi{t_0}<T\le\infty$, $f:\mathbb{R}^n\to\mathbb{R}^n$ is a $C^r$ function with $r\geq 2$ and ${\bf y}_0\in\mathbb{R}^n$.
In this section, we \KMj{introduce a class of ODEs which we mainly treat, and a function measuring the asymptotic degree of functions as $t\to t_{\max}$ with respect to $\theta(t)$.
We also provide several estimates of such degrees we mainly use later to obtain multi-order asymptotic expansions of blow-up solutions.}

\KMm{
\begin{rem}
In what follows, we shall use the following notations for describing asymptotic relations based on \cite{E1979}.
Consider two real continuous functions $f(\epsilon), g(\epsilon)$ defined on a small interval $(0,\epsilon_0]$ and asymptotic behavior as $\epsilon \to +0$.
\begin{itemize}
\item $f = O(g)$ iff there are constant $\bar \epsilon_0 \in (0,\epsilon_0]$ and $C > 0$ such that $|f(\epsilon)| \leq C|g(\epsilon)|$ for all $\epsilon \in (0,\bar \epsilon_0]$.
\item $f = o(g)$, or $f\ll g$ iff $\lim_{\epsilon\to +0}(f(\epsilon)/g(\epsilon))$ exists and is $0$.
\item $f\sim g$ iff $\lim_{\epsilon\to +0}(f(\epsilon)/g(\epsilon))$ exists and is $1$.
\item $f = O_s(g)$ iff $f = O(g)$ and $f \not = o(g)$.
\end{itemize}
\end{rem}
}

%
%
\subsection{Asymptotically quasi-homogeneous vector fields}
\label{sec:QH}

First of all, we review a class of vector fields in our present discussions.

\begin{dfn}[Asymptotically quasi-homogeneous vector fields, cf. \cite{D1993, Mat2018}]\rm
\label{dfn-AQH}
Let $\KMf{f_0}: \mathbb{R}^n \to \mathbb{R}$ be a function.
Let $\alpha_1,\ldots, \alpha_n$ \KMn{be nonnegative integers with $(\alpha_1,\ldots, \alpha_n) \not = (0,\ldots, 0)$ and $k > 0$.}
We say that $\KMf{f_0}$ is a {\em quasi-homogeneous function\footnote{
\KMn{In preceding studies, all $\alpha_i$'s and $k$ are typically assumed to be natural numbers.
In the present study, on the other hand, the above generalization is valid.
}
} of type $\KMh{\alpha = }(\alpha_1,\ldots, \alpha_n)$ and order $k$} if
\begin{equation*}
f_0( \KMh{s^{\Lambda_\alpha}{\bf x}} ) = s^k \KMf{f_0}( \KMh{{\bf x}} )\quad \text{ for all } {\bf x} = (x_1,\ldots, x_n)^T \in \mathbb{R}^n \text{ and } s >0,
\end{equation*}
where\KMk{\footnote{
Throughout the rest of this paper, \KMl{real positive numbers or functions to the matrices are denoted} in the similar manner.
}}
\begin{equation*}
\KMh{\Lambda_\alpha =  {\rm diag}\left(\alpha_1,\ldots, \alpha_n\right),\quad s^{\Lambda_\alpha}{\bf x} = (s^{\alpha_1}x_1,\ldots, s^{\alpha_n}x_n)^T.}
\end{equation*}
Next, let $X = \sum_{i=1}^n f_i({\bf x})\frac{\partial }{\partial x_i}$ be a continuous vector field on $\mathbb{R}^n$.
We say that $X$, or simply $f = \KMg{(f_1,\ldots, f_n)^T}$ is a {\em quasi-homogeneous vector field of type $\KMh{\alpha = }(\alpha_1,\ldots, \alpha_n)$ and order $k+1$} if each component $f_i$ is a quasi-homogeneous function of type $\KMh{\alpha}$ and order $k + \alpha_i$.
\par
Finally, we say that $X = \sum_{i=1}^n f_i({\bf x})\frac{\partial }{\partial x_i}$, or simply $f$ is an {\em asymptotically quasi-homogeneous vector field of type $\KMh{\alpha = }(\alpha_1,\ldots, \alpha_n)$ and order $k+1$ at infinity} if \KMf{there is a quasi-homogeneous vector field  $f_{\alpha,k} = \KMi{(f_{i; \alpha,k})_{i=1}^n}$ of type $\KMh{\alpha}$ and order $k+1$ such that}
\begin{equation*}
\KMm{f_i( s^{\Lambda_\alpha}{\bf x} ) - s^{k+\alpha_i} f_{i;\alpha,k}( {\bf x} ) = o(s^{k+\alpha_i})}
 \end{equation*}
\KMm{as $s\to +\infty$} uniformly for \KMf{${\bf x} = (x_1,\ldots, x_n)\in S^{n-1} \equiv \{{\bf x}\in \mathbb{R}^n \mid \sum_{i=1}^n x_i^2 = 1\}$}.
\end{dfn}

A fundamental property of quasi-homogeneous functions and vector fields is reviewed here.
\begin{lem}\label{temporary-label2}
A \KMl{$C^1$} quasi-homogenous function $\KMi{f_0}$ of type $(\alpha_1,\ldots,\alpha_n)$ and order $k$ satisfies the following differential equation:
\begin{equation}\label{temporary-label1}
\sum_{l=1}^n \alpha_l y_l \frac{\partial \KMi{f_0}}{\partial y_l}({\bf y}) = k \KMi{f_0}({\bf y}).
\end{equation}
This equation is rephrased as
\begin{equation*}
(\nabla_{\bf y}\KMi{f_0}({\bf y}))^T \Lambda_{\alpha} {\bf y} = k \KMi{f_0}({\bf y}).
\end{equation*}
\end{lem}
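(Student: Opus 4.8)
The plan is to apply the classical argument behind Euler's identity for homogeneous functions, adapted to the quasi-homogeneous weights $\alpha = (\alpha_1,\ldots,\alpha_n)$. The defining property of $f_0$ furnishes, for each fixed ${\bf y}\in\mathbb{R}^n$, a one-parameter family of identities
\[
f_0\bigl(s^{\Lambda_\alpha}{\bf y}\bigr) = s^k f_0({\bf y}), \qquad s>0,
\]
and the idea is to regard both sides as functions of the scalar $s$ and to differentiate in $s$.

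First I would fix ${\bf y}$ and set $\phi(s) := f_0(s^{\alpha_1}y_1,\ldots,s^{\alpha_n}y_n)$. Since each exponent $\alpha_l$ is a nonnegative integer, the map $s\mapsto s^{\alpha_l}y_l$ is smooth on $(0,\infty)$, and since $f_0$ is assumed $C^1$, the composition $\phi$ is $C^1$ on $(0,\infty)$; the chain rule then applies and yields
\[
\phi'(s) = \sum_{l=1}^n \frac{\partial f_0}{\partial y_l}\bigl(s^{\Lambda_\alpha}{\bf y}\bigr)\,\alpha_l s^{\alpha_l-1}y_l.
\]
On the other hand, the right-hand side $s^k f_0({\bf y})$ is smooth in $s$ on $(0,\infty)$ (here $k>0$ need not be an integer), with derivative $k s^{k-1}f_0({\bf y})$.

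Next I would evaluate the resulting equality $\phi'(s) = k s^{k-1}f_0({\bf y})$ at $s=1$. Because $s^{\Lambda_\alpha}{\bf y}$ reduces to ${\bf y}$ and every power of $s$ reduces to $1$ at $s=1$, this collapses precisely to
\[
\sum_{l=1}^n \alpha_l y_l \frac{\partial f_0}{\partial y_l}({\bf y}) = k f_0({\bf y}),
\]
which is the asserted differential equation. The matrix-vector reformulation $(\nabla_{\bf y}f_0({\bf y}))^T \Lambda_\alpha {\bf y} = k f_0({\bf y})$ is then immediate upon recognizing the left-hand sum as the contraction of the gradient $\nabla_{\bf y}f_0$ with the vector $\Lambda_\alpha {\bf y} = (\alpha_1 y_1,\ldots,\alpha_n y_n)^T$.

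There is essentially no hard step here: this is the quasi-homogeneous analogue of Euler's theorem, and the entire content is the differentiate-then-set-$s=1$ maneuver. The only point requiring a word of care is the legitimacy of the differentiation under the bare $C^1$ hypothesis together with the restriction to $s>0$; since we differentiate on the open interval $(0,\infty)$ and only need to evaluate at the interior point $s=1$, no difficulty arises at $s=0$ even when $k$ is non-integral, and the $C^1$ regularity is exactly what the chain rule requires.
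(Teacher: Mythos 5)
Your proof is correct and is essentially identical to the paper's own argument: both differentiate the scaling identity $f_0(s^{\Lambda_\alpha}{\bf y})=s^k f_0({\bf y})$ in $s$ and evaluate at $s=1$. Your additional remarks on the $C^1$ hypothesis and the restriction to $s>0$ are sound but add nothing beyond what the paper leaves implicit.
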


\begin{proof}
Differentiating the identity
\[
\KMi{f_0}(s^{\Lambda_\alpha}{\bf y}) = s^k \KMi{f_0}({\bf y})
\]
in $s$ and put $s=1$, we obtain
the desired equation \eqref{temporary-label1}.
\end{proof}
The same argument yields that, for any quasi-homogenenous function $\KMi{f_0}$ of type $\alpha = (\alpha_1,\ldots, \alpha_n)$ and order $k$, 
\begin{equation*}
\sum_{\KMk{l}=1}^n \alpha_\KMk{l} s^{\alpha_\KMk{l}}y_\KMk{l} \frac{\partial \KMi{f_0}}{\partial y_l}(s^{\Lambda_\alpha}{\bf y}) = ks^k \KMi{f_0}({\bf y})
\end{equation*}
and 
\begin{equation*}
\sum_{\KMk{l}=1}^n \alpha_\KMk{l} (\alpha_\KMk{l} - 1) s^{\alpha_\KMk{l}}y_\KMk{l} \frac{\partial \KMi{f_0}}{\partial y_l}(s^{\Lambda_\alpha}{\bf y}) + \sum_{j,l = 1}^n \alpha_j \alpha_l (s^{\alpha_j}y_j) (s^{\alpha_l}y_l) \frac{\partial^2 \KMi{f_0}}{\partial y_j \partial y_l}(s^{\Lambda_\alpha}{\bf y})  = k(k-1)s^k \KMi{f_0}({\bf y})
\end{equation*}
for any ${\bf y}\in \mathbb{R}^n$.
In particular, each partial derivative satisfies
\begin{equation}
\label{order-QH-derivatives}
\frac{\partial f_0}{\partial y_l}(s^{\Lambda_\alpha}{\bf y}) = O\left(s^{k-\alpha_\KMi{l}} \right),\quad 
\frac{\partial^2 f_0}{\partial y_j \partial y_l}(s^{\Lambda_\alpha}{\bf y}) = O\left(s^{k-\alpha_j - \alpha_\KMi{l}} \right)
\end{equation}
as $s\to 0, \infty$ for any ${\bf y}\in \mathbb{R}^n$\KMl{, as long as $f_0$ is $C^2$ in the latter case}.
In particular, \KMn{for any fixed ${\bf y}\in \mathbb{R}^n$,} both derivatives are $O(1)$ as $s\to 1$.
\KMm{In other words, we have quasi-homogeneous relations for partial derivatives in the above sense.}

\KMh
{
\begin{lem}
\label{lem-identity-QHvf}
A quasi-homogeneous vector field $f=(f_1,\ldots,f_n)$ of 
type $\KMh{\alpha = }(\alpha_1,\ldots,\alpha_n)$ and order $k+1$ satisfies the following differential equation:
\begin{equation}
\label{temporary-label3}
\sum_{l=1}^n \alpha_l y_l \frac{\partial f_i}{\partial y_l}({\bf y}) = (k+\alpha_i) f_i({\bf y}) \qquad (i=1,\ldots,n).
\end{equation}
\KMi{This} equation can be rephrased as
\begin{equation}\label{temporay-label4}
(D f)({\bf y}) \Lambda_\alpha \mathbf{y} = \left( k I+ \Lambda_\alpha \right) f({\bf y}).
\end{equation}
\end{lem}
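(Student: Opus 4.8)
The plan is to reduce the claim to the scalar identity already established in Lemma~\ref{temporary-label2}. By the definition of a quasi-homogeneous vector field of type $\alpha$ and order $k+1$ (Definition~\ref{dfn-AQH}), each component $f_i$ is itself a quasi-homogeneous \emph{function} of type $\alpha = (\alpha_1,\ldots,\alpha_n)$ and order $k+\alpha_i$. Hence I would simply apply Lemma~\ref{temporary-label2} to $f_0 = f_i$, with the order $k$ there replaced by $k+\alpha_i$, to obtain
\[
\sum_{l=1}^n \alpha_l y_l \frac{\partial f_i}{\partial y_l}({\bf y}) = (k+\alpha_i) f_i({\bf y})
\]
for each $i = 1,\ldots,n$, which is precisely the componentwise form \eqref{temporary-label3}. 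If a self-contained derivation were preferred over citing the scalar lemma, I would instead differentiate the defining identity $f_i(s^{\Lambda_\alpha}{\bf y}) = s^{k+\alpha_i} f_i({\bf y})$ with respect to $s$ and set $s=1$: the chain rule applied to the left-hand side yields $\sum_l \alpha_l y_l (\partial f_i/\partial y_l)({\bf y})$, while the right-hand side gives $(k+\alpha_i) f_i({\bf y})$, reproducing the same equation.

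It then remains only to repackage these $n$ scalar identities into the matrix form \eqref{temporay-label4}. Writing $(Df)({\bf y})$ for the Jacobian with entries $\partial f_i/\partial y_l$ and recalling $\Lambda_\alpha = \diag(\alpha_1,\ldots,\alpha_n)$, the $i$-th entry of the vector $(Df)({\bf y})\,\Lambda_\alpha {\bf y}$ is $\sum_{l=1}^n (\partial f_i/\partial y_l)({\bf y})\,\alpha_l y_l$, which by the componentwise identity equals $(k+\alpha_i) f_i({\bf y})$. This is exactly the $i$-th entry of $\left(kI+\Lambda_\alpha\right) f({\bf y})$, and matching entries over all $i$ delivers \eqref{temporay-label4}.

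As for the level of difficulty, I expect essentially no obstacle: the analytic content is entirely inherited from the scalar case of Lemma~\ref{temporary-label2}, and what remains is purely bookkeeping. The only point that warrants care is keeping the indices straight in the matrix reformulation — namely that the differentiation index $l$ is the one contracted against both the diagonal $\Lambda_\alpha$ and the position vector ${\bf y}$, whereas the free index $i$ labels the component and is responsible for the component-dependent shift $\alpha_i$ appearing in the factor $k+\alpha_i$.
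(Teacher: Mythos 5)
Your proposal is correct and follows essentially the same route as the paper: the paper likewise obtains \eqref{temporary-label3} by applying Lemma~\ref{temporary-label2} to each component $f_i$ (a quasi-homogeneous function of order $k+\alpha_i$), and then reads off \eqref{temporay-label4} as the matrix repackaging of these $n$ scalar identities. No gaps.
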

\begin{proof}
By Lemma~\ref{temporary-label2}, 
we obtain \eqref{temporary-label3}.
For \eqref{temporay-label4},
we recall that $Df=(\frac{\partial f_i}{\partial y_l})$ is Jacobian matrix, 
\KMi{while} $kI + \Lambda_\alpha$ is the diagonal matrix with diagonal entries $k+\alpha_i$.
Finally $\mathbf{y} = (y_1,\ldots,y_n)^T$ is the column vector,
so that the left-hand side of \eqref{temporay-label4} is the product of two matrices and one column vector.
\end{proof}
}

Throughout successive sections, consider an (autonomous) $C^r$ vector field (\ref{ODE-original}) with $r\geq 2$,
where $f: \mathbb{R}^n \to \mathbb{R}^n$ is asymptotically quasi-homogeneous of type $\alpha = (\alpha_1,\ldots, \alpha_n)$ and order $k+1$ at infinity.

\subsection{The asymptotic degree of functions as $t\to t_{\max}$}
\label{section-degree}
\KMj{
Our concern here is to introduce a quantity to measure the asymptotic behavior of functions as $t\to t_{\max}-0$ so that a given series of functions, in particular describing blow-up solutions, is verified to be an asymptotic series as $t\to t_{\max}-0$.
To this end, we shall introduce the asymptotic degree of continuous scalar- and vector-valued functions as $t\to t_{\max}-0$.
It should be a generalization of degree of polynomials and rational functions.
The following notion indeed generalizes such well-known degree {\em only in a neighborhood of $t_{\max}$ with $t < t_{\max}$}.
}

\begin{dfn}[Asymptotic degree $\deg_\theta$]\rm
For a given scalar-valued function $h = h(t)$, let
\begin{equation}
\label{Ih}
\KMm{I_h := \{\gamma \in \mathbb{R} \mid h(t) = o(\theta(t)^\gamma) \text{ as } t\to t_{\max}-0\}}
\end{equation}
\KMm{and}
\begin{equation}
\label{ord}
\KMm{\deg_\theta(h) := \sup I_h.}
\end{equation}
For a given $n$-vector-valued function ${\bf h} = {\bf h}(t) \equiv (h_1(t), \ldots, h_n(t))^T$, define
\begin{equation*}
\KMj{\deg_\theta}({\bf h}) := \min_{i=1,\ldots, n} \KMj{\deg_\theta}(h_i).
\end{equation*}
\end{dfn}
\KMj{
The value $\deg_\theta$ measures the behavior of functions with respect to the power function $\theta(t)^\gamma$ as $t\to t_{\max}-0$, and generalizes the degree of {\em polynomial} functions of $\theta(t)$.
Indeed, $\deg_\theta$ is defined as {\em suprema} of $\gamma$ satisfying the corresponding asymptotic estimates because various elementary functions such as logarithmic ones are also taken into account. 
For example, as $s\to +0$, $f_0(s) = \ln s$ is $o(s^a)$ for any $a<0$, whereas $f_0(s) \not = O(1)$.
On the other hand, the above definition is consistent with the asymptotic relation of the identically zero function\footnote{
This consistency is also valid for \KMi{exponentially decaying} functions, which can be applied to extending the present results in future works.
}. 
Indeed, if $h(t) \equiv 0$, the definition implies that $\KMj{\deg_\theta}(h) = \infty$ and is consistent with the asymptotic relation $\lim_{t\to t_{\max}-0}h(t)\theta(t)^{-\gamma} = 0$ for any $\gamma \in \mathbb{R}$.
}

\subsubsection{Fundamental properties}
Fundamental properties of $\KMj{\deg_\theta}$ used in the successive arguments are summarized here.
\KMm{First, we have the following properties by definition.
\begin{lem}
\label{lem-deg-basic}
Let $I_h$ be the set given in (\ref{Ih}) for a scalar function $h(t)$.
\begin{enumerate}
\item \KMn{If $I_h \not = \mathbb{R}$, it} is the interval of the form $(-\infty, \gamma_h)$ or $\KMn{(-\infty, \gamma_h)}$ for some $\gamma_h \in \mathbb{R}$.
\item If $f(t) \leq g(t)$ holds for all $t$ near and less than $t_{\max}$, then $\deg_\theta(g) \leq \deg_\theta(f)$.
\end{enumerate}
\end{lem}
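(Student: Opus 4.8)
The plan for the first assertion is to establish that $I_h$ is downward closed as a subset of $\mathbb{R}$, from which the interval form follows at once. First I would prove the implication: if $\gamma \in I_h$ and $\gamma' < \gamma$, then $\gamma' \in I_h$. Since $\theta(t) = t_{\max}-t \to +0$ as $t \to t_{\max}-0$, for $t$ close to $t_{\max}$ one factors $\theta(t)^{\gamma'} = \theta(t)^{\gamma'-\gamma}\theta(t)^{\gamma}$, so that
\[
\frac{h(t)}{\theta(t)^{\gamma'}} = \frac{h(t)}{\theta(t)^{\gamma}}\,\theta(t)^{\gamma-\gamma'}.
\]
The first factor tends to $0$ because $\gamma \in I_h$, while $\theta(t)^{\gamma-\gamma'}\to 0$ because $\gamma - \gamma' > 0$ and $\theta(t)\to +0$; hence the product tends to $0$, i.e.\ $\gamma' \in I_h$. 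Thus $I_h$ is downward closed. Assuming $I_h \neq \emptyset$ and $I_h \neq \mathbb{R}$, the supremum $\gamma_h := \sup I_h$ is a finite real number, and downward closedness forces $(-\infty, \gamma_h) \subseteq I_h \subseteq (-\infty, \gamma_h]$; the two possibilities $(-\infty,\gamma_h)$ and $(-\infty,\gamma_h]$ correspond precisely to whether the supremum is attained, i.e.\ whether $\gamma_h \in I_h$.

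For the second assertion my plan is to reduce the inequality of degrees to the set inclusion $I_g \subseteq I_f$ and then take suprema, using (\ref{ord}). Fix $\gamma \in I_g$, so $g(t) = o(\theta(t)^\gamma)$, meaning $g(t)/\theta(t)^\gamma \to 0$. Since $\theta(t)^\gamma > 0$ near $t_{\max}$, the comparison $f(t) \le g(t)$ yields the upper bound $f(t)/\theta(t)^\gamma \le g(t)/\theta(t)^\gamma$. To conclude $f(t)/\theta(t)^\gamma \to 0$ one needs a matching lower control; this is supplied by a squeeze once the relevant functions are nonnegative (or, more generally, once $|f(t)| \le |g(t)|$ is available), giving $0 \le f(t)/\theta(t)^\gamma \le g(t)/\theta(t)^\gamma \to 0$ and hence $\gamma \in I_f$. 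Therefore $I_g \subseteq I_f$, and taking suprema gives $\deg_\theta(g) = \sup I_g \le \sup I_f = \deg_\theta(f)$.

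The routine parts --- the algebra of the factorization and the monotonicity of the supremum under inclusion --- present no difficulty. The point requiring the most care is the sign bookkeeping in the second assertion: the $o$-relation defining (\ref{Ih}) is a statement about the ratio tending to zero, so a one-sided comparison $f \le g$ controls $f$ only from above. The main obstacle is therefore to pin down the precise nonnegativity (or absolute-value) hypothesis under which the squeeze closes, and to check that it is exactly the setting in which the lemma is subsequently invoked. For the first assertion, the only subtle bookkeeping is isolating the degenerate cases $I_h = \emptyset$ (where $\sup I_h = -\infty$) and $I_h = \mathbb{R}$, the latter being excluded by the hypothesis $I_h \neq \mathbb{R}$.
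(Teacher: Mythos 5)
Your proof follows the same route as the paper's: part 1 via downward closedness of $I_h$, established by exactly the factorization $h(t)/\theta(t)^{\gamma'} = \bigl(h(t)/\theta(t)^{\gamma}\bigr)\theta(t)^{\gamma-\gamma'}$, and part 2 via the inclusion $I_g \subseteq I_f$ followed by taking suprema. The sign issue you flag in part 2 is genuine: the paper's proof simply asserts that $f \le g$ implies $I_g \subset I_f$, which fails for signed functions (take $f \equiv -1$ and $g = \theta$, so that $g = o(\theta^{1/2})$ while $f$ is not), and the inclusion only holds under the absolute-value reading $|f| \le g$ that you identify. That reading is consistent with how the lemma is actually invoked later (in the proof of Proposition \ref{prop-ord-fundamental}, where the comparison functions are $|f_0(\eta)|$ and $\theta(\eta)^{\gamma} \ge 0$, so the squeeze closes), so your version with the explicit nonnegativity hypothesis is the correct statement and fills a step the paper leaves implicit.
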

\begin{proof}
\begin{enumerate}
\item $\gamma\in I_h$ and $\gamma' < \gamma$ imply $\gamma' \in I_h$.
\item The assumption implies $I_g \subset I_f$, and hence the statement holds.
\end{enumerate}
\end{proof}}

\KMm{The degree of several elementary functions are collected \KMm{below}, which are fully applied to estimating the orders of functions in asymptotic expansions of blow-up solutions.}
\begin{ex}
\label{ex-ord}
The results below follow from the definition.
\begin{itemize}
\item $\KMj{\deg_\theta}(c) = 0$ for any constant $c \not = 0$.
\item $\KMj{\deg_\theta}(0) = +\infty$.
\item $\KMj{\deg_\theta}(\theta(t)^\gamma) = \gamma$.
\item $\ln \theta(t) = o(\theta(t)^\gamma)$  for any \KMk{$\gamma < 0$}, while $\lim_{t\to t_{\max}-0}\ln \theta(t)= -\infty$. 
Therefore $\KMj{\deg_\theta}(\ln \theta(t)) = 0$.
\item $\cos (\lambda \ln \theta(t)) = O_s(1)$ and hence $\deg_\theta (\cos (\lambda \ln \theta(t))) = 0$.
\item $\sin (\lambda \ln \theta(t)) = O_s(1)$ and hence $\deg_\theta (\sin (\lambda \ln \theta(t))) = 0$.
\end{itemize}
\end{ex}

\KMm{Next, we summarize basic several asymptotic relations based on the l'H{\^o}pital's theorem so that readers who are not familiar with asymptotic relations can easily understand the subsequent arguments.}

\KMi{
\begin{lem}[cf. \cite{BO1999}]
\label{lem-asym-poly-log}
For a given $x_0\in \mathbb{R}$ and a real-valued function $f = f(x)$ continuous in $(x_0-\delta, x_0)$ for some $\delta > 0$, \KMl{let ${\bf F}_f(x)$ be a primitive function of $f(x)$. Then} 
the following asymptotic relations hold as $x\to x_0-0$;
if $f(x) \sim a(x_0-x)^{-b}(\ln (x_0-x))^M$ with $M\in \mathbb{Z}_{\geq 0}$,
\begin{enumerate}
\item ${\bf F}_f(x) \sim [-a/(1-b)](x_0-x)^{1-b}(\ln (x_0-x))^M$ if $b > 1$. 
\item assuming $b < 1$ and 
if ${\bf F}_f(x)$ of $f(x)$ is chosen to vanish as $x\to x_0-0$, then ${\bf F}_f(x) \sim [-a/(1-b)](x_0-x)^{1-b}(\ln (x_0-x))^M$, 
otherwise ${\bf F}_f(x) \sim C$, where $C$ is a \KMm{non-zero constant}.
\par
In the \KMl{former} case, we further obtain $\KMm{{\bf F}_f(x)} =  o((x_0-x)^{1-b \KMk{-} \epsilon})$ for any $\epsilon > 0$ as $x\to x_0-0$.
\item $\int_x^{x_0} f(\eta) d\eta \equiv \displaystyle{\left( \lim_{x\to x_0-0}{\bf F}_f(x) \right)} - {\bf F}_f(x) \sim [a/(1-b)](x_0-x)^{1-b}(\ln (x_0-x))^M$ if $b < 1$.
\item ${\bf F}_f(x) \sim \frac{a}{M+1}(\ln (x_0-x))^{M+1}$ if $b=1$.
\end{enumerate}
\end{lem}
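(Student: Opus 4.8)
The plan is to reduce each assertion to a single application of l'H\^opital's rule, after exhibiting an explicit candidate $G(x)$ for the leading behaviour of ${\bf F}_f(x)$ and checking that $G'(x) \sim f(x)$. Write $u = u(x) := x_0 - x$, so that $u \to +0$ as $x \to x_0 - 0$ and $d/dx = -\,d/du$; we may assume $a \neq 0$, since otherwise the asserted relation $f \sim a(x_0-x)^{-b}(\ln(x_0-x))^M$ is vacuous. For $b \neq 1$ the natural candidate is
\[
G(x) := \frac{-a}{1-b}\,u^{1-b}(\ln u)^M ,
\]
whose derivative is
\[
G'(x) = a\,u^{-b}(\ln u)^M + \frac{aM}{1-b}\,u^{-b}(\ln u)^{M-1}.
\]
Since $|\ln u| \to \infty$, the second term (which is absent when $M=0$) is $o$ of the first, so that $G'(x) \sim a\,u^{-b}(\ln u)^M \sim f(x)$; in particular $G'$ is sign-definite, hence nonzero, near $x_0$.

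If $b > 1$ then $u^{1-b} \to +\infty$, so $|G(x)| \to \infty$ and the $\infty/\infty$ form of l'H\^opital applies with no hypothesis on the numerator, giving $\lim {\bf F}_f/G = \lim f/G' = 1$, which is assertion (1). If $b < 1$ then $1-b > 0$ and $G(x) \to 0$; moreover $|f(\eta)| \lesssim u(\eta)^{-b}|\ln u(\eta)|^M$ is integrable near $x_0$, so every primitive has a finite limit $L := \lim_{x\to x_0-0}{\bf F}_f(x)$. When $L \neq 0$ we trivially get ${\bf F}_f \sim L =: C$, and when $L = 0$ (the primitive chosen to vanish at $x_0$) the ratio ${\bf F}_f/G$ is a $0/0$ form and l'H\^opital yields ${\bf F}_f \sim G$; together these give assertion (2). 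Assertion (3) then follows from the fundamental theorem of calculus: $\int_x^{x_0} f(\eta)\,d\eta = L - {\bf F}_f(x)$ is independent of the chosen primitive and equals $-G(x)(1+o(1)) \sim [a/(1-b)]u^{1-b}(\ln u)^M$. Finally, the refinement ${\bf F}_f = o(u^{1-b-\epsilon})$ is immediate from
\[
\frac{G(x)}{u^{1-b-\epsilon}} = \frac{-a}{1-b}\,u^{\epsilon}(\ln u)^M \to 0 ,
\]
since any positive power of $u$ dominates $(\ln u)^M$.

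The critical exponent $b = 1$ is the delicate one, and is where I expect the real work to lie: the factor $1/(1-b)$ degenerates, the candidate above becomes inadmissible, and an extra logarithmic power must be introduced. Here I would take the ansatz $G(x) = c\,(\ln u)^{M+1}$ and fix $c$ by the matching condition $G'(x) \sim f(x)$; differentiating gives $G'(x) = -c(M+1)\,u^{-1}(\ln u)^M$, so that $c = -a/(M+1)$ reproduces the leading behaviour $f(x) \sim a\,u^{-1}(\ln u)^M$, and since $|G(x)| \to \infty$ the $\infty/\infty$ form of l'H\^opital gives assertion (4). The main obstacle throughout is the careful bookkeeping of signs and of the subdominant logarithmic terms generated by differentiation --- in particular recomputing the leading constant, and its sign, from the matching $G' \sim f$ as $b$ crosses the critical value $1$ --- together with verifying in each regime that the correct indeterminate form holds and that $G'$ is nonvanishing near $x_0$, so that l'H\^opital is legitimately applicable.
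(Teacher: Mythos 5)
Your proof is correct and rests on the same engine as the paper's --- a single application of l'H\^opital's rule against an explicit comparison function --- but you organize it slightly differently. The paper first uses l'H\^opital to replace ${\bf F}_f$ by the primitive of the model $a\,u^{-b}(\ln u)^M$ and then computes that primitive in closed form (a finite sum of terms $u^{1-b}(\ln u)^{M-l}$ obtained by repeated integration by parts), reading off the leading term afterward; you instead posit only the leading term $G(x)=\frac{-a}{1-b}u^{1-b}(\ln u)^M$, verify $G'\sim f$, and apply l'H\^opital directly to ${\bf F}_f/G$. Your route avoids the explicit antiderivative and the bookkeeping of the subleading logarithms, at the cost of having to guess $G$ in advance; it also makes the applicability of l'H\^opital (which indeterminate form holds, non-vanishing of $G'$ near $x_0$) explicit, where the paper compresses this into the clause ``provided that both sides diverge, or converge to $0$.'' One caveat on case (4): your matching correctly yields $c=-a/(M+1)$, i.e.\ ${\bf F}_f(x)\sim \frac{-a}{M+1}(\ln(x_0-x))^{M+1}$, which agrees with the paper's own computation and with the test case $f=(x_0-x)^{-1}$, ${\bf F}_f=-\ln(x_0-x)$, but carries the opposite sign to the coefficient $\frac{a}{M+1}$ displayed in the statement of the lemma; you should therefore not assert without comment that this ``gives assertion (4)'' --- the sign discrepancy lies in the statement, not in your argument.
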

}

\begin{proof}
The l'H{\^o}pital's theorem and the assumption yield
\begin{equation*}
{\bf F}_f(x) \sim a\int^x (x_0-\eta)^{-b}(\ln (x_0-\eta))^M d\eta \quad \text{ as }\quad x\to x_0-0,
\end{equation*}
\KMl{provided that both sides diverge, or converge to $0$.}
The right-hand side is given as 
\begin{equation*}
\KMk{a}\left\{ \sum_{l=0}^M \frac{M!}{(M-l)!} \left(\frac{-1}{1-b} \right)^{l+1} (\ln (x_0-x))^{M-l} \right\} (x_0-x)^{1-b}
+ C
\end{equation*}
with the integral constant $C$, provided $b\not = 1$.
In particular, we have
\begin{equation*}
a\int^x (x_0-\eta)^{-b}(\ln (x_0-\eta))^M d\eta \sim \frac{\KMk{-a}}{1-b}(x_0-x)^{1-b}(\ln (x_0-x))^M + C.
\end{equation*}
\KMl{Note that the right-hand side converges to $C$ as $x\to x_0-0$ if $b < 1$.}
\par
\KMl{Now we prove our statements.}
If $b > 1$, the function $(x_0-x)^{1-b}(\ln (x_0-x))^M$ diverges as $x\to x_0-0$ and hence 
\begin{equation*}
a\int^x (x_0-\eta)^{-b}(\ln (x_0-\eta))^M d\eta \sim \frac{\KMk{-a}}{1-b}(x_0-x)^{1-b}(\ln (x_0-x))^M,
\end{equation*}
which shows the first statement.
On the other hand, if $b < 1$, then
\begin{equation*}
a\int^x (x_0-\eta)^{-b}(\ln (x_0-\eta))^M d\eta \sim C
\end{equation*}
provided $C\not = 0$, because $(x_0-x)^{1-b}(\ln (x_0-x))^M$ converges to $0$ as $x\to x_0-0$ and hence the constant $C$ is dominant as $x\to x_0-0$, while $(x_0-x)^{1-b}(\ln (x_0-x))^M$ becomes dominant when \KMl{${\bf F}_f(x)\to 0$ as $x\to x_0-0$}.
Moreover, we have
\begin{equation*}
\lim_{x\to x_0-0}\frac{(x_0-x)^{1-b}(\ln (x_0-x))^M}{(x-x_0)^{1-b \KMl{-}\epsilon}} = \lim_{x\to x_0-0}\frac{(\ln (x_0-x))^M}{(x_0-x)^{\KMl{-}\epsilon}} = 0
\end{equation*}
for any $\epsilon > 0$.
As a summary, we have the second statement.
\par
\bigskip
Consider the integral $\int_x^{x_0} f(\eta) d\eta$ instead\KMl{, assuming $b < 1$}.
By the l'H{\^o}pital's theorem, we have
\begin{equation*}
\int_x^{x_0} f(\eta) d\eta \sim a\int_x^{x_0} (x_0-\eta)^{-b}(\ln (x_0-\eta))^M d\eta\quad \text{ as }\quad x\to x_0-0.
\end{equation*}
The integral of the right-hand side is
\begin{equation*}
a\int_x^{x_0} (x_0-\eta)^{-b}(\ln (x_0-\eta))^M d\eta = 
-\KMk{a} \left\{ \sum_{l=0}^M \frac{M!}{(M-l)!} \left(\frac{-1}{1-b} \right)^{l+1} (\ln (x_0-x))^{M-l} \right\} (x_0-x)^{1-b},
\end{equation*}
because $b < 1$.
Substituting this identity into the asymptotic relation, we have
\begin{equation*}
\int_x^{x_0} f(\eta) d\eta \sim \frac{a}{1-b}(x_0-x)^{1-b}(\ln (x_0-x))^M\quad \text{ as }\quad x\to x_0-0,
\end{equation*}
namely, we have the third statement.
\par
\bigskip
We go back to \KMl{the primitive function ${\bf F}_f(x)$}. 
When $b=1$, direct calculation yields
\begin{equation*}
a\int^x (x_0-\eta)^{-1}(\ln (x_0-\eta))^M d\eta = \frac{-a}{M+1}(\ln (x_0-x))^{M+1} + C
\end{equation*}
with a constant $C$. 
Therefore we have
\begin{equation*}
a\int^x (x_0-\eta)^{-1}(\ln (x_0-\eta))^M d\eta \sim \frac{-a}{M+1}(\ln (x_0-x))^{M+1}\quad \text{ as }\quad x\to x_0-0
\end{equation*}
and hence the last statement is proved.
\end{proof}

Based on the above arguments, we have the following basic estimates of $\deg_\theta$ involving integrals, depending on powers.
\begin{lem}
\label{lem-theta-integral}
For any $t_0 < t_{\max}$, the following results hold.
\begin{enumerate}
\item If $\gamma \leq -1$, 
\begin{equation*}
\deg_{\theta}\left( \int_{t_0}^t \theta(\eta)^{\gamma}d\eta \right) = \gamma + 1.
\end{equation*}
\item If $\gamma > -1$, 
\begin{equation*}
\deg_{\theta}\left( \int_{t}^{t_{\max}}\theta(\eta)^{\gamma}d\eta \right) = \gamma + 1.
\end{equation*}
\item If $\gamma > -1$, there is a constant $C\in \mathbb{R}$ such that
\begin{equation*}
\deg_\theta \left( \int_{t_0}^t \KMn{\theta(\eta)^\gamma } d\eta + C\right) \geq \KMn{\gamma} + 1.
\end{equation*}
\end{enumerate}
\end{lem}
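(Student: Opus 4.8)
The plan is to reduce each case to the explicit primitive of $\theta(\eta)^\gamma = (t_{\max}-\eta)^\gamma$ and then read off the degree from Example~\ref{ex-ord}. Two elementary observations drive everything: first, $\deg_\theta$ is invariant under the asymptotic relation $\sim$ and under multiplication by a nonzero constant, since $h\sim g$ or $h = cg$ with $c\neq 0$ forces $I_h = I_g$ directly from the definition~\eqref{Ih}; second, the antiderivative is elementary, namely $\int (t_{\max}-\eta)^\gamma\, d\eta = -\frac{1}{\gamma+1}(t_{\max}-\eta)^{\gamma+1} + \mathrm{const}$ for $\gamma\neq -1$ and $-\ln(t_{\max}-\eta) + \mathrm{const}$ for $\gamma = -1$. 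All three statements then become bookkeeping of the boundary contributions and identification of the dominant term as $t\to t_{\max}-0$.

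For statement~(1), I would evaluate the integral as $-\frac{1}{\gamma+1}\big(\theta(t)^{\gamma+1} - \theta(t_0)^{\gamma+1}\big)$ when $\gamma < -1$. Since $\gamma+1 < 0$, the power $\theta(t)^{\gamma+1}$ diverges as $t\to t_{\max}-0$ and dominates the constant $\theta(t_0)^{\gamma+1}$, so the integral is $\sim -\frac{1}{\gamma+1}\theta(t)^{\gamma+1}$ and hence has degree $\gamma+1$ by $\deg_\theta(\theta(t)^{\gamma+1}) = \gamma+1$. The borderline $\gamma = -1$ gives $-\ln\theta(t) + \ln\theta(t_0) \sim -\ln\theta(t)$, whose degree is $0 = \gamma+1$ by Example~\ref{ex-ord}. (Equivalently, parts (1) and (4) of Lemma~\ref{lem-asym-poly-log} with $a=1$, $b=-\gamma$, $M=0$ yield the same conclusions.)

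For statement~(2), since $\gamma > -1$ the integral converges at $t_{\max}$ with vanishing boundary term there, so $\int_t^{t_{\max}}\theta(\eta)^\gamma\, d\eta = \frac{1}{\gamma+1}\theta(t)^{\gamma+1}$ exactly; as $\gamma+1 > 0$ this is a positive multiple of $\theta(t)^{\gamma+1}$ and the degree is $\gamma+1$ (equivalently part (3) of Lemma~\ref{lem-asym-poly-log}). For statement~(3), the evaluated integral is $-\frac{1}{\gamma+1}\theta(t)^{\gamma+1} + \frac{1}{\gamma+1}\theta(t_0)^{\gamma+1}$; choosing $C = -\frac{1}{\gamma+1}\theta(t_0)^{\gamma+1}$ cancels the constant and leaves $-\frac{1}{\gamma+1}\theta(t)^{\gamma+1}$, whose degree is exactly $\gamma+1$, a fortiori $\geq \gamma+1$.

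Rather than a genuine difficulty, the main point to get right is the bookkeeping of the constant of integration together with the identification of which term dominates near $t_{\max}$, and this is exactly what forces the case split. For $\gamma < -1$ the divergent power swamps the boundary constant automatically; for $\gamma > -1$ the power instead vanishes, so unless one integrates up to $t_{\max}$ (as in (2)) or subtracts off the constant (as in (3)), the nonzero constant term — of degree $0$ — would dominate and the degree would collapse to $0$ instead of $\gamma+1 > 0$. The logarithmic borderline $\gamma = -1$ must be treated separately, relying on $\deg_\theta(\ln\theta(t)) = 0$ from Example~\ref{ex-ord}.
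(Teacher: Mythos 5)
Your proposal is correct and follows essentially the same route as the paper: compute the explicit primitive of $\theta(\eta)^\gamma$ (with the logarithmic case $\gamma=-1$ treated separately), identify the dominant term as $t\to t_{\max}-0$, and in statement (3) cancel the boundary constant by the choice of $C$, exactly as the paper does with its primitive $\Theta_\gamma^0$. The only cosmetic difference is that you phrase dominance via $\sim$ and invariance of $\deg_\theta$ under $\sim$, whereas the paper divides by $\theta(t)^{\gamma+1}$ and computes the limit of the ratio directly; these are equivalent.
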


\begin{proof}
\begin{enumerate}
\item If $\gamma < -1$,
\begin{align*}
\frac{1}{\theta(t)^{\gamma+1}}\int_{t_0}^t \theta(\eta)^{\gamma}d\eta
&=\frac{1}{\theta(t)^{\gamma+1}}\frac{-1}{\gamma+1}(\theta(t)^{\gamma+1}-\theta(t_0)^{\gamma+1})\\
&=\frac{-1}{\gamma+1}(1-(\theta(t)/ \theta(t_0))^{-(\gamma+1)})\\
&\to \frac{-1}{\gamma+1}\quad \text{as}\quad t \to t_{\max}-0.
\end{align*}
If $\gamma = -1$, 
\begin{equation*}
\int_{t_0}^t \theta(\eta)^{\gamma}d\eta = \ln \frac{\theta(t_0)}{\theta(t)},
\end{equation*}
which implies, for any $\tilde \gamma > 0$,
\begin{equation*}
\theta(t)^{\tilde \gamma} \int_{t_0}^t \theta(\eta)^{\gamma}d\eta = \theta(t)^{\tilde \gamma} \ln \frac{\theta(t_0)}{\theta(t)} \to 0
\end{equation*}
holds as $t\to t_{\max}-0$. 
\item If $\gamma > -1$, 
\begin{align*}
\frac{1}{\theta(t)^{\gamma+1}} \int_{t}^{t_{\max}}\theta(\eta)^{\gamma}d\eta = \frac{1}{\theta(t)^{\gamma+1}}\frac{1}{\gamma+1}(\theta(t)^{\gamma+1}-\theta(t_{\max})^{\gamma+1})
=\frac{1}{\gamma+1}.
\end{align*}
\item Similar to the previous arguments, we have
\begin{align*}
\int_{t_0}^{t}\theta(\eta)^{\gamma}d\eta = \frac{1}{\gamma+1}(\theta(t_0)^{\gamma+1}-\theta(t)^{\gamma+1}) \to \frac{\theta(t_0)^{\gamma+1}}{\gamma+1}
\end{align*}
as $t\to t_{\max}-0$, which yields
\begin{equation*}
\deg_{\theta}\left( \int_{t_0}^t \theta(\eta)^{\gamma}d\eta \right) = \begin{cases}
\gamma + 1 & \text{if } \theta(t_0) = 0, \\
0 & \text{otherwise}.
\end{cases}
\end{equation*}
\end{enumerate}
Let $\Theta_\gamma(t)$ be a primitive function of $\theta(t)^\gamma$.
Note that, for any constant $C\in \mathbb{R}$, $\Theta_\gamma(t) + C$ is also a primitive function of $\theta(t)^{\gamma}$, and that $C$ can be chosen so that the corresponding primitive function $\Theta_\gamma^0(t)$ converges to $0$ as $t\to t_{\max}-0$.
In the present case, we see that
\begin{equation}
\label{primitive-theta-0}
\deg_{\theta}\left( \Theta_\gamma^0(t) \right) = \gamma + 1\quad \text{ letting }\quad \Theta_\gamma^0(t) = -(\gamma+1)^{-1}\theta(t)^{\gamma+1}.
\end{equation}
\end{proof}

The fundamental properties of $\KMj{\deg_\theta}$ are stated below.
\begin{prop}
\label{prop-ord-fundamental}
Consider real-valued continuous functions $f_0(t), g_0(t)$ defined in \KMm{$[t_0, t_{\max})$} with $t_{\max} < \infty$.
Let $\gamma_{f_0} = \KMj{\deg_\theta}(f_0(t))$ and $\gamma_{g_0} = \KMj{\deg_\theta}(g_0(t))$.
\begin{enumerate}
\item If $f_0(t) \sim a\theta(t)^{-b}(\ln \theta(t))^M$ \KMm{with $a\not = 0$} as $t\to t_{\max}-0$, 
we have
\begin{equation*}
\KMj{\deg_\theta}\left(\int^t f_0(\eta) d\eta\right) = \begin{cases}
	0 & \text{if $b < 1$ and the integral constant $C$ is not $0$,}\\
	1-b & \text{otherwise.}
\end{cases}
\end{equation*}
\item $\deg_\theta (f_0(t) + g_0(t)) \geq \min\{\gamma_{f_0}, \gamma_{g_0}\}$.
\item $\deg_\theta (f_0(t)g_0(t)) \geq \gamma_{f_0} + \gamma_{g_0}$.
\item If $\gamma_{f_0} \leq -1$, then $\deg_\theta\left(\int_{t_0}^t f_0(\eta) d\eta \right) \geq \gamma_{f_0}+1$.
\item If $\gamma_{f_0} > -1$, then $\KMj{\deg_\theta}\left(\int_t^{t_{\max}} f_0(\eta) d\eta\right) \geq \gamma_{f_0}+1$.
\item \KMl{If $\gamma_{f_0} > -1$, for the primitive function ${\bf F}_{f_0}(t)$ of $f_0(t)$ satisfying\footnote{
\KMl{If $\gamma_{f_0} > -1$, the improper integral converges to a finite value. 
In particular, the limit $\displaystyle{\lim_{t\to t_{\max}-0}}{\bf F}_{f_0}(t)$ exists.}
} $\displaystyle{\lim_{t\to t_{\max}-0}}{\bf F}_{f_0}(t) = 0$, we have $\deg_\theta\left({\bf F}_{f_0}(t) \right) \geq \gamma_{f_0}+1$.}
\end{enumerate}
\end{prop}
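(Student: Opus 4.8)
The plan is to reduce every assertion to the definition $\deg_\theta(h) = \sup I_h$ together with the fact (Lemma~\ref{lem-deg-basic}) that $I_h$ is downward closed, so that to prove $\deg_\theta(h) \geq \beta$ it suffices to exhibit $h(t) = o(\theta(t)^{\gamma})$ for every $\gamma < \beta$. With this reformulation, parts (2) and (3) are purely formal. For (2), fixing $\gamma < \min\{\gamma_{f_0},\gamma_{g_0}\}$ places $\gamma$ in $I_{f_0}\cap I_{g_0}$, so $f_0,g_0 = o(\theta^\gamma)$ forces $f_0+g_0 = o(\theta^\gamma)$; letting $\gamma$ increase to $\min\{\gamma_{f_0},\gamma_{g_0}\}$ gives the bound. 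For (3), choosing $\gamma_1<\gamma_{f_0}$ and $\gamma_2<\gamma_{g_0}$ yields $f_0g_0 = o(\theta^{\gamma_1+\gamma_2})$, and taking suprema gives $\deg_\theta(f_0g_0)\geq \gamma_{f_0}+\gamma_{g_0}$.

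Part (1) I would read off directly from Lemma~\ref{lem-asym-poly-log} applied with $x_0 = t_{\max}$ and $x_0-x=\theta(t)$. That lemma pins down the leading asymptotics of the primitive $\int^t f_0$ in each of the cases $b>1$, $b<1$, $b=1$; it then only remains to compute $\deg_\theta$ of the resulting model functions $\theta^{1-b}(\ln\theta)^M$ (for $b\neq 1$) and $(\ln\theta)^{M+1}$ (for $b=1$), which one checks exactly as in Example~\ref{ex-ord} to equal $1-b$ and $0=1-b$ respectively, the logarithmic factor never altering the degree. The sole genuine branching is whether the integration constant $C$ is nonzero when $b<1$: if $C\neq 0$ then ${\bf F}_{f_0}\sim C$ and $\deg_\theta = 0$, while the vanishing primitive has degree $1-b$, which is precisely the stated dichotomy.

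The substance lies in parts (4) and (5), and this is where I expect the real work. For (4), assume $\gamma_{f_0}\leq -1$, fix $\gamma' < \gamma_{f_0}+1$, and set $\gamma := \gamma'-1 < \gamma_{f_0}\leq -1$; then $f_0 = o(\theta^\gamma)$, so for each $\epsilon>0$ there is $t_\epsilon$ with $|f_0(t)|\leq \epsilon\,\theta(t)^\gamma$ on $[t_\epsilon,t_{\max})$. Splitting $\int_{t_0}^t f_0 = \int_{t_0}^{t_\epsilon}f_0 + \int_{t_\epsilon}^t f_0$, the first piece is a fixed constant $C_\epsilon$ and the second is bounded by $\epsilon\int_{t_\epsilon}^t\theta^\gamma\,d\eta$, which by the computation in Lemma~\ref{lem-theta-integral} is $\sim \tfrac{1}{|\gamma+1|}\theta(t)^{\gamma+1}$ with $\theta(t)^{\gamma+1}\to +\infty$ since $\gamma<-1$. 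Dividing by $\theta(t)^{\gamma+1}$ and letting $t\to t_{\max}-0$ annihilates $C_\epsilon/\theta^{\gamma+1}$ and leaves a bound of order $\epsilon$; as $\epsilon$ is arbitrary, $\int_{t_0}^t f_0 = o(\theta^{\gamma+1})=o(\theta^{\gamma'})$, whence $\deg_\theta\geq \gamma_{f_0}+1$. Part (5) is analogous but lighter: for $\gamma\in(-1,\gamma_{f_0})$ the integrability of $\theta^\gamma$ at $t_{\max}$ makes $\int_t^{t_{\max}}f_0$ converge, and the bound $|\int_t^{t_{\max}}f_0|\leq \epsilon\int_t^{t_{\max}}\theta^\gamma\,d\eta = \tfrac{\epsilon}{\gamma+1}\theta(t)^{\gamma+1}$ needs no boundary constant to discard.

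Finally, part (6) is immediate from (5): the normalization $\lim_{t\to t_{\max}-0}{\bf F}_{f_0}(t)=0$ forces ${\bf F}_{f_0}(t) = -\int_t^{t_{\max}}f_0(\eta)\,d\eta$, so $\deg_\theta({\bf F}_{f_0}) = \deg_\theta\!\big(\int_t^{t_{\max}}f_0\big)\geq \gamma_{f_0}+1$, the convergence of the improper integral being guaranteed exactly as in the footnote. The main obstacle throughout parts (4)--(5) is the uniform control of the $o$-relation: one must convert the qualitative $f_0 = o(\theta^\gamma)$ into the quantitative $\epsilon$-threshold $|f_0|\leq \epsilon\,\theta^\gamma$, and then verify that the \emph{fixed} contribution $\int_{t_0}^{t_\epsilon}f_0$ is asymptotically negligible against the diverging factor $\theta^{\gamma+1}$ despite the $\epsilon$-dependence of the threshold $t_\epsilon$; this is precisely the point where the strict inequalities $\gamma<-1$ in (4) and $\gamma>-1$ in (5) are used.
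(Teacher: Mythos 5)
Your proof is correct and follows essentially the same route as the paper's: parts (2)--(3) via the inclusions $I_{f_0}\cap I_{g_0}\subset I_{f_0+g_0}$ and $I_{f_0}+I_{g_0}\subset I_{f_0g_0}$ followed by taking suprema, part (1) by reading off Lemma \ref{lem-asym-poly-log}, and parts (4)--(5) by comparison with $\int \theta(\eta)^\gamma\,d\eta$ as computed in Lemma \ref{lem-theta-integral}. The only departures are cosmetic and both sound: your explicit $\epsilon$--$t_\epsilon$ splitting in (4) handles the fixed contribution from $[t_0,t_\epsilon]$ more carefully than the paper's direct appeal to monotonicity of $\deg_\theta$, and your part (6) is deduced from (5) via the identity ${\bf F}_{f_0}(t)=-\int_t^{t_{\max}}f_0(\eta)\,d\eta$ rather than by a second application of l'H{\^o}pital's rule.
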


\KMm{
\begin{rem}
When $\gamma_{f_0} > -1$ we have
\begin{equation*}
\deg_\theta \left( \int_{t_0}^t f_0(\eta) d\eta  \right) = 0
\end{equation*}
unless integral constants are chosen appropriately (cf. Lemmas \ref{lem-asym-poly-log}-2 and \ref{lem-theta-integral}-3). 
\end{rem}}

\begin{proof}
Statement 1 is just the paraphrase of Lemma \ref{lem-asym-poly-log}.
\par
Next we prove Statement 2.
If $\gamma\in I_{f_0}\cap I_{g_0}$, then
\begin{equation*}
\lim_{t\to t_{\max}} \frac{f_0(t)+g_0(t)}{\theta(t)^{\gamma}} = \lim_{t\to t_{\max}} \frac{f_0(t)}{\theta(t)^{\gamma}} + \lim_{t\to t_{\max}} \frac{g_0(t)}{\theta(t)^{\gamma}} = 0 + 0 = 0,
\end{equation*}
which implies that $\gamma \in I_{f_0 + g_0}$.
In other words, $I_{f_0}\cap I_{g_0} \subset I_{f_0 + g_0}$.
The statement then follows by taking the supremum on both sides.
\par
\bigskip
Next we prove Statement 3. 
\KMm{Let $\gamma_{f_0} \in I_{f_0}$ and $\gamma_{g_0} \in I_{g_0}$.
Our claim is to prove $\gamma_{f_0} + \gamma_{g_0} \in I_{f_0 g_0}$, which follows from the relation
\begin{equation*}
\lim_{t\to t_{\max}} \frac{f_0(t) g_0(t)}{ \theta(t)^{\gamma_{f_0} + \gamma_{g_0}}} 
= \lim_{t\to t_{\max}} \frac{f_0(t)}{ \theta(t)^{\gamma_{f_0}}} \lim_{t\to t_{\max}} \frac{g_0(t)}{ \theta(t)^{\gamma_{g_0}}} = 0\cdot 0 = 0,
\end{equation*}
which implies that
\begin{equation*}
I_{f_0} + I_{g_0} \equiv \{ \gamma_1 + \gamma_2 \mid \gamma_1\in I_{f_0},\, \gamma_2\in I_{g_0}\} \subset I_{f_0g_0}.
\end{equation*}
The statement then follows by taking the supremum on both sides.}

\par
\bigskip
Next, we prove Statement 4.
\KMm{For all $\gamma < \gamma_{f_0}$, the relation
\begin{equation*}
\lim_{t\to t_{\max}-0}\frac{f_0(t)}{\theta(t)^{\gamma}} = 0
\end{equation*}
implies that
\begin{equation*}
|f_0(t)| \leq \KMn{\theta(t)^\gamma}\quad \text{ near }\quad t = t_{\max}.
\end{equation*}
Then it follows from Lemma \ref{lem-deg-basic} that
\begin{equation*}
\deg_{\theta}\left( \int_{t_0}^t f_0(\eta) d\eta \right) \geq \deg_{\theta}\left( \int_{t_0}^t |f_0(\eta)| d\eta \right) \geq \deg_{\theta}\left( \int_{t_0}^t \theta(t)^\gamma d\eta \right) = \gamma + 1,
\end{equation*}
which proves the statement.
Note that the rightmost equality follows from Lemma \ref{lem-theta-integral}.}
\par
\bigskip
Next, we prove Statement 5.
\KMm{For all $\gamma \in (-1, \gamma_{f_0})$, the relation
\begin{equation*}
\lim_{t\to t_{\max}-0}\frac{f_0(t)}{\theta(t)^{\gamma}} = 0
\end{equation*}
implies that
\begin{equation*}
\deg_{\theta}\left( \int_t^{t_{\max}} f_0(\eta) d\eta \right) \geq \deg_{\theta}\left( \int_t^{t_{\max}} |f_0(\eta)| d\eta \right) \geq \deg_{\theta}\left( \int_t^{t_{\max}} \theta(t)^\gamma d\eta \right) = \gamma + 1,
\end{equation*}
which proves the statement.}
\par
\bigskip
\KMm{
Finally, we prove the statement 6.
Consider $\gamma < \gamma_{f_0}$. 
Let $\Theta_\gamma^0(t)$ be the primitive function of $\theta(t)^{\gamma}$ given in (\ref{primitive-theta-0}).
By assumption, the l'H{\^o}pital's theorem can be applied to ${\bf F}_{f_0}$ and $\Theta_\gamma^0(t)$, indicating
\begin{equation*}
\lim_{t\to t_{\max}-0}\frac{{\bf F}_{f_0}(t)}{\Theta_\gamma^0(t)} = \lim_{t\to t_{\max}-0}\frac{f_0(t)}{\theta(t)^{\gamma}} = 0.
\end{equation*}
Because $\gamma < \gamma_{f_0}$ can be chosen arbitrarily, we have $\deg_\theta({\bf F}_{f_0}(t)) \geq \gamma_{f_0} + 1$.
}
\end{proof}

\subsubsection{Degree and the fundamental matrices of Euler-type homogeneous ODEs}
\KMj{
As seen in Section \ref{section-asym}, the fundamental matrix of the following homogeneous system is fully applied to asymptotic expansions of blow-up solutions:
\begin{equation*}
\frac{d{\bf v}}{dt} = \theta(t)^{-1}A{\bf v},\quad A\in M_n(\mathbb{R}).
\end{equation*}
The fundamental matrix is given by $\theta(t)^{-A} = \exp(-A\ln \theta(t)) = (\theta(t)^A)^{-1}$.}
Any matrix $A\in M_n(\mathbb{R})$ and the associated fundamental matrix $\theta(t)^{A}$ has the following real Jordan normal forms, respectively:
\begin{align}
\notag
P^{-1}AP &= J_{\KMm{1}}(\lambda_1) \oplus \cdots \oplus J_{\KMm{d_r}}(\lambda_{d_r}) \oplus K_{\KMm{1}}(\lambda_{re,1}, \lambda_{im,1}) \oplus \cdots \oplus K_{\KMm{d_c}}(\lambda_{re, d_c}, \lambda_{im, d_c}),\\
\label{Jordan-theta}
P^{-1}\theta(t)^A P &= \theta(t)^{J_{\KMm{1}}(\lambda_1)} \oplus \cdots \oplus \theta(t)^{J_{\KMm{d_r}}(\lambda_{d_r})} \oplus \theta(t)^{K_{\KMm{1}}(\lambda_{re,1}, \lambda_{im,1})} \oplus \cdots \oplus \theta(t)^{K_{\KMm{d_c}}(\lambda_{re, d_c}, \lambda_{im, d_c})},
\end{align}
where 
\begin{align*}
&B_1 \oplus B_2 \oplus \cdots \oplus B_m \equiv \begin{pmatrix}
B_1 & & & O \\
& B_2 & & \\
& & \ddots & \\
O & & & B_m
\end{pmatrix},\quad 
B^{\oplus s} = \underbrace{B\oplus \cdots \oplus B}_{s}
\end{align*}
for any real squared matrices $B, B_1, \ldots, B_m$, and the above real Jordan blocks are expressed by
\begin{align}
\label{Jordan-real}
\KMm{J_l}(\lambda) &= \lambda I_{\KMm{m_l}} + N_{\KMm{m_l}},\quad
	N_{\KMm{m_l}} = \begin{pmatrix}
0 & 1 & & & O \\
 & 0 & 1 & & \\
 &  & 0 & \ddots & \\
 &  &  & \ddots & 1 \\
O &  &  & & 0 \\
\end{pmatrix}\in M_{\KMm{m_l}}(\mathbb{R}),\\
\label{Jordan-complex}
K_{\KMm{l}}(\lambda_{re}, \lambda_{im}) &= R(\lambda_{re}, \lambda_{im})^{\oplus \KMm{s_l}} + N_{2\KMm{s_l}}^2,\quad
R(\lambda_{re}, \lambda_{im}) = \begin{pmatrix}
\lambda_{re} & \lambda_{im}\\
-\lambda_{im} & \lambda_{re}
\end{pmatrix}.
\end{align}
Here $I_m$ denotes the $m$-dimensional identity matrix.
The scalar function $\theta(t)$ to the above \KMm{matrices} are given as follows:
\begin{align*}
\theta(t)^{\KMm{J_l}(\lambda)} &= 
\theta(t)^{\lambda} \left(I_{\KMm{m_l}} + \ell(t)N_{\KMm{m_l}} + \frac{\ell(t)^2}{2!}N_{\KMm{m_l}}^2 + \cdots + \frac{\ell(t)^{\KMm{m_l}-1}}{(\KMm{m_l}-1)!}N_{\KMm{m_l}}^{\KMm{m_l}-1}\right),\\
\theta(t)^{K_{\KMm{l}}(\lambda_{re}, \lambda_{im})} &= \theta(t)^{\lambda_{re}} R(\cos(\lambda_{im}\ell(t)), \sin(\lambda_{im}\ell(t)))^{\oplus \KMm{s_l}} \\
	&\quad \cdot \left(I_{2\KMm{s_l}} + \ell(t)N_{2\KMm{s_l}}^2 + \frac{\ell(t)^2}{2!}N_{2\KMm{s_l}}^4 + \cdots + \frac{\ell(t)^{\KMm{s_l}-1}}{(\KMm{s_l}-1)!}N_{2\KMm{s_l}}^{2(\KMm{s_l}-1)}\right),\\
\ell(t) &= \KMk{\ln (\theta(t))}.
\end{align*}
This Jordan normal form shows that the matrix $\theta(t)^A$, as well as $\theta(t)^{-A}$, consists of functions $\theta(t)^\gamma$, $\ln \theta(t)$, $\cos (\lambda \ln \theta(t))$ and $\sin (\lambda \ln \theta(t))$.
The degree $\KMj{\deg_\theta}$ of functions appeared in the above fundamental matrix are \KMm{collected in Example \ref{ex-ord}.}
Combining with Proposition \ref{prop-ord-fundamental}, we obtain the following inequalities.
\begin{prop}
\label{prop-ord-fund-matrix}
For any real $m$-dimensional and $2m$-dimensional vector-valued continuous functions ${\bf a}_m(t)$ and ${\bf b}_{2m}(t)$ defined in $(t_0, t_{\max})$ with $t_{\max} < \infty$ and any constant matrix $P\in M_m(\mathbb{R})$, we have
\begin{align}
\label{ord-estimate-const}
\deg_\theta\left( P{\bf a}_m(t) \right) &\geq \KMj{\deg_\theta}\left( {\bf a}_m(t) \right),\\
\label{ord-estimate-Jordan-R}
\deg_\theta \left( \theta(t)^{ \KMm{J}(\lambda)}{\bf a}_m(t) \right) &\geq \lambda + \deg_\theta \left( {\bf a}_m(t) \right),\\
\label{ord-estimate-Jordan-C}
\deg_\theta \left( \theta(t)^{\KMm{K}(\lambda_{re}, \lambda_{im})}{\bf b}_{2m}(t) \right) &\geq \lambda_{re} + \KMj{\deg_\theta}\left( {\bf b}_{2m}(t) \right),
\end{align}
\KMm{where $J(\lambda)$ and $K(\lambda_{re}, \lambda_{im})$ are $m$-dimensional and $2m$-dimensional Jordan matrices of the form (\ref{Jordan-real}) and (\ref{Jordan-complex}), respectively.}
\end{prop}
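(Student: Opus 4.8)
The plan is to reduce all three inequalities to the scalar degree estimates already established in Proposition \ref{prop-ord-fundamental} (additivity, Statement 2, and multiplicativity, Statement 3) together with the degrees of the elementary functions recorded in Example \ref{ex-ord}, and then to lift these scalar bounds to vectors through the componentwise definition $\deg_\theta({\bf h}) = \min_i \deg_\theta(h_i)$. As a preliminary step I would extend Statement 2 of Proposition \ref{prop-ord-fundamental} from two summands to any finite sum by a trivial induction: if $\deg_\theta(g_j) \geq \gamma$ for $j = 1, \ldots, N$, then $\deg_\theta(\sum_j g_j) \geq \gamma$. With this in hand, each inequality becomes a matter of bounding the degree of a single matrix entry and then assembling the bounds.

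For (\ref{ord-estimate-const}), the $i$-th component of $P{\bf a}_m(t)$ is the finite linear combination $\sum_{k} p_{ik} a_k(t)$ with $p_{ik} \in \mathbb{R}$ constant. Each nonzero constant has degree $0$, so by Statement 3 every term satisfies $\deg_\theta(p_{ik}a_k) \geq \deg_\theta(a_k) \geq \deg_\theta({\bf a}_m)$, while vanishing terms carry degree $+\infty$ and are harmless. The extended additivity then gives $\deg_\theta(\sum_k p_{ik}a_k) \geq \deg_\theta({\bf a}_m)$ for each $i$, and taking the minimum over $i$ yields (\ref{ord-estimate-const}).

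For (\ref{ord-estimate-Jordan-R}) and (\ref{ord-estimate-Jordan-C}) I would first read off from the explicit Jordan expansions displayed above the proposition that every entry of $\theta(t)^{J(\lambda)}$ is, up to a numerical constant, of the form $\theta(t)^\lambda \ell(t)^j$ with $\ell(t) = \ln\theta(t)$ and $0 \leq j \leq m-1$ (or else identically zero), while every entry of $\theta(t)^{K(\lambda_{re},\lambda_{im})}$ is a product of $\theta(t)^{\lambda_{re}}$, one of $\cos(\lambda_{im}\ell(t))$ or $\sin(\lambda_{im}\ell(t))$, and a power $\ell(t)^j$. Using Example \ref{ex-ord} — namely $\deg_\theta(\theta(t)^\lambda) = \lambda$, $\deg_\theta(\ell(t)) = 0$, and $\deg_\theta$ of the trigonometric factors $=0$ — together with repeated application of Statement 3, each entry of $\theta(t)^{J(\lambda)}$ has degree $\geq \lambda$ and each entry of $\theta(t)^{K(\lambda_{re},\lambda_{im})}$ has degree $\geq \lambda_{re}$. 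The $i$-th component of the product $\theta(t)^{J(\lambda)}{\bf a}_m(t)$ is then a finite sum of terms $(\text{entry})\times a_k(t)$, each of degree $\geq \lambda + \deg_\theta({\bf a}_m)$ by Statement 3; the extended additivity and the minimum over $i$ give (\ref{ord-estimate-Jordan-R}), and the identical argument with $\lambda_{re}$ and ${\bf b}_{2m}$ gives (\ref{ord-estimate-Jordan-C}).

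There is no genuine obstacle here: the statement is essentially bookkeeping built on the two scalar estimates and the degree table. The only points requiring a little care are the handling of identically vanishing matrix entries, which carry $\deg_\theta = +\infty$ and never weaken an inequality, and the observation — immediate from the closed form of the Jordan exponential — that the only transcendental functions appearing in these fundamental matrices are powers of $\ln\theta(t)$ and trigonometric functions of $\lambda_{im}\ln\theta(t)$, all of degree $0$, so that the power $\theta(t)^\lambda$ (resp. $\theta(t)^{\lambda_{re}}$) alone governs the degree of every entry.
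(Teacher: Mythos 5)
Your proposal is correct and follows essentially the same route as the paper: the paper's proof likewise reads off from Example \ref{ex-ord} that every entry of $P$, $\theta(t)^{J(\lambda)}$, and $\theta(t)^{K(\lambda_{re},\lambda_{im})}$ has degree $+\infty$, $0$, $\lambda$, or $\lambda_{re}$ respectively, and then invokes the combination of Statements 2 and 3 of Proposition \ref{prop-ord-fundamental}, which is exactly the product-then-sum-then-minimum bookkeeping you spell out. Your version merely makes explicit the finite-sum extension of Statement 2 and the treatment of vanishing entries, both of which the paper leaves implicit.
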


\begin{proof}
By Example \ref{ex-ord}, the values of $\KMj{\deg_\theta}$ for each entry of the constant matrix $P$ is either $+\infty$ or $0$.
The first statement then follows from the combination of statements in Proposition \ref{prop-ord-fundamental}.
Similarly, the values of $\KMj{\deg_\theta}$ for each entry in $\theta(t)^{\KMm{J}(\lambda)}$ are identically either $+\infty$ or $\lambda$, while those for each entry in $\theta(t)^{\KMm{K}(\lambda_{re}, \lambda_{im})}$ are identically either $+\infty$ or $\lambda_{re}$.
The conclusion then follows from the combination of statements in Proposition \ref{prop-ord-fundamental}.
\end{proof}

We further consider the asymptotic degree of the fundamental matrix on invariant subspaces of $A$.
In general, eigenvalues of $A$ with positive (resp. negative) real parts associate the invariant projector $P_+$ (resp. $P_-$) such that $AP_- = P_- A$ (resp. $AP_+ = P_+A$) and that ${\rm Image}\,(P_+)$ (resp.  ${\rm Image}\,(P_-)$) is generated by (generalized) eigenvectors $A$ associated with eigenvalues with (resp. negative) real parts\footnote{
The invariant projector onto the invariant subspace associated with purely imaginary eigenvalues is also provided in general, but it is out of our interest in the present argument.
}.
Now $\mathbb{R}^n$ possesses the following decomposition into the invariant subspaces associated with $\{\lambda \in {\rm Spec}(A) \mid {\rm Re}\,\lambda > 0\}$, $\{\lambda \in {\rm Spec}(A) \mid {\rm Re}\,\lambda = 0\}$ and $\{\lambda \in {\rm Spec}(A) \mid {\rm Re}\,\lambda < 0\}$, respectively \KMf{(e.g. \cite{Cha2011, Rob})}:
\begin{equation}
\label{dec-Rn}
\mathbb{R}^n = \mathbb{E}_A^u \oplus  \mathbb{E}_A^c \oplus \mathbb{E}_A^s.
\end{equation}
Let $P_+$ be the invariant projector onto $\mathbb{E}_A^u$ and $P_-$ be the invariant projector onto $\mathbb{E}_A^s$.
In particular, we have
\begin{equation*}
AP_\pm = P_\pm A,\quad \theta(t)^A P_{\pm} = P_{\pm}\theta(t)^A
\end{equation*}
for any $t < \KMm{t_{\max}}$ under our considerations.
\par
Our interest here is the asymptotic degree of vector-valued functions of the form $\theta(t)^{-A}P_\pm {\bf h}(t)$.
The following estimates are mainly used to construct asymptotic expansions of blow-ups.

\begin{prop}
\label{prop-deg-integral-theta-F}
Let ${\bf h}(t)$ be a continuous $n$-vector-valued function with $\deg_\theta({\bf h}(t)) = \gamma > -1$.
Then the following inequalities hold:
\begin{align}
\label{deg-integral-theta-F-minus}
\deg_\theta\left(P_-  \left( {\bf C} +  \int_{t_0}^t \left( \frac{\theta(\eta)}{\theta(t)} \right)^{A}  {\bf h}(\eta) d\eta \right) \right) &\geq \min \left\{ \min_{\substack{\lambda\in {\rm Spec}(A),\\ {\rm Re}\,\lambda < 0}}(- {\rm Re}\,\lambda), \gamma + 1 \right\},\\
\label{deg-integral-theta-F-plus}
\deg_\theta\left( \int_{t}^{t_{\max}} \left( \frac{\theta(\eta)}{\theta(t)} \right)^{A} P_+ {\bf h}(\eta) d\eta \right) &\geq \gamma + 1,
\end{align}
where ${\bf C}\in \mathbb{R}^n$ is a constant vector.
Finally, \KMl{let $P^{-1}AP = \Lambda$ be the Jordan \KMm{normal form} of the matrix $A$ and
${\bf H}_{\bf h}(t)$ be the primitive function of $\left(\frac{\theta(t)}{\theta(t_0)}\right)^\Lambda P^{-1} P_- {\bf h}(t)$ such that}
\begin{equation}
\label{primitive-zero-power}
\KMl{\lim_{t\to t_{\max} - 0}({\bf H}_{\bf h}(t))_i = 0\quad \text{whenever}\quad \deg_\theta \left( \left\{ \left(\frac{\theta(t)}{\theta(t_0)}\right)^\Lambda P^{-1} P_- {\bf h}(t) \right\}_i \right) > -1.
}
\end{equation}
Then we have
\begin{equation}
\label{deg-integral-theta-F-minus-2}
\deg_\theta\left( \KMl{ P \left(  \frac{\theta(t)}{\theta(t_0)}\right)^{-\Lambda} P_- {\bf H}_{\bf h}(t)} \right) \geq \gamma + 1.
\end{equation}
\end{prop}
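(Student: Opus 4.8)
The plan is to pass to the Jordan coordinates furnished by $P$ and reduce the whole estimate to a block-by-block computation, in which the cancellation of the eigenvalue contribution becomes transparent. Writing ${\bf g}(t) := \left(\frac{\theta(t)}{\theta(t_0)}\right)^\Lambda P^{-1}P_-{\bf h}(t)$, so that ${\bf H}_{\bf h}$ is a primitive of ${\bf g}$, I would first observe that $P^{-1}P_-{\bf h}(t)$ is supported only on the coordinates belonging to the stable Jordan blocks of $\Lambda$ (those with ${\rm Re}\,\lambda < 0$), since $P_-$ is the projector onto $\mathbb{E}_A^s$. On every coordinate $i$ belonging to $\mathbb{E}_A^u\oplus\mathbb{E}_A^c$ we then have $g_i\equiv 0$, hence $\deg_\theta(g_i)=+\infty>-1$, so the normalization (\ref{primitive-zero-power}) forces the corresponding constant primitive $({\bf H}_{\bf h})_i$ to vanish identically. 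Consequently $\left(\frac{\theta(t)}{\theta(t_0)}\right)^{-\Lambda}{\bf H}_{\bf h}(t)$ receives no contribution from the unstable or center directions, whose powers $\theta(t)^{-{\rm Re}\,\lambda}$ with ${\rm Re}\,\lambda\ge 0$ would otherwise destroy the estimate, and it suffices to work on a single stable block.

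Fix a real block $J(\lambda)$ (resp. a complex block $K(\lambda_{re},\lambda_{im})$) of $\Lambda$ with ${\rm Re}\,\lambda<0$ and let ${\bf g}$, ${\bf H}_{\bf h}$ denote the corresponding sub-vectors. Using $\deg_\theta(P^{-1}P_-{\bf h})\ge\deg_\theta({\bf h})=\gamma$ from (\ref{ord-estimate-const}) together with (\ref{ord-estimate-Jordan-R}) (resp. (\ref{ord-estimate-Jordan-C})), I obtain $\deg_\theta(g_i)\ge {\rm Re}\,\lambda+\gamma$ for each component. The crucial step is to bound the primitive: setting $\delta_i:=\deg_\theta(g_i)$, I split into two cases. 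If $\delta_i>-1$, the normalization (\ref{primitive-zero-power}) gives $\lim_{t\to t_{\max}-0}({\bf H}_{\bf h})_i=0$, so Proposition \ref{prop-ord-fundamental}(6) yields $\deg_\theta(({\bf H}_{\bf h})_i)\ge\delta_i+1$. If $\delta_i\le -1$, then $({\bf H}_{\bf h})_i$ is $\int_{t_0}^t g_i\,d\eta$ plus a constant; Proposition \ref{prop-ord-fundamental}(4) bounds the integral by $\delta_i+1\le 0$, and since the added constant has degree $0\ge\delta_i+1$, Proposition \ref{prop-ord-fundamental}(2) again gives $\deg_\theta(({\bf H}_{\bf h})_i)\ge\delta_i+1$. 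In either case $\deg_\theta(({\bf H}_{\bf h})_i)\ge {\rm Re}\,\lambda+\gamma+1$, hence $\deg_\theta({\bf H}_{\bf h})\ge {\rm Re}\,\lambda+\gamma+1$ on the block.

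Finally I would multiply by $\left(\frac{\theta(t)}{\theta(t_0)}\right)^{-\Lambda}$. Since $-\Lambda$ is again in real Jordan form with eigenvalue real parts $-{\rm Re}\,\lambda$, its entries consist of $\theta(t)^{-{\rm Re}\,\lambda}$ times logarithmic and trigonometric factors of degree $0$ (Example \ref{ex-ord}), so the estimates (\ref{ord-estimate-Jordan-R})--(\ref{ord-estimate-Jordan-C}) apply with $\lambda\mapsto-\lambda$ and give, on the block, $\deg_\theta\bigl(\left(\tfrac{\theta(t)}{\theta(t_0)}\right)^{-\Lambda}{\bf H}_{\bf h}\bigr)\ge -{\rm Re}\,\lambda+({\rm Re}\,\lambda+\gamma+1)=\gamma+1$. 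Taking the minimum over the finitely many blocks and applying the constant matrices $P_-$ and $P$ via (\ref{ord-estimate-const}), which never decrease $\deg_\theta$, yields the claimed bound $\gamma+1$. The main obstacle I anticipate is the primitive estimate in the second paragraph: one must check that the normalization (\ref{primitive-zero-power}) is precisely the condition needed so that the slowly decaying (degree-$0$) integration constants are suppressed exactly on the components where they would otherwise dominate. This is what makes the stable eigenvalue contribution cancel cleanly and produces a bound independent of the spectrum, in contrast with (\ref{deg-integral-theta-F-minus}), whose degree is limited by $\min_{{\rm Re}\,\lambda<0}(-{\rm Re}\,\lambda)$.
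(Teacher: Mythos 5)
Your block-by-block argument for the last assertion, (\ref{deg-integral-theta-F-minus-2}), is correct and is essentially the paper's own proof: the same reduction to real Jordan blocks via $P$, the same dichotomy on the degree $\nu$ of the integrand (paper's Cases 1 and 2), the same invocation of Proposition \ref{prop-ord-fundamental}-4 and \ref{prop-ord-fundamental}-6, and the same cancellation $-{\rm Re}\,\lambda + ({\rm Re}\,\lambda + \gamma + 1) = \gamma+1$. Your handling of the unconstrained integration constant in the case $\delta_i \le -1$ (noting that its degree $0 \ge \delta_i + 1$, so Proposition \ref{prop-ord-fundamental}-2 absorbs it) is actually slightly more explicit than the paper's terse remark that ${\rm Re}\,(-\lambda_l) > 0 \ge \nu_l + 1$, and the observation that (\ref{primitive-zero-power}) is exactly the normalization that kills the degree-$0$ constants where they would dominate is the right way to see why the bound is spectrum-independent.

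The gap is one of coverage: the proposition asserts three inequalities, and you have proved only the third. Nothing in your write-up establishes (\ref{deg-integral-theta-F-minus}) or (\ref{deg-integral-theta-F-plus}); the former is mentioned only in your closing sentence as a point of contrast. Both do follow from the same machinery you set up, but each needs its own short argument. For (\ref{deg-integral-theta-F-minus}) the constant ${\bf C}$ is \emph{arbitrary} rather than chosen by the normalization (\ref{primitive-zero-power}), so after multiplication by $\left(\theta(t)/\theta(t_0)\right)^{-\Lambda}$ its contribution on a stable block has degree only $-{\rm Re}\,\lambda$, and this is precisely what forces the extra term $\min_{{\rm Re}\,\lambda<0}(-{\rm Re}\,\lambda)$ inside the minimum — you should carry that term through the two cases explicitly. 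For (\ref{deg-integral-theta-F-plus}) the integral runs from $t$ to $t_{\max}$ over the unstable blocks; there the convergence of the improper integral and Proposition \ref{prop-ord-fundamental}-5 give the bound $\nu + 1 \ge {\rm Re}\,\lambda + \gamma + 1$ before the factor $\theta(t)^{-{\rm Re}\,\lambda}$ restores $\gamma+1$, with no spectral term appearing because no free constant is present. As written, the proposal would need these two additional (short) arguments to count as a proof of the stated proposition.
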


\begin{proof}
First, for fixed $\eta \in [t_0, t]$, apply the \KMm{normalization} to $(\theta(\eta) / \theta(t))^{A}$ following (\ref{Jordan-theta}):
\begin{align*}
P^{-1}\left(\frac{\theta(\eta)}{\theta(t)}\right)^{A} P &= \left(\frac{\theta(\eta)}{\theta(t)}\right)^{J_{\KMm{1}}(\lambda_1)} \oplus \cdots \oplus \left(\frac{\theta(\eta)}{\theta(t)}\right)^{J_{\KMm{d_r}}(\lambda_{d_r})}\\
	&\oplus \left(\frac{\theta(\eta)}{\theta(t)}\right)^{K_{\KMm{1}}(\lambda_{re,1}, \lambda_{im,1})} \oplus \cdots \oplus \left(\frac{\theta(\eta)}{\theta(t)}\right)^{K_{\KMm{d_c}}(\lambda_{re, d_c}, \lambda_{im, d_c})}.
\end{align*}
Letting $\KMm{P_l^{(r)}}$ be the invariant projector onto the invariant subspace associated with the real eigenvalue $\lambda_l$, we have
\begin{align}
\notag
\KMl{\left(\frac{\theta(\eta)}{\theta(t)}\right)^{A}} \KMm{P_l^{(r)}} {\bf h}(\eta) &= \KMl{P}\left( 0\oplus \cdots \oplus 0 \oplus \left(\frac{\theta(\eta)}{\theta(t)}\right)^{ \KMm{J_l} (\lambda_l)} \oplus 0 \oplus \cdots \oplus 0\right) \KMl{P^{-1}} \KMm{P_l^{(r)}} {\bf h}(\eta)\\
\label{h-decomposed}
	&\equiv \KMl{P} \left(\frac{\theta(\eta)}{\theta(t)}\right)^{\overline{\KMm{J_l}(\lambda_l)}} \KMl{P^{-1}} \KMm{P_l^{(r)}} {\bf h}(\eta).
\end{align}
Similarly, letting $\KMm{P_l^{(c)}}$ be the invariant projector onto the invariant subspace associated with the complex conjugate eigenvalues $\{\KMm{\lambda_{re, l} \pm \sqrt{-1} \lambda_{im, l}}\}$, we have
\begin{align*}
\KMl{ \left(\frac{\theta(\eta)}{\theta(t)}\right)^{A}} \KMm{P_l^{(c)}} {\bf h}(\eta) &= P \left( 0\oplus \cdots \oplus 0 \oplus \left(\frac{\theta(\eta)}{\theta(t)}\right)^{K_{\KMm{l}}(\lambda_{re, l}, \lambda_{im, l})} \oplus 0 \oplus \cdots \oplus 0\right) \KMl{P^{-1}} \KMm{P_l^{(c)}} {\bf h}(\eta)\\
	&\equiv \KMl{P} \left(\frac{\theta(\eta)}{\theta(t)}\right)^{\overline{K_{\KMm{l}}(\lambda_{re, l}, \lambda_{im, l})}} \KMl{P^{-1}} \KMm{P_l^{(c)}} {\bf h}(\eta).
\end{align*}

Now consider the integral
\begin{equation}
\label{integral-ml}
\int_{t_0}^t \left(\frac{\theta(\eta)}{\theta(t)}\right)^{A} \KMm{P_l^{(r)}} {\bf h}(\eta) d\eta.
\end{equation}
The integral is also written as
\begin{align}
\notag
\int_{t_0}^t \left(\frac{\theta(\eta)}{\theta(t)}\right)^{A} \KMm{P_l^{(r)}} {\bf h}(\eta) d\eta 
	&= \int_{t_0}^{t} P \left(\frac{\theta(\eta)}{\theta(t)}\right)^{\overline{\KMm{J_l}(\lambda_l)}} P^{-1} \KMm{P_l^{(r)}} {\bf h}(\eta) d\eta \\ 
\label{integral-ml-2}
	&=P \theta(t)^{-\overline{\KMm{J_l}(\lambda_l)}} \int_{t_0}^{t} \theta(\eta)^{\overline{\KMm{J_l}(\lambda_l)}} P^{-1} \KMm{P_l^{(r)}} {\bf h}(\eta) d\eta.
\end{align}
Moreover, if the improper integral 
\begin{equation*}
\mathbb{I}_1 = \int_{t_0}^{t_{\max}} \theta(\eta)^{\overline{\KMm{J_l}(\lambda_l)}} P^{-1} \KMm{P_l^{(r)}} {\bf h}(\eta) d\eta 
\end{equation*}
converges, then the integral is also written as
\begin{align}
\notag
\int_{t_0}^t & \left(\frac{\theta(\eta)}{\theta(t)}\right)^{A} \KMm{P_l^{(r)}} {\bf h}(\eta) d\eta \\
\notag
	&= P \theta(t)^{-\overline{\KMm{J_l}(\lambda_l)}} \int_{t_0}^{t_{\max}} \theta(\eta)^{\overline{\KMm{J_l}(\lambda_l)}} P^{-1} \KMm{P_l^{(r)}} {\bf h}(\eta) d\eta 
	- P \theta(t)^{-\overline{\KMm{J_l}(\lambda_l)}} \int_{t}^{t_{\max}} \theta(\eta)^{\overline{\KMm{J_l}(\lambda_l)}} P^{-1} \KMm{P_l^{(r)}} {\bf h}(\eta) d\eta\\
\label{integral-ml-3}
	&\equiv P\theta(t)^{-\overline{\KMm{J_l}(\lambda_l)}} (\mathbb{I}_1 + \mathbb{I}_2(t)).
\end{align}
\KMk{Now we have}
\begin{equation}
\label{ineq-deg_lambda_l}
\deg_\theta\left( \KMm{\theta(t)^{\Lambda} P^{-1}} \KMm{P_l^{(r)}} {\bf h}(t) \right) \geq \lambda_{l} + \gamma,\quad  
\deg_\theta\left( \KMm{\theta(t)^{\Lambda} P^{-1}} \KMm{P_l^{(c)}} {\bf h}(t) \right) \geq {\rm Re}\,\lambda_{re,l} + \gamma.
\end{equation}
\KMk{by using Proposition \ref{prop-ord-fund-matrix}.}
There are two cases depending on the value
\begin{equation}
\label{nu_ml}
\KMm{\nu_l} := \deg_\theta \left(\theta(\KMk{t})^{\overline{\KMm{J_l}(\lambda_l)}} P^{-1} \KMm{P_l^{(r)}} {\bf h}(t) \right).
\end{equation}
From (\ref{ineq-deg_lambda_l}), we have $\KMm{\nu_l} \geq \lambda_l + \gamma$.
\begin{description}
\item[Case 1. $\KMm{\nu_l} \leq -1$.] 
\end{description}
In this case, the integral (\ref{integral-ml}) with the equivalent form (\ref{integral-ml-2}) is directly estimated through Proposition \ref{prop-ord-fundamental}-4 to obtain
\begin{align*}
\KMk{ \deg_\theta \left( \int_{t_0}^t \theta(\eta)^{\overline{\KMm{J_l}(\lambda_l)}} P^{-1} \KMm{P_l^{(r)}} {\bf h}(\eta) d\eta  \right) } &\geq \KMm{\nu_l} + 1 \\
	&\geq \lambda_l + \gamma + 1.
\end{align*}
Therefore
\begin{align}
\label{integral-ml-special-choice}
\KMk{ \deg_\theta \left( P \theta(t)^{-\overline{\KMm{J_l}(\lambda_l)}} \left\{ \int_{t_0}^t \theta(\eta)^{\overline{\KMm{J_l}(\lambda_l)}} P^{-1} \KMm{P_l^{(r)}} {\bf h}(\eta) d\eta  \right\} \right) }
&\geq \gamma + 1
\end{align}
by Proposition \ref{prop-ord-fund-matrix}.
In particular, for any constant vector ${\bf C}$, we have
\begin{align*}
\deg_\theta \left( P \theta(t)^{-\overline{\KMm{J_l}(\lambda_l)}} \left\{ {\bf C} + \int_{t_0}^t \theta(\eta)^{\overline{\KMm{J_l}(\lambda_l)}} P^{-1} \KMm{P_l^{(r)}} {\bf h}(\eta) d\eta \right\} \right) &\geq \min \{-\lambda_l, \gamma + 1\}
\end{align*}
from Proposition \ref{prop-ord-fundamental}-2.
\begin{description}
\item[Case 2. $\KMm{\nu_l} > -1$.] 
\end{description}
In this case, the improper integral $\mathbb{I}_1$ converges and the integral (\ref{integral-ml}) with the equivalent form (\ref{integral-ml-3}) is considered, instead of (\ref{integral-ml-2}).
Proposition \ref{prop-ord-fundamental} yields
\begin{align}
\notag
\deg_\theta\left( P\theta(t)^{-\overline{\KMm{J_l}(\lambda_l)}} (\mathbb{I}_1 + \mathbb{I}_2(t)) \right)
	&\geq \min\left\{ \deg_\theta \left( P\theta(t)^{-\overline{\KMm{J_l}(\lambda_l)}} \mathbb{I}_1 \right),  \deg_\theta \left( P\theta(t)^{-\overline{\KMm{J_l}(\lambda_l)}}\mathbb{I}_2(t)\right) \right\},\\
\notag
\deg_\theta \left( P\theta(t)^{-\overline{\KMm{J_l}(\lambda_l)}} \mathbb{I}_1 \right) &\geq -\lambda_l,\\
\notag
\deg_\theta \left( \mathbb{I}_2(t) \right) &\geq \KMm{\nu_l} + 1\quad (\text{from Proposition \ref{prop-ord-fundamental}-5})\\
\notag
	&\geq \lambda_l + \gamma + 1,\\
\label{integral-ml-4}
\deg_\theta \left( P\theta(t)^{-\overline{\KMm{J_l}(\lambda_l)}}\mathbb{I}_2(t)\right) &\geq \gamma + 1.
\end{align}
Combining two cases, we have
\begin{equation*}
\left(\frac{\theta(t)}{\theta(t_0)}\right)^{-A} \KMm{P_l^{(r)}} \left( {\bf C} + \int_{t_0}^t  \left(\frac{\theta(\eta)}{\theta(t_0)}\right)^{A} {\bf h}(\eta) d\eta \right) \geq \min\{ -\lambda_l, \gamma + 1\}
\end{equation*}
for any constant vector ${\bf C}\in \mathbb{R}^n$, where we have used the commutativity
\begin{equation*}
\KMm{P_l^{(r)}} \left(\frac{\theta(\eta)}{\theta(t_0)}\right)^{A} =  \left(\frac{\theta(\eta)}{\theta(t_0)}\right)^{A} \KMm{P_l^{(r)}}
\end{equation*}
for any $\eta\in [t_0, t]$, thank to the fact that $\KMm{P_l^{(r)}}$ is an invariant projector.
The similar estimates follow replacing $\lambda_l$ and $\KMm{P_l^{(r)}}$ by $\lambda_{re,l}$ and $\KMm{P_l^{(c)}}$, respectively, for the case of complex conjugate eigenvalues.
The inequality (\ref{deg-integral-theta-F-minus}) then follows from the property of projections
\begin{equation*}
P_- = \sum_{l; \lambda_l < 0} \KMm{P_l^{(r)}} +\sum_{l; {\rm Re}\, \lambda_{re,l} < 0} \KMm{P_l^{(c)}}.
\end{equation*}
As for $P_+$, the inequality (\ref{deg-integral-theta-F-plus}) follows from (\ref{integral-ml-4}) and
\begin{equation*}
P_+ = \sum_{l; \lambda_l > 0} \KMm{P_l^{(r)}} +\sum_{l; {\rm Re}\, \lambda_{re,l} > 0} \KMm{P_l^{(c)}}.
\end{equation*}
\KMl{
Finally consider the last inequality (\ref{deg-integral-theta-F-minus-2}).
Our main interest here is (\ref{h-decomposed}) replacing $\theta(t)$ by $\theta(t)/\theta(t_0)$.
Let $\KMm{\nu_l}$ be the asymptotic degree defined in (\ref{nu_ml}).
If $\KMm{\nu_l} < -1$, our statement is exactly (\ref{integral-ml-special-choice}).
Note that we have used the fact that ${\rm Re}\, (-\lambda_l) > 0 \geq \KMm{\nu_l} +1$ in the present setting.
}
\par
Now consider the case when $\KMm{\nu_l} > -1$.
In this case, the improper integral
\begin{equation*}
\lim_{t\to t_{\max} - 0} \int_{t_0}^{t}  \left( \frac{\theta(\eta)}{\theta(t_0)}\right)^{\overline{\KMm{J_l}(\lambda_l)}} P^{-1}\KMm{P_l^{(r)}} {\bf h}(\eta)  d\eta
\end{equation*}
converges.
Let $\overline{{\bf H}_{{\bf h}; \KMm{l} }}(t)$ be the primitive function of $\left( \theta(t) / \theta(t_0)\right)^{\overline{\KMm{J_l}(\lambda_l)}} P^{-1} \KMm{P_l^{(r)}} {\bf h}(t)$ such that (\ref{primitive-zero-power}) holds, which is $n$-vector-valued.
Proposition \ref{prop-ord-fundamental}-6 yields that
\begin{align*}
\deg_\theta\left( \overline{{\bf H}_{{\bf h}; \KMm{l} }}(t) \right) &\geq \KMm{\nu_l} + 1
\end{align*}
and hence
\begin{align*}
\deg_\theta \left( P\left(\frac{\theta(t)}{\theta(t_0)}\right)^{-\overline{\KMm{J_l}(\lambda_l)}} P_- \overline{{\bf H}_{{\bf h}; \KMm{l}}}(t) \right) &\geq \KMm{\nu_l} -\lambda_l + 1 \geq \gamma + 1,
\end{align*}
which completes our proof through the composition of the projection $P_-$.
\end{proof}

\section{Asymptotic expansion of type-I blow-ups}
\label{section-asym}

Now we move to provide a systematic methodology for deriving asymptotic \KMk{expansions} of blow-up solutions.
\KMk{When blow-up solutions are supposed to be given a priori, one of the main issues in blow-up studies is to characterize rough asymptotic behavior of blow-up solutions, well-known as {\em the blow-up rates}.}
Our aim here is to obtain successive terms of such blow-up solutions towards the precise description of blow-up behavior.

\subsection{The fundamental setting}
\KMj{In the present study, we only pay attention to blow-up solutions ${\bf y}(t)$ of (\ref{ODE-original}) possessing the following asymptotic behavior.}
\begin{ass}
\label{ass-fundamental}
The asymptotically quasi-homogeneous system (\ref{ODE-original}) of type $\alpha$ and the order $k+1$ admits a solution 
\begin{equation*}
{\bf y}(t) = (y_1(t), \ldots, y_n(t))^T
\end{equation*}
 which blows up at $t = t_{\max} < \infty$ with the asymptotic behavior
\begin{equation}
\label{blow-up-behavior}
y_i(t) \KMk{ \sim c_i \theta(t)^{-\alpha_i / k}}, \quad t\to t_{\max}-0,\quad i=1,\ldots, n,
\end{equation}
\KMk{for some constants $c_i\in \mathbb{R}$.}
\end{ass}
Under Assumption \ref{ass-fundamental}, we shall write the blow-up solution ${\bf y}(t)$ as
\KMk{${\bf y}(t) = \theta(t)^{-\frac{1}{k}\Lambda_\alpha} {\bf Y}(t)$, equivalently}
\begin{equation}
\label{blow-up-sol}
y_i(t) = \theta(t)^{-\alpha_i / k} Y_i(t),\quad {\bf Y}(t) = (Y_1(t), \ldots, Y_n(t))^T,
\end{equation}
and determine the concrete form of the factor ${\bf Y}(t)$.
\par
As the first step, decompose the vector field $f$ into two terms as follows:
\begin{equation*}
f({\bf y}) = f_{\alpha, k}({\bf y}) + f_{\rm res}({\bf y}),
\end{equation*}
where $f_{\alpha, k}$ is the quasi-homogeneous component of $f$ and $f_{\rm res}$ is the residual (\KMe{i.e.,} lower-order) terms.
The componentwise expressions are 
\begin{equation*}
f_{\alpha, k}({\bf y}) = (f_{1;\alpha, k}({\bf y}), \ldots, f_{n;\alpha, k}({\bf y}))^T,\quad f_{\rm res}({\bf y}) = (f_{1;{\rm res}}({\bf y}), \ldots, f_{n;{\rm res}}({\bf y}))^T\KMe{,}
\end{equation*}
\KMe{respectively.
}
Substituting (\ref{blow-up-sol}) into (\ref{ODE-original}), we have
\begin{align*}
\frac{\alpha_i}{k}\theta(t)^{-\alpha_i / k-1} Y_i + \theta(t)^{-\alpha_i / k} Y_i' &= f_{i; \alpha, k}({\bf y}) + f_{i;{\rm res}}({\bf y})\\
	&= \theta(t)^{-(k+\alpha_i) / k} f_{i; \alpha, k}({\bf Y}) + f_{i;{\rm res}}({\bf y}),
\end{align*}
where we have used the quasi-homogeneity of $f_{\alpha, k}({\bf y})$ in the last equality.
Multiplying $(t_{\max}-t)^{\frac{\alpha_i}{k} + 1}$ for each $i$, we finally have
\begin{equation*}
\theta(t) \frac{d}{dt}Y_i = -\frac{\alpha_i}{k}Y_i + f_{i; \alpha, k}({\bf Y}) +  \theta(t)^{\frac{\alpha_i}{k}+1} f_{i;{\rm res}}(\theta(t)^{-\KMf{ \frac{1}{k}\Lambda_\alpha }} {\bf Y})
\end{equation*}
with the vector form
\begin{align}
\label{blow-up-basic-0}
\theta(t)\frac{d}{dt} {\bf Y} &= - \KMf{ \frac{1}{k}\Lambda_\alpha } {\bf Y} + f_{\alpha, k}({\bf Y}) + \theta(t)^{\KMf{ \frac{1}{k}\Lambda_\alpha }+\KMg{I}} f_{{\rm res}}(\theta(t)^{-\KMf{ \frac{1}{k}\Lambda_\alpha }} {\bf Y}),\\
\notag
\theta(t)^{\KMf{ \frac{1}{k}\Lambda_\alpha }} &\equiv {\rm diag}\left( \theta(t)^{\alpha_1/k},\ldots, \theta(t)^{\alpha_n/k}\right).
\end{align}
Another form equivalent to (\ref{blow-up-basic-0}) is 
\begin{equation}
\label{blow-up-basic}
\frac{d}{dt}{\bf Y} = \theta(t)^{-1}\left\{ -\KMf{ \frac{1}{k}\Lambda_\alpha }{\bf Y} + f_{\alpha, k}({\bf Y}) \right\} + \theta(t)^{\KMf{ \frac{1}{k}\Lambda_\alpha }} f_{{\rm res}}( \theta(t)^{-\KMf{ \frac{1}{k}\Lambda_\alpha }} {\bf Y}).
\end{equation}
While the residual term $f_{i;{\rm res}}(\theta(t)^{-\KMf{ \frac{1}{k}\Lambda_\alpha }} {\bf Y})$ can diverge as $t\to t_{\max} - 0$, the product $\theta(t)^{\KMc{\KMf{ \frac{1}{k}\Lambda_\alpha } + \KMg{I}}} f_{{\rm res}}(\theta(t)^{-\KMf{ \frac{1}{k}\Lambda_\alpha }} {\bf Y})$ goes to zero as $t\to t_{\max}-0$ from the asymptotic quasi-homogeneity of $f$.
\par
The system (\ref{blow-up-basic}) is actually nonautonomous, but the most singular term $\theta(t)^{-1}\left\{ - \KMf{\frac{1}{k}\Lambda_\alpha }{\bf Y} + f_{\alpha, k}({\bf Y}) \right\}$ as $t\to t_{\max}- 0$ has the common $t$-dependence $\theta(t)^{-1}$.
We thus focus on this singular term and consider the expansion of vector fields so that the linearization at an appropriate point and \KMi{normalization} of the system at the point can be applied to deriving the function ${\bf Y}(t)$ step by step.

\subsection{Balance law}
First note that the asymptotic behavior (\ref{blow-up-behavior}) indicates the fact that
the function ${\bf Y}(t)$ is as smooth as $f$ in $t < t_{\max}$ and there is a non-zero vector ${\bf Y}_0\in \mathbb{R}^n$ satisfying
\begin{equation}
\label{form-Y}
{\bf Y}(t) = {\bf Y}_0 + \tilde {\bf Y}(t),\quad {\bf Y}_0 = (Y_{0,1}, \ldots, Y_{0,n})^T,\quad \lim_{t\to t_{\max}-0}\tilde {\bf Y}(t) = {\bf 0}\in \mathbb{R}^n.
\end{equation}
The constant term ${\bf Y}_0$ determines the complete form of the lowest order term of ${\bf Y}(t)$. 
This term can be determined as a root of \KMi{the} time-independent system associated with (\ref{blow-up-basic-0}).
Taking the limit $t\to t_{\max}-0$ in (\ref{blow-up-basic-0}), we observe with continuity of the vector field and ${\bf Y} = {\bf Y}(t)$ that the system must satisfy
\begin{equation}
\label{0-balance}
\KMf{\frac{1}{k}\Lambda_\alpha }{\bf Y}_0 = f_{\alpha, k}({\bf Y}_0),
\end{equation}
provided that $\theta(t) \frac{d{\bf Y}}{dt} = o(\theta(t))$ holds as $t\to t_{\max}$, which have to be justified later\footnote{
By Theorem \ref{thm-smoothness-Y}, this asymptotic relation is proved to be valid.
\KMk{See Remark \ref{rem-justification-balance-0}.}
}.

\begin{dfn}\rm
\label{dfn-balance}
We call the equation (\ref{0-balance}) \KMf{{\em a balance law}} for the blow-up solution ${\bf y}(t)$.
\end{dfn}

Note that (nonzero) roots of the balance law are not always uniquely determined.
In other words, the equation (\ref{0-balance}) can have \KMc{multiple different solutions}.

\subsection{\KMc{Multi-order asymptotic expansions}}

We shall determine the non-constant term $\tilde {\bf Y}(t)$.
Assume that a nonzero root ${\bf Y}_0$ of the balance law (\ref{0-balance}) is obtained.
We do not know the concrete form of $\tilde {\bf Y}(t)$ at present\footnote{
We would try to express $\tilde {\bf Y}(t)$ as an asymptotic {\em power series}, but there is no guarantee that $\tilde {\bf Y}(t)$ can be expressed as power series, as mentioned in Introduction.
}. 
We then expand $\tilde {\bf Y}(t)$ as the {\em general asymptotic series}
\begin{align}
\notag
{\bf Y}(t) &= {\bf Y}_0 + \tilde {\bf Y}(t),\\
\label{Y-asym}
\tilde {\bf Y}(t) &= \sum_{j=1}^\infty {\bf Y}_j(t),\quad {\bf Y}_{j}(t) \ll {\bf Y}_{j-1}(t)\quad (t\to t_{\max}-0),\quad j=1,2,\ldots,\\
\notag
{\bf Y}_j(t) &= (Y_{j,1}(t), \ldots, Y_{j,n}(t))^T,\quad j=1,2,\ldots
\end{align}
and solve ${\bf Y}_j(t)$ through the balance \KMf{law} for each $j$ inductively as the general framework of local asymptotic analysis for ordinary differential equations (cf. \cite{BO1999}), where the asymptotic relation ${\bf Y}_{j}(t) \ll {\bf Y}_{j-1}(t)$ for vector-valued functions \KMf{are} considered {\em componentwise}.
Note that all the arguments below are valid only near $t = t_{\max}$ with $t < t_{\max}$.

\subsubsection{\KMc{The blow-up power-determining matrix and the blow-up power eigenvalues}}
Apply the Taylor's theorem \KMf{to} the right-hand side of (\ref{blow-up-basic}) at ${\bf Y} = {\bf Y}_0$:
\begin{align}
\label{asym-eq}
&\frac{d}{dt} \tilde {\bf Y} = \theta(t)^{-1} \left[ \left( -\KMf{ \frac{1}{k}\Lambda_\alpha } + Df_{\alpha, k}({\bf Y}_0) \right)\tilde {\bf Y} + R_{\alpha, k}({\bf Y}) \right]
	+  \theta(t)^{\KMf{ \frac{1}{k}\Lambda_\alpha }}  f_{{\rm res}}\left(\theta(t)^{-\KMf{ \frac{1}{k}\Lambda_\alpha }}  {\bf Y} \right),\\
	\notag
&R_{\alpha, k}({\bf Y}) = f_{\alpha,k}({\bf Y}) - \left\{f_{\alpha,k}({\bf Y}_0) + Df_{\alpha,k}({\bf Y}_0)\tilde {\bf Y}\right\},
\end{align}
where all terms involving the balance law (\ref{0-balance}) are cancelled.
The remainder of the Taylor expansion of $f_{\alpha, k}$ is expressed as $R_{\alpha, k}$ and note that $R_{\alpha, k}({\bf Y}_0) = 0$.
\par
Now \KMc{we derive a governing system} for the second term ${\bf Y}_1$ eliminating all possible \KMe{negligible} terms.
From (\ref{Y-asym}), the principal term of the left-hand side \KMi{will} be $d{\bf Y}_1/dt$, which have to be verified later \KMi{(see the sentence before Remark \ref{rem-order-Yj})}. 
On the other hand, the principal terms in the right-hand side \KMi{will} involve the linear part $ (-\KMf{ \frac{1}{k}\Lambda_\alpha } + Df_{\alpha, k}({\bf Y}_0))\tilde {\bf Y}$ and $f_{{\rm res}}\left(\theta(t)^{-\KMf{ \frac{1}{k}\Lambda_\alpha }}  {\bf Y}_0 \right)$.
In other words, the other parts are negligible as $t\to t_{\max}$, compared with the above terms\footnote{
If our interests are only several lower terms, the governing part from the remaining term can be eliminated.
}.
As a summary, the asymptotically governing system for ${\bf Y}_1$ as $t\to t_{\max}-0$ \KMk{we consider is}
\begin{align}
\label{2nd-0}
\frac{d}{dt}{\bf Y}_1 &= \theta(t)^{-1}\left\{ -\KMf{ \frac{1}{k}\Lambda_\alpha } + D f_{\alpha, k}({\bf Y}_0) \right\}{\bf Y}_1 + \KMc{\theta(t)^{\KMf{ \frac{1}{k}\Lambda_\alpha }}}  f_{\rm res}( \KMc{\theta(t)^{-\KMf{ \frac{1}{k}\Lambda_\alpha }}} {\bf Y}_0),
\end{align}
which is inhomogeneous and {\em linear}.
Moreover, the coefficient of the homogeneous part is the product of \KMe{$\theta(t)^{-1}$} and a {\em constant} matrix.
\KMe{We can therefore apply the fundamental matrix of linear homogeneous systems with constant coefficient matrix to obtaining the general form of solutions.
In concrete problems, we can transform this constant matrix to a canonical form through a constant matrix and reduce the problem to $n$ inhomogeneous differential equations of the first order.}
The constant matrix $-\KMf{ \frac{1}{k}\Lambda_\alpha } + D f_{\alpha, k}({\bf Y}_0)$ plays a key role in our asymptotic analysis.

\begin{dfn}[Blow-up power eigenvalues]\rm
\label{dfn-blow-up-power-ev}
Suppose that a nonzero root ${\bf Y}_0$ of the balance law (\ref{0-balance}) is given.
We call the constant matrix
\begin{equation}
\label{blow-up-power-determining-matrix}
A = -\KMf{ \frac{1}{k}\Lambda_\alpha } + D f_{\alpha, k}({\bf Y}_0)
\end{equation}
the {\em \KMf{blow-up} power-determining matrix} for the blow-up solution ${\bf y}(t)$, and call the eigenvalues $\{\lambda_i\}_{i=1}^n \equiv {\rm Spec}(A)$ the {\em blow-up power eigenvalues}, where eigenvalues with nontrivial multiplicity are distinguished in this expression, except specifically noted.
\end{dfn}
Using the matrix $A$, (\ref{asym-eq}) is simply rewritten as follows:
\begin{equation*}
\frac{d}{dt} \tilde {\bf Y} = \theta(t)^{-1} \left[ A\tilde {\bf Y} + R_{\alpha, k}({\bf Y}) \right]
	+  \theta(t)^{\KMf{ \frac{1}{k}\Lambda_\alpha }}  f_{{\rm res}}\left( \theta(t)^{-\KMf{ \frac{1}{k}\Lambda_\alpha }} {\bf Y} \right).
\end{equation*}
Similarly, the linear system (\ref{2nd-0}) is rewritten as follows:
\begin{align}
\label{2nd}
\frac{d}{dt}{\bf Y}_1 &= \theta(t)^{-1}A {\bf Y}_1 + {\bf g}_1(t),\quad 
{\bf g}_1(t) = \KMc{\theta(t)^{\KMf{ \frac{1}{k}\Lambda_\alpha }}}  f_{\rm res}( \KMc{\theta(t)^{-\KMf{ \frac{1}{k}\Lambda_\alpha }}} {\bf Y}_0).
\end{align}

\subsubsection{\KMe{Multi-order asymptotic expansions}}


Let $\Psi(t) \equiv \theta(t)^{-A}$ be the fundamental matrix of the homogeneous system
\begin{align*}
\frac{d}{dt}{\bf Y}_1 &= \theta(t)^{-1}A {\bf Y}_1.
\end{align*}
\KMj{Also, let 
$\mathbb{R}^n = \mathbb{E}_A^u \oplus  \mathbb{E}_A^c \oplus \mathbb{E}_A^s$
be the decomposition of $\mathbb{R}^n$ into the invariant subspaces associated with ${\rm Spec}(A)$ mentioned in (\ref{dec-Rn}).}
The following proposition provides a simple form of ${\bf Y}_1(t)$ satisfying the asymptotic relation (\ref{Y-asym}) with $j=1$ which is convergent under a mild assumption to $A$ and a suitable choice of initial points.

\begin{prop}
\label{prop-conv-2nd}
Fix $t_{\max}\KMf{>t_0}$, assuming finite, and let ${\bf Y}_0$ be a nonzero root of the balance law (\ref{0-balance}) for (\ref{blow-up-basic}).
Assume that the associated  blow-up power-determining matrix $A$ is hyperbolic:
\begin{equation*}
{\rm Spec}(A) \cap \sqrt{-1}\mathbb{R} = \emptyset.
\end{equation*}
Then the inhomogeneous system (\ref{2nd}) admits a \KMk{global-in-time} solution (\ref{formula-2nd}) \KMk{with}
\begin{equation}
\label{convergent-2nd}
\lim_{t\to t_{\max}-0}{\bf Y}_1(t) = {\bf 0}\in \mathbb{R}^n.
\end{equation}
This solution can be written as
\begin{equation}
\label{formula-2nd}
{\bf Y}_1(t) = \left(\frac{\theta (t)}{\theta (t_0)}\right)^{-A} \left\{ P_- \left( {\bf Y}_1^0 + \int_{t_0}^t \left(\frac{\theta (\eta)}{\theta (t_0)}\right)^{A} {\bf g}_1(\eta) d\eta \right) - P_+  \int_{t}^{t_{\max}} \left(\frac{\theta (\eta)}{\theta (t_0)}\right)^{A} {\bf g}_1(\eta) d\eta \right\}
,\quad {\bf Y}_1^0 \in \mathbb{R}^n
\end{equation}
\KMk{for $t\in (t_0, t_{\max})$} 
with the initial time $t=t_0$, whenever $\|P_- {\bf Y}_1^0\|$ is sufficiently small.
In particular, (\ref{2nd}) locally generates a $C^r$ $m_A$-parameter family of solutions ${\bf Y}_1(t)$ satisfying (\ref{convergent-2nd}), where
\begin{equation}
\label{mA}
m_A = \sharp \{\lambda\in {\rm Spec}(A) \mid {\rm Re}\lambda < 0\} \equiv \dim \mathbb{E}_A^s.
\end{equation}
\end{prop}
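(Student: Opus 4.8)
The plan is to treat (\ref{2nd}) as a linear inhomogeneous system whose homogeneous part has the explicit fundamental matrix $\Psi(t)=(\theta(t)/\theta(t_0))^{-A}$, and to single out the decaying solution by a Perron-type variation-of-parameters formula: integrate forward on the stable subspace $\mathbb{E}_A^s$ and backward on the unstable subspace $\mathbb{E}_A^u$. First I would confirm that $\Psi$ is a fundamental matrix normalized by $\Psi(t_0)=I$: since $\theta'(t)=-1$ we have $\frac{d}{dt}\ln\theta(t)=-\theta(t)^{-1}$, hence $\Psi'(t)=\theta(t)^{-1}A\,\Psi(t)$. Because $A$ commutes with its spectral projectors $P_\pm$, so does $\Psi(t)$, and hyperbolicity guarantees $\mathbb{E}_A^c=\{0\}$, so $P_-+P_+=I$ and the two halves of (\ref{formula-2nd}) can be analysed independently.

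Second, I would verify by direct differentiation that (\ref{formula-2nd}) solves (\ref{2nd}). Applying the product rule to the stable piece reproduces $\theta(t)^{-1}A$ times that piece together with the boundary contribution $P_-{\bf g}_1(t)$; applying it to the unstable piece reproduces $\theta(t)^{-1}A$ times that piece together with $P_+{\bf g}_1(t)$, where the minus sign from differentiating $\int_t^{t_{\max}}$ cancels the outer sign. Summing, the homogeneous contributions assemble into $\theta(t)^{-1}A\,{\bf Y}_1(t)$ and the inhomogeneous contribution is $(P_-+P_+){\bf g}_1(t)={\bf g}_1(t)$; this last identity is precisely where hyperbolicity is used.

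The decisive step is the limit (\ref{convergent-2nd}). I would first establish that $\gamma:=\deg_\theta({\bf g}_1)>-1$: writing ${\bf g}_1(t)=\theta(t)^{\frac{1}{k}\Lambda_\alpha}f_{\rm res}(\theta(t)^{-\frac{1}{k}\Lambda_\alpha}{\bf Y}_0)$, the asymptotic quasi-homogeneity of $f$ makes each component of lower order than $\theta(t)^{-1}$. With this degree bound, Proposition \ref{prop-deg-integral-theta-F} applies directly: by (\ref{deg-integral-theta-F-minus}) the stable part of (\ref{formula-2nd}) has degree at least $\min\{\min_{{\rm Re}\,\lambda<0}(-{\rm Re}\,\lambda),\,\gamma+1\}>0$, by (\ref{deg-integral-theta-F-plus}) the unstable part has degree at least $\gamma+1>0$, and the free homogeneous term $\Psi(t)P_-{\bf Y}_1^0$ decays because each stable eigenvalue contributes a factor $\theta(t)^{-{\rm Re}\,\lambda}\to0$ that dominates the accompanying logarithmic and oscillatory factors. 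Since a positive asymptotic degree forces a quantity to be $o(1)$, all three pieces vanish as $t\to t_{\max}-0$, which is (\ref{convergent-2nd}); the same estimate makes the improper integral over $[t,t_{\max}]$ convergent, so the solution is defined on all of $(t_0,t_{\max})$.

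Finally, for the parameter count the only free data in (\ref{formula-2nd}) is $P_-{\bf Y}_1^0\in\mathbb{E}_A^s$ (its unstable component never enters), so the solutions satisfying (\ref{convergent-2nd}) form an $m_A$-parameter family with $m_A=\dim\mathbb{E}_A^s$ as in (\ref{mA}); the dependence on these parameters is $C^r$ because the formula is linear in $P_-{\bf Y}_1^0$ and inherits the $t$-regularity of $f\in C^r$, while smallness of $\|P_-{\bf Y}_1^0\|$ keeps ${\bf Y}(t)={\bf Y}_0+{\bf Y}_1(t)+\cdots$ inside the region where the linearization at ${\bf Y}_0$ and the ordering ${\bf Y}_1\ll{\bf Y}_0$ stay valid. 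I expect the main obstacle to be the convergence on $\mathbb{E}_A^u$---controlling the backward improper integral and rigorously verifying $\deg_\theta({\bf g}_1)>-1$---since the rest reduces to a routine variation-of-parameters computation once $\Psi$ and $P_\pm$ are in place.
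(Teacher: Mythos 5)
Your argument is correct in outline but takes a genuinely different route from the paper's. The paper never verifies the formula by differentiation: it passes to the logarithmic time $s=-\ln\theta(t)$, applies the right-sided compactification of Appendix \ref{section-appendix-right-cpt} to turn the asymptotically autonomous system into an autonomous one that is $C^1$ up to $\{\tilde s=1\}$, identifies $({\bf 0},1)$ as a hyperbolic equilibrium with an $(m_A+1)$-dimensional local stable manifold via the Stable Manifold Theorem, and then represents the bounded solutions on that manifold by the Lyapunov--Perron formula, which becomes (\ref{formula-2nd}) after reverting to $t$. That route delivers existence, the convergence (\ref{convergent-2nd}) with an exponential-in-$s$ rate, and the $C^r$ $m_A$-parameter structure in one stroke from invariant-manifold theory; your route is more elementary and makes the decay quantitative through $\deg_\theta$, at the cost of having to justify each integral estimate by hand.

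The one soft spot is the one you yourself flag. Asymptotic quasi-homogeneity gives only $({\bf g}_1(t))_i=\theta(t)^{\alpha_i/k}f_{i;{\rm res}}(\theta(t)^{-\frac{1}{k}\Lambda_\alpha}{\bf Y}_0)=o(\theta(t)^{-1})$, i.e.\ $\deg_\theta({\bf g}_1)\geq -1$, \emph{not} the strict inequality $\gamma>-1$ that Proposition \ref{prop-deg-integral-theta-F} requires as a hypothesis; the paper itself only imposes $\alpha_i/k+\gamma_i>-1$ later, as an explicit assumption of Proposition \ref{prop-order-increase}, and notes that quasi-homogeneity alone yields merely $\delta\geq 0$. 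In the borderline case $\gamma=-1$ you should bypass the degree proposition and argue directly from the $o(\theta^{-1})$ bound: by l'H\^{o}pital (as in Lemma \ref{lem-asym-poly-log}), $\theta(t)^{-\lambda}\int_t^{t_{\max}}\theta(\eta)^{\lambda}{\bf g}_1(\eta)\,d\eta=o(1)$ for ${\rm Re}\,\lambda>0$, and $\theta(t)^{-\lambda}\bigl({\bf C}+\int_{t_0}^t\theta(\eta)^{\lambda}{\bf g}_1(\eta)\,d\eta\bigr)=o(1)$ for ${\rm Re}\,\lambda<0$, with the logarithmic factors from nontrivial Jordan blocks absorbed since $\theta(t)^{\epsilon}(\ln\theta(t))^M\to 0$. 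With that repair (\ref{convergent-2nd}) survives and your proof is complete.
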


The proof is left to Appendix \ref{section-app-proof-conv-2nd}.
The key point of the expression \KMk{(\ref{formula-2nd})} is that free parameters describing the solution ${\bf Y}_1(t)$ are chosen only from $\mathbb{E}_A^s$.
We see in examples in Part II \cite{asym2} that integral constants stemming from $\mathbb{E}_A^u$ vanish in \KMk{all} cases, from asymptotic quasi-homogeneity of $f$ and hyperbolicity of $A$.

\par
\bigskip
Multi-order asymptotic expansion of ${\bf Y}(t)$ can be derived inductively.
In other words, the higher-order terms ${\bf Y}_j(t)$, $j\geq 2$, can be also determined in the similar way \KMf{to ${\bf Y}_1(t)$}, while one attention mentioned below is needed.
Now assume that, for given $j \geq 2$, the preceding terms $\{{\bf Y}_l(t)\}_{l=0}^{j-1}$ are calculated.
Neglecting possible higher-order terms, the governing system for ${\bf Y}_j(t)$ is given as follows:
\begin{align}
\label{jth}
\frac{d}{dt} {\bf Y}_j &= \theta(t)^{-1} A{\bf Y}_j + {\bf g}_j(t),\\
\notag
{\bf g}_j(t) &= \theta(t)^{-1} \left\{ R_{\alpha, k}({\bf S}_{j-1}({\bf Y})(t)) - R_{\alpha, k}({\bf S}_{j-2}({\bf Y})(t)) \right\}\\
\label{gj}
	&\quad  + \theta(t)^{\KMf{ \frac{1}{k}\Lambda_\alpha }}  \left\{ f_{{\rm res}}({\bf S}^{\theta}_{j-1}({\bf Y})(t)) - f_{{\rm res}}({\bf S}^{\theta}_{j-2}({\bf Y})(t)) \right\},
\end{align}
where
\begin{align}
\label{Sn}
{\bf S}_j({\bf Y})(t) &\equiv \sum_{\tilde j=0}^j {\bf Y}_{\tilde j}(t),\quad 
{\bf S}^{\theta}_j ({\bf Y})(t) \equiv \theta(t)^{-\KMf{ \frac{1}{k}\Lambda_\alpha }}  {\bf S}_j({\bf Y})(t),\quad j =0,1,\ldots,\\
\notag
{\bf S}_j ({\bf Y})(t) &\equiv {\bf S}^{\theta}_j ({\bf Y})(t)\equiv 0,\quad j <0.
\end{align}
The system (\ref{jth}) \KMi{is} inhomogeneous and linear, like (\ref{2nd}).
In the similar way to solving (\ref{2nd}), we obtain a general form of solutions ${\bf Y}_j(t)$.
However, we have to pay attention to asymptotic relations (\ref{Y-asym}).
Although the validity of (\ref{Y-asym}) depends on $f$ and $\{{\bf Y}_l(t)\}_{l=0}^{j-1}$, one can construct a solution of (\ref{jth}) such that free parameters describing ${\bf Y}_j(t)$ are not newly generated except those included in ${\bf Y}_1(t)$, which is uniquely determined.
As shown in the proof of the following proposition, such a choice of solutions indicates the {\em elimination of exponential terms generated by the homogeneous part of (\ref{jth})}, namely $d{\bf Y}_j / dt = \theta(t)^{-1}A{\bf Y}_j$, in solutions ${\bf Y}_j(t)$, which would yield \KMf{a necessary condition of} the asymptotic relation (\ref{Y-asym}).


\begin{prop}
\label{prop-conv-(j+1)th}
Fix $t_{\max}\KMf{> t_0}$, assuming finite, and let ${\bf Y}_0$ be a nonzero root of the balance law (\ref{0-balance}) for (\ref{blow-up-basic}).
Assume that the associated  blow-up power-determining matrix $A$ is hyperbolic and that, for given $j \geq 2$, the preceding terms $\{{\bf Y}_l(t)\}_{l=0}^{j-1}$ are calculated.
Then the integral 
\begin{equation}
\label{formula-(j+1)}
{\bf Y}_j(t) =  \left(\frac{\theta (t)}{\theta (t_0)}\right)^{-A} \left\{ P_- \left( {\bf Y}_j^0 + \int_{t_0}^t \left(\frac{\theta (\eta)}{\theta (t_0)}\right)^{A} {\bf g}_j(\eta) d\eta \right) - P_+  \int_{t}^{t_{\max}} \left(\frac{\theta (\eta)}{\theta (t_0)}\right)^{A} {\bf g}_j(\eta) d\eta \right\}
\end{equation}
\KMk{for $t\in (t_0, t_{\max})$} 
with the initial time $t=t_0$ solves the inhomogeneous system (\ref{jth}).
The constant vector ${\bf Y}_j^0 \in \mathbb{R}^n$ \KMl{can be} chosen to satisfy
\begin{equation}
\label{const-asym-j}
P_- {\bf Y}_j^0 =  P_- \KMl{P{\bf H}_j}(t_0),
\end{equation}
where \KMl{the matrix $A$ has the Jordan structure through a nonsingular matrix $P$: $P^{-1}AP = \Lambda$, and ${\bf H}_j = {\bf H}_j(t)$ denotes the primitive function of $\left( \theta (t) / \theta (t_0) \right)^{\Lambda}P^{-1} P_- {\bf g}_j(t)$} satisfying
\begin{equation}
\label{integral-jth-H}
\KMl{\lim_{t\to t_{\max}-0} ({\bf H}_j(t))_i = 0 \quad \text{whenever}\quad \deg_\theta \left( \left( \left( \frac{\theta (t)}{\theta (t_0)} \right)^{\Lambda}P^{-1} P_- {\bf g}_j(t) \right)_i \right) > -1}.
\end{equation}
The initial point 
\begin{equation*}
{\bf Y}_j(t_0) =  P_- {\bf Y}_j^0 - P_+  \int_{t_0}^{t_{\max}} \left(\frac{\theta (\eta)}{\theta (t_0)}\right)^{A} {\bf g}_j(\eta) d\eta
\end{equation*}
at $t=t_0$ is \KMl{then} uniquely determined once free parameters in ${\bf Y}_1(t)$ constructing ${\bf g}_j(t)$ are fixed.
\end{prop}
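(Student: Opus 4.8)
The plan is to treat (\ref{formula-(j+1)}) as the output of variation of parameters for the inhomogeneous linear system (\ref{jth}) and then to read off the asymptotic bookkeeping from Proposition \ref{prop-deg-integral-theta-F}. First I would record that $\Phi(t) := \left(\theta(t)/\theta(t_0)\right)^{-A}$ is the fundamental matrix of the homogeneous part $\frac{d}{dt}{\bf Y}_j = \theta(t)^{-1}A{\bf Y}_j$ normalized at $t=t_0$: since $\theta'(t)=-1$ we get $\frac{d}{dt}\Phi(t) = \theta(t)^{-1}A\,\Phi(t)$ and $\Phi(t_0)=I$. Because $A$ is hyperbolic we have $P_-+P_+=I$, so writing $-P_+\int_t^{t_{\max}} = -P_+\int_{t_0}^{t_{\max}} + P_+\int_{t_0}^t$ recasts (\ref{formula-(j+1)}) into the standard form ${\bf Y}_j(t) = \Phi(t)\big[\,{\bf c} + \int_{t_0}^t \Phi(\eta)^{-1}{\bf g}_j(\eta)\,d\eta\,\big]$ with the constant vector ${\bf c} = P_-{\bf Y}_j^0 - P_+\int_{t_0}^{t_{\max}}\Phi(\eta)^{-1}{\bf g}_j(\eta)\,d\eta$. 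Differentiating this expression directly verifies that it solves (\ref{jth}) and that its value at $t=t_0$ equals ${\bf c}$, \emph{provided} the improper integral defining the $P_+$ component of ${\bf c}$ converges.

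Establishing that convergence is the next step. Since ${\bf g}_j$ in (\ref{gj}) is built only from the already-computed terms $\{{\bf Y}_l\}_{l=0}^{j-1}$ through ${\bf S}_{j-1}, {\bf S}_{j-2}$, the quasi-homogeneous remainder $R_{\alpha,k}$, and the residual $f_{\rm res}$, the inductive hypothesis supplies a degree bound $\deg_\theta({\bf g}_j)=\gamma>-1$. I would then invoke (\ref{deg-integral-theta-F-plus}) of Proposition \ref{prop-deg-integral-theta-F}: on $\mathbb{E}_A^u$ every eigenvalue has positive real part, so $\int_t^{t_{\max}}\left(\theta(\eta)/\theta(t)\right)^{A}P_+{\bf g}_j(\eta)\,d\eta$ converges with asymptotic degree at least $\gamma+1$. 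This both makes the formula well-defined and shows the unstable contribution decays, so the homogeneous \emph{exponential} solutions (in $\ell=\ln\theta(t)$) along $\mathbb{E}_A^u$ never enter.

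The heart of the argument is the choice (\ref{const-asym-j}) and its role in eliminating the \emph{stable} homogeneous modes. Passing to Jordan coordinates $P^{-1}AP=\Lambda$, the $P_-$ integrand becomes $\left(\theta(\eta)/\theta(t_0)\right)^{\Lambda}P^{-1}P_-{\bf g}_j(\eta)$, whose primitive, normalized componentwise by (\ref{integral-jth-H}), is precisely ${\bf H}_j$. Setting $P_-{\bf Y}_j^0 = P_-P{\bf H}_j(t_0)$ then reduces the entire $P_-$ part of the solution to the object estimated in (\ref{deg-integral-theta-F-minus-2}), namely $P\left(\theta(t)/\theta(t_0)\right)^{-\Lambda}P_-{\bf H}_j(t)$, which has degree at least $\gamma+1$. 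The point is that this choice absorbs the integration constant into the normalized primitive, so no free multiple of a pure homogeneous solution $\theta(t)^{-\lambda}$ with $\re\lambda<0$ survives; such a term would constitute a new free parameter and, having $\deg_\theta = -\re\lambda$, would in general violate the asymptotic ordering ${\bf Y}_j\ll{\bf Y}_{j-1}$ of (\ref{Y-asym}). This is the same mechanism used for $j=1$ in Proposition \ref{prop-conv-2nd}, specialized so that no parameter beyond those of ${\bf Y}_1$ appears.

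Finally, uniqueness follows by bookkeeping. Since ${\bf g}_j$ depends only on $\{{\bf Y}_l\}_{l=0}^{j-1}$, and among these only ${\bf Y}_1$ carries the $m_A$ free parameters of Proposition \ref{prop-conv-2nd}, once those are fixed ${\bf g}_j$ is determined; hence both $\int_{t_0}^{t_{\max}}\Phi(\eta)^{-1}{\bf g}_j(\eta)\,d\eta$ and ${\bf H}_j(t_0)$ are determined, and the initial value ${\bf Y}_j(t_0)=P_-{\bf Y}_j^0 - P_+\int_{t_0}^{t_{\max}}\Phi(\eta)^{-1}{\bf g}_j(\eta)\,d\eta$ is uniquely fixed with no new degree of freedom. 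I expect the main obstacle to be the degree bookkeeping that yields $\gamma>-1$ for ${\bf g}_j$ and preserves the strict ordering in (\ref{Y-asym}): this requires tracking how $\deg_\theta$ propagates through $R_{\alpha,k}$, through $f_{\rm res}$ composed with the diagonal scaling $\theta(t)^{-\frac{1}{k}\Lambda_\alpha}$, and through the spectral projectors after the Jordan normalization, which is precisely where the asymptotic quasi-homogeneity of $f$ must be used carefully.
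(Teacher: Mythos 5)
Your proposal is correct, but it reaches the conclusion by a genuinely different route from the paper for the existence part. The paper does not verify directly that (\ref{formula-(j+1)}) solves (\ref{jth}); instead it passes to the time variable $s=-\ln\theta(t)$, applies the right-sided compactification of Appendix A, invokes the Stable Manifold Theorem to get a bounded solution on the $(m_A+1)$-dimensional stable manifold of $({\bf 0},1)$, and then uses the Lyapunov--Perron representation of bounded solutions of linear inhomogeneous systems to arrive at the $P_-/P_+$ integral formula. Your argument short-circuits all of this: you observe that $\Phi(t)=(\theta(t)/\theta(t_0))^{-A}$ is the normalized fundamental matrix, rewrite (\ref{formula-(j+1)}) as $\Phi(t)\{{\bf c}+\int_{t_0}^t\Phi(\eta)^{-1}{\bf g}_j(\eta)\,d\eta\}$ using $P_-+P_+=I$, and differentiate. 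This is more elementary and suffices for what the proposition literally asserts (that the formula \emph{solves} the system); what the paper's dynamical route buys in exchange is the identification of this particular solution as the one lying on the stable manifold, i.e.\ the one compatible with ${\bf Y}_j\to{\bf 0}$, which is then exploited in Theorem \ref{thm-smoothness-Y} and Proposition \ref{prop-convergence-Y}. Your treatment of the constant vector ${\bf Y}_j^0$ via the normalized primitive ${\bf H}_j$ and the identity $P_-P{\bf H}_j(t_0)+P({\bf H}_j(t)-{\bf H}_j(t_0))=P_-P{\bf H}_j(t)$ is essentially identical to the paper's case analysis on $\nu_{j,i}$, just stated more compactly. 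One caveat: you attribute the bound $\deg_\theta({\bf g}_j)>-1$ (needed for convergence of the improper $P_+$ integral and for applying Proposition \ref{prop-deg-integral-theta-F}) to an ``inductive hypothesis,'' but no such hypothesis appears in the statement of Proposition \ref{prop-conv-(j+1)th}; that estimate is only established later, in Proposition \ref{prop-order-increase}, under additional assumptions on $f_{\rm res}$. The paper's proof relies on the same unstated integrability in its Lyapunov--Perron step, so this is a shared imprecision rather than a defect specific to your argument, but you should state it as an explicit assumption rather than as something already proved.
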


The proof is left to Appendix \ref{section-app-proof-conv-(j+1)th}. 
\KMl{The choice of ${\bf Y}_j^0$ in the proposition plays a key role in justification of multi-order asymptotic expansions: Proposition \ref{prop-order-increase} and Theorem \ref{thm-asym-exp} stated below.}
Under this arrangement, we see in (\ref{formula-(j+1)}) that general solutions of the homogeneous part make {\em no} contributions to ${\bf Y}_j$ with $j\geq 2$.
In such a case, it turns out that our choice in (\ref{2nd}) that the governing term in the {\em left-hand side} of (\ref{asym-eq}) being $d{\bf Y}_1/dt$ is valid.

\begin{rem}
\label{rem-order-Yj}
For $j\geq 2$, the leading terms of the function ${\bf g}_j(t)$ are essentially ${\bf Y}_{j-1}(t)$ and $Df_{{\rm res}}({\bf S}^{\theta}_{j-2}({\bf Y})(\KMe{t})){\bf Y}_{j-1}(t)$ \KMf{by the Taylor's theorem}.
The integral in (\ref{formula-(j+1)}) would therefore increase the order of $\theta(t)$ in ${\bf Y}_j(t)$, which would \KMe{provide} the asymptotic relation (\ref{Y-asym}).
As seen in Section \ref{section-justification}, this expectation is justified \KMk{under mild assumptions}.
\end{rem}

The above arguments are summarized as the following theorem.
\KMi{Obviously, the formal sum of \KMk{systems} (\ref{0-balance}), (\ref{2nd}) and (\ref{jth}) for $j\geq 2$ recovers (\ref{blow-up-basic}).}
\begin{thm}[Formal asymptotic expansion of blow-up solutions]
\label{thm}
Let ${\bf y}(t)$ be a blow-up solution of (\ref{ODE-original}) satisfying Assumption \ref{ass-fundamental}.
Assume that a nonzero root ${\bf Y}_0$ of the balance law (\ref{0-balance}) is given and that the associated blow-up power-determining matrix $A$ is hyperbolic.
Then ${\bf y}(t)$ with the initial time $t=t_0$ is formally expanded as follows\footnote{
\KMe{The terminology {\em formal} in this sentence means that neither the convergence of the infinite series (\ref{formula-asym}) nor the asymptotic relation (\ref{Y-asym}) are considered in detail.} 
}:
\begin{align}
\notag
{\bf y}(t) &= \left(\frac{\theta (t)}{\theta (t_0)}\right)^{-\KMf{ \frac{1}{k}\Lambda_\alpha }} {\bf Y}_0\\
\label{formula-asym}
	& + \left(\frac{\theta (t)}{\theta (t_0)}\right)^{-(\KMf{ \frac{1}{k}\Lambda_\alpha } + A)}  \sum_{j=1}^\infty \left\{ P_- \left( {\bf Y}_j^0 + \int_{t_0}^t \left(\frac{\theta (\eta)}{\theta (t_0)}\right)^{A} {\bf g}_j(\eta) d\eta \right) - P_+  \int_{t}^{t_{\max}} \left(\frac{\theta (\eta)}{\theta (t_0)}\right)^{A} {\bf g}_j(\eta) d\eta \right\},
\end{align}
where ${\bf g}_1(t)$ is given in (\ref{2nd}), while ${\bf g}_j(t)$ for $j\geq 2$ \KMi{are} given in (\ref{gj}).
The constant vector ${\bf Y}_1^0 \in \mathbb{R}^n$ is arbitrarily chosen so that $\|P_- {\bf Y}_1^0\|$ is sufficiently small, while ${\bf Y}_j^0 \in \mathbb{R}^n$ with $j\geq 2$ are chosen so that (\ref{const-asym-j}) holds.
In particular, (\ref{2nd}) and (\ref{jth}) for $j\geq 2$ generate an $m_A$-parameter family of formal expansions of ${\bf y}(t)$ with fixed $t_{\max}$.
\end{thm}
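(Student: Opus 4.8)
The plan is to read the theorem as the synthesis of the three ingredients already established: the balance law (\ref{0-balance}) fixing the constant term ${\bf Y}_0$, Proposition \ref{prop-conv-2nd} producing the first correction ${\bf Y}_1(t)$, and Proposition \ref{prop-conv-(j+1)th} producing every higher correction ${\bf Y}_j(t)$ inductively. Since the assertion is only a \emph{formal} expansion (neither the convergence of the series (\ref{formula-asym}) nor the ordering (\ref{Y-asym}) is claimed here), the actual work is to organize these ingredients into the single closed formula (\ref{formula-asym}) and to count the surviving free parameters.

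First I would start from the substitution (\ref{blow-up-sol}), ${\bf y}(t) = \theta(t)^{-\frac{1}{k}\Lambda_\alpha}{\bf Y}(t)$, which by Assumption \ref{ass-fundamental} and (\ref{form-Y}) puts ${\bf Y}(t)$ in the form ${\bf Y}_0 + \tilde{\bf Y}(t)$ with ${\bf Y}_0$ a prescribed nonzero root of the balance law. I would then insert the general asymptotic series (\ref{Y-asym}) for $\tilde{\bf Y}(t)$ into the reduced equation (\ref{blow-up-basic}) and Taylor-expand the quasi-homogeneous part at ${\bf Y}_0$ as in (\ref{asym-eq}); the balance law cancels the zeroth-order terms. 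Collecting contributions order by order then defines the governing systems, namely (\ref{2nd}) at level $j=1$ and (\ref{jth})--(\ref{gj}) at level $j\geq 2$. Each of these is linear and inhomogeneous with the common homogeneous coefficient $\theta(t)^{-1}A$, where $A$ is the blow-up power-determining matrix of Definition \ref{dfn-blow-up-power-ev}.

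Next, invoking hyperbolicity of $A$, I would apply Proposition \ref{prop-conv-2nd} to solve (\ref{2nd}) for ${\bf Y}_1(t)$ in the form (\ref{formula-2nd}), the only free parameter being $P_-{\bf Y}_1^0 \in \mathbb{E}_A^s$ with $\|P_-{\bf Y}_1^0\|$ small. Then, inductively in $j$, Proposition \ref{prop-conv-(j+1)th} solves (\ref{jth}) for ${\bf Y}_j(t)$ in the form (\ref{formula-(j+1)}) with the stable constant vector pinned down by (\ref{const-asym-j}); this is exactly the choice that prevents any new free parameter from entering at levels $j\geq 2$. Substituting ${\bf Y}(t) = {\bf Y}_0 + \sum_{j\geq1}{\bf Y}_j(t)$ back into ${\bf y}(t) = \theta(t)^{-\frac{1}{k}\Lambda_\alpha}{\bf Y}(t)$ and factoring the common prefactor gives (\ref{formula-asym}), using the identity $\frac{1}{k}\Lambda_\alpha + A = Df_{\alpha,k}({\bf Y}_0)$ (immediate from (\ref{blow-up-power-determining-matrix})) to merge $\theta(t)^{-\frac{1}{k}\Lambda_\alpha}$ with the $(\theta(t)/\theta(t_0))^{-A}$ coming from (\ref{formula-2nd}) and (\ref{formula-(j+1)}) into the single exponent $-(\frac{1}{k}\Lambda_\alpha + A)$. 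The parameter count is then immediate: the only undetermined datum is $P_-{\bf Y}_1^0$, living in the $m_A$-dimensional space $\mathbb{E}_A^s$ of (\ref{mA}), so the construction yields an $m_A$-parameter family, and the closing remark that the formal sum of (\ref{0-balance}), (\ref{2nd}) and (\ref{jth}) reproduces (\ref{blow-up-basic}) holds by construction.

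I expect the genuinely delicate points here to be bookkeeping rather than analysis. The rearrangement of the matrix prefactors needs care because $\Lambda_\alpha$ and $A$ need not commute, so the passage to the combined exponent and the tracking of the $\theta(t_0)$-normalization must be understood at the level of the formal series, consistent with the footnote deferring convergence and ordering. The one structural claim that must be argued rather than merely assembled is that fixing the constant vectors via (\ref{const-asym-j}) for all $j\geq 2$ genuinely leaves the number of free parameters equal to $m_A$: that no hidden freedom re-enters through the inductive dependence of ${\bf g}_j$ on the previously chosen ${\bf Y}_1$. This follows from Proposition \ref{prop-conv-(j+1)th}, which renders ${\bf Y}_j$ a deterministic functional of $\{{\bf Y}_l\}_{l<j}$ once (\ref{const-asym-j}) is imposed.
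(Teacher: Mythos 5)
Your proposal is correct and matches the paper's treatment: Theorem \ref{thm} is presented there as a summary assembled from the balance law (\ref{0-balance}), Proposition \ref{prop-conv-2nd} for ${\bf Y}_1$, and Proposition \ref{prop-conv-(j+1)th} for ${\bf Y}_j$ with $j\geq 2$, with the $m_A$-parameter count coming solely from $P_-{\bf Y}_1^0\in\mathbb{E}_A^s$ and the remark that the formal sum of the governing systems recovers (\ref{blow-up-basic}) — exactly the assembly you carry out. Your observation about the identity $\frac{1}{k}\Lambda_\alpha + A = Df_{\alpha,k}({\bf Y}_0)$ and the purely formal status of the combined matrix exponent is consistent with the paper's footnote deferring convergence and ordering to later results.
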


This is a systematic procedure of a candidate of asymptotic expansion of ${\bf y}(t)$ in an arbitrary order.
On the other hand, the asymptotic relation
\begin{equation*}
{\bf y}(t) - \theta(t)^{-\KMf{ \frac{1}{k}\Lambda_\alpha }} \sum_{\KMc{j}=0}^N {\bf Y}_j(t) \ll \theta(t)^{-\KMf{ \frac{1}{k}\Lambda_\alpha }} {\bf Y}_N(t)\quad (t\to t_{\max}) 
\end{equation*}
has to be independently investigated to claim that (\ref{formula-asym}) is indeed an asymptotic expansion of ${\bf y}(t)$.
In fact, Proposition \ref{prop-conv-(j+1)th} does not refer to the decay order of ${\bf Y}_j(t)$ so that asymptotic relation ${\bf Y}_j(t) \ll {\bf Y}_{j-1}(t)$ as $t\to t_{\max}-0$ is satisfied for all $j \in \KMl{\mathbb{Z}_{\geq 1}}$.
\KMi{Nevertheless, we see later that, \KMk{under mild assumptions}, the above solution satisfies (\ref{Y-asym}) in an appropriate sense.}
\par
\bigskip
\KMi{On the other hand, there is a nontrivial problem whether the (formal) series ${\bf Y}(t)$ is analytically meaningful, that is, convergent.}
The following theorem shows that the full system (\ref{blow-up-basic}) possesses a global-in-time solution converging to a root ${\bf Y}_0$ of the balance law (\ref{0-balance}) and locally generates an $m_A$-parameter family of such a solution, indicating that the consideration of asymptotic expansion of the form (\ref{formula-asym}) {\em as an $m_A$-parameter family} is consistent.


\begin{thm}[Convergence and parameter dependence of ${\bf Y}(t)$]
\label{thm-smoothness-Y}
For a blow-up solution associating ${\bf Y}(t)$, fix $t_{\max}$, assuming finite, and let ${\bf Y}_0$ be a nonzero root of the balance law (\ref{0-balance}) for (\ref{blow-up-basic}).
Assume that the associated  blow-up power-determining matrix $A$ is \KMf{hyperbolic.}
Then the system (\ref{blow-up-basic}) has a solution ${\bf Y}(t)$ converging to ${\bf Y}_0$ as $t\to t_{\max} - 0$ and generates an $m_A$-parameter family of solutions 
\begin{equation*}
{\bf Y}(t) = {\bf Y}(t; t_{\max}, C_1, \ldots, C_{m_A})
\end{equation*}
satisfying (\ref{Y-asym}), where $m_A$ is given in (\ref{mA}),
and free parameters $C_1, \ldots, C_{m_A}$ are chosen in a small neighborhood of ${\bf 0}\in \mathbb{R}^{m_A}$.
Moreover, the parameter family ${\bf Y}(t; t_{\max}, C_1, \ldots, C_{m_A})$ is $C^r$ with respect to $t$ with $t<t_{\max}$ and $C_1,\ldots, C_{m_A}$.
\end{thm}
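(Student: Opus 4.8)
The plan is to remove the apparent singularity of (\ref{blow-up-basic}) at $t=t_{\max}$ by rescaling time and then to recast the statement as the existence of trajectories asymptotic to a hyperbolic equilibrium of an autonomous system. First I would introduce the new time variable $s=-\ln\theta(t)$, so that $s\to+\infty$ as $t\to t_{\max}-0$ and $dt/ds=\theta(t)=e^{-s}$. Starting from (\ref{blow-up-basic-0}) (equivalently, multiplying (\ref{blow-up-basic}) by $\theta(t)$), the system becomes
\begin{equation*}
\frac{d{\bf Y}}{ds}=G({\bf Y})+R(s,{\bf Y}),\qquad G({\bf Y}):=-\tfrac{1}{k}\Lambda_\alpha{\bf Y}+f_{\alpha,k}({\bf Y}),
\end{equation*}
with $R(s,{\bf Y})=e^{-s(\frac{1}{k}\Lambda_\alpha+I)}f_{\rm res}(e^{s\frac{1}{k}\Lambda_\alpha}{\bf Y})$. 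The balance law (\ref{0-balance}) is exactly $G({\bf Y}_0)={\bf 0}$, and $DG({\bf Y}_0)=A$ is the blow-up power-determining matrix (\ref{blow-up-power-determining-matrix}), which is hyperbolic by assumption.

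Next I would use the asymptotic quasi-homogeneity of $f$ (Definition \ref{dfn-AQH}) to show that $R(s,{\bf Y})\to{\bf 0}$ as $s\to+\infty$, uniformly for ${\bf Y}$ near ${\bf Y}_0$; thus the $s$-system is asymptotically autonomous with limiting field $G$ and hyperbolic equilibrium ${\bf Y}_0$. I would then perform the time-variable compactification by adjoining $\tau=e^{-s}=\theta(t)$, which satisfies $d\tau/ds=-\tau$, producing the autonomous extended system
\begin{equation*}
\frac{d{\bf Y}}{ds}=G({\bf Y})+\widetilde R(\tau,{\bf Y}),\qquad \frac{d\tau}{ds}=-\tau,\qquad \widetilde R(\tau,{\bf Y}):=\tau^{\frac{1}{k}\Lambda_\alpha+I}f_{\rm res}(\tau^{-\frac{1}{k}\Lambda_\alpha}{\bf Y}),
\end{equation*}
on $\{\tau\ge0\}\times\mathbb{R}^n$, with $\widetilde R(0,\cdot)\equiv{\bf 0}$. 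The point $(0,{\bf Y}_0)$ is an equilibrium whose linearization is block-triangular with spectrum ${\rm Spec}(A)\cup\{-1\}$; since $A$ is hyperbolic and $-1<0$, it is hyperbolic with stable subspace of dimension $m_A+1$, the extra contracting direction being the $\tau$-axis.

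The core of the argument is to produce the stable set of $(0,{\bf Y}_0)$ for the compactified system, and here I would invoke the machinery for asymptotically autonomous systems (\cite{WXJ2021}). This yields an $(m_A+1)$-dimensional set of trajectories tending to $(0,{\bf Y}_0)$ as $s\to+\infty$; intersecting it transversally with the slice $\{\tau=\theta(t_0)\}$---which fixes the initial time $t_0$, equivalently $t_{\max}$---removes one dimension and leaves exactly an $m_A$-parameter family of initial data ${\bf Y}(t_0)$ whose forward orbits converge to ${\bf Y}_0$, with $m_A=\dim\mathbb{E}_A^s$ as in (\ref{mA}). Translating back, this produces solutions ${\bf Y}(t)$ of (\ref{blow-up-basic}) of the form (\ref{form-Y}) with $\tilde{\bf Y}(t)\to{\bf 0}$, i.e. the convergence underlying (\ref{Y-asym}), parametrized by $C_1,\dots,C_{m_A}$ ranging over a neighborhood of the origin. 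Finally, the $C^r$ dependence on $t$ and on the parameters for $t<t_{\max}$ follows from standard smooth dependence of solutions of the $C^r$ nonsingular system (\ref{blow-up-basic}) on initial data, the dependence on $t_{\max}$ being encoded in the compactifying coordinate $\tau_0=\theta(t_0)$.

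I expect the main obstacle to be the lack of smoothness of $\widetilde R$ at the boundary $\tau=0$: lower-order quasi-homogeneous pieces of $f_{\rm res}$ of relative order $k'<k$ contribute factors $\tau^{(k-k')/k}$ with generally non-integer exponents, so the extended field is only continuous (H\"older) at $\tau=0$ and the classical $C^r$ stable manifold theorem cannot be applied directly there. This is precisely why the asymptotically-autonomous/time-variable-compactification results are needed in place of the elementary stable manifold theorem, and verifying their hypotheses---continuity of $\widetilde R$ up to $\tau=0$, hyperbolicity of the limiting equilibrium $(0,{\bf Y}_0)$, and the correct count of contracting directions---is the technical heart of the proof.
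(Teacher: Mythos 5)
Your overall route is the same as the paper's: pass to $s=-\ln\theta(t)$, observe that the system is asymptotically autonomous with future limit field $G$ having the hyperbolic equilibrium ${\bf Y}_0$ with linearization $A$, compactify the time variable to obtain an autonomous system with a hyperbolic equilibrium on the boundary whose stable subspace has dimension $m_A+1$, apply the Stable Manifold Theorem, and spend one dimension of the stable manifold on the time direction to leave an $m_A$-parameter family with $C^r$ dependence. All of this matches Appendix \ref{section-app-proof-smoothness-Y}.

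The one point where your proposal stops short is exactly the obstacle you flag at the end, and it is worth noting how the paper closes it. Your compactification $\tau=e^{-s}$ is the $\nu=1$ instance of the paper's right-sided exponential compactification (\ref{R-exp-comp}), and, as you observe, the extended field $\widetilde R(\tau,{\bf Y})=\tau^{I+\frac{1}{k}\Lambda_\alpha}f_{\rm res}(\tau^{-\frac{1}{k}\Lambda_\alpha}{\bf Y})$ may then be only H\"older at $\tau=0$, so the classical Stable Manifold Theorem is not applicable at the boundary equilibrium. The resolution is not a more general stable-manifold result for merely continuous fields; it is the free parameter $\nu\in(0,1)$ in $\tilde s=1-e^{-\nu s}$. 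Taking $\nu$ sufficiently small multiplies every exponent of $(1-\tilde s)$ in the residual term of (\ref{Y-1side-compactified}) by $\nu^{-1}$, which makes the compactified vector field genuinely $C^r$ jointly in $({\bf Y},\tilde s)$ up to $\tilde s=1$ (after checking the limits (\ref{limit-comp}) exist, which is where the hypotheses of \cite{WXJ2021} enter). The classical Stable Manifold Theorem \cite{Rob} then applies verbatim at the hyperbolic saddle $({\bf Y}_0,1)$, whose linearization has spectrum ${\rm Spec}(A)\cup\{-\nu\}$, and also delivers the exponential decay rates $e^{-\mu_i s}$ that translate into the powers $(t_{\max}-t)^{\mu_i}$ used elsewhere (cf.\ Remark \ref{rem-justification-balance-0}). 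So the "technical heart" you correctly identify is handled by tuning the compactification rate rather than by invoking stronger invariant-manifold machinery; without that adjustment your argument does not go through as written.
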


The right-\KMc{sided} compactification of time variable introduced in \cite{WXJ2021} is applied to the proof, which is left to Appendix \ref{section-app-proof-smoothness-Y}.
\begin{rem}[Validity of the balance law (\ref{0-balance})]
\label{rem-justification-balance-0}
Using the convergence stated in Theorem \ref{thm-smoothness-Y}, our choice of terms in (\ref{blow-up-basic-0}) for deriving the balance law (\ref{0-balance}) is confirmed to be valid.
That is, all possible dominant terms in (\ref{blow-up-basic-0}) as $t\to t_{\max}$ appears in (\ref{0-balance}). 
Indeed, the proof of Theorem \ref{thm-smoothness-Y}, shown in Appendix \ref{section-app-proof-smoothness-Y}, implies that $\theta(t) \frac{d{\bf Y}}{dt} = O(\theta(t)^\mu)$ for some $\mu > 0$, whenever $A$ is hyperbolic.
\end{rem}
\par
We can also prove that, for each $N\in \mathbb{N}$, the remainder \KMe{${\bf Y}_N^c(t) \equiv {\bf Y}(t) -S_{N}({\bf Y})(t)$} also converges as $t\to t_{\max}$ whenever free parameters are small.
The precise statement is below, which is proved in Appendix \ref{section-app-proof-convergence-Y}.

\begin{prop}[Convergence of the remainder of ${\bf Y}(t)$]
\label{prop-convergence-Y}
Under the same assumptions as Theorem \ref{thm-smoothness-Y}, for each $N\in \KMi{\mathbb{N}}$,
assume that vector-valued functions \KMe{$\{{\bf Y}_j(t)\}_{j=0}^{N}$} are constructed as ${\bf Y}_0$ when $j=0$, (\ref{formula-2nd}) when $j=1$ and (\ref{formula-(j+1)}) for $j\geq 2$\KMi{.}
\par
Then, for each $N\in \KMi{\mathbb{N}}$, the remainder \KMe{${\bf Y}_N^c(t) \equiv {\bf Y}(t) -S_{N}({\bf Y})(t)$} of the formal asymptotic expansion of ${\bf Y}(t)$ converges to ${\bf 0}\in \mathbb{R}^n$ as $t\to t_{\max} - 0$ whenever free parameters $C_1,\ldots, C_{m_A}$ are small.
\end{prop}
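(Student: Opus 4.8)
The plan is to exploit the linear decomposition
\[
{\bf Y}_N^c(t) = {\bf Y}(t) - {\bf S}_N({\bf Y})(t) = \tilde{\bf Y}(t) - \sum_{j=1}^N {\bf Y}_j(t),
\]
which follows at once from ${\bf Y}(t) = {\bf Y}_0 + \tilde{\bf Y}(t)$ and ${\bf S}_N({\bf Y})(t) = {\bf Y}_0 + \sum_{j=1}^N {\bf Y}_j(t)$. Theorem~\ref{thm-smoothness-Y} already furnishes, for sufficiently small free parameters $C_1,\ldots,C_{m_A}$, a solution with $\tilde{\bf Y}(t) = {\bf Y}(t) - {\bf Y}_0 \to {\bf 0}$ as $t\to t_{\max}-0$. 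Hence it suffices to prove that each constructed term ${\bf Y}_j(t)$ with $1 \le j \le N$ converges to ${\bf 0}$. For $j=1$ this is precisely (\ref{convergent-2nd}) in Proposition~\ref{prop-conv-2nd} (valid when $\|P_-{\bf Y}_1^0\|$ is small), so the whole matter reduces to the terms given by (\ref{formula-(j+1)}) for $j\ge 2$.

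I would first recall that a scalar or vector function with \emph{strictly positive} asymptotic degree necessarily tends to ${\bf 0}$ as $t\to t_{\max}-0$, whereas degree $0$ is insufficient (Example~\ref{ex-ord} exhibits $\ln\theta(t)$, constants, and bounded oscillations all of degree $0$). The core is therefore a finite induction on $j$ establishing $\deg_\theta({\bf Y}_j(t)) > 0$. Assuming $\deg_\theta({\bf Y}_l(t)) \ge \delta$ for some $\delta>0$ and all $1\le l\le j-1$ (such a $\delta$ exists since the index set is finite), and granting the degree estimate $\deg_\theta({\bf g}_j(t)) > -1$ discussed below, I would feed $\gamma := \deg_\theta({\bf g}_j(t))$ into Proposition~\ref{prop-deg-integral-theta-F}. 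Applying (\ref{deg-integral-theta-F-minus}) to the $P_-$ part and (\ref{deg-integral-theta-F-plus}) to the $P_+$ part of the representation (\ref{formula-(j+1)}), and noting that the constant prefactors $\theta(t_0)^{\pm A}$ have entrywise degree $0$ or $+\infty$ (Proposition~\ref{prop-ord-fund-matrix}), one obtains
\[
\deg_\theta({\bf Y}_j(t)) \ge \min\Big\{ \min_{\substack{\lambda\in{\rm Spec}(A),\ {\rm Re}\,\lambda<0}}(-{\rm Re}\,\lambda),\ \gamma+1 \Big\} > 0,
\]
where positivity of the first argument is exactly hyperbolicity of $A$ and positivity of the second is $\gamma>-1$. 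This closes the induction and, combined with the first paragraph, yields ${\bf Y}_N^c(t)\to{\bf 0}$.

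The degree bound $\deg_\theta({\bf g}_j(t)) > -1$ is the heart of the argument and the step I expect to be the main obstacle. Writing ${\bf S}_{j-1}({\bf Y}) - {\bf S}_{j-2}({\bf Y}) = {\bf Y}_{j-1}$, I would treat the two contributions in (\ref{gj}) separately. For the quasi-homogeneous remainder part, $R_{\alpha,k}$ is the second-order Taylor remainder with $R_{\alpha,k}({\bf Y}_0)=0$ and $DR_{\alpha,k}({\bf Y}_0)=Df_{\alpha,k}({\bf Y}_0)-Df_{\alpha,k}({\bf Y}_0)=0$; hence the difference $R_{\alpha,k}({\bf S}_{j-1}({\bf Y})) - R_{\alpha,k}({\bf S}_{j-2}({\bf Y}))$ is quadratically small, of degree at least $2\delta$ by the mean value theorem together with the product rule (Proposition~\ref{prop-ord-fundamental}-3), and the prefactor $\theta(t)^{-1}$ leaves degree $\ge 2\delta - 1 > -1$. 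For the residual part, the asymptotic quasi-homogeneity of $f$ together with its $C^r$ smoothness guarantees that $\theta(t)^{\frac{1}{k}\Lambda_\alpha} f_{\rm res}(\theta(t)^{-\frac{1}{k}\Lambda_\alpha}\,\cdot\,)$ and its derivative have degree $>-1$ near ${\bf Y}_0$, so the corresponding difference, again controlled by the increment ${\bf Y}_{j-1}$ via the mean value theorem, also has degree $>-1$; combining the two via the sum rule (Proposition~\ref{prop-ord-fundamental}-2) gives $\deg_\theta({\bf g}_j(t)) > -1$. The delicate point throughout is the bookkeeping of the $\theta(t)^{-1}$ and $\theta(t)^{\pm\frac{1}{k}\Lambda_\alpha}$ factors against the positive degrees gained from the quadratic, respectively lower-order, structure, and verifying that the requisite derivative estimate for $f_{\rm res}$ is genuinely a consequence of the asymptotic quasi-homogeneity hypothesis rather than an independent assumption.
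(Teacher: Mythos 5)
Your reduction ${\bf Y}_N^c=\tilde{\bf Y}-\sum_{j=1}^N{\bf Y}_j$ is sound, the appeal to Theorem \ref{thm-smoothness-Y} for $\tilde{\bf Y}\to{\bf 0}$ and to Proposition \ref{prop-conv-2nd} for ${\bf Y}_1\to{\bf 0}$ is legitimate, and ``positive asymptotic degree implies convergence to ${\bf 0}$'' is correct. The genuine gap is exactly the point you flag at the end and then leave unresolved: the estimate $\deg_\theta({\bf g}_j)>-1$ (equivalently, positivity of the quantity $\delta$ of (\ref{suff-asym})) is \emph{not} a consequence of asymptotic quasi-homogeneity. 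AQH only gives $f_{i;{\rm res}}(\theta^{-\frac{1}{k}\Lambda_\alpha}{\bf x})=o(\theta^{-(k+\alpha_i)/k})$, hence $\gamma_i\geq-(k+\alpha_i)/k$ and $\deg_\theta(\{{\bf g}_1\}_i)\geq\frac{\alpha_i}{k}+\gamma_i\geq-1$; since $\deg_\theta$ is a supremum, the value $-1$ can be attained, and then your degree bound for ${\bf Y}_1$ degenerates to $\geq 0$, which does not force convergence (functions of degree $0$ such as $\ln\theta(t)$ diverge). The strict inequalities $\frac{\alpha_i}{k}+\gamma_i>-1$ and the derivative bound (\ref{order-fres-diff}) are \emph{independent hypotheses} of Proposition \ref{prop-order-increase}; Proposition \ref{prop-convergence-Y} assumes only what Theorem \ref{thm-smoothness-Y} assumes (hyperbolicity of $A$ and a nonzero root of the balance law). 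So your induction --- which needs $\deg_\theta({\bf Y}_l)\geq\delta>0$ both at the base and to make the quadratic remainder $R_{\alpha,k}({\bf S}_{j-1})-R_{\alpha,k}({\bf S}_{j-2})$ beat the prefactor $\theta(t)^{-1}$ --- proves the statement only under the strictly stronger hypotheses of Proposition \ref{prop-order-increase}, not under the stated ones.

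For contrast, the paper avoids any quantitative rate. It writes down the ODE satisfied by the remainder ${\bf Y}_N^c$ itself, passes to $s=-\ln\theta(t)$, and applies the right-sided exponential compactification of Appendix \ref{section-appendix-right-cpt}; in the resulting autonomous system $({\bf Y}_N^c,\tilde s)=({\bf 0},1)$ is an equilibrium whose linearization involves $A$ (plus the transverse eigenvalue $-\nu$), hence is a hyperbolic saddle, and the Stable Manifold Theorem gives convergence of ${\bf Y}_N^c$ to ${\bf 0}$ for small free parameters. The only input from asymptotic quasi-homogeneity there is the qualitative fact that the time-dependent residual terms tend to $0$ in the future limit $s\to\infty$ (a little-$o$ statement), which holds with no assumption on the \emph{rate} of decay of $f_{\rm res}$. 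If you want to keep a term-by-term argument, you would have to run this same compactification argument for each ${\bf Y}_j$-equation rather than a $\deg_\theta$ count; as written, your proposal does not establish the proposition under its stated hypotheses.
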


\subsection{Complete determination of asymptotic expansion: a case study}
In the above setting, the function ${\bf g}_j$ contained in the integrand consists of nonlinearities depending on ${\bf S}_{j-1}({\bf Y})(t)$.
Powers of asymptotic series are then determined by blow-up power eigenvalues $\{\lambda_i\}$, the quasi-homogeneous part $f_{\alpha, k}$ and \KMi{the lower term} $f_{\rm res}$.
In case that $f$ has a special form, the complete form of asymptotic expansions can be determined.
For example, \KMe{assume that the vector field $f$ is {\em quasi-homogeneous} polynomial. Then} we know that $f_{\rm res} \equiv 0$ and the equations for all ${\bf Y}_j$ become much simpler.
When we further assume that $A$ is diagonalizable \KMc{and that ${\rm Spec}(A)\subset \mathbb{R}$}, (\ref{2nd}) becomes linear and {\em homogeneous}, and hence the solution ${\bf Y}_1$ is written as the linear combination of $\{ \theta(t)^{-\lambda_i}\}_{i=1}^n$.
In the next step for solving ${\bf Y}_2$, the powers of $\theta(t)$ \KMf{appeared in (\ref{jth}) with $j=2$} are determined as the following form:
\begin{equation*}
\KMi{\left( \sum_{\substack{\beta\in \mathbb{Z}_{\geq 0}^n\\ |\beta| = l }} \sum_{i=1}^n \beta_i \lambda_i \right)} - 1,\quad l =1,2,\ldots,
\end{equation*}
\KMf{because} $f_{\rm res} \equiv 0$.
\KMf{Through the integration,} we see that the powers of $\theta(t)$ in ${\bf Y}_2$ have the following form:
\begin{equation}
\label{power-QH}
\sum_{\substack{\beta\in \mathbb{Z}_{\geq 0}^n\\ |\beta| = l }} \sum_{i=1}^n \beta_i \lambda_i,\quad l=1,2,\ldots.
\end{equation}
The same conclusion holds for ${\bf Y}_j$ with $j\geq 3$.
We therefore expect that all possible powers of \KMe{$\theta(t)$} appeared in ${\bf Y}(t)$ are restricted to the form (\ref{power-QH}), and hence the true powers of $\theta(t)$ in $y_i(t)$ are the sum of the above exponents and the principal exponent $-\alpha_i/k$.
The similar argument yields the complete form of (formal) asymptotic expansions\KMe{, even though} $A$ admits nontrivial Jordan blocks.

\begin{thm}
\label{thm-asym-QH}
\KMd{Consider (\ref{ODE-original}) with a polynomial vector field $f$ which is quasi-homogeneous.}
Let ${\bf y}(t) = (y_1(t), \ldots, y_n(t))^T$ be a blow-up solution for $f$ satisfying Assumption \ref{ass-fundamental}.
Suppose that a nonzero root ${\bf Y}_0$ of the balance law is associated and that the induced blow-up power-determining matrix $A$ satisfies ${\rm Spec}(A)\subset \mathbb{R}$.
Then ${\bf y}(t)$ has the form
\begin{equation}
\label{asym-form}
y_i(t) = \KMd{\theta(t)}^{-\alpha_i / k} \sum_{j=0}^\infty \left\{ \sum_{M=0}^{M_{j,k,\alpha, A}}\sum_{\substack{\beta\in \mathbb{Z}_{\geq 0}^n\\ |\beta| = j}} Y_{\beta,M,i} \KMd{\theta(t)}^{-\beta \cdot_{A} \lambda} (\ln \KMd{\theta(t)} )^M \right\},
\end{equation}
where $Y_{\beta,M} = (Y_{\beta,M,1}, \ldots, Y_{\beta,M,n})^T\in \mathbb{R}^n$ and
\begin{equation*}
\beta \cdot_A \lambda \equiv \sum_{l=1}^n \beta_l (\lambda_l)_-,\quad (\mu)_- \equiv \min \{0, \mu\}
\end{equation*}
for $\beta\in \mathbb{Z}_{\geq 0}^n$ and $\lambda = (\lambda_1,\ldots, \lambda_n)^T$ with $\{\lambda_i\}_{i=1}^n = {\rm Spec}(A)$ counting the multiplicity.
The integer $M_{j,k,\alpha, A}$ determined by $j, k, \alpha$, \KMk{geometric multiplicity} of elements in ${\rm Spec}(A)$ \KMk{and size of associated Jordan blocks}.
\end{thm}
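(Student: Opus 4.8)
The plan is to exploit that a quasi-homogeneous polynomial vector field has $f_{\rm res}\equiv 0$, so that the full system (\ref{blow-up-basic}) reduces to $\frac{d}{dt}{\bf Y}=\theta(t)^{-1}\{-\frac{1}{k}\Lambda_\alpha{\bf Y}+f_{\alpha,k}({\bf Y})\}$, whose right-hand side is $\theta(t)^{-1}$ times an \emph{autonomous} polynomial vector field $G$ vanishing at ${\bf Y}_0$ (the balance law (\ref{0-balance})) with $DG({\bf Y}_0)=A$. Equivalently, in the rescaled time $\tau=-\ln\theta(t)$ the function ${\bf Y}$ solves $\frac{d{\bf Y}}{d\tau}=G({\bf Y})$ near the hyperbolic equilibrium ${\bf Y}_0$; since ${\rm Spec}(A)\subset\mathbb{R}$, the terms appearing in the expansion are built purely from $\theta(t)^{-\lambda}=e^{\lambda\tau}$ and powers of $\ln\theta(t)=-\tau$, which is the source of the claimed monomial shape. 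Concretely I would prove by induction on $j$ that each ${\bf Y}_j(t)$ produced by the recursion of Section \ref{section-asym} is a finite $\mathbb{R}^n$-valued linear combination of functions $\theta(t)^{-q}(\ln\theta(t))^M$ with $q\in Q_j:=\{\beta\cdot_A\lambda\mid\beta\in\mathbb{Z}_{\geq0}^n,\ |\beta|=j\}$ and $0\le M\le M_{j,k,\alpha,A}$; summing over $j$ and multiplying by $\theta(t)^{-\alpha_i/k}$ then yields (\ref{asym-form}), while Theorem \ref{thm-smoothness-Y} and Proposition \ref{prop-convergence-Y} upgrade this formal object to a genuine asymptotic expansion of ${\bf y}(t)$.

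For the base cases, $j=0$ is immediate since ${\bf Y}_0$ is constant ($\beta=0$, $M=0$). For $j=1$, because $f_{\rm res}\equiv0$ we have ${\bf g}_1\equiv0$ in (\ref{formula-2nd}), so Proposition \ref{prop-conv-2nd} gives ${\bf Y}_1(t)=(\theta(t)/\theta(t_0))^{-A}P_-{\bf Y}_1^0$. Reading off the real Jordan decomposition (\ref{Jordan-theta}) together with the explicit expression for $\theta(t)^{J_l(\lambda)}$, each nonzero entry is of the form $\theta(t)^{-\lambda_l}(\ln\theta(t))^M$ with $\lambda_l<0$ (the projector $P_-$ kills the eigenvalues with positive real part) and $M$ bounded by the corresponding Jordan block size minus one; since $\lambda_l<0$ gives $(\lambda_l)_-=\lambda_l$ and $\beta=e_l$, this is exactly the claimed shape with $q\in Q_1$.

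For the inductive step I would first analyze the forcing ${\bf g}_j(t)=\theta(t)^{-1}\{R_{\alpha,k}({\bf S}_{j-1}({\bf Y})(t))-R_{\alpha,k}({\bf S}_{j-2}({\bf Y})(t))\}$, where the residual contribution in (\ref{gj}) is absent and $R_{\alpha,k}$ is a polynomial in $\tilde{\bf Y}$ whose monomials all have degree at least two. Substituting the inductive forms of ${\bf Y}_0,\dots,{\bf Y}_{j-1}$ and taking the difference isolates products of factors drawn from ${\bf Y}_{l_1},\dots,{\bf Y}_{l_r}$ with $\sum_s l_s=j$; the key algebraic point is the additivity $Q_{l_1}+\cdots+Q_{l_r}\subseteq Q_j$, which holds because $\beta\mapsto\beta\cdot_A\lambda$ is linear and $|\beta|$ is additive. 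Hence each term of ${\bf g}_j(t)$ has the form $\theta(t)^{-1}\theta(t)^{-q}(\ln\theta(t))^M$ with $q\in Q_j$ and $M$ controlled by the inductive logarithmic degrees.

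Finally I would feed this forcing through the representation (\ref{formula-(j+1)}), using the distinguished choice of ${\bf Y}_j^0$ from Proposition \ref{prop-conv-(j+1)th} to eliminate homogeneous free modes, so that no new powers $\theta(t)^{-\lambda}$ are introduced for $j\ge2$. On the stable part, conjugation by $(\theta(t)/\theta(t_0))^{-A}$ followed by the integration asymptotics of Lemma \ref{lem-asym-poly-log} and Proposition \ref{prop-deg-integral-theta-F} turns $\theta(t)^{-1-q}(\ln\theta(t))^M$ into $\theta(t)^{-q}(\ln\theta(t))^{M'}$; on the unstable part, the integral $\int_t^{t_{\max}}$ against $\theta(\eta)^{\lambda_i}$ with $\lambda_i>0$ raises the power by one while the prefactor $\theta(t)^{-\lambda_i}$ cancels the $\lambda_i$, so that the resulting power is again $-q$ rather than $-q-\lambda_i$. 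This cancellation is exactly what the truncation $(\lambda_l)_-=\min\{0,\lambda_l\}$ encodes: unstable directions are slaved and contribute nothing positive to the exponent. I expect the main obstacle to be the bookkeeping of the logarithmic degree $M_{j,k,\alpha,A}$: an extra factor $\ln\theta(t)$ is generated each time the forcing exponent is \emph{resonant}, i.e. when $q$ coincides with an eigenvalue of $A$ (the case $b=1$ of Lemma \ref{lem-asym-poly-log}), and further logarithms come from nontrivial Jordan blocks; one must verify that these accumulations are finite at each level $j$ and bounded by a quantity depending only on $j$, $k$, $\alpha$, the geometric multiplicities of the blow-up power eigenvalues and the sizes of the associated Jordan blocks. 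Collecting all terms and reindexing by $\beta$ then completes the proof.
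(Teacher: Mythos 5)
Your proposal follows essentially the same route as the paper's proof in Appendix B.5: reduce to the quasi-homogeneous case where $f_{\rm res}\equiv 0$, pass to the Jordan normal form of $A$, induct on the index of the terms ${\bf Y}_j$, observe that the forcing ${\bf g}_j$ is built from products of lower-order terms whose exponents add, and integrate $\theta(\eta)^{-q-1}(\ln\theta(\eta))^M$ explicitly, with the resonant case $q=\lambda_i$ (the $b=1$ case of Lemma \ref{lem-asym-poly-log}) producing the extra logarithms. The treatment of the unstable projection (the $\int_t^{t_{\max}}$ integral whose prefactor $\theta(t)^{-\lambda_i}$ cancels the raised power, so that positive eigenvalues never enter the exponent — the content of $(\lambda_l)_-$) is also exactly the paper's computation.

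One bookkeeping claim in your inductive step is too strong and, as stated, false: you assert that the difference $R_{\alpha,k}({\bf S}_{j-1}({\bf Y}))-R_{\alpha,k}({\bf S}_{j-2}({\bf Y}))$ isolates products ${\bf Y}_{l_1}\cdots{\bf Y}_{l_r}$ with $\sum_s l_s=j$, and hence that every exponent of ${\bf Y}_j$ lies in $Q_j=\{\beta\cdot_A\lambda\mid |\beta|=j\}$. Since $R_{\alpha,k}$ is the second-order Taylor remainder, its monomials have degree $r\geq 2$ in $\tilde{\bf Y}$, and the difference picks out those products containing at least one factor ${\bf Y}_{j-1}$ with the remaining factors drawn from ${\bf Y}_1,\ldots,{\bf Y}_{j-1}$; thus $\sum_s l_s\geq j$ but can strictly exceed $j$ (e.g.\ the term ${\bf Y}_2^2$ in ${\bf g}_3$ has index sum $4$). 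So the strict shell-by-shell statement $q\in Q_j$ does not hold for ${\bf Y}_j$. This does not endanger the theorem, whose conclusion only concerns the union over all $\beta\in\mathbb{Z}_{\geq 0}^n$; the fix is to weaken your inductive hypothesis to the one the paper actually uses, namely that every exponent appearing up to stage $j$ is of the form $-\beta\cdot_A\lambda$ for \emph{some} multi-index $\beta$, without pinning $|\beta|$ to $j$, and to treat the final double sum over $j$ and $|\beta|=j$ as a reindexing of the total collection of terms rather than as a statement about which recursion step produces which term.
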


The proof is left in Appendix \ref{section-appendix-proof}.
The similar conclusion \KMi{will} be obtained when $f$ is \KMe{polynomial} which is not necessarily quasi-homogeneous.
In this case, exponents of $\theta(t)$ and $\ln \theta(t)$ (\ref{asym-form}) depend on \KMe{$f_{\rm res}$}.
\begin{rem}
\label{rem-calc-cpx}
When a pair of complex conjugate eigenvalues is contained in ${\rm Spec}(A)$, the function ${\bf Y}(t)$ includes terms of the following form:
\begin{equation*}
\theta(t)^{\eta} \cos^{m_1}(\lambda_{im}\ln \theta(t))\sin^{m_2}(\lambda_{im}\ln \theta(t)) (\ln \theta(t))^{m_3}
\end{equation*}
for some $\eta \in \mathbb{R}$, $\lambda_{im}\in \mathbb{R}\setminus \{0\}$ and $m_1, m_2, m_3\in \mathbb{Z}_{\geq 0}$.
If such a term is included in ${\bf Y}_N(t)$ for some $N\geq 1$, the next term ${\bf Y}_{N+1}(t)$ needs lengthy calculations because the integral of the above term is required.
\end{rem}

\subsection{Justification of asymptotic expansions}
\label{section-justification}
We have observed that, in the construction of multi-order asymptotic \KMi{expansions} of blow-ups, free parameters  corresponding to blow-up power eigenvalues with negative real parts \KMi{appear}. 
Next we consider if the formal asymptotic expansion (\ref{formula-asym}) of ${\bf y}(t)$ is certainly an asymptotic expansion in the original sense.
It should be noted that, in our construction (\ref{formula-asym}), the asymptotic relation ${\bf Y}_{l}(t)\ll {\bf Y}_{l-1}(t)$ in (\ref{Y-asym}) is applied as the {\em necessary condition} to asymptotic expansions, which is not always sufficient to prove the requirement of asymptotic expansions
\begin{equation}
\label{asym-true-cond}
\frac{\KMe{\theta(t)^{\alpha_i/k} y_i(t)} - \sum_{j=0}^N Y_{j,i}(t)}{Y_{N,i}(t)} = \frac{\sum_{j=N+1}^\infty Y_{j,i}(t)}{Y_{N,i}(t)}  = o(1)\quad \text{ as }t\to t_{\max}
\end{equation}
for all $N\in \KMi{\mathbb{N}}$ and each $i = 1,\ldots, n$, \KMk{while} the residual series $\sum_{j=N+1}^\infty Y_{j,i}(t)$ converges to $0$ as $t\to t_{\max}$ from Proposition \ref{prop-convergence-Y} \KMi{and the uniqueness of solutions}.
To verify the asymptotic relation (\ref{asym-true-cond}) for (\ref{formula-asym}), 
it requires concrete forms of $f$ as well as the blow-up power-determining matrix $A$ and a nonsingular matrix $P$ inducing the Jordan canonical form of $A$.
Indeed, there can be a case that $Y_{N,i}(t)$ goes to zero much faster than $Y_{N+1,i}(t)$ as $t\to t_{\max}$ for some $N$, due to the situation that coefficients of low powers of $\theta(t)$ accidentally become zero\footnote{
\KMc{This situation includes the case that $Y_{N,i}(t) \equiv 0$.}
}.
Nevertheless, the lowest orders of $\theta(t)$ in the sequence $\{{\bf Y}_j(t)\}_{j=0}^\infty$ are \KMd{expected} to increase monotonously under a mild assumption, and hence the asymptotic series in the sense of (\ref{asym-true-cond}) can be constructed by {\em arranging several terms} so that an asymptotic series in the original sense is obtained.
In what follows, we prove that this presumption is true\KMj{, at least, under several restrictions of $f_{\rm res}$ and its derivatives.}
\KMj{The degree function $\deg_\theta$ introduced in Section \ref{section-degree} plays a key role in estimating the degree of ${\bf Y}_j(t)$ for each $j\in \mathbb{Z}_{\geq 0}$ a priori, which yields the justification of our asymptotic expansions.}

\begin{prop}
\label{prop-order-increase}
Let (\ref{formula-asym}) be the formal asymptotic expansion of ${\bf y}(t)$ \KMl{obtained in Theorem \ref{thm-smoothness-Y}} for \KMh{an asymptotically quasi-homogeneous} vector field $f$ of type $\alpha = (\alpha_1,\ldots, \alpha_n)$ and order $k+1$, and ${\bf Y}_0$ be the corresponding non-zero root of the balance law.
For $i=1,\ldots, n$, define $\gamma_i \in \mathbb{R}$ by 
\begin{align}
\label{order-fres}
\KMi{
\gamma_i := \inf_{\tilde {\bf x}\in \mathbb{R}^n} \KMj{\deg_\theta} \left( f_{i; {\rm res}}\left( \theta(\cdot)^{-\frac{1}{k}\Lambda_\alpha}\tilde {\bf x} \right) \right).
}
\end{align}
Assume that 
\begin{itemize}
\item for $i=1,\ldots, n$,
\begin{equation}
\label{order-fres-diff}
\KMi{\gamma_i \leq \inf_{\tilde {\bf x}\in \mathbb{R}^n}\inf_{\tilde {\bf v}\in \mathbb{R}^n} \KMj{\deg_\theta} \left(  \left[ Df_{{\rm res}}\left( \theta(\cdot )^{-\frac{1}{k}\Lambda_\alpha}\tilde {\bf x} \right)\theta(\cdot )^{-\frac{1}{k}\Lambda_\alpha} \tilde {\bf v} \right]_i \right)},
\end{equation}
\item
\KMj{the following inequality holds:
\begin{equation*}
\frac{\alpha_i}{k} + \gamma_i  > -1\quad \text{ for all }\quad i = 1,\ldots, n.
\end{equation*}
}
\end{itemize}
Then we have
\begin{equation}
\label{estimate-order-asym}
\KMj{\deg_\theta}({\bf Y}_N) \geq N\delta\quad \text{ for all }\quad N\in \mathbb{Z}_{\geq 0}\KMj{,}
\end{equation}
\KMj{where
\begin{equation}
\label{suff-asym}
\delta := \min \left\{ \min_{l=1,\ldots, n}\left\{\frac{\alpha_l}{k} + \gamma_l \right\} + 1, \, \min_{ \substack{ i=1,\ldots, n\\ {\rm Re}\, \lambda_i < 0}} \left( -{\rm Re}\, \lambda_i \right)\right\} > 0.
\end{equation}
}
\end{prop}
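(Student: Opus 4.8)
The plan is to establish (\ref{estimate-order-asym}) by induction on $N$, at each stage reducing the claim to a lower bound on $\deg_\theta({\bf g}_N)$ and then gaining one power of $\theta(t)$ through the integral estimates of Proposition \ref{prop-deg-integral-theta-F}. For the base case $N=0$, the root ${\bf Y}_0$ is a nonzero constant vector, so $\deg_\theta({\bf Y}_0)=0=0\cdot\delta$ by Example \ref{ex-ord}. For $N=1$, I would estimate ${\bf g}_1(t)=\theta(t)^{\frac{1}{k}\Lambda_\alpha}f_{\rm res}(\theta(t)^{-\frac{1}{k}\Lambda_\alpha}{\bf Y}_0)$ componentwise: definition (\ref{order-fres}) gives $\deg_\theta([{\bf g}_1]_i)\geq\frac{\alpha_i}{k}+\gamma_i$, whence $\deg_\theta({\bf g}_1)\geq\min_l(\frac{\alpha_l}{k}+\gamma_l)>-1$ by the second hypothesis. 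Since ${\bf Y}_1^0$ is a free parameter, the $P_-$-contribution to (\ref{formula-2nd}) retains the floor $\min_{{\rm Re}\,\lambda<0}(-{\rm Re}\,\lambda)$ coming from (\ref{deg-integral-theta-F-minus}), while the integral together with the $P_+$-part gains one power of $\theta$ via (\ref{deg-integral-theta-F-minus}) and (\ref{deg-integral-theta-F-plus}); taking the minimum reproduces exactly the two competing quantities in (\ref{suff-asym}), i.e.\ $\deg_\theta({\bf Y}_1)\geq\delta$.

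For the inductive step I would fix $N\geq2$ and assume $\deg_\theta({\bf Y}_l)\geq l\delta$ for $0\leq l\leq N-1$; the entire step rests on proving $\deg_\theta({\bf g}_N)\geq N\delta-1$. Granting this, $N\delta-1>-1$, and the specific choice of ${\bf Y}_N^0$ dictated by (\ref{const-asym-j})--(\ref{integral-jth-H}) is precisely what makes (\ref{deg-integral-theta-F-minus-2}) applicable to the $P_-$-part (no free parameter enters now, so the floor $\min(-{\rm Re}\,\lambda)$ no longer intervenes), while (\ref{deg-integral-theta-F-plus}) handles the $P_+$-part; both then give $\deg_\theta({\bf Y}_N)\geq\deg_\theta({\bf g}_N)+1\geq N\delta$. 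It thus remains to bound the two pieces of ${\bf g}_N$ in (\ref{gj}).

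For the quasi-homogeneous piece $\theta(t)^{-1}\{R_{\alpha,k}({\bf S}_{N-1}({\bf Y}))-R_{\alpha,k}({\bf S}_{N-2}({\bf Y}))\}$ I would use that $R_{\alpha,k}$ is the second-order Taylor remainder of $f_{\alpha,k}$ at ${\bf Y}_0$, so both $R_{\alpha,k}$ and $DR_{\alpha,k}$ vanish at ${\bf Y}_0$. Since ${\bf S}_{N-1}({\bf Y})-{\bf S}_{N-2}({\bf Y})={\bf Y}_{N-1}$, the integral mean-value form expresses the difference as $\int_0^1 DR_{\alpha,k}({\bf S}_{N-2}({\bf Y})-{\bf Y}_0+s{\bf Y}_{N-1})[{\bf Y}_{N-1}]\,ds$; as $DR_{\alpha,k}$ vanishes at the origin, its value here has degree $\geq\delta$ (the degree of its argument, the Hessian of $f_{\alpha,k}$ staying $O(1)$ near ${\bf Y}_0$), while the explicit factor ${\bf Y}_{N-1}$ has degree $\geq(N-1)\delta$, so by the sum and product rules of Proposition \ref{prop-ord-fundamental} this piece has degree $\geq N\delta$ before the factor $\theta(t)^{-1}$, hence $\geq N\delta-1$. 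For the residual piece $\theta(t)^{\frac{1}{k}\Lambda_\alpha}\{f_{\rm res}({\bf S}^\theta_{N-1}({\bf Y}))-f_{\rm res}({\bf S}^\theta_{N-2}({\bf Y}))\}$ I would again take the integral mean-value form $\int_0^1 Df_{\rm res}(\theta(t)^{-\frac{1}{k}\Lambda_\alpha}{\bf Z}_s)\,\theta(t)^{-\frac{1}{k}\Lambda_\alpha}{\bf Y}_{N-1}\,ds$ with ${\bf Z}_s={\bf S}_{N-2}({\bf Y})+s{\bf Y}_{N-1}$, so that only the first derivative of $f_{\rm res}$ appears. Expanding componentwise and invoking hypothesis (\ref{order-fres-diff}) (with $\tilde{\bf v}$ ranging over the coordinate vectors) bounds the degree of the matrix factor below by $\gamma_i$; multiplying by the degree $\geq(N-1)\delta$ of ${\bf Y}_{N-1}$ and by $\theta(t)^{\alpha_i/k}$ yields degree $\geq\frac{\alpha_i}{k}+\gamma_i+(N-1)\delta$, which is $\geq N\delta-1$ because (\ref{suff-asym}) forces $\frac{\alpha_i}{k}+\gamma_i\geq\delta-1$. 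Combining the two pieces gives $\deg_\theta({\bf g}_N)\geq N\delta-1$, closing the induction.

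The main obstacle is the residual piece: hypothesis (\ref{order-fres-diff}) controls $\deg_\theta$ only when $Df_{\rm res}$ is evaluated on a ray $\theta(\cdot)^{-\frac{1}{k}\Lambda_\alpha}\tilde{\bf x}$ with a \emph{constant} $\tilde{\bf x}$, whereas in the mean-value representation it is evaluated at the moving argument $\theta(\cdot)^{-\frac{1}{k}\Lambda_\alpha}{\bf Z}_s(\cdot)$, whose directional behavior converges to that of ${\bf Y}_0$ only as $t\to t_{\max}-0$. To legitimately substitute $\tilde{\bf x}\mapsto{\bf Z}_s(t)$ and then integrate over $s\in[0,1]$, one needs the $o$-estimates underlying (\ref{order-fres-diff}) to hold \emph{uniformly} for $\tilde{\bf x}$ in a neighbourhood of ${\bf Y}_0$ and for $s\in[0,1]$; this uniformity is automatic when $f_{\rm res}$ is assembled from finitely many (quasi-homogeneous, logarithmic, $\ldots$) terms as in the examples, but for a general $C^r$ residual it is the delicate point that the structural hypotheses must be understood to supply. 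The same care settles the analogous but easier substitution in the quasi-homogeneous piece, where the Hessian of $f_{\alpha,k}$ stays bounded near ${\bf Y}_0$ and therefore contributes no negative degree.
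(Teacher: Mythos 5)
Your proposal is correct and follows essentially the same route as the paper's proof: induction on $N$ with the base cases $N=0,1$ handled via the componentwise estimate $\deg_\theta([{\bf g}_1]_i)\geq \alpha_i/k+\gamma_i$ and Proposition \ref{prop-deg-integral-theta-F}, and the inductive step reduced to $\deg_\theta({\bf g}_N)\geq N\delta-1$ by writing both pieces of ${\bf g}_N$ in mean-value form (the paper uses the Lagrange form with intermediate constants $\eta_1,\eta_2,\eta_3$ and the explicit second-order Taylor remainder for $R_{\alpha,k}$, which is equivalent to your integral mean-value representation), invoking the $O(1)$ bound (\ref{order-QH-derivatives}) on $D^2f_{\alpha,k}$ and hypothesis (\ref{order-fres-diff}) for $Df_{\rm res}$, and then gaining one power via (\ref{deg-integral-theta-F-minus-2}) with the choice (\ref{const-asym-j}) of ${\bf Y}_N^0$ and via (\ref{deg-integral-theta-F-plus}) for the $P_+$ part. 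The uniformity issue you flag in substituting the moving argument into (\ref{order-fres}) and (\ref{order-fres-diff}) is a genuine subtlety, but it is present in the paper's own proof in exactly the same way and is implicitly absorbed into how the infima in those hypotheses are to be read.
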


The proof is left to Appendix \ref{section-appendix-proof}.
\begin{rem}
The asymptotic quasi-homogeneity of $f$ implies
\begin{equation*}
\KMi{ \gamma_i \geq -\frac{k+\alpha_i}{k} }
\end{equation*}
and hence the quantity \KMi{$\delta$ in (\ref{suff-asym}) is always nonnegative}.
\KMj{In particular, $\delta$ becomes positive under our assumption.}
\KMi{When} $f$ is polynomial, we have the following sharper estimate:
\begin{equation*}
\gamma_i \geq -\frac{k+\alpha_i-1}{k},
\end{equation*}
indicating $\delta \geq 1/k$ \KMc{if ${\rm Spec}(A)\subset \{\lambda\in \mathbb{C} \mid {\rm Re}\,\lambda \geq 0\}$}.
Note that the statement itself also holds for the case that each $\alpha_i$ is {\em nonnegative real numbers (namely, not always integers)} except $\alpha_i \equiv 0$ for all $i$, by generalizing the quasi-homogeneity of $f$ in Definition \ref{dfn-AQH} to the $n$-tuple $\alpha = (\alpha_1,\ldots, \alpha_n)\in (\mathbb{R}_{\geq 0})^n \setminus \{{\bf 0}\}$ as the type of $f$.
\end{rem}
The \KMj{proposition} shows that the magnitude of $\theta(t)$ in the series $\{{\bf Y}_j(t)\}_{j=0}^\infty$ {\em eventually increases} in the sense of (\ref{estimate-order-asym}).
Therefore, arranging $\{{\bf Y}_j(t)\}_{j=0}^\infty$ componentwise, we can show that, for {\em any finite} sequence $\{{\bf Y}_j(t)\}_{j=0}^N$, the arranged sequence includes the finite order asymptotic expansion of ${\bf Y}(t)$.
The precise statement is shown below.

\begin{thm}[Justification of asymptotic expansion]
\label{thm-asym-exp}
Consider \KMi{an asymptotically quasi-homogeneous vector field $f$ of type $\alpha$ and order $k+1$.}
Let (\ref{formula-asym}) be the formal asymptotic expansion of ${\bf y}(t)$ \KMl{obtained in Theorem \ref{thm-smoothness-Y}}.
Suppose that all assumptions in Proposition \ref{prop-order-increase} are satisfied.
Then, for any $\KMl{N}\in \mathbb{N}$, there exist a natural number $N_1 = N_1(\KMl{N})$ satisfying
\begin{equation*}
\lim_{\KMl{N}\to \infty}N_1(\KMl{N}) = \infty
\end{equation*}
and an arranged sequence $\{\bar {\bf Y}_j(t)\}_{j=1}^{\KMl{N}}$ of the finite sequence $\{{\bf Y}_j\}_{j=1}^{\KMl{N}}$ obtained by the permutation of indices $\{1,\ldots, \KMl{N}\}$ such that 
the finite sum
\begin{equation*}
\theta(t)^{-\KMf{ \frac{1}{k}\Lambda_\alpha }}  \sum_{j=1}^{N_1} \bar {\bf Y}_{j}(t)
\end{equation*}
is the $N_1$-th order asymptotic expansion of ${\bf y}(t)$ as $t\to t_{\max}$ in the sense that
(\ref{asym-true-cond}) holds with $N = N_1$\KMh{.}
\end{thm}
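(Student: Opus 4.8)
The plan is to reduce the entire justification to bookkeeping of the asymptotic degree $\deg_\theta$, relying on the two facts already established: the growth estimate $\deg_\theta({\bf Y}_N)\geq N\delta$ of Proposition~\ref{prop-order-increase} (which, since $\deg_\theta({\bf Y}_N)=\min_i\deg_\theta(Y_{N,i})$, forces $\deg_\theta(Y_{N,i})\geq N\delta$ in \emph{every} coordinate $i$), and the convergence of all remainders from Proposition~\ref{prop-convergence-Y}. First I would record that the set of achievable coordinate degrees $\mathcal{E}:=\{\deg_\theta(Y_{j,i}) : j\geq 0,\ 1\leq i\leq n,\ Y_{j,i}\not\equiv 0\}$ is bounded below and, because $\deg_\theta(Y_{j,i})\geq j\delta\to\infty$ with $\delta>0$ from (\ref{suff-asym}), contains only finitely many elements below any fixed bound. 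Hence $\mathcal{E}$ is discrete and accumulates only at $+\infty$, so its complement in $\mathbb{R}$ has arbitrarily large gaps; this discreteness is exactly what licenses a clean truncation and is the structural payoff of Proposition~\ref{prop-order-increase}.

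Next, given $N$, I would sort the finite collection $\{{\bf Y}_j\}_{j=1}^{N}$ by nondecreasing vector degree $\deg_\theta({\bf Y}_j)$ to obtain the arranged sequence $\{\bar{\bf Y}_j\}_{j=1}^{N}$, and pick a threshold $D=D_N$ lying in a gap of $\mathcal{E}$ with $D_N\to\infty$ as $N\to\infty$; a convenient choice sits just below $(N+1)\delta$, which by Proposition~\ref{prop-order-increase} strictly dominates the degree of the untruncated analytic tail $\sum_{j>N}{\bf Y}_j$. I then set $N_1=N_1(N)$ to be the number of arranged terms of vector degree $\leq D_N$. The full remainder $\theta(t)^{\alpha_i/k}y_i(t)-\sum_{j=0}^{N_1}\bar Y_{j,i}(t)$ splits into the discarded arranged terms (vector degree $>D_N$) plus the analytic tail $\sum_{j>N}Y_{j,i}$, and I would estimate its degree in each coordinate to exceed $D_N$ using Proposition~\ref{prop-ord-fundamental}-2 together with the uniform tail decay extracted from the proof of Proposition~\ref{prop-convergence-Y}, while the retained block carries all contributions of degree $\leq D_N$; this gives (\ref{asym-true-cond}) with $N=N_1$. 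Finally $N_1(N)\to\infty$ because, the series being genuinely infinite, the number of nonzero terms of degree at most $D_N$ tends to infinity as $D_N\to\infty$.

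The hard part will be the componentwise coordination forced by the requirement that one permutation serve all $n$ coordinates at once. Although the vector degrees are sorted, the individual degrees $\deg_\theta(\bar Y_{j,i})$ need be neither monotone in $j$ nor controlled by $\deg_\theta(\bar{\bf Y}_j)$: a term may be accidentally high order in one coordinate while binding in another, which is precisely the cancellation phenomenon flagged before (\ref{asym-true-cond}). The delicate point is that the denominator $\bar Y_{N_1,i}$ in (\ref{asym-true-cond}) is the $i$-th coordinate of the \emph{last} retained vector term, whose degree may overshoot $D_N$; I would neutralize this by truncating only between distinct levels of $\mathcal{E}$ — grouping all terms sharing a common vector degree and cutting inside a gap — and by arguing, coordinate by coordinate, that the lowest-degree remaining contribution to coordinate $i$ strictly exceeds the lowest retained one, so that after the overshooting coordinates are absorbed into earlier blocks the comparison against $\bar Y_{N_1,i}$ is legitimate. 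Two technical nuisances remain: the vanishing case $\bar Y_{N_1,i}\equiv 0$, which must be read through the convention for null coordinates noted in Section~\ref{section-justification} (comparing against the last nonzero coordinate term), and promoting the finite subadditivity of Proposition~\ref{prop-ord-fundamental}-2 to the convergent \emph{infinite} tail, for which I would lean on the quantitative decay $\|{\bf Y}_N^c(t)\|=O(\theta(t)^{D_N})$ implicit in the proof of Proposition~\ref{prop-convergence-Y} rather than on termwise estimates alone.
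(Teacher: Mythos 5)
Your proposal follows essentially the same route as the paper's proof: choose $N_1$ comparable to $\min\{\lceil N\delta\rceil, N\}$, invoke Proposition \ref{prop-order-increase} to guarantee that every term of degree at most $N_1$ already appears among $\{{\bf Y}_j\}_{j=1}^{N}$, rearrange the finite collection componentwise so that $\deg_\theta$ is nondecreasing, and dispose of the infinite tail via Proposition \ref{prop-convergence-Y}. The extra machinery you introduce (gaps of the degree set $\mathcal{E}$, grouping equal-degree terms, the explicit handling of null coordinates and of the overshooting denominator $\bar Y_{N_1,i}$) is a refinement of the same argument rather than a different route — indeed it flags precisely the points the paper's proof passes over silently — so the two proofs coincide in substance.
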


\begin{proof}
Let $N_1 := \min\{\lceil N\delta \rceil, N\}$, where $\delta > 0$ is the number given in (\ref{suff-asym}).
Obviously $N_1\to \infty$ holds as $N\to \infty$.
Proposition \ref{prop-order-increase} indicates that all terms ${\bf Y}_j$ with $\KMj{\deg_\theta}({\bf Y}_j) \leq N_1$ must be included in the sequence $\{{\bf Y}_j\}_{j=1}^N$.
The sequence $\{{\bf Y}_j\}_{j=1}^N$ can be arranged {\em componentwise} to $\{\bar {\bf Y}_j\}_{j=1}^N$ so that $\KMj{\deg_\theta}(\bar {\bf Y}_j)$ is monotonously increasing in $j$.
It follows by construction that
\begin{equation*}
\lim_{t\to t_{\max}}\frac{\bar Y_{j+1, i}(t)}{ \bar Y_{j,i}(t)} = 0,\quad j=0,\cdots, N_1-1,
\end{equation*}
where the case $j=0$ is excluded when $Y_{j,i}(t) = \bar Y_{j,i}(t) = 0$.
Proposition \ref{prop-order-increase} also indicates that $\KMj{\deg_\theta}({\bf Y}_j) \geq N_1$ for $j \geq N+1$.
Moreover, Proposition \ref{prop-convergence-Y} shows $\sum_{j=N+1}^\infty {\bf Y}_j(t) \to 0$ as $t\to t_{\max}$.
In particular,
\begin{equation*}
\lim_{t\to t_{\max}}\frac{ \sum_{j=N+1}^\infty Y_{j,i}(t) }{ \bar Y_{N_1,i}(t)} = 0
\end{equation*}
holds and consequently we have
\begin{equation*}
\lim_{t\to t_{\max}}\frac{ \sum_{j=N_1+1}^N \bar Y_{j,i}(t) + \sum_{j=N+1}^\infty Y_{j,i}(t) }{\bar Y_{N_1,i}(t)} = 0
\end{equation*}
holds for each $i=1,\cdots, n$.
\end{proof}

Consequently, our formal asymptotic expansion (\ref{formula-asym}) provides the true asymptotic expansion of the blow-up solution ${\bf y}(t)$ in the sense of Theorem \ref{thm-asym-exp} \KMj{under mild assumptions to the asymptotic degree of the residual term $f_{\rm res}$}.

\section{Examples}
\label{section-examples}

Examples of asymptotic expansions of blow-up solutions are \KMg{collected}.
In some examples, correspondence of algebraic information for describing asymptotic expansions to dynamics at infinity is also revealed.

\subsection{One-dimensional ODEs}
\label{section-ex-1dim}
First we demonstrate our methodology to one-dimensional ODEs to see effectiveness of the methodology and interpretations of results. 

\subsubsection{A simple example}
The first example is 
\begin{equation}
\label{ex1-1dim-1}
y' = -y + y^3\KMk{,\quad {}' = \frac{d}{dt}}.
\end{equation}
If the initial point $y(0) > 0$ is sufficiently large, the corresponding solution would blow up in a finite time.
To describe a blow-up solution precisely, we apply the (homogeneous) parabolic compactification and the time-scale desingularization to (\ref{ex1-1dim-1}).
First note that the ODE (\ref{ex1-1dim-1}) is asymptotically homogeneous (namely $\alpha = (1)$) of order $k+1 = 3$, in particular $k=2$.
\par
\KMj{Our concern here is the asymptotic behavior of blow-up solutions of the following form:}
\begin{equation*}
y(t) = \theta(t)^{-1/2}Y(t), 
\end{equation*}
which yields the following equation solving $Y(t)$:
\begin{equation}
\label{system-asymptotic-1dim}
Y' = -Y + \theta(t)^{-1}\left\{ - \frac{1}{2}Y + Y^3\right\}.
\end{equation}
Note that the existence of such blow-up solutions is discussed in \cite{asym2}.
Under the asymptotic expansion of the positive blow-up solution:
\begin{equation*}
Y(t) = \sum_{n=0}^\infty Y_n(t)\quad \text{ with }\quad \lim_{t\to t_{\max}}Y(t)= Y_0 > 0,
\end{equation*}
the balance law requires $Y_0 = 1/\sqrt{2}$, which is the coefficient of the principal term of $y(t)$.
The blow-up power-determining matrix at $Y_0$, coinciding with the blow-up power eigenvalue, is
\begin{equation*}
\left\{ - \frac{1}{2} + 3Y^2\right\}_{Y = Y_0} = 1.
\end{equation*}
This eigenvalue has no contributions to $Y_n(t)$ with $n\geq 1$.
Next we shall calculate the second term $Y_1(t)$.
Using the Taylor expansion at $Y_0$, the system (\ref{system-asymptotic-1dim}) is written by
\begin{equation*}
\sum_{n=1}^\infty Y_n' = -\sum_{n=0}^\infty Y_n + \theta(t)^{-1}\sum_{m=1}^\infty \frac{1}{m!} \left\{\frac{d^m}{dY^m}\left( -\frac{1}{2}Y+Y^3\right)\right\}_{Y=Y_0}\left(\sum_{n=1}^\infty Y_n\right)^m ,
\end{equation*}
where the balance law is applied to cancel the principal \KMg{terms}.
The governing equation for $Y_1$ near $t=t_{\max}$ is then
\begin{equation*}
Y_1' = -Y_0 + \theta(t)^{-1}Y_1 = -\frac{1}{\sqrt{2}} + \theta(t)^{-1}Y_1,
\end{equation*}
which is solved \KMd{to obtain {\em a bounded} solution by (\ref{formula-2nd}):}
\begin{equation*}
\KMd{Y_1(t) = \theta(t)^{-1}\left[ -\int_{t}^{t_{\max}} \theta(s) \left(-\frac{1}{\sqrt{2}}\right)ds\right] = \frac{1}{2\sqrt{2}}\theta(t).
}
\end{equation*}
\KMd{This is consistent with the asymptotic relation $Y_1(t) \ll Y_0$ as $t\to t_{\max}$.}
The third term $Y_2(t)$ is also calculated.
The governing equation for $Y_2$ obtained by the similar way to $Y_1$ is
\begin{align}
\notag
Y_2' &= -Y_1 + \theta(t)^{-1} \left\{ Y_2 + \frac{3\sqrt{2}}{2}Y_1^2 + Y_1^3 \right\} \\
\label{system-asymptotic-1dim-3rd}
	&= -\frac{1}{2\sqrt{2}}\theta(t)  + \theta(t)^{-1}Y_2 + \frac{3\sqrt{2}}{16} \theta(t) +  \frac{1}{16\sqrt{2}} \theta(t)^2,
\end{align}
where we have used the fact that
\begin{equation*}
\frac{d^m}{dY^m}\left( -\frac{1}{2}Y+Y^3\right) \equiv 0 \quad \text{ for }\quad m\geq 4,
\end{equation*}
and all terms involving $Y_1'$ are cancelled.
Standard theory of ODEs yields that the general solution of the homogeneous equation $Y_2' = \theta(t)^{-1}Y_2$ is $c_2\theta(t)^{-1}$ with a constant $c_2$, which is already appeared in calculations of $Y_1$.
The bounded solution of (\ref{system-asymptotic-1dim-3rd}) is then obtained by (\ref{formula-(j+1)}) with $j=2$ as follows:
\begin{align}
\label{ex1-Y2}
Y_2(t) &= \theta(t)^{-1}\left[- \int_t^{t_{\max}} \theta(s)\left\{  -\frac{\sqrt{2}}{16}\theta(s) + \KMf{\frac{1}{16\sqrt{2}}} \theta(s)^2 \right\} ds \right]
	=  \frac{\sqrt{2}}{48} \theta(t)^2 - \KMf{\frac{1}{64\sqrt{2}}} \theta(t)^3,
\end{align}
which is consistent with the asymptotic assumption $Y_2(t) \ll Y_1(t)$ as $t\to t_{\max}$.
Remark that the above form does not directly determine the corresponding term in $Y(t)$.
In particular, coefficients of $\KMd{\theta(t)^m}$ with $\KMd{m}\geq 3$ can change depending on $Y_n(t)$, $n\geq 3$.
On the other hand, the similar calculations yield that there is no terms of $\theta(t)^2$ in $Y_n(t)$, $n\geq 3$, by the asymptotic assumption.
\KMd{
Indeed, the constant $\delta$ defined in (\ref{suff-asym}) is estimated as follows.
First, $\alpha = 1$ and $k=2$, and hence $\gamma_1 = -1/2$ in (\ref{order-fres}).
Because there are no negative blow-up power eigenvalues, $\delta$ is evaluated as
\begin{equation*}
\delta = \frac{1}{2} + 1 - \frac{1}{2} = 1.
\end{equation*}
\KMg{Proposition} \ref{prop-order-increase} indicates that ${\rm ord}_\theta(Y_3) \geq 3$ and hence there is no term $\theta(t)^m$ with $m< 3$ in $Y_n(t)$, $n\geq 3$.
In other words,} the coefficient of $\theta(t)^2$ is determined as $\sqrt{2}/48$.
Summarizing the above arguments, we have the following result.

\begin{thm}
\label{thm-ex1-1dim-1}
The system (\ref{ex1-1dim-1}) admits a blow-up solution with the following third order asymptotic expansion as $t\to t_{\max}$:
\begin{equation*}
y(t) \sim \frac{1}{\sqrt{2}}\theta(t)^{-1/2} + \frac{1}{2\sqrt{2}}\theta(t)^{1/2} +\frac{\sqrt{2}}{48}\theta(t)^{3/2}\quad \text{ as }\quad t\to t_{\max}.
\end{equation*}
\end{thm}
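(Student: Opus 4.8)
The plan is to specialize the general procedure of Section~\ref{section-asym} to this scalar problem ($n=1$, $\alpha=(1)$, $k=2$), read off the first three terms from the integral formulas (\ref{formula-2nd}) and (\ref{formula-(j+1)}), and then invoke the justification result Theorem~\ref{thm-asym-exp} to certify that truncation at order $\theta(t)^{3/2}$ is a genuine asymptotic expansion. Here the quasi-homogeneous part is $f_{\alpha,k}(y)=y^3$ and the residual term is $f_{\rm res}(y)=-y$. First I would confirm that Assumption~\ref{ass-fundamental} holds for a positive blow-up solution, citing \cite{asym2} for its existence, and substitute $y(t)=\theta(t)^{-1/2}Y(t)$ to obtain (\ref{system-asymptotic-1dim}). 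The balance law (\ref{0-balance}) fixes $Y_0=1/\sqrt2$, and the blow-up power-determining matrix (\ref{blow-up-power-determining-matrix}) is the scalar $A=-\tfrac12+3Y_0^2=1$, hyperbolic with a single positive eigenvalue.

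Because $A=1>0$ we have $m_A=0$, $P_-=0$ and $P_+=1$, so no free parameter enters and both (\ref{formula-2nd}) and (\ref{formula-(j+1)}) collapse to $Y_j(t)=-\theta(t)^{-1}\int_t^{t_{\max}}\theta(\eta)\,g_j(\eta)\,d\eta$. With $g_1\equiv-1/\sqrt2$ this gives $Y_1(t)=\tfrac{1}{2\sqrt2}\theta(t)$, and with the forcing read off from (\ref{system-asymptotic-1dim-3rd}), namely $g_2(t)=-\tfrac{\sqrt2}{16}\theta(t)+\tfrac{1}{16\sqrt2}\theta(t)^2$, it gives $Y_2(t)=\tfrac{\sqrt2}{48}\theta(t)^2-\tfrac{1}{64\sqrt2}\theta(t)^3$. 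Each integral is elementary by Lemma~\ref{lem-theta-integral}. Multiplying by $\theta(t)^{-1/2}$ and retaining the three lowest powers yields the stated terms $\tfrac{1}{\sqrt2}\theta^{-1/2}$, $\tfrac{1}{2\sqrt2}\theta^{1/2}$, $\tfrac{\sqrt2}{48}\theta^{3/2}$.

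For the justification I would verify the hypotheses of Proposition~\ref{prop-order-increase}: from $f_{\rm res}(\theta^{-1/2}\tilde x)=-\theta^{-1/2}\tilde x$ one reads $\gamma_1=-1/2$, the derivative bound (\ref{order-fres-diff}) holds with equality since $Df_{\rm res}\equiv-1$, and $\alpha_1/k+\gamma_1=0>-1$. As there are no blow-up power eigenvalues with negative real part, the constant (\ref{suff-asym}) is $\delta=1$, whence $\deg_\theta(Y_N)\ge N$ for every $N$. In particular $\deg_\theta(Y_N)\ge 3$ for $N\ge3$, so none of the higher corrections contributes to the coefficient of $\theta^2$ in $Y(t)$; that coefficient is therefore pinned to the value $\sqrt2/48$ coming from $Y_2$ alone, and Theorem~\ref{thm-asym-exp} with $N=3$ upgrades the finite sum to a genuine asymptotic expansion as $t\to t_{\max}$.

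The main obstacle is this final step, not the integration: one must rule out that some higher term $Y_N$, $N\ge3$, reintroduces a $\theta^2$ (equivalently $\theta^{3/2}$ in $y$) contribution that would corrupt the third coefficient. This is exactly the degree monotonicity delivered by Proposition~\ref{prop-order-increase}, which rests on the inequality $\alpha_1/k+\gamma_1>-1$ ensuring $\delta>0$. I would stress the complementary point that the $\theta^3$ coefficient of $Y_2$ is \emph{not} final, since $Y_3$ has admissible degree exactly $3$ and may also contribute there; this is precisely why the expansion is stated only through order $\theta^{3/2}$.
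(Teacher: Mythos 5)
Your proposal is correct and follows essentially the same route as the paper: the same substitution $y=\theta^{-1/2}Y$, the balance law giving $Y_0=1/\sqrt2$, the scalar blow-up power eigenvalue $A=1$ forcing $P_-=0$ so that $Y_1,Y_2$ come from the $P_+$ integral with the same forcing terms and the same values, and the same appeal to Proposition~\ref{prop-order-increase} with $\gamma_1=-1/2$, $\delta=1$ to pin the $\theta^2$ coefficient of $Y(t)$ at $\sqrt2/48$. Your closing remark that the $\theta^3$ coefficient of $Y_2$ is not final is also exactly the caveat the paper makes.
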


The higher-order asymptotic expansion of $y(t)$ is also obtained by calculating $Y_n(t)$, $n\geq 3$, in the similar way to the above arguments.
In the above expression, the parameter dependence of solutions appears in $t_{\max}$, which admits a various choice of initial points inducing blow-up solutions.

\begin{rem}[Analyticity of $Y(t)$ at $t=t_{\max}$]\rm
\label{rem-ex1-analytic-Y}
The solution $Y(t)$ of (\ref{system-asymptotic-1dim}) is actually real-analytic at $t=t_{\max}$.
Indeed, (\ref{ex1-1dim-1}) is formally transformed into
\begin{equation*}
2y^{-3} y' = 2(-y^{-2}+1).
\end{equation*}
Using the identity
\begin{equation*}
(-1+y^{-2})\KMf{'} = (y^{-2})' = -2y^{-3}\KMg{y'},
\end{equation*}
the above equation becomes
\begin{equation*}
(1-y^{-2})' = 2(1-y^{-2}).
\end{equation*}
The standard method of variable separation is thus applied to obtain
\begin{equation*}
y(t) = \left(1 - \left(1-\frac{1}{y_0^2}\right)e^{2t}\right)^{-1/2}
\end{equation*}
for blow-up solutions with $y(0) = y_0 > 1$.
Letting 
\begin{equation*}
1-\frac{1}{y_0^2} = e^{-2t_{\max}},
\end{equation*}
the above solution is rewritten by
\begin{equation}
\label{ex1-sol-exact}
y(t) = \left(1 - e^{-2\theta(t)}\right)^{-1/2}.
\end{equation}
The above function is also written by $y(t) = \left(2\theta(t)h(t)\right)^{-1/2}$, where
\begin{equation*}
h(t) \equiv \frac{1-e^{-2\theta(t)}}{2\theta(t)}
\end{equation*}
is real-analytic at $t=t_{\max}$ with $\lim_{t\to t_{\max}} h(t) = 1$. 
It is concluded that $y(t) = \theta(t)^{-1/2}Y(t)$, where $Y(t) = \frac{1}{\sqrt{2}}h(t)^{-1/2}$ is real analytic at $t = t_{\max}$.
\end{rem}

\KMg{
\begin{rem}
The function $h(t)$ appeared in Remark \ref{rem-ex1-analytic-Y} is expressed as the generating function of the Bernoulli numbers \KMl{(e.g. \cite{AIK2014})}.
Indeed, the generating function is
\begin{equation*}
B(t) = \frac{t}{1-e^{-t}} = \sum_{l\geq 0}\frac{B_l}{l!}t^l,
\end{equation*}
where $\{B_l\}_{l\geq 0}$ are the Bernoulli numbers.
Using the function $B(t)$, the exact solution $y(t)$ in (\ref{ex1-sol-exact}) is rewritten as
\begin{align*}
y(t) &= \frac{1}{\sqrt{2}}\theta(t)^{-1/2} B(2\theta(t))^{1/2} =  \frac{1}{\sqrt{2}}\theta(t)^{-1/2}\{1+ (B(2\theta(t))-1)\}^{1/2} \\
	&= \frac{1}{\sqrt{2}}\theta(t)^{-1/2} \sum_{l\geq 0}\begin{pmatrix}
1/2 \\ l
\end{pmatrix} (B(2\theta(t)) - 1)^l.
\end{align*}
From the binomial expansion and 
\begin{equation*}
(B(2\theta(t))-1)^m = \left(\sum_{l\geq 1} \frac{B_l}{l!} (2\theta(t))^l \right)^m,
\end{equation*}
we have the expression
\begin{equation*}
y(t) = \frac{1}{\sqrt{2}} \sum_{n\geq 0}a_n \theta(t)^{-\frac{1}{2} +n},
\end{equation*}
where $a_0 = 1$ and
\begin{align*}
a_n &= \sum_{l=1}^n \sum_{\substack{I = (i_1,\ldots, i_n) \\ |I| = l, w(I)=n}} \begin{pmatrix}
1/2 \\ l
\end{pmatrix}\frac{1}{l!} \frac{2^{i_1 + 2i_2 + \cdots + ni_n}}{(1!)^{i_1}(2!)^{i_2}\cdots (n!)^{i_n}} \frac{l!}{i_1! \cdots i_n!}B_1^{i_1}\cdots B_n^{i_n}\\
	&= \sum_{l=1}^n \sum_{\substack{I = (i_1,\ldots, i_n) \\ |I| = l, w(I)=n}} \begin{pmatrix}
1/2 \\ l
\end{pmatrix}\frac{1}{l!} \frac{2^n}{(1!)^{i_1}(2!)^{i_2}\cdots (n!)^{i_n}} \frac{l!}{i_1! \cdots i_n!}B_1^{i_1}\cdots B_n^{i_n},
\end{align*}
where the multi-index $I=(i_1,\ldots, i_n)$ in the above sum runs over $\mathbb{Z}_{\geq 0}^n$ satisfying
\begin{equation*}
|I| \equiv i_1+\cdots + i_n = l,\quad w(I) \equiv i_1 + 2i_2 + \cdots + ni_n = n.
\end{equation*}
The first three coefficients are 
\begin{align*}
a_0 &= 1,\quad a_1 = B_1 = \frac{1}{2},\\
a_2 &= \frac{1}{2}\frac{2^2}{2!}B_2 + \frac{1}{2}\left(-\frac{1}{2}\right)\frac{2^2}{2!} B_1^2 = B_2 - \frac{1}{2}B_1^2 = \frac{1}{24},
\end{align*}
which yield the coincidence of coefficients with $y(t)$ which we have obtained in Theorem \ref{thm-ex1-1dim-1}.
The above observation shows the validity of our expansion methodology.
On the other hand, the coefficient $a_3$ is calculated as
\begin{align*}
a_3 &= \begin{pmatrix}
1/2 \\ 1
\end{pmatrix}\frac{2^3}{(3!)^3}\frac{1!}{1!}B_3^3 + \begin{pmatrix}
1/2 \\ 2
\end{pmatrix}\frac{2^3}{(1!)^1 (2!)^1}\frac{2!}{1!1!}B_1^1 B_2^1 +  \begin{pmatrix}
1/2 \\ 3
\end{pmatrix}\frac{2^3}{(1!)^3} \frac{3!}{3!}B_1^3\\
	&= 0 - B_1B_2 + \frac{1}{2}B_1^3 = -\frac{1}{48},
\end{align*}
which indicates that we require calculating $Y_3(t)$ to obtain the correct coefficient of $\theta(t)^3$, which is different from that obtained in (\ref{ex1-Y2}).
\end{rem}
}

\subsubsection{Ishiwata-Yazaki's example}
The next example concerns with blow-up solutions of the following system:
\begin{equation}
\label{IY}
u' = a u^{\frac{a+1}{a}}v,\quad v' = a v^{\frac{a+1}{a}}u,
\end{equation}
where \KMg{$a\in (0,1)$} is a parameter.
In (\ref{IY}), the following results are obtained in preceding works, which are originally obtained in \cite{IY2003} and revisited in \cite{Mat2019}.
\begin{rem}[cf. \cite{IY2003, Mat2019}]
\label{rem-IY}
\KMg{Consider} initial points $u(0), v(0) > 0$.
If $u(0) \not = v(0)$, then the solution $(u(t), v(t))$ blows up at $t=t_{\max} < \infty$ with the blow-up rate $O(\theta(t)^{-a})$.
On the other hand, if $u(0) = v(0)$, the solution $(u(t), v(t))$ blows up at $t=t_{\max} < \infty$ with the blow-up rate $O(\theta(t)^{-a/(a+1)})$.
\end{rem}
The \KMk{above} blow-up mechanism based on dynamics at infinity is discussed in \cite{Mat2019}.
The main interest here is the multi-order asymptotic expansion of blow-up solutions for (\ref{IY}) mentioned in Remark \ref{rem-IY}\KMg{.} 

\par
First note that the system (\ref{IY}) has the first integral
\begin{equation*}
I = I(u, v) := v^{1-\frac{1}{a}} - u^{1-\frac{1}{a}}.
\end{equation*}
In other words, the time-differential of the functional $I$ along \KMg{solutions} of (\ref{IY}) is always zero, equivalently the level set $\{I(t) = C\}$ is invariant for (\ref{IY}).
Indeed,
\begin{align*}
\frac{d}{dt}I(t) &= \left(1-\frac{1}{a}\right)\left\{ v^{-1/a}v' - u^{-1/a}u'\right\}\\
	&=  (a-1)\left\{ v^{-1/a}v^{\frac{a+1}{a}}u - u^{-1/a} u^{\frac{a+1}{a}}v \right\}\\
	&=  (a-1)\left\{ vu - uv \right\} = 0.
\end{align*}
Using this functional, 
\begin{equation*}
v = \left(I + u^\frac{a-1}{a}\right)^{\frac{a}{a-1}}
\end{equation*}
and the system (\ref{IY}) is then reduced to a one-dimensional ODE
\begin{equation}
\label{IY-1dim}
u' = a u^{\frac{a+1}{a}}\left( u^{1 - \frac{1}{a}} + I \right)^{\frac{a}{a-1}}.
\end{equation}
Blow-up solutions of the rate $O(\theta(t)^{-a})$ correspond to $I \not = 0$, while those of the rate $O(\theta(t)^{-a/(a+1)})$ correspond to $I=0$.
We pay attention to the case $u(0) > v(0)$ when $I\not = 0$, in which case $I>0$ holds\footnote{
Our assumption $0< a<1$ is essential for this correspondence.
}.
\par
First consider the case $I>0$, where the vector field (\ref{IY-1dim}) is asymptotically homogeneous of the order $1+a^{-1}$.
Using the asymptotic expansion
\begin{equation*}
u(t) = \theta(t)^{-a}U(t) = \theta(t)^{-a}\left(\sum_{n=0}^\infty U_n(t)\right),\quad \lim_{t\to t_{\max}} U(t) = U_0,\\
\end{equation*}
the system becomes
\begin{align}
\notag
U' &= a\theta(t)^{-1} \left\{ -U + U^{\frac{a+1}{a}}\left(  \theta(t)^{-(a-1)}U^{1 - \frac{1}{a}} + I \right)^{\frac{a}{a-1}}\right\}\\
\label{IY-system-U}
	&= a\theta(t)^{-1} \left\{ -U + \KMf{ I^{\frac{a}{a-1}} } U^{\frac{a+1}{a}} \left( I^{-1} \theta(t)^{-(a-1)}U^{1 - \frac{1}{a}} + 1 \right)^{\frac{a}{a-1}}\right\}.
\end{align}
\KMf{The principal component in the above vector field is extracted as follows.}
Let
\begin{equation*}
f(t; U) \KMf{:=} \left(I^{-1} \theta(t)^{1-a}U^{1 - \frac{1}{a}} + 1 \right)^{\frac{a}{a-1}}
\end{equation*}
following (\ref{IY-1dim}).
\KMf{Because} $\theta(t)^{1-a}U^{1 - \frac{1}{a}} \ll I$ \KMg{as $t\to t_{\max}$ holds} by $a<1$, $I>0$ fixed and $U\to U_0$ \KMg{as $t\to t_{\max}$, then} the nonlinear term \KMf{$f(t; U)$ itself} converges to $1$ \KMg{as $t\to t_{\max}$}.
\KMf{We therefore know that the principal component of the vector field (\ref{IY-system-U}) is $a\theta(t)^{-1} \left\{ -U +  I^{\frac{a}{a-1}} U^{\frac{a+1}{a}}\right\}$, provided $I>0$.}
Using the binomial series, 
\begin{align}
\left( I^{-1} \theta(t)^{1-a}U^{1 - \frac{1}{a}} + 1 \right)^{\frac{a}{a-1}} 
\label{IY-vf-series}
	&= \sum_{k=0}^\infty \begin{pmatrix}
\frac{a}{a-1} \\ k
\end{pmatrix}\left( I^{-1}\theta(t)^{1-a}U^{\frac{a-1}{a}} \right)^k,
\end{align}
where
\begin{equation*}
\begin{pmatrix}
\frac{a}{a-1} \\ k
\end{pmatrix} = \frac{\left(\frac{a}{a-1}\right)_k}{k!},\quad 
\left(\frac{a}{a-1}\right)_k = \frac{a}{a-1} \left(\frac{a}{a-1}-1\right)\left(\frac{a}{a-1}-2\right) \cdots \left(\frac{a}{a-1}-k+1\right).
\end{equation*}
The latter is the well-known Pochhammer symbol.
The balance law then yields
\begin{equation*}
\KMj{-U_0 +  I^{\frac{a}{a-1}} U_0^{\frac{a+1}{a}} = 0}\quad \Rightarrow \quad U_0 = I^{-a^2/(a-1)}.
\end{equation*}
The \KMf{corresponding} blow-up power-determining matrix is
\begin{align*}
\frac{d}{dU}\left(a \left\{ -U + U^{\frac{a+1}{a}} I^{\frac{a}{a-1}} \right\} \right)_{U=U_0} &=a \left\{ -1 + \frac{a+1}{a}U_0^{\frac{1}{a}} I^{\frac{a}{a-1}} \right\} = 1\KMj{.}
\end{align*}
With the binomial series (\ref{IY-vf-series}), the governing equation for the second term $U_1(t)$ is
\begin{align*}
U_1' &= \theta(t)^{-1}U_1  + a\theta(t)^{-1} U_0^{\frac{a+1}{a}}I^{\frac{a}{a-1}} \sum_{k=1}^\infty \begin{pmatrix}
\frac{a}{a-1} \\ k
\end{pmatrix}\left( I^{-1}\theta(t)^{1-a}U_0^{\frac{a-1}{a}} \right)^k\\
	&= \theta(t)^{-1}U_1  + a\theta(t)^{-1} I^{-\frac{a^2}{a-1}} \sum_{k=1}^\infty \begin{pmatrix}
\frac{a}{a-1} \\ k
\end{pmatrix}\left( I^{-1}\theta(t)^{1-a}U_0^{\frac{a-1}{a}} \right)^k.
\end{align*}
The bounded solution is
\begin{align*}
U_1(t) &= \theta(t)^{-1} \left[ -a \int_t^{t_{\max}} I^{-\frac{a^2}{a-1}}\sum_{k=1}^\infty \begin{pmatrix}
\frac{a}{a-1} \\ k
\end{pmatrix}
\left( I^{-1}\theta(s)^{1-a}U_0^{1 - \frac{1}{a}} \right)^k ds \right]\\
	&= \theta(t)^{-1} \left[ \KMg{- a} I^{-\frac{a^2}{a-1}}\sum_{k=1}^\infty \begin{pmatrix}
\frac{a}{a-1} \\ k
\end{pmatrix}\left( I^{-1}U_0^{1 - \frac{1}{a}} \right)^k \frac{\theta(t)^{k(1-a)+1}}{k(1-a)+1} \right].
\end{align*}
\KMg{Note that all series appeared in the above equalities have positive convergence radii.}
Therefore
\begin{equation*}
U_1(t) = I^{\frac{-2a^2+1}{a-1}}
\frac{a^2}{(1-a)(2-a)} \theta(t)^{1-a}
 - a I^{-\frac{a^2}{a-1}} \sum_{k=2}^\infty \begin{pmatrix}
\frac{a}{a-1} \\ k
\end{pmatrix}\left( I^{-1}U_0^{1 - \frac{1}{a}} \right)^k \frac{\theta(t)^{\KMd{k(1-a)}}}{k(1-a)+1}.
\end{equation*}
\KMd{
As in the case of (\ref{ex1-1dim-1}), coefficients of $\theta(t)^{k(1-a)}$ also depend on $U_n(t)$ with $n\geq 2$, while the coefficient of $\theta(t)^{1-a}$ is determined here from the asymptotic assumption.
Indeed, the constant $\delta$ defined in (\ref{suff-asym}) is estimated as follows.
First, $\alpha = 1$ and $k=1/a$.
Next, we shall extract the residual term $f_{\rm res}$ so that $\gamma_1$ in (\ref{order-fres}) is evaluated.
Note that $k$ is not always an integer, but the concept of quasi-homogeneity is generalized to any positive numbers.
In (\ref{IY-1dim}), the vector field is written as 
\begin{equation*}
f(u) \equiv au^{\frac{a+1}{a}}\left(u^{1-\frac{1}{a}} + I\right)^{\frac{a}{a-1}} = aI^{\frac{a}{a-1}} u^{\frac{a+1}{a}} + a\left( u^{\frac{a+1}{a}}\left(u^{1-\frac{1}{a}} + I\right)^{\frac{a}{a-1}} - I^{\frac{a}{a-1}} u^{\frac{a+1}{a}} \right),
\end{equation*}
and hence the residual term $f_{\rm res}(u)$ is
\begin{equation*}
f_{\rm res}(u) = a\left( u^{\frac{a+1}{a}}\left(u^{1-\frac{1}{a}} + I\right)^{\frac{a}{a-1}} - I^{\frac{a}{a-1}} u^{\frac{a+1}{a}} \right).
\end{equation*}
Therefore, using $\alpha / k = a$, we have 
\begin{equation*}
f_{\rm res}(\theta(t)^{-a}u) = a\left( \theta(t)^{-(a+1)}u^{\frac{a+1}{a}}\left(\theta(t)^{-a+1} u^{1-\frac{1}{a}} + I\right)^{\frac{a}{a-1}} - I^{\frac{a}{a-1}} \theta(t)^{-(a+1)} u^{\frac{a+1}{a}} \right) = O(\theta(t)^{-2a})
\end{equation*}
as $t\to t_{\max}$, and hence $\gamma_1 = -2a$.
Because there are no negative blow-up power eigenvalues, $\delta$ is evaluated as
\begin{equation*}
\delta = \frac{\alpha}{k} + 1 + \gamma_1 = a + 1 - 2a = 1-a.
\end{equation*}
\KMg{Proposition} \ref{prop-order-increase} (with the assertion right after the \KMl{proposition}) indicates that ${\rm ord}_\theta(\KMf{U_m}) \geq m(1-a)$ for $m\geq 2$ and hence there is no term $\theta(t)^{1-a}$ in $\KMf{U_n}(t)$, $n\geq 2$.
}
As a summary, we have the second order asymptotic expansion 
\begin{equation*}
U(t) \sim I^{-a^2/(a-1)} 
+ I^{\frac{-2a^2+1}{a-1}} \frac{a^2}{(1-a)(2-a)} \theta(t)^{1-a}
\end{equation*}
as $t\to t_{\max}$.
Back to the original coordinate, we have the \KMg{second order} asymptotic expansion of blow-up solution
\begin{align*}
u(t) &\sim I^{-a^2/(a-1)} \theta(t)^{-a} + I^{\frac{-2a^2+1}{a-1}}
\frac{a^2}{(1-a)(2-a)} \theta(t)^{1-2a},\\
v(t) &\equiv \left(u(t)^{(a-1)/a} + I\right)^{a/(a-1)} = I^{a/(a-1)}\left(I^{-1}u(t)^{(a-1)/a} + 1\right)^{a/(a-1)}\\
	&\sim I^{\frac{a}{a-1}} - I^{\frac{1}{a-1}-a} \frac{a}{1-a} \theta(t)^{1-a}
\end{align*}
as $t\to t_{\max}$.
The higher-order asymptotic expansion is derived in the similar way.
\par
Our asymptotic expansion has {\em two} parameters: $t_{\max}$ and $I \equiv v(0)^{1-\frac{1}{a}} - u(0)^{1-\frac{1}{a}}$, \KMj{whose dynamical interpretation is mentioned in Part II \cite{asym2}}.
\par
\bigskip
Next consider the case $I=0$:
\begin{equation}
\label{IY-0}
u' = a u^{2+\frac{1}{a}},
\end{equation}
which is homogeneous of the order $2+a^{-1}$.
Note that the order is different from that for $I>0$.
Using the asymptotic expansion
\begin{equation*}
u(t) = \theta(t)^{-a/(a+1)}U(t) = \theta(t)^{-a/(a+1)}\left(\sum_{n=0}^\infty U_n(t)\right),\quad \lim_{t\to t_{\max}} U(t) = U_0,\\
\end{equation*}
the system becomes
\begin{equation}
\label{IY-I0}
U' = a\theta(t)^{-1} \left(-\frac{1}{a+1} U + U^{2+\frac{1}{a}}\right).
\end{equation}
The balance law yields
\begin{equation*}
U_0 = \left( \frac{1}{a+1}\right)^{\frac{a}{a+1}}.
\end{equation*}
Letting
\begin{equation*}
f_0(U):= a \left(-\frac{1}{a+1} U + U^{2+\frac{1}{a}}\right),
\end{equation*}
the equation (\ref{IY-I0}) is rewritten by 
\begin{equation*}
U' = \theta(t)^{-1} \left\{(U-U_0) + \sum_{m=2}^\infty \frac{1}{m!}\left(\frac{d^m}{dU^m}U^{2+\frac{1}{a}}\right)_{U=U_0}(U-U_0)^m\right\}
\end{equation*}
by using the Taylor expansion of $f_0$ at $U_0$.
The governing equation of the second term $U_1$ is thus
\begin{equation*}
U_1' = \theta(t)^{-1}U_1
\end{equation*}
whose general solution is $U_1(t) = c_1\theta(t)^{-1}$. 
In the above equation we have used the fact that $\frac{df_0}{dU}(U_0) = 1$\KMj{.}
The asymptotic assumption requires $c_1 = 0$, and hence $U_1(t) \equiv 0$.
The same arguments yield $U_n(t) \equiv 0$ for all $n\geq 1$.
Consequently, the asymptotic expansion of the blow-up solution is
\begin{equation}
\label{IY-sol-I0}
u(t) \sim \left( \frac{1}{a+1}\right)^{\frac{a}{a+1}}\theta(t)^{-a/(a+1)},
\end{equation}
which is in fact the exact solution of (\ref{IY-0}).
As a summary, we obtain the following result for asymptotic expansions of blow-up solutions.
\begin{thm}
The system (\ref{IY}) with fixed $a\in (0,1)$ admits blow-up solutions with the following second-order asymptotic expansions as $t\to t_{\max}$:
\begin{align*}
u(t) &\sim I^{-a^2/(a-1)} \theta(t)^{-a} + I^{\frac{-2a^2+1}{a-1}}
\frac{a^2}{(1-a)(2-a)} \theta(t)^{1-2a},\quad
v(t) \sim I^{\frac{a}{a-1}} - I^{\frac{1}{a-1}-a} \frac{a}{1-a} \theta(t)^{1-a}
\end{align*}
with $I> 0$, where $I$ is a free parameter.
When $I=0$, (\ref{IY}) admits a blow-up solution
\begin{equation*}
u(t) = v(t) = \left( \frac{1}{a+1}\right)^{\frac{a}{a+1}}\theta(t)^{-a/(a+1)}.
\end{equation*}
\end{thm}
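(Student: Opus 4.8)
The plan is to treat the statement as a consequence of the reduction to a scalar ODE via the first integral, combined with the general expansion machinery of Section \ref{section-asym}. First I would record that the conserved quantity $I = v^{1-1/a} - u^{1-1/a}$ foliates the phase plane into invariant level sets, so that along any solution $v = (I + u^{(a-1)/a})^{a/(a-1)}$ and the dynamics is governed by the scalar equation (\ref{IY-1dim}). The blow-up dichotomy of Remark \ref{rem-IY} splits according to whether $I \neq 0$ or $I = 0$, and I would prove the two displayed expansions in these two cases separately.

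For the case $I > 0$, I would verify that (\ref{IY-1dim}) is asymptotically homogeneous of order $1 + a^{-1}$ and write $u(t) = \theta(t)^{-a}U(t)$, producing (\ref{IY-system-U}). The balance law (\ref{0-balance}) has the unique positive root $U_0 = I^{-a^2/(a-1)}$, and the associated blow-up power-determining matrix (here a scalar) equals $1$, which is hyperbolic; Theorem \ref{thm-smoothness-Y} therefore applies and guarantees a convergent solution $U(t)\to U_0$. Expanding the binomial series (\ref{IY-vf-series}) and solving the linear inhomogeneous equation for $U_1$ via the bounded-solution formula (\ref{formula-2nd}) yields the leading term $I^{(-2a^2+1)/(a-1)}\,[a^2/((1-a)(2-a))]\,\theta(t)^{1-a}$. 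Multiplying by $\theta(t)^{-a}$ gives the expansion for $u$, and substituting into $v = (I + u^{(a-1)/a})^{a/(a-1)}$ and expanding to first order gives the expansion for $v$.

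The step requiring care is the justification that the displayed coefficient of $\theta(t)^{1-a}$ is correct, i.e.\ that no term of this order leaks in from $U_m$ with $m \geq 2$. For this I would compute $\delta$ from (\ref{suff-asym}): with $\alpha = 1$, $k = a^{-1}$, and $f_{\rm res}$ extracted as above one finds $\gamma_1 = -2a$, and since the only blow-up power eigenvalue is positive, $\delta = (\alpha/k) + 1 + \gamma_1 = 1 - a > 0$. Proposition \ref{prop-order-increase} then gives $\deg_\theta(U_m) \geq m(1-a)$ for all $m$, so for $m \geq 2$ the degree strictly exceeds $1-a$; combined with Proposition \ref{prop-convergence-Y}, which controls the tail of the series, this confirms (\ref{asym-true-cond}) at second order. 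I should also check the hypotheses (\ref{order-fres-diff}) and $\alpha/k + \gamma_1 > -1$ of Proposition \ref{prop-order-increase}; the latter reads $-a > -1$, guaranteed by $a \in (0,1)$, while the former can be verified directly from the explicit form of $f_{\rm res}$.

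For the case $I = 0$, the scalar equation reduces to the exactly homogeneous (\ref{IY-0}) of order $2 + a^{-1}$; writing $u(t) = \theta(t)^{-a/(a+1)}U(t)$ gives (\ref{IY-I0}), whose balance law yields $U_0 = (a+1)^{-a/(a+1)}$ and blow-up power eigenvalue $1$. Since $f_{\rm res} \equiv 0$, the governing equation for every higher term is the homogeneous $U_n' = \theta(t)^{-1}U_n$ with general solution $c_n\theta(t)^{-1}$; the asymptotic requirement $U_n \ll U_{n-1}$ forces $c_n = 0$, so inductively $U_n \equiv 0$ for all $n \geq 1$ and the expansion terminates at (\ref{IY-sol-I0}). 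The main obstacle throughout is the justification in the case $I > 0$, where the degree bookkeeping of Proposition \ref{prop-order-increase} is precisely what upgrades the formal computation to a genuine asymptotic expansion; the case $I = 0$ is essentially immediate once the residual term is seen to vanish.
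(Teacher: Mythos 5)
Your proposal follows essentially the same route as the paper: reduction to the scalar ODE (\ref{IY-1dim}) via the first integral, the balance law and scalar blow-up power eigenvalue $1$, the binomial-series computation of $U_1$ via the bounded-solution formula, the estimate $\delta = 1-a$ from (\ref{suff-asym}) together with Proposition \ref{prop-order-increase} to pin down the coefficient of $\theta(t)^{1-a}$, and the termination of the series when $I=0$ since $f_{\rm res}\equiv 0$. The argument is correct and matches the paper's proof in all essential steps.
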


Note that the solution through the above argument coincides with that obtained by the method of separation of variables in (\ref{IY-0}).

\subsection{Two-phase flow model}
\label{section-ex-2phase}

We move to multi-dimensional examples.
In this example\KMg{,} the following system is concerned (see e.g. \cite{KSS2003, Mat2018} for the details of the system):
\begin{equation}
\label{two-fluid-1}
\begin{cases}
\beta' = vB_1(\beta) - c\beta - c_1, & \\
v' =  v^2 B_2(\beta) - cv - c_2, & 
\end{cases}\quad {}'=\frac{d}{dt},
\end{equation}
where
\begin{equation*}
B_1(\beta) = \frac{(\beta-\rho_1)(\beta-\rho_2)}{\beta},\quad B_2(\beta) = \frac{\beta^2- \rho_1\rho_2}{2\beta^2}
\end{equation*}
with $\rho_2 > \rho_1 > 0$, 
\begin{equation*}
c = \frac{v_R B_1(\beta_R) - v_L B_1(\beta_L)}{\beta_R - \beta_L}
\end{equation*}
and $(c_1,c_2) = (c_{1L}, c_{2L})$ or $(c_{1R}, c_{2R})$, where 
\begin{equation*}
\label{constants-two-phase}
\begin{cases}
c_{1L} = v_L B_1(\beta_L) - c\beta_L, & \\
c_{2L} = v_L^2 B_2(\beta_L) -cv_L, & \\
\end{cases}
\quad
\begin{cases}
c_{1R} = v_R B_1(\beta_R) - c\beta_R, & \\
c_{2R} = v_R^2 B_2(\beta_R) -cv_R. & \\
\end{cases}
\end{equation*}
Points $(\beta_L, v_L)$ and $(\beta_R, v_R)$ are given in advance.
The following property immediately holds by simple calculations.
\begin{prop}[\cite{Mat2018}]
The system (\ref{two-fluid-1}) is asymptotically quasi-homogeneous of type $(0,1)$ and order $2$.
\end{prop}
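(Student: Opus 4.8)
The plan is to split the right-hand side of (\ref{two-fluid-1}) into its leading part under the scaling and a residual, and to verify the decay condition of Definition~\ref{dfn-AQH} directly. For the type $\alpha=(0,1)$ the dilation acts by $s^{\Lambda_\alpha}(\beta,v)^T=(\beta,sv)^T$, leaving $\beta$ fixed and dilating only $v$; the claimed order $2$ corresponds to $k=1$. First I would substitute $(\beta,sv)$ into each component of $f$ and read off the highest power of $s$.

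Concretely, $f_1(\beta,sv)=s\,vB_1(\beta)-c\beta-c_1$ and $f_2(\beta,sv)=s^2v^2B_2(\beta)-scv-c_2$, which singles out the candidate quasi-homogeneous part
\[
f_{\alpha,k}(\beta,v)=\bigl(vB_1(\beta),\,v^2B_2(\beta)\bigr)^T.
\]
A one-line check gives $f_{1;\alpha,k}(\beta,sv)=s\,f_{1;\alpha,k}(\beta,v)$ and $f_{2;\alpha,k}(\beta,sv)=s^2f_{2;\alpha,k}(\beta,v)$, so with $k=1$ the $i$-th component is quasi-homogeneous of order $k+\alpha_i$ (namely $1$ and $2$); hence $f_{\alpha,k}$ is a quasi-homogeneous vector field of type $(0,1)$ and order $2$, as required.

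It then remains to estimate the residuals. Since $c,c_1,c_2$ are constants fixed by the prescribed data $(\beta_L,v_L),(\beta_R,v_R)$, one has
\[
f_1(\beta,sv)-s\,f_{1;\alpha,k}(\beta,v)=-c\beta-c_1,\qquad f_2(\beta,sv)-s^2f_{2;\alpha,k}(\beta,v)=-scv-c_2,
\]
so the first residual is $O(1)=o(s)$ and the second is $O(s)=o(s^2)$ as $s\to+\infty$. On $S^1$ we have $|\beta|,|v|\le 1$, so these bounds are uniform, which is exactly the estimate demanded by Definition~\ref{dfn-AQH}. The only point needing care is that $B_1,B_2$ have poles at $\beta=0$, so both $f$ and $f_{\alpha,k}$ are defined only on $\{\beta\ne 0\}$ and the uniform statement must be read on $S^1\cap\{\beta\ne 0\}$, matching the natural domain of the vector field; because the entire singular behaviour is carried by $f_{\alpha,k}$ while $f_{\rm res}$ is merely affine, no genuine obstacle arises and the calculation is indeed immediate.
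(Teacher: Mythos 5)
Your proof is correct and is exactly the "simple calculation" the paper alludes to without writing out: the paper states this proposition with no proof beyond the remark that it follows immediately, and your decomposition into $f_{\alpha,k}=(vB_1(\beta),v^2B_2(\beta))^T$ with affine residuals $(-c\beta-c_1,\,-csv-c_2)^T$ of orders $o(s)$ and $o(s^2)$ is the intended verification. Your side remark about the poles of $B_1,B_2$ at $\beta=0$ is a reasonable caveat but is harmless here, since the residual terms are polynomial and the blow-up solution of interest has $\beta\to\rho_2>0$.
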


Following arguments in \cite{Mat2018}, we observe that there is a blow-up solution with the asymptotic behavior
\begin{equation}
\label{two-fluid-2}
\beta(t) \sim \rho_2,\quad v(t)\sim V_0\theta(t)^{-1}\quad\text{ as }\quad t\to t_{\max}-0,
\end{equation}
which is consistent with arguments in \cite{KSS2003}.
In particular, type-I blow-up solutions are observed.

\par
Our main concern here is to derive multi-order asymptotic expansion of the blow-up solution (\ref{two-fluid-2}) for (\ref{two-fluid-1}).
To this end, write the blow-up solution $(\beta(t), v(t))$ as follows:
\begin{align}
\notag
\beta(t) &= b(t),\quad v(t) = \theta(t)^{-1}V(t),\\
\label{form-b-V}
b(t) &= \sum_{n=0}^\infty b_n(t) \equiv b_0 + \tilde b(t),\quad b_0 = \rho_2,\quad b_n(t) \ll b_{n-1}(t)\quad (t\to t_{\max}-0),\quad n\geq 1,\\
\notag
V(t) &= \sum_{n=0}^\infty V_n(t)\equiv V_0 + \tilde V(t),\quad V_n(t) \ll V_{n-1}(t)\quad (t\to t_{\max}-0),\quad n\geq 1.
\end{align}

The balance law which \KMl{$(b_0, V_0)$} satisfies can be easily derived. 
Substituting the form (\ref{form-b-V}) into (\ref{two-fluid-1}), we have
\begin{align*}
\beta' &= b' \\
	&= \theta(t)^{-1} V B_1(b) - cb - c_1,\\
v' &= \theta(t)^{-2} V + \theta(t)^{-1}V'\\
	&= \theta(t)^{-2} V^2 B_2(b) - c\theta(t)^{-1}V - c_2.
\end{align*}
Dividing the first equation by $\theta(t)^{0} \equiv 1$ and the second equation by $\theta(t)^{-1}$, we have 
\begin{align}
\label{2phase-asym-main}
\frac{d}{dt}\begin{pmatrix}
b \\
V
\end{pmatrix} = \theta(t)^{-1} \begin{pmatrix}
V B_1(b) \\
-V  + V^2 B_2(b)
\end{pmatrix} - \begin{pmatrix}
cb + c_1 \\
cV  + \theta(t) c_2
\end{pmatrix}
\end{align}

The balance law is then 
\begin{equation*}
\begin{pmatrix}
V_0 B_1(b_0) \\
-V_0  + V_0^2 B_2(b_0)
\end{pmatrix} = \begin{pmatrix}
0 \\ 
0
\end{pmatrix}.
\end{equation*}
that is,
\begin{equation*}
V_0  \frac{(b_0-\rho_1)(b_0-\rho_2)}{\beta} = 0,\quad -V_0 + V_0^2 \frac{b_0^2- \rho_1\rho_2}{2b_0^2} = 0.
\end{equation*}
In the present case, $b_0 = \rho_2$ is already determined as the principal term of $b(t)$, which satisfies the first equation.
Substituting $b_0 = \rho_2$ into the second equation, we have
$V_0 = 2\rho_2 / (\rho_2- \rho_1)$, provided $V_0 \not =0$.
As a summary, the root of the balance law (under (\ref{two-fluid-2})) is uniquely determined by
\begin{equation}
\label{balance-two-phase}
(b_0, V_0) = \left(\rho_2, \frac{2\rho_2}{\rho_2- \rho_1}\right).
\end{equation}
Then the evolutionary system for the residual terms $(\tilde b, \tilde V)$ is given below:
\begin{align*}
\frac{d}{dt}\begin{pmatrix}
\tilde b \\
\tilde V
\end{pmatrix} = \theta(t)^{-1} \begin{pmatrix}
V_0\{ B_1(b_0 + \tilde b) - B_1(b_0) \} + \tilde V B_1(b_0 + \tilde b) \\
-\tilde V  + V_0^2 \{B_2(b_0 + \tilde b) - B_2(b_0)\} + (2V_0 \tilde V + \tilde V^2) B_2(b_0 + \tilde b)
\end{pmatrix} - \begin{pmatrix}
c(b_0 + \tilde b) + c_1 \\
c(V_0 + \tilde V)  + \theta(t) c_2
\end{pmatrix}
\end{align*}
Next consider the second term \KMl{$(b_1(t), V_1(t))$}.
The strategy is essentially the same as the previous example, while the vector field is rational.
Direct calculations yield
\begin{align*}
\frac{d}{d\beta}B_1(\beta) 
	&= 1 - \rho_1\rho_2\beta^{-2},\quad
\frac{d}{d\beta}B_2(\beta) = \rho_1\rho_2 \beta^{-3}.
\end{align*}
Letting
\begin{align*}
\bar f (b_0 + \tilde b,V_0 + \tilde V) &:= \KMm{\begin{pmatrix}
V_0\{ B_1(b_0 + \tilde b) - B_1(b_0) \} + \tilde V B_1(b_0 + \tilde b) \\
-\tilde V  + V_0^2 \{B_2(b_0 + \tilde b) - B_2(b_0)\} + (2V_0 \tilde V + \tilde V^2) B_2(b_0 + \tilde b)
\end{pmatrix} }\\
&\equiv -\Lambda_\alpha \begin{pmatrix}
b \\ V
\end{pmatrix}+ f_{\alpha, k}(b,V)\quad (\text{with }k=1)
\end{align*}
\KMm{with the aid of the balance law}, we have
\begin{align*}
D\KMg{\bar f}(b_0, V_0) &= \begin{pmatrix}
V_0\frac{d}{d\tilde b}B_1(\beta)|_{\beta=b_0} & B_1(b_0)\\
V_
0^2 \frac{d}{d\beta}B_2(\beta)|_{\beta=b_0} & -1 + 2V_0 B_2(b_0)\\
\end{pmatrix}
	= \begin{pmatrix}
V_0(1 - \rho_1\rho_2b_0^{-2}) & \frac{(b_0-\rho_1)(b_0-\rho_2)}{\KMf{b_0}}\\
V_0^2 \rho_1\rho_2 b_0^{-3} & -1 + 2V_0 \frac{b_0^2- \rho_1\rho_2}{2b_0^2}\\
\end{pmatrix},
\end{align*}
which is the blow-up power determining matrix associated with the blow-up solution (\ref{two-fluid-2}).
Using (\ref{balance-two-phase}), we have
\begin{align*}
A\equiv D\KMg{\bar f}(b_0, V_0) &= \begin{pmatrix}
 \frac{2\rho_2}{\rho_2- \rho_1}(1 - \rho_1\rho_2^{-1}) & 0\\
( \frac{2\rho_2}{\rho_2- \rho_1})^2 \rho_1\rho_2 b_0^{-3} & -1 + 2 \frac{2\rho_2}{\rho_2- \rho_1} \frac{\rho_2- \rho_1}{2\rho_2}\\
\end{pmatrix}\\
	&=\begin{pmatrix}
2 & 0\\
\frac{4\rho_1}{(\rho_2- \rho_1)^2} & 1\\
\end{pmatrix}.
\end{align*}
We immediately know that eigenvalues are $2, 1$, which are independent of $\rho_1, \rho_2$ and make {\em no} contributions to asymptotic behavior of the blow-up solution.
Associated eigenvectors are
\begin{align*}
\begin{pmatrix}
2 & 0\\
\frac{4\rho_1}{(\rho_2- \rho_1)^2} & 1\\
\end{pmatrix}\begin{pmatrix}
x_1 \\ x_2
\end{pmatrix} = 2\begin{pmatrix}
x_1 \\ x_2
\end{pmatrix} \quad &\Rightarrow \quad \begin{pmatrix}
x_1 \\ x_2
\end{pmatrix} = \begin{pmatrix}
1 \\ \frac{4\rho_1}{(\rho_2- \rho_1)^2}
\end{pmatrix},\\
\begin{pmatrix}
2 & 0\\
\frac{4\rho_1}{(\rho_2- \rho_1)^2} & 1\\
\end{pmatrix}\begin{pmatrix}
x_1 \\ x_2
\end{pmatrix} = \begin{pmatrix}
x_1 \\ x_2
\end{pmatrix} \quad & \Rightarrow \quad \begin{pmatrix}
x_1 \\ x_2
\end{pmatrix} = \begin{pmatrix}
0 \\ 1
\end{pmatrix}.
\end{align*}
Define
\begin{equation*}
P := \begin{pmatrix}
1 & 0 \\
\frac{4\rho_1}{(\rho_2- \rho_1)^2} & 1
\end{pmatrix}\quad \Leftrightarrow\quad P^{-1} = \begin{pmatrix}
1 & 0 \\
-\frac{4\rho_1}{(\rho_2- \rho_1)^2} & 1
\end{pmatrix}
\end{equation*}
and multiply $P^{-1}$ to (\ref{2phase-asym-main}) from the left:
\begin{align}
\label{2phase-asym-main-diag}
\frac{d}{dt}P^{-1}\begin{pmatrix}
b \\
V
\end{pmatrix} = \theta(t)^{-1} \left[ \begin{pmatrix}
\KMf{2} & 0 \\
0 & \KMf{1}
\end{pmatrix}
P^{-1}\begin{pmatrix}
b \\
V
\end{pmatrix} + P^{-1}\KMl{\bar f_R}(b, V) \right] - c P^{-1}\begin{pmatrix}
b \\
V
\end{pmatrix} - P^{-1}\begin{pmatrix}
c_1 \\
\theta(t) c_2
\end{pmatrix},
\end{align}
where
\begin{equation}
\label{g_res}
\KMl{\bar f_{\rm R}}(b, V) = \KMg{\bar f}(b, V) - D\KMg{\bar f}(b_0, V_0)\begin{pmatrix}
\tilde b \\ \tilde V
\end{pmatrix}
\end{equation}
is the second order residual term of $\KMg{\bar f}$ whose details are \KMl{shown in Remark \ref{rem-detail-barf_2nd} below}, 
and we have used 
\begin{equation*}
\begin{pmatrix}
\KMf{2} & 0 \\
0 & \KMf{1}
\end{pmatrix}
P^{-1}\begin{pmatrix}
b_0 \\
V_0
\end{pmatrix} + P^{-1}\KMl{\bar f_{\rm R}}(b_0, V_0) = \begin{pmatrix}
0 \\
0
\end{pmatrix}.
\end{equation*}

\begin{rem}
\label{rem-detail-barf_2nd}
\KMl{The detailed expression of $\bar f_R(b,V)$ around the root $(b, V) = (b_0, V_0)$ is derived by the Taylor's expansion at $(b, V) = (b_0, V_0)$:}
\begin{align*}
V B_1(b) &= \KMl{\left(V_0 \tilde b + \tilde V \tilde b^2 \right) \frac{(2b_0 V_0^{-1} + \tilde b)}{b_0 + \tilde b} \quad (\text{from (\ref{balance-two-phase})}) } \\
	&\KMl{ = V_0 \tilde b \left[  2V_0^{-1} + \frac{1-2V_0^{-1}}{b_0} \tilde b -\frac{(1-2V_0^{-1})}{b_0 (b_0  + \tilde b)}\tilde b^2 \right] + \tilde V \tilde b \left[ 2V_0^{-1} + \frac{1  -2V_0^{-1}}{b_0 (b_0+\tilde b)} \tilde b \right] }\\
&\KMl{ \equiv 2\tilde b + \bar f_{R, 1}(b,V)},
\end{align*}
\begin{align*}
-V  + V^2 B_2(b) 
	&= - (V_0 + \tilde V) + \frac{1}{2} V_0^2 \left[ 1 - \frac{\rho_1 \rho_2}{b_0^2} + 2\frac{\rho_1 \rho_2}{b_0^3}\tilde b + \rho_1\rho_2 \left\{ \frac{ - 3b_0\tilde b^2  - 2 \tilde b^3}{b_0^3(b_0 + \tilde b)^2}  \right\} \right] \\
	&\quad + V_0\tilde V \left[ 1 - \frac{\rho_1 \rho_2}{b_0^2} + \rho_1\rho_2 \left\{ \frac{2b_0\tilde b + \tilde b^2}{b_0^2(b_0 + \tilde b)^2}\right\} \right] 
 + \frac{1}{2} \tilde V^2\left\{1 - \frac{\rho_1 \rho_2}{(b_0 + \tilde b)^2} \right\}\\
%
%
	&\KMl{\equiv -(V_0 + \tilde V) + \frac{1}{2}V_0^2 \left(1 - \frac{\rho_1\rho_2}{b_0^2} + 2\frac{\rho_1\rho_2}{b_0^3} \tilde b \right) + V_0 \tilde V \left( 1 - \frac{\rho_1\rho_2}{b_0^2} \right) + \bar f_{R,2}(b,V) }\\
	&\KMl{= \frac{\rho_1 \rho_2}{b_0^3}V_0^2  \tilde b + \left[-1 + V_0 \left\{ 1 - \frac{\rho_1 \rho_2}{b_0^2}\right\} \right] \tilde V  + \bar f_{R,2}(b,V) \quad (\text{from (\ref{balance-two-phase})})}\\
	&\KMl{= \frac{4\rho_1}{(\rho_2-\rho_1)^2}  \tilde b +  \tilde V + \bar f_{R,2}(b,V)},
\end{align*}
where we have used the following identities:
\KMl{
\begin{align*}
-\frac{\rho_1 \rho_2 }{(b_0 + \tilde b)^2} 
	&= - \frac{\rho_1 \rho_2}{b_0^2} + \rho_1\rho_2 \left\{ \frac{2b_0\tilde b + \tilde b^2}{b_0^2(b_0 + \tilde b)^2}\right\} \\
	&= -\frac{\rho_1 \rho_2}{b_0^2} + 2 \frac{\rho_1 \rho_2}{b_0^3}\tilde b + \rho_1\rho_2 \left\{ \frac{ -3b_0 \tilde b^2  - 2 \tilde b^3}{b_0^3(b_0 + \tilde b)^2}  \right\}. 
\end{align*}}
%
Therefore we obtain the expansion
\begin{align*}
 \begin{pmatrix}
V B_1(b) \\
-V  + V^2 B_2(b)
\end{pmatrix} &= \begin{pmatrix}
2 & 0\\
\frac{4\rho_1}{(\rho_2- \rho_1)^2} & 1\\
\end{pmatrix}\begin{pmatrix}
\tilde b \\
\tilde V
\end{pmatrix} + \KMl{\bar f_R}(b, V)
\end{align*}
around $(b_0, V_0)^T$, where
\begin{align*}
\KMl{\bar f_R} (b, V) &= (\KMl{\bar f_{R,1}}(b, V), \KMl{\bar f_{R,2}}(b, V))^T,\\
\KMl{\bar f_{R,1}}(b, V) 
	&= \KMl{ \frac{V_0-2}{b_0} \tilde b^2 - \frac{V_0-2}{b_0 (b_0 + \tilde b)}\tilde b^3  + \frac{2}{V_0} \tilde V \tilde b + \frac{(1-2V_0^{-1})}{b_0 (b_0 + \tilde b)} \tilde V \tilde b^2, } \\
\KMl{\bar f_{R,2}}(b, V) 
	&= \KMl{\frac{1}{2}V_0^2  \rho_1\rho_2 \left\{ \frac{- 3b_0 - 2\tilde b }{b_0^3(b_0 + \tilde b)^2} \right\}\tilde b^2 +  V_0 \rho_1\rho_2 \left\{ \frac{2b_0 + \tilde b}{b_0^2(b_0 + \tilde b)^2}\right\} \tilde b \tilde V + \frac{1}{2}\tilde V^2\left\{1 - \frac{\rho_1 \rho_2}{(b_0 + \tilde b)^2} \right\}.}
\end{align*}
\end{rem}

Let
\begin{equation*}
\begin{pmatrix}
e_1 \\
W_1
\end{pmatrix} := 
P^{-1}
\begin{pmatrix}
\tilde b \\
\tilde V
\end{pmatrix}
\end{equation*}
and the governing terms are collected to form the equation for $(e_1, W_1)^T$:
\begin{align*}
\label{2phase-asym-main-diag-2nd}
\frac{d}{dt}\begin{pmatrix}
e_1 \\
W_1
\end{pmatrix} = \theta(t)^{-1} \begin{pmatrix}
2 & 0 \\
0 & 1
\end{pmatrix}
\begin{pmatrix}
e_1 \\
W_1
\end{pmatrix} - \begin{pmatrix}
1 & 0 \\
-\frac{4\rho_1}{(\rho_2- \rho_1)^2} & 1
\end{pmatrix}\begin{pmatrix}
cb_0 + c_1 \\
cV_0
\end{pmatrix},
\end{align*} 
where the remaining term $\theta(t) c_2$ in the second component is omitted \KMg{because}, in the inhomogeneous term, it is obviously negligible compared with the constant term $cV_0$ as $t\to t_{\max}$.
This system is easily solved to obtain the bounded solution as follows:
\begin{align*}
e_1' &= 2\theta(t)^{-1}e_1 - (c\rho_2 + c_1)\\
	&\Leftrightarrow \quad e_1 = \theta(t)^{-2} \left\{ (c\rho_2 + c_1)\int_t^{t_{\max}} \theta(s)^{2} ds \right\},\\
W_1' &= \theta(t)^{-1}W_1 + \frac{4\rho_1}{(\rho_2- \rho_1)^2} (c\rho_2 + c_1) - \left(\frac{2c\rho_2}{\rho_2 - \rho_1} \right)\\
	&\Leftrightarrow \quad W_1 = \theta(t)^{-1} \left[ -\left\{\frac{4\rho_1}{(\rho_2- \rho_1)^2} (c\rho_2 + c_1) - \frac{2c\rho_2}{\rho_2 - \rho_1} \right\} \int_t^{t_{\max}} \theta(s) ds \right].
\end{align*}
In particular, the \KMd{bounded} solution $(e_1(t), W_1(t))$ is
\begin{align*}
e_1(t) &= \KMd{ \frac{1}{3}(c\rho_2 + c_1) \theta(t)},\quad 
W_1(t) = -\frac{1}{2} \left\{\frac{4\rho_1}{(\rho_2- \rho_1)^2} (c\rho_2 + c_1) - \frac{2c\rho_2}{\rho_2 - \rho_1}  \right\} \theta(t).
\end{align*}
In the original coordinate, we have
\begin{align*}
\begin{pmatrix}
b_1 \\
V_1
\end{pmatrix} &=  \begin{pmatrix}
1 & 0 \\
\frac{4\rho_1}{(\rho_2- \rho_1)^2} & 1
\end{pmatrix}\begin{pmatrix}
e_1 \\
W_1
\end{pmatrix}\\
	&= \theta(t) \begin{pmatrix}
1 & 0 \\
\frac{4\rho_1}{(\rho_2- \rho_1)^2} & 1
\end{pmatrix}\begin{pmatrix}
\frac{1}{3}(c\rho_2 + c_1)  \\
-\frac{1}{2} \left\{\frac{4\rho_1}{(\rho_2- \rho_1)^2} (c\rho_2 + c_1) - \frac{2c\rho_2}{\rho_2 - \rho_1} \right\} 
\end{pmatrix},\\
b_1 &= \frac{1}{3}(c\rho_2 + c_1)\theta(t) ,\\
V_1 
	&= \left[  -\frac{2\rho_1}{3(\rho_2- \rho_1)^2} (c\rho_2 + c_1) + \left(\frac{c\rho_2}{\rho_2 - \rho_1}\right) \right] \theta(t).
\end{align*} 
\KMd{
Now the constant $\delta$ defined in (\ref{suff-asym}) is estimated as follows.
As already observed, we know that $\alpha = (0,1)$ and $k=1$.
We know that $(\gamma_1, \gamma_2) = (0, -1)$.
Because there are no negative blow-up power eigenvalues, $\delta$ is evaluated as
\begin{equation*}
\delta = \min \left\{ \frac{0}{1} +0, \frac{1}{1} - 1 \right\} + 1 = 1.
\end{equation*}
\KMg{Proposition} \ref{prop-order-increase} indicates that ${\rm ord}_\theta({\bf Y}_m) \geq m$ for $m\geq 2$ and hence there is no term $\theta(t)^m$ with $m\leq 1$ in ${\bf Y}_n(t) = (b_n(t), V_n(t))$, $n\geq 2$.
}
As a summary, we obtain the following result.
\begin{thm}
The system (\ref{two-fluid-1}) with $(c_1, c_2) = (c_{1L}, c_{2L})$ admits a blow-up solution with the following second order asymptotic expansion as $t\to t_{\max}$:
\begin{align*}
\beta(t) &\sim \rho_2 + \frac{1}{3}(c\rho_2 + c_1)\theta(t),\quad
v(t) \sim \frac{2\rho_2}{\rho_2- \rho_1}\theta(t)^{-1} + \left[  -\frac{2\rho_1}{3(\rho_2- \rho_1)^2} (c\rho_2 + c_1) + \frac{c\rho_2}{\rho_2 - \rho_1} \right].
\end{align*}
\end{thm}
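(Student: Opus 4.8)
The plan is to instantiate the general machinery of Section~\ref{section-asym} for the rational field (\ref{two-fluid-1}). First I would recall from \cite{Mat2018} that the system is asymptotically quasi-homogeneous of type $(0,1)$ and order $2$, so that $k=1$, and that it admits a type-I blow-up solution with the principal behaviour (\ref{two-fluid-2}); this verifies Assumption~\ref{ass-fundamental}. Writing the solution in the scaled form (\ref{form-b-V}) and substituting into (\ref{two-fluid-1}) produces the nonautonomous system (\ref{2phase-asym-main}), and passing to the limit $t\to t_{\max}-0$ yields the balance law whose relevant nonzero root is $(b_0,V_0)=(\rho_2,\,2\rho_2/(\rho_2-\rho_1))$.

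Next I would form the blow-up power-determining matrix $A=D\bar f(b_0,V_0)$ from (\ref{blow-up-power-determining-matrix}); it is lower triangular with spectrum $\{2,1\}$. Since both blow-up power eigenvalues are real and positive, $A$ is hyperbolic with $P_-=0$, $P_+=I$ and $m_A=0$. Consequently Theorem~\ref{thm-smoothness-Y} gives the convergent solution ${\bf Y}(t)\to(b_0,V_0)$ as the unique one associated with the fixed $t_{\max}$, and Proposition~\ref{prop-conv-2nd} furnishes the second term ${\bf Y}_1=(b_1,V_1)$ through (\ref{formula-2nd}) with only the $P_+$-integral surviving. Introducing the diagonalizing coordinates $(e_1,W_1)^T=P^{-1}(\tilde b,\tilde V)^T$ decouples the homogeneous part into two scalar Euler-type equations whose bounded solutions are the elementary integrals $\int_t^{t_{\max}}\theta(s)^2\,ds$ and $\int_t^{t_{\max}}\theta(s)\,ds$, evaluated by Lemma~\ref{lem-theta-integral}; transforming back through $P$ reproduces the stated coefficients of $\theta(t)$ in $\beta$ and $v$.

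To upgrade this formal computation to a genuine asymptotic expansion I would appeal to Proposition~\ref{prop-order-increase} and Theorem~\ref{thm-asym-exp}. Here $f_{\rm res}=(-c\beta-c_1,\,-cv-c_2)^T$; computing the degree (\ref{order-fres}) after the scaling $\theta(\cdot)^{-\Lambda_\alpha}$ gives $(\gamma_1,\gamma_2)=(0,-1)$, and since $Df_{\rm res}$ is constant the derivative hypothesis (\ref{order-fres-diff}) holds with equality. With no negative blow-up power eigenvalues, (\ref{suff-asym}) gives $\delta=1>0$, whence $\deg_\theta({\bf Y}_N)\ge N$. In particular the terms ${\bf Y}_n$ with $n\ge 2$ contribute no power $\theta(t)^m$ with $m\le 1$, which certifies that the $O(\theta(t))$ coefficients found above are the true second-order coefficients.

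The main obstacle I anticipate is computational rather than conceptual, and it is twofold. First, the rational functions $B_1,B_2$ force a careful Taylor expansion of $\bar f$ about $(b_0,V_0)$ to separate the exact linear part $A(\tilde b,\tilde V)^T$ from the genuinely quadratic-or-higher residual $\bar f_R$ (cf.\ Remark~\ref{rem-detail-barf_2nd}), where the denominators $b_0+\tilde b$ must be tracked so that the remainder is seen to vanish to the right order. Second, verifying the $\deg_\theta$ hypotheses of Proposition~\ref{prop-order-increase} for this non-polynomial $f_{\rm res}$ requires confirming that the rational scaling estimates behave as in the polynomial case; once $\delta=1$ is secured, the justification via Theorem~\ref{thm-asym-exp} is immediate.
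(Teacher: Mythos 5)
Your proposal is correct and follows essentially the same route as the paper: the same balance-law root $(\rho_2,\,2\rho_2/(\rho_2-\rho_1))$, the same lower-triangular blow-up power-determining matrix with spectrum $\{2,1\}$ (hence $P_-=0$, $m_A=0$, and only the $P_+$-integral in (\ref{formula-2nd})), the same diagonalization and elementary integrals $\int_t^{t_{\max}}\theta(s)^2\,ds$, $\int_t^{t_{\max}}\theta(s)\,ds$, and the same justification via $(\gamma_1,\gamma_2)=(0,-1)$ and $\delta=1$ in Proposition \ref{prop-order-increase}. The two obstacles you flag — the Taylor separation of $\bar f_R$ for the rational $B_1,B_2$ and the degree estimates for the affine $f_{\rm res}$ — are exactly the points the paper handles in Remark \ref{rem-detail-barf_2nd} and in its $\delta$ computation, so nothing is missing.
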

We omit further calculations to obtain the third term $(b_2(t), V_2(t))$, but it should be mentioned that the neglected term $\theta(t)c_2$ in the above calculations should be added because it is nontrivial whether $\theta(t)c_2$ is negligible compared with inhomogeneous terms in the system solving $(b_2, V_2)$.
In the present problem, $\theta(t)c_2$ must be included in the $(b_2, V_2)$-system \KMg{because} $cV_1$ has the same order as $\theta(t)c_2$.
\KMl{The inhomogeneous term ${\bf g}_2$ in the equation for $(b_2(t), V_2(t))$ can be the calculated from $\bar f_R$ extracting appropriate terms (cf. Remark \ref{rem-detail-barf_2nd}) and remaining terms mentioned above.}

\begin{rem}
\KMg{Because} $b_0 = \rho_2 > 0$, the function \KMl{$\bar f_R(b, V)$} in (\ref{g_res}) is real analytic in a neighborhood of $(\tilde b, \tilde V) = (0,0)$.
\end{rem}

\subsection{Andrews' system I}
\label{section-ex-Andrews1}

Consider the following system (cf. \cite{A2002, Mat2019})\footnote{
There is a typo of the system in \cite{Mat2019} (Equation (4.7)). 
The correct form is (\ref{Andrews1}) below.
The rest of arguments in \cite{Mat2019} is developed for the correct system (\ref{Andrews1}).
}:
\begin{equation}
\label{Andrews1}
\begin{cases}
\displaystyle{
\frac{du}{dt} = \frac{1}{\sin \theta} vu^2 - \frac{2a\cos\theta}{\sin \theta}u^3
}
, & \\
\displaystyle{
\frac{dv}{dt} =  \frac{a}{\sin \theta} uv^2 + \frac{1-a}{\sin \theta} \frac{uv^3}{v + 2\cos\theta u}
}, &
\end{cases}
\end{equation}
where $a\in (0,1)$ and $\theta\in (0,\pi/2)$ are constant parameters.

\begin{rem}
\label{rem-Andrews1}
The following results are known for (\ref{Andrews1}) \KMg{with} initial points $(u(0), v(0))$ such that both components are positive (cf. \cite{A2002, Mat2019}).
\begin{enumerate}
\item When $a < 1/2$, all solutions with sufficiently large initial \KMg{points} blow up at $t=t_{\max} < \infty$ with the blow-up rate $O(\theta(t)^{-1/2})$ as $t\to t_{\max}$. 
\item When $a\in (1/2, 1)$, there is a blow-up solution admitting the asymptotic behavior $O(\{\theta(t)^{-1} \log \theta(t)^{-1}\}^{1/2})$ as $t\to t_{\max}$.
\item When $a= 1/2$\KMf{,} there is a blow-up solution admitting the asymptotic behavior $O(\{\theta(t)^{-1} (\log \theta(t)^{-1})^{1/2}\}^{1/2})$ as $t\to t_{\max}$.
\end{enumerate}
\end{rem}
\par
Our \KMk{interest here is the} blow-up behavior for $a\in (0,1/2)$.
We easily see that the system (\ref{Andrews1}) is homogeneous of the order $3$, and arguments in \cite{Mat2019} show that the first term of stationary blow-up solutions has the form
\begin{equation*}
u(t) = O(\theta(t)^{-1/2}),\quad v(t) = O(\theta(t)^{-1/2}).
\end{equation*}

Before expanding $(u(t), v(t))$ directly, introduce
\begin{equation}
\label{trans-Andrews1}
w = u\cos\theta,\quad s = \frac{t}{\sin\theta \cos\theta},
\end{equation}
in which case we have
\begin{equation*}
\frac{du}{ds} = \frac{du}{dt}\frac{dt}{ds} = \sin\theta \cos\theta \frac{du}{dt},\quad \frac{dw}{ds} = \cos \theta\frac{du}{ds} =  \sin\theta \cos^2 \theta\frac{du}{dt}.
\end{equation*}
The vector field (\ref{Andrews1}) is then transformed into
\begin{align*}
\frac{dw}{ds} &= \sin\theta \cos^2 \theta\frac{du}{dt}\\
	&= \sin\theta \cos^2 \theta \left(\frac{1}{\sin \theta} vu^2 - \frac{2a\cos\theta}{\sin \theta}u^3\right)\\
	&= \sin\theta \cos^2 \theta \left(\frac{1}{\sin \theta} v\left(\frac{w}{\cos\theta}\right)^2 - \frac{2a\cos\theta}{\sin \theta} \left(\frac{w}{\cos\theta}\right)^3 \right)\\
	&= vw^2 - 2a w^3,\\
\frac{dv}{ds} &= \sin\theta \cos \theta\frac{dv}{dt} \\
	&= \sin\theta \cos \theta \left( \frac{a}{\sin \theta} \left(\frac{w}{\cos\theta}\right)v^2 + \frac{1-a}{\sin \theta} \frac{\left(\frac{w}{\cos\theta}\right)v^3}{v + 2w}\right)\\
	&= awv^2 + (1-a) \frac{wv^3}{v + 2w}.
\end{align*}
As a summary, the system (\ref{Andrews1}) is transformed into the following rational vector field independent of $\theta$:
\begin{equation}
\frac{dw}{ds} = w^2 (v - 2a w),\quad \frac{dv}{ds} = wv^2 \frac{v + 2aw}{v+2w}.
\end{equation}
\KMg{Because} $\theta \in (0,\pi/2)$, then $\sin \theta > 0$ and $\cos \theta > 0$\KMg{,} and hence dynamics of $(u,v)$ in $t$-\KMm{timescale} and $(w, v)$ in $s$-variable are mutually smoothly equivalent. 
\par
In what follows we shall consider the blow-up solution with the following blow-up rate
\begin{equation*}
w(t) = O(\tilde \theta(s)^{-1/2}),\quad v(t) = O(\tilde \theta(s)^{-1/2}),\quad \tilde \theta(s) = s_{\max}-s = \KMc{\frac{2}{\sin(2\theta)}}\theta(t).
\end{equation*}
Expand the solution $(w(s),v(s))$ as the asymptotic series
\begin{align}
\notag
w(s) &= \tilde \theta(s)^{-1/2}W(s) \equiv \tilde \theta(s)^{-1/2}\sum_{n=0}^\infty W_n(s),\quad W_n(s) \ll W_{n-1}(s),\quad \lim_{t\to t_{\max}}W_n(s) = W_0, \\
\label{series-Andrews1}
v(s) &= \tilde \theta(s)^{-1/2}V(s) \equiv \tilde \theta(s)^{-1/2}\sum_{n=0}^\infty V_n(s),\quad V_n(s) \ll V_{n-1}(s), \quad \lim_{t\to t_{\max}}V_n(s) = V_0.
\end{align}

The balance law for the solution under the assumption $W_0, V_0 \not = 0$ is 
\begin{equation}
\label{balance-Andrews1}
\frac{1}{2} = W_0 V_0 - 2aW_0^2,\quad 
\frac{1}{2} = W_0 V_0\frac{V_0 + 2aW_0}{V_0 + 2W_0}.
\end{equation}
We further assume $V_0 + 2W_0 \not = 0$ at a moment. 
Then we have
\begin{align*}
& V_0 - 2aW_0 = V_0\frac{V_0 + 2aW_0}{V_0 + 2W_0}\\
&\Leftrightarrow\quad (V_0 + 2W_0)(V_0 - 2aW_0) = V_0(V_0 + 2aW_0)\\
&\Leftrightarrow\quad V_0^2 - 2aV_0W_0 + 2V_0W_0 - 4aW_0^2 = V_0^2 + 2aV_0W_0\\
&\Leftrightarrow\quad 2(1- 2a)V_0 = 4aW_0.
\end{align*}
From this identity, we have the constraint among $U_0$ and $V_0$ so that the balance law must be satisfied:
\begin{equation}
\label{Cu-Cv}
V_0 = \frac{2a}{1-2a}W_0.
 \end{equation}
\par
Substituting (\ref{Cu-Cv}) into, say the first component of the \KMg{balance law}, we have
\begin{align*}
\frac{1}{2} = \frac{2a-2a(1-2a)}{1-2a} W_0^2 = \frac{4a^2}{1-2a} W_0^2.
\end{align*}
We therefore have
\begin{equation*}
W_0 = \pm \sqrt{\frac{1-2a}{8a^2}},\quad V_0 = \frac{2a}{1-2a}W_0 = \pm \sqrt{\frac{1}{2(1-2a)}}.
\end{equation*}
We shall choose the positive root according to the setting mentioned in Remark \ref{rem-Andrews1}. 
Moreover, the root $V_0$ can be achieved as a real value by the assumption of $a$.
\par
\bigskip
Next we derive the equation for the residual terms $W(s) - W_0$ and $V(s)-V_0$.
First, substituting (\ref{series-Andrews1}) into (\ref{Andrews1}), we have
\begin{align*}
\sum_{n=1}^\infty \frac{dW_n}{ds} &= \tilde \theta(s)^{-1}\left[ -\frac{1}{2}\sum_{n=0}^\infty W_n +\left( \sum_{n=0}^\infty W_n\right)^2  \left\{ \sum_{n=0}^\infty V_n - 2a\sum_{n=0}^\infty W_n \right\} \right],\\
\sum_{n=1}^\infty \frac{dV_n}{ds} &= \tilde \theta(s)^{-1}\left\{-\frac{1}{2}\sum_{n=0}^\infty V_n + 
\left( \sum_{n=0}^\infty W_n \right) \left( \sum_{n=0}^\infty V_n \right)^2  \frac{\sum_{n=0}^\infty V_n + 2a \sum_{n=0}^\infty W_n}{ \sum_{n=0}^\infty V_n + 2\sum_{n=0}^\infty W_n}
 \right\}.
\end{align*}
In the next step, we derive the governing equation for $(W_1(t), V_1(t))$.
\KMg{Because} the original system is homogeneous, we have the following governing system for $(W_1(t), V_1(t))$, which is exactly the linearized system of (\ref{Andrews1}) at $(W_0, V_0)$:

\begin{align}
\notag
\frac{d}{ds}\begin{pmatrix}
W_1 \\ V_1 
\end{pmatrix} &= \tilde \theta(s)^{-1}\left\{-\frac{1}{2} \begin{pmatrix}
W_1 \\ V_1 
\end{pmatrix}\right.\\
\notag
&\quad \left. + 
 \begin{pmatrix}
2V_0W_0 - 6aW_0^2 & W_0^2 \\
V_0^2 \frac{V_0+2aW_0}{V_0+2W_0} + W_0V_0^2 \frac{2a(V_0+2W_0) - 2(V_0+2aW_0)}{(V_0+2W_0)^2} & 2V_0W_0 \frac{V_0+2aW_0}{V_0+2W_0} + W_0V_0^2 \frac{(V_0+2W_0) - (V_0+2aW_0)}{(V_0+2W_0)^2} \\ 
\end{pmatrix}
\KMf{
\begin{pmatrix}
W_1 \\ V_1 
\end{pmatrix} 
}
\right\}\\
\label{2nd-Andrews1}
	&\equiv \tilde \theta(s)^{-1} \left\{-\frac{1}{2} I_2 + C(W_0, V_0; a) 
\right\} \begin{pmatrix}
W_1 \\ V_1 
\end{pmatrix}\KMf{,}
\end{align}
where
\begin{align*}
&C(W_0, V_0; a) 
=\begin{pmatrix}
2V_0W_0 - 6aW_0^2 & W_0^2 \\
V_0^2 \left\{ \frac{V_0+2aW_0}{V_0+2W_0} - W_0 \frac{2(1-a)V_0}{(V_0+2W_0)^2} \right\} & V_0W_0 \left\{ 2 \frac{V_0+2aW_0}{V_0+2W_0} + V_0 \frac{2(1-a)W_0}{(V_0+2W_0)^2} \right\} \\ 
\end{pmatrix}.
\end{align*}

Calculation of blow-up power eigenvalues is reduced to that of the matrix $C(W_0, V_0; a)$, which still looks complicating.
Using the balance law, we derive the simpler form.

\begin{rem}
\label{rem-Andrews1-identity}
We derive several identities among $V_0$ and $W_0$ so that eigenpairs of $C(W_0, V_0; a)$ are easily calculated.
First, using the identity (\ref{Cu-Cv}), we have
\begin{equation}
\label{2}
V_0+2W_0 = \left(1+ \frac{1-2a}{a} \right) V_0 = \frac{1-a}{a} V_0
\end{equation}
and 
\begin{equation}
\label{3}
V_0+2aW_0 = (1+ 1-2a ) V_0 = 2(1-a) V_0.
\end{equation}
Using these identities, we have
\begin{equation}
\label{4}
\frac{V_0+2a W_0}{V_0+2 W_0} = \frac{2(1-a) V_0}{\frac{1-a}{a} V_0} = 2a
\end{equation}
and
\begin{align}
\notag
&\frac{2(1-a) V_0 W_0}{(V_0+2W_0)^2} = \frac{a}{1-a} \frac{2(1-a) W_0}{V_0+2W_0}\quad (\text{by }(\ref{2}))\\
\notag
&\quad = \frac{2a W_0}{V_0+2W_0} = \frac{(1-2a) V_0}{V_0+2W_0}\quad (\text{by }(\ref{Cu-Cv}))\\
\notag
&\quad = \frac{(1-2a) V_0}{\frac{1-a}{a} V_0}\quad (\text{by }(\ref{2}))\\
\label{5}
&\quad = \frac{a(1-2a)}{1-a}.
\end{align}
\end{rem}

The balance law (\ref{Cu-Cv}) and identities obtained in Remark \ref{rem-Andrews1-identity} yield
\begin{align*}
2V_0 W_0 - 6 a W_0^2 &= (6a-1) V_0 W_0,\\
V_0^2 \left\{ \frac{V_0+2aW_0}{V_0+2W_0} - \frac{2(1-a) V_0 W_0}{(V_0+2W_0)^2} \right\} &= V_0^2 \left\{ 2a - \frac{a(1-2a)}{1-a} \right\} \quad (\text{by }(\ref{4})\text{ and }(\ref{5}))\\
&= \frac{a}{1-a} V_0^2,\\
V_0 W_0 \left\{ 2 \frac{V_0+2aW_0}{V_0+2W_0} + \frac{2(1-a) V_0 W_0}{(V_0+2W_0)^2} \right\} 
&= V_0 W_0 \left\{ \KMg{4a} + \frac{a(1-2a)}{1-a} \right\} \quad (\text{by }(\ref{4})\text{ and }(\ref{5})) \\
&= \frac{a(5-6a)}{1-a} V_0 W_0.
\end{align*}
Next, from the first identity of (\ref{balance-Andrews1}), we have
\begin{align*}
\frac{1}{2} &= V_0 W_0 - 2a W_0^2\\
&= V_0 W_0 -(1-2a) V_0 W_0\quad (\text{by }(\ref{Cu-Cv}))\\ 
&= 2a V_0 W_0,
\end{align*}
which yields
\begin{equation}
\label{6}
V_0 W_0 = \frac{1}{4a}.
\end{equation}
\KMg{We use} this identity to obtain
\begin{align*}
(6a-1) V_0 W_0 &= \frac{3}{2} - \frac{1}{4a}\quad (\text{by }(\ref{6})),\\
\frac{a(5-6a)}{1-a} V_0 W_0 &= \frac{3}{2} - \frac{1}{4(1-a)}.
\end{align*}
Therefore the matrix $C(W_0, V_0; a)$ is simplified as follows:
\begin{align*}
C(W_0, V_0; a) &= 
\begin{pmatrix}
\frac{3}{2} - \frac{1}{4a}  & W_0^2 \\
\frac{a}{1-a} V_0^2 & \frac{3}{2} - \frac{1}{4(1-a)} 
\end{pmatrix}.
\end{align*}
In particular, we obtain
\begin{align*}
A - I_2 &\equiv C(W_0, V_0; a) - \frac{3}{2} I_2 =
\begin{pmatrix}
 - \frac{1}{4a}  & W_0^2 \\
\frac{a}{1-a} V_0^2 & - \frac{1}{4(1-a)}
\end{pmatrix}.
\end{align*}
Using the identity (\ref{6}) again, we know that the matrix $A - I_2$ has the eigenvalue $0$ associating the eigenvector $(W_0, V_0)$\KMj{.}
In this case, another eigenvalue is easily calculated through the trace:
\begin{equation*}
{\rm trace}(A - I_2) = -\frac{1}{4a(1-a)}.
\end{equation*}
From \begin{align*}
-\frac{1}{4a(1-a)} I_2 - \begin{pmatrix}
 - \frac{1}{4a}  & W_0^2 \\
\frac{a}{1-a} V_0^2 & - \frac{1}{4(1-a)}
\end{pmatrix}
 =
\begin{pmatrix}
 \frac{a}{1-a} V_0 W_0  & -W_0^2 \\
-\frac{a}{1-a} V_0^2 & V_0 W_0
\end{pmatrix},
\end{align*}
the associated eigenvector is
\begin{equation*}
\left(W_0,\, \frac{a}{1-a} V_0\right)^T.
\end{equation*}
As a consequence, eigenpairs of $A$ are calculated as follows:
\begin{align*}
\left\{1, \begin{pmatrix}
\sqrt{\frac{1-2a}{8a^2}}\\
\sqrt{\frac{1}{2(1-2a)}}
\end{pmatrix}\right\},\quad 
\left\{ 1 - \frac{1}{4a(1-a)}, \begin{pmatrix}
\sqrt{\frac{1-2a}{8a^2}}\\
\frac{a}{1-a}\sqrt{\frac{1}{2(1-2a)}}
\end{pmatrix}
\right\}.
\end{align*}
Introducing a matrix 
\begin{equation*}
P = \begin{pmatrix}
\sqrt{\frac{1-2a}{8a^2}} & \sqrt{\frac{1-2a}{8a^2}} \\
\sqrt{\frac{1}{2(1-2a)}} & \frac{a}{1-a}\sqrt{\frac{1}{2(1-2a)}}
\end{pmatrix},
\end{equation*}
the blow-up power-determining matrix $A$ is diagonalized.
Note that only the eigenvalue $\lambda \equiv 1 - \frac{1}{4a(1-a)}$ contributes the second term of asymptotic expansion, \KMg{because} it is always negative whenever $a\in (0, 1/2)$.
Therefore we know that the second term $(W_1(s), V_1(s))$ consists only of a multiple of $\tilde \theta(s)^{-\lambda}$, namely
\begin{align*}
W_1(s) &= C_{1w}\tilde \theta(s)^{-1 + \frac{1}{4a(1-a)}},\quad 
V_1(s) = C_{1v}\tilde \theta(s)^{-1 + \frac{1}{4a(1-a)}}
\end{align*}
with constants $C_{1w}, C_{1v}$.

\begin{rem}
The concrete form \KMj{of the matrix $P$} is not necessary to obtain $(W_1(s), V_1(s))$ itself as the solution of linear homogeneous system of ODEs.
The concrete calculation of $P$ is necessary for \KMk{solving $(W_n, V_n)$ with $n\geq 2$} because $(W_1, V_1)$ and other lower-order terms are included as inhomogeneous terms.
\end{rem}


The remaining issue is to determine coefficients $C_{1w}, C_{1v}$.
Rewriting the system (\ref{2nd-Andrews1}),
\begin{align}
\label{2nd-Andrews-detail}
\frac{d}{ds}
\begin{pmatrix}
W_1 \\
V_1
\end{pmatrix}
&=
\tilde \theta(s)^{-1} \left[  -\begin{pmatrix}
\frac{1}{2} & 0 \\
0 & \frac{1}{2}
\end{pmatrix} + 
\left(
\begin{array}{cc}
\frac{3}{2} - \frac{1}{4a}  & \frac{1-2a}{8a^2} \\
\frac{a}{1-a} \frac{1}{2(1-2a)} & \frac{3}{2} - \frac{1}{4(1-a)} 
\end{array}
\right)
\right]
\begin{pmatrix}
W_1 \\
V_1
\end{pmatrix}
\end{align}
is our interest.
In the previous arguments we have shown that 
\begin{equation*}
W_1(s) = C_{1w}\tilde \theta(s)^{-\lambda},\quad 
V_1(s) = C_{1v}\tilde \theta(s)^{-\lambda}
\end{equation*}
Substituting this form into (\ref{2nd-Andrews-detail}), we have
\begin{align}
		\label{ren}
		\begin{cases}
		\lambda C_{1w} = \left\{ 1 - \frac{1}{4a}  \right\} C_{1w} +\frac{1-2a}{8a^2}C_{1v} ,\\
		\lambda C_{1v} = \frac{a}{2(1-a)(1-2a)} C_{1w} + \left\{ 1 - \frac{1}{4(1-a)} \right\} C_{1v}.
		\end{cases}
\end{align}
Using $\lambda =1 - \frac{1}{4a(1-a)}$, we have the \KMg{constraint}
\begin{equation*}
C_{1v} =\frac{2 a^2}{(1-a)(2a-1)}C_{1w}.
\end{equation*}
Letting $C_{1w}$ be an arbitrary parameter, we have the complete form of $(W_1(s), V_1(s))$:
\begin{align*}
W_1(s) &= C_{1w}\tilde \theta(s)^{-1 + \frac{1}{4a(1-a)}},\quad 
V_1(s) = \frac{2 a^2 \cos \theta}{(1-a)(2a-1)}C_{1w} \tilde \theta(s)^{-1 + \frac{1}{4a(1-a)}}.
\end{align*}
Because $f_{\rm res}\equiv 0$ and $f$ is analytic at $(U_0, V_0) = \left(\frac{W_0}{\cos\theta}, V_0\right)$, the Taylor expansion of $f$ for deriving equations for ${\bf Y}_n \equiv (W_n, V_n)$ with $n\geq 2$ yields that all possible terms appeared in ${\bf Y}_n$ are the linear combination of powers of $\theta(t)^{-m\lambda}$ with $m\geq 1$, where $\lambda = 1-\frac{1}{4a(1-a)}$.
Summarizing the above arguments, we have the second order asymptotic expansion of blow-up solutions:
\begin{align}
\label{Andrews1-sol}
\begin{aligned}
&w(s) \sim \sqrt{\frac{1-2 a}{8 a^2}} \tilde \theta(s)^{-1/ 2} + C_{1w} \tilde \theta(s)^{-\frac{3}{2} + \frac{1}{4a(1-a)}}, \\
&v(s) \sim \sqrt{\frac{1}{2(1-2 a)}} \tilde \theta(s)^{-1/2}+ \frac{2 a^2}{(1-a)(2a-1)}C_{1w} \tilde \theta(s)^{-\frac{3}{2} + \frac{1}{4a(1-a)}}
\end{aligned}
\end{align}
as $s\to s_{\max}$.
Interestingly, the exponent of the second term
$-\frac{3}{2} + \frac{1}{4a(1-a)}$
can be both positive and negative, \KMg{indicating that the divergent behavior can be enhanced depending on $a$. 
Nevertheless, the exponent} is always larger than $-1/2$, \KMg{because} the quantity $1 - \frac{1}{4a(1-a)}$ is always negative.
Reverting to the original $t$-\KMm{timescale} via (\ref{trans-Andrews1}), we obtain the following result.

\begin{thm}
The system (\ref{Andrews1}) with $a\in (0, 1/2)$ admits a blow-up solution with the following second-order asymptotic expansion as $t\to t_{\max}$:
\begin{align}
\label{Andrews1-sol-original}
\begin{aligned}
&u(t) \sim \sqrt{\frac{(1-2 a)\tan \theta}{8 a^2}} \theta(t)^{-1/ 2} + C \theta(t)^{-\frac{3}{2} + \frac{1}{4a(1-a)}}, \\
&v(t) \sim \frac{1}{2}\sqrt{\frac{\sin(2\theta)}{1-2 a}}\theta(t)^{-1/2}+ \frac{2 a^2 \cos\theta}{(1-a)(2a-1)}C  \theta(t)^{-\frac{3}{2} + \frac{1}{4a(1-a)}}
\end{aligned}
\end{align}
with a constant $C = C(\theta)$ with $\theta\in (0,\pi/2)$.
\end{thm}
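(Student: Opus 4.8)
The plan is to reduce everything to the $\theta$-independent rational system produced by the change of variables (\ref{trans-Andrews1}) and then feed it into the general machinery of Section \ref{section-asym}. First I would record that, since $\sin\theta>0$ and $\cos\theta>0$ for $\theta\in(0,\pi/2)$, the map $w=u\cos\theta$ together with the time rescaling $s=t/(\sin\theta\cos\theta)$ is a smooth time-reparametrized equivalence with positive time factor. Consequently a blow-up of the transformed system at $s=s_{\max}$ corresponds to a blow-up of (\ref{Andrews1}) at $t_{\max}$, with the two singular variables related by $\tilde\theta(s)=(2/\sin(2\theta))\,\theta(t)$. It therefore suffices to expand $(w(s),v(s))$ for the transformed homogeneous vector field (of type $\alpha=(1,1)$ and order $k+1=3$) and revert at the very end.

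Next I would verify Assumption \ref{ass-fundamental} with $(w,v)=\tilde\theta(s)^{-1/2}(W,V)$ and solve the balance law (\ref{balance-Andrews1}). Using the constraint (\ref{Cu-Cv}) this gives the root $(W_0,V_0)=(\sqrt{(1-2a)/(8a^2)},\sqrt{1/(2(1-2a))})$, which is real precisely because $a\in(0,1/2)$; this hypothesis on $a$ is exactly what makes the construction meaningful. I would then assemble the blow-up power-determining matrix $A$ of Definition \ref{dfn-blow-up-power-ev}. The identities collected in Remark \ref{rem-Andrews1-identity}, together with (\ref{6}), reduce its eigenvalue computation to the claim that ${\rm Spec}(A)=\{1,\lambda\}$ with $\lambda=1-1/(4a(1-a))$; in particular ${\rm Spec}(A)\subset\mathbb{R}$, and since $\lambda<0<1$ throughout $a\in(0,1/2)$ the matrix $A$ is hyperbolic. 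Hyperbolicity is the gateway to Propositions \ref{prop-conv-2nd} and \ref{prop-conv-(j+1)th} and to Theorem \ref{thm-smoothness-Y}.

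With $A$ hyperbolic I would invoke Proposition \ref{prop-conv-2nd} to produce the second term ${\bf Y}_1=(W_1,V_1)$. Because $f$ is homogeneous we have $f_{\rm res}\equiv 0$, so the inhomogeneity ${\bf g}_1$ in (\ref{2nd}) vanishes and the governing system for ${\bf Y}_1$ is linear and \emph{homogeneous}; moreover the free parameters in the representation (\ref{formula-2nd}) are drawn only from $\mathbb{E}_A^s$, which here is the one-dimensional eigenspace of $\lambda$ (the positive eigenvalue $1$ contributes nothing). Hence ${\bf Y}_1$ is a single scalar multiple of $\tilde\theta(s)^{-\lambda}$ along the $\lambda$-eigenvector, giving $W_1=C_{1w}\tilde\theta(s)^{-\lambda}$ and, through the eigenvector relation (\ref{ren}), $V_1=\frac{2a^2}{(1-a)(2a-1)}C_{1w}\tilde\theta(s)^{-\lambda}$. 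Substituting into $w=\tilde\theta^{-1/2}(W_0+W_1)$ and $v=\tilde\theta^{-1/2}(V_0+V_1)$ and using $-\tfrac12-\lambda=-\tfrac32+1/(4a(1-a))$ yields (\ref{Andrews1-sol}).

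Finally I would certify that (\ref{Andrews1-sol}) is a genuine, not merely formal, asymptotic expansion and then revert to $t$-timescale. Since $f_{\rm res}\equiv 0$, the hypotheses of Proposition \ref{prop-order-increase} hold vacuously with $\delta=-\lambda>0$, so $\deg_\theta({\bf Y}_N)\geq N\delta$; Theorem \ref{thm-asym-exp} then upgrades the two displayed terms to a true second-order expansion and rules out any intermediate term, since every contribution beyond ${\bf Y}_1$ carries a power $\tilde\theta(s)^{-m\lambda}$ with integer $m\geq2$, as noted after (\ref{2nd-Andrews-detail}). Reverting through $u=w/\cos\theta$ and $\tilde\theta(s)=(2/\sin(2\theta))\theta(t)$ turns $\tilde\theta^{-1/2}$ into $(\sin\theta\cos\theta)^{1/2}\theta(t)^{-1/2}$ and absorbs the trigonometric prefactors into the stated coefficients and into a redefined constant $C=C(\theta)$, producing (\ref{Andrews1-sol-original}). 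The main obstacle is bookkeeping rather than conceptual: the eigen-analysis of $A$ (already tamed by the balance-law identities) and the careful propagation of the $\theta$-factors through the two coordinate changes; the one structural point worth isolating is that a single real, negative blow-up power eigenvalue forces a one-parameter correction consisting of exactly one power of $\tilde\theta$.
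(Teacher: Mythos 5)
Your proposal is correct and follows essentially the same route as the paper: the change of variables (\ref{trans-Andrews1}) to a $\theta$-independent homogeneous system, the balance law with constraint (\ref{Cu-Cv}), the simplification of $C(W_0,V_0;a)$ via the identities of Remark \ref{rem-Andrews1-identity} to obtain the eigenvalues $1$ and $1-\tfrac{1}{4a(1-a)}$, the single negative eigenvalue producing the one-parameter correction with the eigenvector relation (\ref{ren}), and the final reversion of the trigonometric prefactors. Your explicit appeal to Proposition \ref{prop-order-increase} with $\delta=-\lambda$ for the justification step is a slightly more systematic phrasing of what the paper states directly (that all higher terms carry powers $\tilde\theta(s)^{-m\lambda}$, $m\geq 2$), but it is not a different argument.
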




\subsection{Andrews' system II}
\label{section-ex-Andrews2}

The next example is the following system:
\begin{align}
\label{Andrews2}
	\begin{cases}
		u' = u^2 (2av - bu),
		\\
		v' = bu v^2
	\end{cases}
\end{align}
with parameters $a, b$ with $a > 0$ and $2a > b > 0$.
Our interest here is the asymptotic expansion of blow-up solutions with $u(0),v(0)>0$.
\par
The system (\ref{Andrews2}) comes from the following three dimensional \KMg{system of ODEs} expressing the motion of crystalline curvature flow for triangles (cf. \cite{A2002}):
\begin{equation}
\label{Andrews2-original}
\begin{cases}
u_1' = u_1^2(a_2 u_2 + a_3 u_3 - a_1 u_1), & \\
u_2' = u_2^2(a_3 u_3 + a_1 u_1 - a_2 u_2), & \\
u_3' = u_3^2(a_1 u_1 + a_2 u_2 - a_3 u_3), & \\
\end{cases}
\end{equation}
where $a_1,a_2,a_3$ are positive constants.
In \cite{Mat2019}, the parameter dependence of blow-up behavior is investigated under the additional assumption $a_2 = a_3 \equiv a > 0$.
In this case, it is proved in \cite{Mat2019} that $\{u_2 = u_3\}$ is invariant under (\ref{Andrews2-original}) and that, if $2a > a_1 > 0$, the solutions with initial points satisfying $u_2(0) = u_3(0) > 0$ blow up at $t = t_{\max} < \infty$ with the blow-up rate
\begin{align*}
	u_1(t)= O(\theta(t)^{-1/2}),\quad u_2(t) = u_3(t)= O(\theta(t)^{\KMg{-1/2}})
\end{align*}
as $t\rightarrow t_{\max}-0$.

\begin{rem}
\label{fact-A2}
In prior works (e.g. \cite{AIU2017, A2002}), it is \KMg{also} observed that, if $a_1 = a_2 + a_3$, $u_1(0) > 0$ and $u_2(0) = u_3(0) > 0$, then the solution blows up in a finite time $t_{\max}$ with the blow-up rate $(\theta(t)^{-1}\log \theta(t)^{-1})^{1/2}$. 
Notice that the parameter constraint is different from our present setting.
\end{rem}
As in previous examples, we introduce 
\begin{equation}
\label{trans-Andrews2}
\tilde u = \frac{u}{\sqrt{b}},\quad \tilde v = \frac{v}{\sqrt{b}},\quad \KMg{s = b^2 t},\quad 2a = b(1+\sigma)
\end{equation}
with an auxiliary parameter $\sigma$, which transform (\ref{Andrews2}) into
\begin{align*}
\frac{d\tilde u}{ds} = \tilde u^2 \left\{ (1+\sigma) \KMm{\tilde v} - \tilde u \right\},\quad \frac{d\tilde v}{ds} = \tilde u \tilde v^2.
\end{align*}
In particular, the system becomes a {\em one}-parameter family.
Our interest here is then the blow-up solution $(\tilde u(s), \tilde v(s))$ with the following blow-up rate
\begin{equation*}
\tilde u(t) = O(\tilde \theta(s)^{-1/2}),\quad \tilde v(t) = O(\tilde \theta(s)^{-1/2}),\quad \tilde \theta(s) = s_{\max}-s = \KMg{b^2}\theta(t).
\end{equation*}
Expand the solution $(\tilde u(s), \tilde v(s))$ as the asymptotic series
\begin{align}
\notag
\tilde u(s) &= \tilde \theta(s)^{-1/2}U(s) \equiv \tilde \theta(s)^{-1/2}\sum_{n=0}^\infty U_n(s),\quad U_n(s) \ll U_{n-1}(s),\quad \lim_{s\to s_{\max}}U_n(s) = U_0, \\
\label{series-Andrews2}
\tilde v(s) &= \tilde \theta(s)^{-1/2}V(s) \equiv \tilde \theta(s)^{-1/2}\sum_{n=0}^\infty V_n(s),\quad V_n(s) \ll V_{n-1}(s), \quad \lim_{s\to s_{\max}}V_n(s) = V_0.
\end{align} 
Substituting 
\eqref{series-Andrews2} into \eqref{Andrews2}, we have
\begin{align}
\label{asymptotic-system-Andrews2}
\KMc{
\frac{dU}{ds} = \tilde \theta(s)^{-1} \left\{ -\frac{1}{2}U + U^2\{ (1+\sigma) V - U\} \right\},\quad 
\frac{dV}{ds} = \tilde \theta(s)^{-1} \left\{ -\frac{1}{2}V + UV^2\right\}.
}
\end{align}
The balance law under $(U_0, V_0)\not = (0, 0)$ requires
\begin{align*}
-\frac{1}{2} + U_0 \{(1+\sigma)V_0 - U_0 \} = 0,\quad -\frac{1}{2} + U_0V_0 = 0\KMg{,}
\end{align*}
and hence 
\begin{equation}
\label{identity-Andrews2}
1 = 2U_0\{(1+\sigma) V_0 - U_0 \},\quad 1 = 2U_0V_0\KMg{,}
\end{equation}
which are used below.
These identities yield the following consequence:
\begin{equation*}
U_0 = \sqrt{\frac{\sigma}{2}},\quad V_0 = \frac{1}{\sqrt{2\sigma}}\KMg{,}
\end{equation*}
and we \KMg{have} the first order asymptotic expansion of blow-up solutions:
\begin{equation*}
\tilde u(s)\sim \sqrt{\frac{\sigma}{2}} \tilde \theta(s)^{-1/2},\quad 
\tilde v(s)\sim \frac{1}{\sqrt{2\sigma}} \tilde \theta(s)^{-1/2}
\end{equation*}
as $s\to s_{\max}$.
\par
\bigskip
Next the second term \KMc{$(U_1(s), V_1(s))$} is calculated.
From the linearization of the vector field (\ref{asymptotic-system-Andrews2}) at $(U_0, V_0)$, the governing system for \KMc{$(U_1(s), V_1(s))$} is
\begin{align}
\label{asymptotic-2nd-Andrews2}
\frac{d}{ds}\begin{pmatrix}
U_1 \\ V_1 
\end{pmatrix} &= \tilde \theta(s)^{-1}\left\{ -\frac{1}{2}I_2 +D(U_0, V_0; \sigma) \right\} \begin{pmatrix}
U_1 \\ V_1 
\end{pmatrix}
\end{align}
where
\begin{align*}
&\KMc{
D(U_0, V_0; \sigma) =  
\begin{pmatrix}
 2(1+\sigma)UV - 3U^2 & (1+\sigma)U^2 \\
 V^2 & 2UV
\end{pmatrix}_{(U,V) = (U_0, V_0)} = 
 \begin{pmatrix}
 1 - \frac{\sigma}{2} & \frac{\sigma(1+\sigma)}{2} \\
 \frac{1}{2\sigma} & 1
\end{pmatrix}
}
\end{align*}
under the identity (\ref{identity-Andrews2}).
In this example, the blow-up power-determining matrix $A$ is
\begin{equation*}
A = -\frac{1}{2}I_2 + \KMc{D(U_0, V_0; \sigma)} 
\end{equation*}
and blow-up power eigenvalues are easily \KMg{calculated} through eigenvalues of $\KMc{D(U_0, V_0; \sigma)}$, which \KMg{solve}
\begin{align*}
\KMc{
\left( \mu - 1 + \frac{\sigma}{2} \right)(\mu - 1) - \frac{1+\sigma}{4} 
= \mu^2 + \left(\frac{\sigma}{2}-2\right)\mu + \frac{3}{4}(1-\sigma) = 0.
}
\end{align*}
The roots of this equation are
\begin{equation*}
\mu = \frac{3}{2},\quad \KMc{\frac{1}{2}(1-\sigma)}
\end{equation*}
and hence the blow-up power eigenvalues are
\begin{equation*}
\lambda \equiv \mu - \frac{1}{2} = 1,\quad \KMc{-\frac{\sigma}{2}}.
\end{equation*}
The value $1$ is an eigenvalue of $A$\KMj{.}
By our assumption $2a > b > 0$, $\sigma > 0$ always holds\KMg{,} and hence another blow-up power eigenvalue is always negative.
Therefore the second term $(U_1(s), V_1(s))$ has the following form:
\begin{equation*}
U_1(s) = C_{1u}\KMc{ \tilde \theta(s)^{\sigma/2} },\quad 
V_1(s) = C_{1v}\KMc{ \tilde \theta(s)^{\sigma/2} },
\end{equation*}
while the constraint among constants are determined by (\ref{asymptotic-2nd-Andrews2}):
\begin{align*}
-\KMg{\rho} C_{1u} &= -\frac{1}{2}C_{1u} + \KMc {\left( 1 - \frac{\sigma}{2} \right)C_{1u} + \frac{\sigma(1+\sigma)}{2}  C_{1v}} ,\\
-\KMg{\rho} C_{1v} &= -\frac{1}{2}C_{1v} + \KMc{ \frac{1}{2\sigma} C_{1u} + C_{1v}},
\end{align*}
where $\KMg{\rho} = \sigma / 2 > 0$.
We thus have
\begin{equation*}
C_{1v} = -\frac{1}{\sigma(1+\sigma)}C_{1u}
\end{equation*}
and the second order asymptotic expansion of the blow-up solution \KMc{$(\tilde u(s), \tilde v(s))$} is obtained as follows:
\begin{align*}
\tilde u(s)&\sim \KMc{
 \sqrt{\frac{\sigma}{2}} \tilde \theta(s)^{-1/2} + C_{1u} \tilde \theta(s)^{\frac{\sigma}{2}- \frac{1}{2}},
 } \\
\tilde v(s)&\sim \KMc{
\frac{1}{\sqrt{2\sigma}} \tilde \theta(s)^{-1/2}  -\frac{1}{\sigma(1+\sigma)} C_{1u}\tilde \theta(s)^{\frac{\sigma}{2}- \frac{1}{2}}
}
\end{align*}
as \KMc{$s\to s_{\max}$}.

\par
\bigskip
Finally, we shall derive the third term $(U_2(s), V_2(s))$ \KMk{to derive an interesting observation (Remark \ref{rem-Andrews2-multi})}.
From the balance law and (\ref{asymptotic-2nd-Andrews2}), the governing equation for $(U_2(t), V_2(t))$ is 
\begin{align}
\notag
\KMc{\frac{d}{ds}}\begin{pmatrix}
U_2 \\ V_2
\end{pmatrix} &= 
\KMc{
\tilde \theta(s)^{-1}\left\{ -\frac{1}{2}I_2 +D(U_0, V_0; \sigma) \right\} \begin{pmatrix}
U_2 \\ V_2 
\end{pmatrix}
}\\
\label{3rd-Andrews2}
	&\quad 
	+ \KMc{
	\tilde \theta(s)^{-1}
	\begin{pmatrix}
	2(1+\sigma)U_0 U_1V_1 + ((1+\sigma)V_0 - 3U_0) U_1^2 + (1+\sigma)U_1^2V_1 - U_1^3 \\
	 U_0 V_1^2 + 2V_0 U_1V_1 + U_1V_1^2 
\end{pmatrix}
},
\end{align}
according to (\ref{jth}) for $j=3$ with $f_{\rm res} \equiv 0$.
Here the inhomogeneous term consists of terms of order $\tilde \theta(s)^{2\KMg{\rho}-1}$ and $\tilde \theta(s)^{3\KMg{\rho}-1}$ with $\KMg{\rho} = \sigma/2$.
\KMg{Because $\rho > 0$ under} our assumption, the terms of order $\tilde \theta(s)^{3\KMg{\rho}-1}$ is negligible compared with those of the order $\tilde \theta(s)^{2\KMg{\rho}-1}$, which indicates that the governing equation determining the third order term is essentially
\begin{align}
\label{3rd-essential-Andrews2}
\KMc{\frac{d}{ds}}
\begin{pmatrix}
U_2 \\ V_2
\end{pmatrix} &\approx 
\KMc{
\tilde \theta(s)^{-1} \left[ \left\{ -\frac{1}{2}I_2 +D(U_0, V_0; \sigma) \right\} \begin{pmatrix}
U_2 \\ V_2 
\end{pmatrix}
	+
	\begin{pmatrix}
	2(1+\sigma)U_0 U_1V_1 + ((1+\sigma)V_0 - 3U_0) U_1^2 \\
	U_0 V_1^2 + 2V_0 U_1V_1
\end{pmatrix} \right].
}
\end{align}
Integration of the system (\ref{3rd-Andrews2}) yields that the essential terms in \KMc{$(U_2(s), V_2(s))$} are of order \KMc{$\tilde \theta(s)^{2\KMg{\rho}}$} and hence these possess the following forms, respectively:
\begin{equation*}
\KMc{
C_{2u}\tilde \theta(s)^{2\KMg{\rho}},\quad C_{2v}\tilde \theta(s)^{2\KMg{\rho}}.
}
\end{equation*}
Substituting them into (\ref{3rd-Andrews2}) and comparing coefficients of $\tilde \theta(s)^{2\KMg{\rho}}$, equivalent to coefficient balance for (\ref{3rd-essential-Andrews2}), we have the following constraint:
\begin{align*}
-2\KMg{\rho}\begin{pmatrix}
C_{2u} \\ C_{2v}
\end{pmatrix}
 &= \left[ -\frac{1}{2}I_2 + 
\KMc{
 \begin{pmatrix}
 1 - \frac{\sigma}{2} & \frac{\sigma(1+\sigma)}{2} \\
 \frac{1}{2\sigma} & 1
\end{pmatrix} 
}
\right] \begin{pmatrix}
C_{2u} \\ C_{2v}
\end{pmatrix} + \begin{pmatrix}
\KMc{
\left\{  -\frac{2}{\sigma}U_0 +  (1+\sigma)V_0 - 3U_0)  \right\}C_{1u}^2 
}
	\\ 
\KMc{
	\left\{ U_0 - 2\sigma(1+\sigma)V_0 \right\}\frac{1}{\sigma^2(1+\sigma)^2}C_{1u}^2
}
\end{pmatrix},
\end{align*}
equivalently
\begin{align*}
\left[ \left(2\KMg{\rho}-\frac{1}{2}\right)I_2 + 
\KMc{
\begin{pmatrix}
 1 - \frac{\sigma}{2} & \frac{\sigma(1+\sigma)}{2} \\
 \frac{1}{2\sigma} & 1
\end{pmatrix} 
}
\right] \begin{pmatrix}
C_{2u} \\ C_{2v}
\end{pmatrix}
 &= -\begin{pmatrix}
\KMc{
\left\{  -\frac{2}{\sigma}U_0 +  (1+\sigma)V_0 - 3U_0)  \right\}C_{1u}^2 
}
	\\ 
\KMc{
	\left\{ U_0 - 2\sigma(1+\sigma)V_0 \right\}\frac{1}{\sigma^2(1+\sigma)^2}C_{1u}^2
}
\end{pmatrix}.
\end{align*}
The inverse of the coefficient matrix in the left hand side is
\begin{align*}
&\KMc{
\frac{1}{(2\KMg{\rho} + \frac{1}{2} - \frac{\sigma}{2})(2\KMg{\rho} + \frac{1}{2}) - \frac{1+\sigma}{4}}\begin{pmatrix}
2\KMg{\rho} + \frac{1}{2} & -\frac{\sigma(1+\sigma)}{2}\\
-\frac{1}{2\sigma} & 2\KMg{\rho} +\frac{1}{2} - \frac{\sigma}{2}
\end{pmatrix}
}\\
&= \KMc{
\frac{1}{\sigma(\sigma+1)}\begin{pmatrix}
2\sigma + 1 & -\sigma(1+\sigma)\\
-\frac{1}{\sigma} & 1+\sigma
\end{pmatrix}
}\KMg{.}\quad (\text{from }\rho = \sigma/2)
\end{align*}
Using \KMc{$U_0 = \sigma V_0$ and $U_0 = \sqrt{\sigma / 2}$},
we have
\begin{align*}
\begin{pmatrix}
C_{2u} \\ C_{2v}
\end{pmatrix}
 &= \KMc{ 
 -\frac{1}{\sigma(\sigma+1)}\begin{pmatrix}
2\sigma + 1 & -\sigma(1+\sigma)\\
-\frac{1}{\sigma} & 1+\sigma
\end{pmatrix}
\begin{pmatrix}
\left\{  -\frac{2}{\sigma}U_0 +  (1+\sigma)V_0 - 3U_0)  \right\}C_{1u}^2 \\
	\left\{ U_0 - 2\sigma(1+\sigma)V_0 \right\}\frac{1}{\sigma^2(1+\sigma)^2}C_{1u}^2
\end{pmatrix}
}\\
 &= \KMc{ 
 -\frac{1}{\sigma(\sigma+1)}\begin{pmatrix}
2\sigma + 1 & -\sigma(1+\sigma)\\
-\frac{1}{\sigma} & 1+\sigma
\end{pmatrix}
\begin{pmatrix}
\left\{  -\frac{2}{\sigma} +  (1+\sigma)\frac{1}{\sigma} - 3)  \right\} \\
	\left\{ 1 - 2(1+\sigma) \right\}\frac{1}{\sigma^2(1+\sigma)^2}
\end{pmatrix} U_0 C_{1u}^2
}
\\
 &= \KMc{ 
 \frac{1+2\sigma}{\sigma^2(\sigma+1)} \sqrt{\frac{\sigma}{2}} \begin{pmatrix}
2\sigma + 1 & -\sigma(1+\sigma)\\
-\frac{1}{\sigma} & 1+\sigma
\end{pmatrix}
\begin{pmatrix}
1 \\
\frac{1}{\sigma(1+\sigma)^2}
\end{pmatrix} C_{1u}^2
}
\\
 &= \KMc{ 
 \frac{1+2\sigma}{\sigma^2(\sigma+1)^2}\sqrt{\frac{\sigma}{2}}
\begin{pmatrix}
\sigma (2\sigma + 3) \\
-1
\end{pmatrix} C_{1u}^2
}.
\end{align*}

As a summary, \KMc{
the third order asymptotic expansion of the blow-up solution $(\tilde u(s), \tilde v(s))$ is calculated as follows:
\begin{align*}
\notag
\tilde u(s)&\sim \sqrt{\frac{\sigma}{2}} \tilde \theta(s)^{-1/2} + C_{1u} \tilde \theta(s)^{\frac{\sigma}{2}- \frac{1}{2}} +  \frac{1+2\sigma}{\sigma^2(\sigma+1)^2}\sqrt{\frac{\sigma}{2}} \sigma (2\sigma + 3) C_{1u}^2 \tilde \theta(s)^{\sigma -1/2}, \\
\tilde v(s)&\sim \frac{1}{\sqrt{2\sigma}} \tilde \theta(s)^{-1/2}  -\frac{1}{\sigma(1+\sigma)} C_{1u}\tilde \theta(s)^{\frac{\sigma}{2}- \frac{1}{2}} - \frac{1+2\sigma}{\sigma^2(\sigma+1)^2}\sqrt{\frac{\sigma}{2}} C_{1u}^2 \tilde \theta(s)^{\sigma -1/2}
\end{align*}
as $s\to s_{\max}$.
\KMd{
Now the constant $\delta$ defined in (\ref{suff-asym}) is estimated as follows.
First, $\alpha = (1,1)$ and $k=2$.
Because $f_{\rm res} \equiv 0$, we have $\gamma_1 = \gamma_2 = \KMg{+\infty}$ and hence
\begin{equation*}
\delta = \min \left\{ \min_{i=1,2} \left\{ \frac{\alpha_i}{k} + \gamma_i \right\} + 1, \frac{\sigma}{2} \right\} \KMg{= \frac{a}{b} - \frac{1}{2}.}
\end{equation*}
\KMg{Proposition} \ref{prop-order-increase} indicates that ${\rm ord}_\theta(Y_m) \geq m\left( \frac{a}{b} - \frac{1}{2} \right)$ for $m\geq 2$ and hence there is no term $\theta(t)^{m\sigma / 2}$ with $m\leq 2$ in $Y_n(t)$, $n\geq 3$.
}
Finally we obtain the third order asymptotic expansion of the blow-up solution in the original timescale $(u(t), v(t))$ through (\ref{trans-Andrews2}).
}
\begin{thm}
The system (\ref{Andrews2}) with $0 < b < 2a$ admits a blow-up solution with the following third order asymptotic expansion with $t\to t_{\max}$:
\begin{align}
\notag
u(t)&\sim \frac{\sqrt{2a-b}}{\sqrt{2}b}\theta(t)^{-1/2} + \KMg{C}\theta(t)^{\frac{a}{b}-1} +  \frac{b(4a-b)(4a+b)}{4\sqrt{2}a^2 \sqrt{2a - b}}\KMg{C^2} \theta(t)^{\frac{2a}{b} - \frac{3}{2}}, \\
\label{Andrews2-sol}
v(t)&\sim \frac{1}{\sqrt{2(2a-b)}}\theta(t)^{-1/2} -\frac{b^2}{2a(2a-b)}\KMg{C}\theta(t)^{\frac{a}{b}-1} - \frac{b^3(4a-b)}{4\sqrt{2}a^2 \left(2a - b\right)^{3/2}} \KMg{C^2} \theta(t)^{\frac{2a}{b} - \frac{3}{2}}
\end{align}
with $C = b^{(4a-3b)/2b}C_1$ and a constant $C_1\in \mathbb{R}$.
\end{thm}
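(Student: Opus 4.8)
The plan is to obtain the statement by transporting the expansion already computed in the rescaled variables $(\tilde u,\tilde v)$ and time $s$ back to the original unknowns $(u,v)$ and time $t$. First I would recall the diffeomorphic reduction (\ref{trans-Andrews2}), under which (\ref{Andrews2}) becomes the one-parameter family solved above, and note that the blow-up time transforms as $\tilde\theta(s)=s_{\max}-s=b^2\theta(t)$ while $\sigma=2a/b-1$. Since the balance law (\ref{identity-Andrews2}), the blow-up power eigenvalues $1$ and $-\sigma/2$, and the coefficients $C_{1u},C_{2u},C_{2v}$ have all been determined, the three-term expansions of $\tilde u(s)$ and $\tilde v(s)$ are in hand; it then remains to substitute $u=\sqrt b\,\tilde u$, $v=\sqrt b\,\tilde v$, replace $\tilde\theta(s)$ by $b^2\theta(t)$, and express $\sigma$ through $a,b$.

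The bookkeeping step is routine but must be executed carefully. For the leading terms one checks $\sqrt b\,\sqrt{\sigma/2}\,(b^2)^{-1/2}=\sqrt{2a-b}/(\sqrt2\,b)$ and similarly for $\tilde v$, reproducing the principal coefficients in (\ref{Andrews2-sol}). The three exponents map as $-1/2\mapsto-1/2$, $\sigma/2-1/2=a/b-1$, and $\sigma-1/2=2a/b-3/2$, matching the displayed powers of $\theta(t)$. The scaling factors $\sqrt b$ and $b^2$ carried by each coefficient, together with the rescaling of the free parameter, are absorbed into the single constant $C=b^{(4a-3b)/2b}C_1$; I would verify the second- and third-order coefficients by direct substitution of $U_0=\sqrt{\sigma/2}$, $V_0=(2\sigma)^{-1/2}$ and the computed $C_{2u},C_{2v}$ into the rescaling.

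Finally I would justify that the formal three-term sum is a genuine asymptotic expansion in the sense of (\ref{asym-true-cond}). Existence, convergence, and parameter dependence of the associated ${\bf Y}(t)$ are supplied by Theorem \ref{thm-smoothness-Y}, since the blow-up power-determining matrix is hyperbolic (eigenvalues $1$ and $-\sigma/2\neq0$). Because $f_{\rm res}\equiv0$ here, one has $\gamma_1=\gamma_2=+\infty$ in (\ref{order-fres}), so the hypotheses of Proposition \ref{prop-order-increase} hold trivially and the governing constant is $\delta=\min\{+\infty,\sigma/2\}=a/b-1/2>0$; hence $\deg_\theta({\bf Y}_N)\geq N\delta$, so no term of order below $\theta(t)^{\sigma}$ can appear in ${\bf Y}_n$ for $n\geq3$, which freezes the three displayed coefficients. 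Theorem \ref{thm-asym-exp} then upgrades the formal truncation to a bona fide asymptotic expansion. The only genuine obstacle is the algebraic bookkeeping of $C_{2u},C_{2v}$ — in particular tracking the scaling of the free constant through (\ref{trans-Andrews2}) — and confirming, via the $\delta$-estimate, that no lower-order term has been overlooked when passing from the formal series to the ordered finite sum.
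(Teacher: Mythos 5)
Your proposal follows the paper's own route essentially verbatim: compute the balance law, eigenvalues $1$ and $-\sigma/2$, and the second- and third-order coefficients in the rescaled variables $(\tilde u,\tilde v,s)$ of (\ref{trans-Andrews2}), invoke Proposition \ref{prop-order-increase} with $\delta=\sigma/2=a/b-1/2$ (using $f_{\rm res}\equiv 0$, hence $\gamma_1=\gamma_2=+\infty$) to freeze the displayed coefficients against higher-order corrections, and then revert the scaling to absorb the powers of $b$ into $C=b^{(4a-3b)/2b}C_1$. This is correct and matches the paper's argument, including the justification step via Theorem \ref{thm-smoothness-Y} and the $\delta$-estimate.
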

Notice that the power of $t_{\max}- t$ in the second term is $\KMg{\rho} - \frac{1}{2}$ and that in the third term is $2\KMg{\rho} - \frac{1}{2}$, where $\KMg{\rho} = \frac{\sigma}{2} = \frac{a}{b} - \frac{1}{2} > 0$.

\begin{rem}
\label{rem-Andrews2-multi}
Similar to multi-order asymptotics of blow-up solutions for (\ref{Andrews1}), powers of $\theta(t)$ can be both positive and negative, depending of the quantity $\KMg{\rho} = \frac{a}{b} - \frac{1}{2}$, which itself is positive.
Observe that the second terms in (\ref{Andrews2-sol}) have the negative power of $\theta(t)$ if and only if
\begin{equation*}
\frac{a}{b} - 1 < 0\quad \Leftrightarrow \quad a < b < 2a.
\end{equation*}
Similarly, the third terms in (\ref{Andrews2-sol}) have the negative power of $\theta(t)$ if and only if
\begin{equation*}
\frac{2a}{b} - \frac{3}{2} < 0\quad \Leftrightarrow \quad a < \frac{3}{4}b < \frac{3}{2}a.
\end{equation*}
We know that the $n$-th terms of blow-up solutions have the negative power of $\theta(t)$ if and only if
\begin{equation*}
(n-1)\left(\frac{a}{b} - \frac{1}{2} \right)- \frac{1}{2} < 0\quad \Leftrightarrow \quad a < \frac{n}{2(n-1)}b < \frac{n}{n-1}a.
\end{equation*}
As the parameter $b (< 2a)$ approaches to $2a$, the number
\begin{equation*}
\sharp\left\{n \in \mathbb{N} \mid n\KMg{\rho} < \frac{1}{2}\right\}
\end{equation*}
determining blow-up solutions of the form
\begin{equation*}
u(t) = \theta(t)^{-1/2}\sum_{n=0}^\infty u_n\theta(t)^{\KMg{\rho}n},\quad v(t) = \theta(t)^{-1/2}\sum_{n=0}^\infty v_n\theta(t)^{\KMg{\rho}n}
\end{equation*}
for (\ref{Andrews2}) increases and the expansion would \lq\lq converge" to the blow-up solution with the blow-up rate $O(\theta^{-1/2}(\log \theta(t)^{-1})^{-1/2})$ \KMj{mentioned} in Remark \ref{fact-A2}.
This expectation imply that multi-order asymptotic expansion of blow-up solutions through the present methodology could contribute to investigate the parameter dependence of blow-up rates.
\end{rem}

\subsection{Keyfitz-Kranser-type system}
\label{section-ex-KK}

The next example is a $2$-dimensional system
\begin{equation}
\label{KK}
	u' = u^2 - v, \quad v' = \frac{1}{3} u^3 - u\KMg{,}
\end{equation}
originated from the Keyfitz-Kranser system \cite{KK1990} which is a system of conservation laws admitting a singular shock.
Blow-up solutions of (\ref{KK}) are studied in \cite{Mat2018} by means of dynamics at infinity\footnote{
\KMj{Detailed treatments are also reviewed in \cite{asym2}.}}.
Observe that the system is asymptotically quasi-homogeneous of type $\alpha=(\alpha_1, \alpha_2)=(1,2)$ and order $k+1 = 2$, consisting of the quasi-homogeneous part $f_{\alpha, k}$ and the lower-order part $f_{\rm res}$ given as follows: 
\begin{equation}
	f_{\alpha, k}(u,v) = \begin{pmatrix}
 u^2 - v\\
 \frac{1}{3} u^3	
 \end{pmatrix}, \quad f_{\mathrm{res}}(u,v) = \begin{pmatrix}
 0\\
 -u	
 \end{pmatrix}.
\end{equation}
It is proved in \cite{Mat2018} that the system (\ref{KK}) admits the following solutions blowing up as $t \to t_{\max} - 0$ associated with {\em two} different equilibria \KMj{at infinity}:
\begin{equation}
	u(t) = O(\theta(t)^{-1}), \quad v(t) = O (\theta(t)^{-2}), \quad \text{as} \quad t \to t_{\max} - 0.
\end{equation}

\subsubsection{Blow-up asymptotics for the quasi-homogeneous part}

Before treating the system (\ref{KK}), we consider a simpler system
\begin{equation}
\label{KK0}
	u' = u^2 - v, \quad v' = \frac{1}{3} u^3,
\end{equation}
namely the quasi-homogeneous part of (\ref{KK}).
As in previous examples, expand the solution $(u(t),v(t))$ as the asymptotic series
\begin{align}
\notag
u(t) &= \theta(t)^{-1}U(t) \equiv \theta(t)^{-1}\sum_{n=0}^\infty U_n(t),\quad U_n(t) \ll U_{n-1}(t),\quad \lim_{t\to t_{\max}}U_n(t) = U_0, \\
\label{series-KK}
v(t) &= \theta(t)^{-2}V(t) \equiv \theta(t)^{-2}\sum_{n=0}^\infty V_n(t),\quad V_n(t) \ll V_{n-1}(t), \quad \lim_{t\to t_{\max}}V_n(t) = V_0.
\end{align}
Note that the quasi-homogeneity induces the different rate of blow-ups among components.
The balance law for (\ref{KK0}), and hence for (\ref{KK}) is
\begin{equation*}
	U_0 = U_0^2 - V_0,\quad 2V_0 = \frac{1}{3} U_0^3,
\end{equation*}
which yields
\begin{equation}
\label{balance-KK}
	U_0 = 3 \pm \sqrt{3}, \quad V_0 = \frac{1}{6} U_0^3.
\end{equation}
In particular, we have two different solutions of the balance law\KMj{.}
\par
Next we consider the second term $(U_1(t), V_1(t))$.
Substituting (\ref{series-KK}) into (\ref{KK0}) and using the balance law, we have
\begin{align}
\notag
\frac{d}{dt}\left(\sum_{n=1}^\infty U_n \right) &= \theta(t)^{-1}\left\{ -\left( \sum_{n=1}^\infty U_n \right) + 2U_0\left( \sum_{n=1}^\infty U_n \right) + \left( \sum_{n=1}^\infty U_n \right)^2 - \left( \sum_{n=1}^\infty V_n \right)\right\},\\
\label{asymptotic-system-KK0}
\frac{d}{dt}\left(\sum_{n=1}^\infty V_n \right) &= \theta(t)^{-1}\left\{ - 2\left( \sum_{n=1}^\infty V_n \right) + U_0^2\left( \sum_{n=1}^\infty U_n \right) + U_0\left( \sum_{n=1}^\infty U_n \right)^2 + \frac{1}{3}\left( \sum_{n=1}^\infty U_n \right)^3 \right\}.
\end{align}
The governing system for $(U_1(t), V_1(t))$ is
\begin{align}
\label{2nd-KK0}
\frac{d}{dt}\begin{pmatrix}
U_1 \\
V_1
\end{pmatrix} &= \theta(t)^{-1}\left\{ \begin{pmatrix}
-1 & 0 \\
0 & -2
\end{pmatrix} + \begin{pmatrix}
2U_0 & -1 \\
U_0^2 & 0
\end{pmatrix} \right\}\begin{pmatrix}
U_1 \\
V_1
\end{pmatrix}.
\end{align}
Depending on the choice of $U_0$, the eigenstructure of the blow-up power-determining matrix \KMg{changes}.
\begin{description}
\item[Case 1. $U_0 = 3-\sqrt{3}$.] 
\end{description}
In this case, the blow-up power-determining matrix $A$ is
\begin{equation*}
A = \begin{pmatrix}
2(3-\sqrt{3})-1 & -1 \\
(3-\sqrt{3})^2 & -2
\end{pmatrix} = \begin{pmatrix}
5 - 2\sqrt{3} & -1 \\
12 - 6\sqrt{3} & -2
\end{pmatrix}
\end{equation*}
and the eigenvalues solve
\begin{align*}
(\lambda -5+2\sqrt{3})(\lambda +2) + (12-6\sqrt{3}) 
	&= \lambda^2 + (-3+2\sqrt{3})\lambda  + 2 -2\sqrt{3} = 0.
\end{align*}
The eigenvalues are
\begin{equation*}
\lambda = 1, \quad 2-2\sqrt{3}
\end{equation*}
and associating eigenvectors are
\begin{align*}
\begin{pmatrix}
5 - 2\sqrt{3} & -1 \\
12 - 6\sqrt{3} & -2
\end{pmatrix}\begin{pmatrix}
a \\ b
\end{pmatrix} = \begin{pmatrix}
a \\ b
\end{pmatrix}\quad &\Rightarrow \quad \begin{pmatrix}
a \\ b
\end{pmatrix} = \begin{pmatrix}
1 \\ 4-2\sqrt{3}
\end{pmatrix},\\
\begin{pmatrix}
5 - 2\sqrt{3} & -1 \\
12 - 6\sqrt{3} & -2
\end{pmatrix}\begin{pmatrix}
a \\ b
\end{pmatrix} = (2-2\sqrt{3})\begin{pmatrix}
a \\ b
\end{pmatrix}\quad &\Rightarrow \quad \begin{pmatrix}
a \\ b
\end{pmatrix} = \begin{pmatrix}
1 \\ 3
\end{pmatrix}.
\end{align*}
Because $2-2\sqrt{3} < 0$, this eigenvalue contributes the power of $\theta(t)$ in the asymptotic expansion, while $\lambda = 1$ makes no contribution \KMk{to asymptotic expansions}.
Introducing the nonsingular matrix 
\begin{equation}
\label{P-KK0}
P = \begin{pmatrix}
1 & 1 \\
4-2\sqrt{3} & 3
\end{pmatrix}\quad \Leftrightarrow \quad P^{-1} \equiv \begin{pmatrix}
p^{11} & p^{12} \\
p^{21} & p^{22}
\end{pmatrix} 
= \frac{1}{-1+2\sqrt{3}}\begin{pmatrix}
3 & -1 \\
-4+2\sqrt{3} & 1
\end{pmatrix},
\end{equation}
the second term $(U_1(t), V_1(t))$ is calculated as
\begin{equation*}
\begin{pmatrix}
U_1(t) \\ V_1(t)
\end{pmatrix} = P\begin{pmatrix}
0 \\ C\theta(t)^{-2+2\sqrt{3}}
\end{pmatrix} = \begin{pmatrix}
C\theta(t)^{-2+2\sqrt{3}} \\ 3C\theta(t)^{-2+2\sqrt{3}}
\end{pmatrix} 
\end{equation*}
as $t\to t_{\max}$.
Letting $C_{1u} \equiv C$, we have the second order asymptotic expansion of blow-up solutions $(u(t), v(t))$ with $U_0 = 3-\sqrt{3}$ as follows:
\begin{align*}
u(t) &\sim (3 - \sqrt{3})\theta(t)^{-1} + C_{1u}\theta(t)^{-3+2\sqrt{3}},\\
v(t) &\sim (9 - 5\sqrt{3})\theta(t)^{-2} + 3C_{1u}\theta(t)^{-4+2\sqrt{3}}
\end{align*}
as $t\to t_{\max}$.


Next the third term $(U_2(t), V_2(t))$ is considered with $U_0 = 3-\sqrt{3}$.
From (\ref{asymptotic-system-KK0}) and (\ref{2nd-KK0}), the governing equation for $(U_2(t), V_2(t))$ is
\begin{align}
\label{3rd-KK0}
\frac{d}{dt}\begin{pmatrix}
U_2 \\ V_2 
\end{pmatrix} = \theta(t)^{-1}\left\{ A\begin{pmatrix}
U_2 \\ V_2 
\end{pmatrix} + 
\begin{pmatrix}
U_1^2 \\ U_0 U_1^2 + \frac{1}{3} U_1^3
\end{pmatrix} \right\}.
\end{align}
From the asymptotic assumption $U_1(t) \ll U_0$, the essential term of $(U_2(t), V_2(t))$ is approximately governed by
\begin{align*}
\frac{d}{dt}\begin{pmatrix}
U_2 \\ V_2 
\end{pmatrix} \approx \theta(t)^{-1} \left\{ A\begin{pmatrix}
U_2 \\ V_2 
\end{pmatrix} + 
\begin{pmatrix}
U_1^2 \\ U_0 U_1^2
\end{pmatrix} \right\}.
\end{align*}
The general solution of this approximate system is
\begin{align*}
\begin{pmatrix}
U_2(t) \\ V_2(t) 
\end{pmatrix} \approx P
\begin{pmatrix}
\theta(t)^{-1} \left\{ \KMd{ -\int_t^{t_{\max}} } \theta(s)^{1-1}(p^{11}U_1(s)^2 + p^{12}U_0U_1(s)^2 ) ds\right\}\\
\theta(t)^{-2+2\sqrt{3}} \left\{ \KMd{ \tilde c_2 + \int_0^t } \theta(s)^{2-2\sqrt{3}-1}(p^{21}U_1(s)^2 + p^{22}U_0U_1(s)^2 ) ds\right\}\\
\end{pmatrix}
\end{align*}
Letting $r = 2\sqrt{3} - 2 (>0)$, we calculate the integrals.
First, 
\begin{align*}
&\KMd{ -\int_t^{t_{\max}} } (p^{11}U_1(s)^2 + p^{12}U_0U_1(s)^2 ) ds\\
&\quad = \KMd{ - p^{11}C_{1u}^2 \int_t^{t_{\max}} } \theta(s)^{2r} ds \KMd{ - U_0p^{12}C_{1u}^2\int_t^{t_{\max}}} \theta(s)^{2r} ds\\
&\quad = \frac{-1}{2r+1}\left\{ p^{11} + U_0p^{12} \right\} C_{1u}^2\theta(t)^{2r+1} \\
%
%
%
%
%
&\quad = \frac{-(9\sqrt{3}+10)}{ 143} C_{1u}^2\theta(t)^{2r+1}.
\end{align*}
Similarly, 
\begin{align*}
&\KMd{\int_0^t} \theta(s)^{-r-1} (p^{21}U_1(s)^2 + p^{22}U_0U_1(s)^2 ) ds\\
&\quad = p^{21}C_{1u}^2 \KMd{\int_0^t} \theta(s)^{r-1} ds + U_0p^{22}C_{1u}^2 \KMd{\int_0^t} \theta(s)^{r-1} ds\\
&\quad = \frac{-1}{r}\left\{ p^{21} + U_0p^{22} \right\} C_{1u}^2\theta(t)^{r} + \tilde C\\
&\quad = -\frac{1+2\sqrt{3}}{22} C_{1u}^2\theta(t)^{r} + \tilde C,
\end{align*}
\KMd{where the integral constant $\tilde C$ is chosen so that $\tilde c_2 + \tilde C = 0$ (cf. (\ref{const-asym-j})).}
Therefore
\begin{align*}
\begin{pmatrix}
U_2(t) \\ V_2(t) 
\end{pmatrix} &\approx P
\begin{pmatrix}
\frac{-(9\sqrt{3}+10)}{ 143} C_{1u}^2\theta(t)^{2r}\\
-\frac{1+2\sqrt{3}}{22} C_{1u}^2\theta(t)^{2r}
\end{pmatrix}
=
\begin{pmatrix}
 -\frac{3+4\sqrt{3}}{ 26}  C_{1u}^2\theta(t)^{2r} \\
 -\frac{1+10\sqrt{3}}{ 26}  C_{1u}^2\theta(t)^{2r}
\end{pmatrix}.
\end{align*}
As a consequence, 
we have the third order asymptotic expansion of blow-up solutions $(u(t), v(t))$ with $U_0 = 3-\sqrt{3}$ as follows:
\begin{align*}
u(t) &\sim (3 - \sqrt{3})\theta(t)^{-1} + C_{1u}\theta(t)^{-3+2\sqrt{3}} -\frac{3+4\sqrt{3}}{26}  C_{1u}^2\theta(t)^{-5+{4}\sqrt{3}},\\
v(t) &\sim (9 - 5\sqrt{3})\theta(t)^{-2} + 3C_{1u}\theta(t)^{-4+2\sqrt{3}} -\frac{1+10\sqrt{3}}{26}  C_{1u}^2\theta(t)^{-6+{4}\sqrt{3}}
\end{align*}
as $t\to t_{\max}$.

\begin{description}
\item[Case 2. $U_0 = 3+\sqrt{3}$.] 
\end{description}
In this case, the blow-up power-determining matrix $A$ is
\begin{equation*}
A = \begin{pmatrix}
2(3+\sqrt{3})-1 & -1 \\
(3+\sqrt{3})^2 & -2
\end{pmatrix} = \begin{pmatrix}
5 + 2\sqrt{3} & -1 \\
12 + 6\sqrt{3} & -2
\end{pmatrix}
\end{equation*}
and the eigenvalues solve
\begin{align*}
(\lambda -5-2\sqrt{3})(\lambda +2) + (12+6\sqrt{3}) 
	&= \lambda^2 + (-3-2\sqrt{3})\lambda -2(5+2\sqrt{3}) + 12 + 6\sqrt{3}\\
	&= \lambda^2 + (-3-2\sqrt{3})\lambda  + 2 +2\sqrt{3} = 0.
\end{align*}
The eigenvalues are
\begin{equation*}
\lambda = 1, \quad 2+2\sqrt{3},
\end{equation*}
both of which are positive.
Therefore these eigenvalues make {\em no} contributions to multi-order asymptotic expansions.
In particular, we know that $U_n(t)\equiv 0$, $V_n(t) \equiv 0$ for all $n\geq 1$ and hence the first order asymptotic expansion
\begin{align*}
u(t) &\sim (3 + \sqrt{3})\theta(t)^{-1},\quad 
v(t) \sim (9 + 5\sqrt{3})\theta(t)^{-2}
\end{align*}
indeed expresses the {\em exact solution} of (\ref{KK0}).
As a summary, we obtain the following result.

\begin{thm}
The quasi-homogeneous system (\ref{KK0}) admits blow-up solutions with the following third-order asymptotic expansions as $t\to t_{\max}$:
\begin{align*}
u(t) &\sim (3 - \sqrt{3})\theta(t)^{-1} + C\theta(t)^{-3+2\sqrt{3}} -\frac{3+4\sqrt{3}}{26}  C^2\theta(t)^{-5+{4}\sqrt{3}},\\
v(t) &\sim (9 - 5\sqrt{3})\theta(t)^{-2} + 3C\theta(t)^{-4+2\sqrt{3}} -\frac{1+10\sqrt{3}}{26}  C^2\theta(t)^{-6+{4}\sqrt{3}}
\end{align*}
with a free parameter $C$, and
\begin{align*}
u(t) &\sim (3 + \sqrt{3})\theta(t)^{-1},\quad v(t) \sim (9 + 5\sqrt{3})\theta(t)^{-2}.
\end{align*}
The latter solution is indeed the exact solution of (\ref{KK0}).
\end{thm}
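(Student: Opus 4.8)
The plan is to realize this theorem as a direct application of the general machinery of Section \ref{section-asym} to the quasi-homogeneous polynomial field $f_{\alpha,k}$ of type $\alpha=(1,2)$ and order $k+1=2$ (so $k=1$), for which $f_{\rm res}\equiv 0$. First I would substitute the ansatz $u(t)=\theta(t)^{-1}U(t)$, $v(t)=\theta(t)^{-2}V(t)$ into (\ref{KK0}) to obtain the reduced system (\ref{blow-up-basic}), and extract the balance law (\ref{0-balance}). Solving $U_0=U_0^2-V_0$, $2V_0=\tfrac13U_0^3$ gives exactly the two roots (\ref{balance-KK}), namely $U_0=3\pm\sqrt3$ with $V_0=\tfrac16U_0^3$; thus there are two candidate principal terms, and the existence of a genuine blow-up solution attached to each (with the correct parameter count) is supplied by Theorem \ref{thm-smoothness-Y}.

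Next, for each root I would form the blow-up power-determining matrix $A=-\frac{1}{k}\Lambda_\alpha+Df_{\alpha,k}({\bf Y}_0)$ of Definition \ref{dfn-blow-up-power-ev} via (\ref{blow-up-power-determining-matrix}) and read off its spectrum. Both choices give ${\rm Spec}(A)\subset\mathbb{R}$ with $A$ hyperbolic, so Theorem \ref{thm-asym-QH} applies. For $U_0=3+\sqrt3$ the eigenvalues are $1$ and $2+2\sqrt3$, both positive; then $m_A=0$ in (\ref{mA}), so the formulas (\ref{formula-2nd}) and (\ref{formula-(j+1)}) force ${\bf Y}_j\equiv{\bf 0}$ for all $j\ge1$, and the first-order term is the exact solution. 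For $U_0=3-\sqrt3$ the eigenvalues are $1$ and $2-2\sqrt3<0$; the single negative eigenvalue produces the one-parameter family, and only the direction $\lambda=2-2\sqrt3$ contributes powers of $\theta(t)$ to the expansion.

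The core computation is then the inductive integration for $U_0=3-\sqrt3$. I would diagonalize $A=P\,{\rm diag}(1,2-2\sqrt3)\,P^{-1}$ with $P$ as in (\ref{P-KK0}), solve the homogeneous linear system (\ref{2nd-KK0}) for ${\bf Y}_1$ by Proposition \ref{prop-conv-2nd}, and discard the $\lambda=1$ contribution following the parameter-elimination prescription (\ref{const-asym-j}) of Proposition \ref{prop-conv-(j+1)th}; this yields ${\bf Y}_1=C_{1u}(\theta(t)^{-2+2\sqrt3},\,3\theta(t)^{-2+2\sqrt3})^T$. For ${\bf Y}_2$ I would assemble the quadratic inhomogeneity $(U_1^2,\,U_0U_1^2+\tfrac13U_1^3)^T$ from (\ref{3rd-KK0}), observe that $\tfrac13U_1^3$ is of strictly larger $\deg_\theta$ and hence negligible, and integrate the remaining system via (\ref{formula-(j+1)}) with the integral constant fixed by (\ref{const-asym-j}). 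Reverting $(U_j,V_j)$ through $u=\theta(t)^{-1}U$, $v=\theta(t)^{-2}V$ reproduces the stated coefficients.

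Finally, to certify that these formal series are genuine asymptotic expansions in the sense of (\ref{asym-true-cond}), I would invoke Theorem \ref{thm-asym-exp} after verifying the hypotheses of Proposition \ref{prop-order-increase}. Since $f_{\rm res}\equiv0$, conditions (\ref{order-fres})--(\ref{order-fres-diff}) are vacuous ($\gamma_i=+\infty$), so the constant (\ref{suff-asym}) reduces to $\delta=-{\rm Re}(2-2\sqrt3)=2\sqrt3-2>0$, giving $\deg_\theta({\bf Y}_N)\ge N(2\sqrt3-2)$ and the required monotone growth of orders. The main obstacle I anticipate is precisely this bookkeeping of orders: one must check that every term dropped during the ``essential-term'' truncations (most notably the cubic $\tfrac13U_1^3$ and all higher self-interactions) carries a strictly larger power of $\theta(t)$ than the coefficients being recorded, so that the listed third-order coefficients are not corrupted by higher-order feedback. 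This is guaranteed abstractly by the $\delta$-estimate, but aligning it with the explicit exponents $-3+2\sqrt3$, $-4+2\sqrt3$, $-5+4\sqrt3$, $-6+4\sqrt3$ is the delicate bookkeeping step of the argument.
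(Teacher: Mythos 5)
Your proposal is correct and follows essentially the same route as the paper: the same balance law with roots $U_0 = 3\pm\sqrt{3}$, the same blow-up power-determining matrices with eigenvalues $\{1,\,2\mp 2\sqrt{3}\}$, the same diagonalization via $P$ and integration of (\ref{2nd-KK0}) and (\ref{3rd-KK0}) dropping the cubic $\tfrac13 U_1^3$ as higher order, and the same conclusion that $m_A=0$ forces ${\bf Y}_j\equiv{\bf 0}$ for $U_0=3+\sqrt{3}$. Your explicit appeal to Proposition \ref{prop-order-increase} with $\delta = 2\sqrt{3}-2$ to certify that the recorded $\theta(t)^{4\sqrt{3}-4}$ coefficients are not corrupted by ${\bf Y}_n$, $n\ge 3$, is a slightly more formal justification than the paper's in-line remark, but it is exactly the mechanism the paper's general framework provides.
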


\subsubsection{Blow-up asymptotics for (\ref{KK})}

We go back to the original system (\ref{KK}).
Notice first that the balance law for (\ref{KK}) is identical with that of (\ref{KK0}) \KMg{because} the quasi-homogeneous components are identical.
Substituting the asymptotic form (\ref{series-KK}) into (\ref{KK}), we have
\begin{align}
\notag
\frac{d}{dt}\left(\sum_{n=1}^\infty U_n \right) &= \theta(t)^{-1}\left\{ -\left( \sum_{n=1}^\infty U_n \right) + 2U_0\left( \sum_{n=1}^\infty U_n \right) + \left( \sum_{n=1}^\infty U_n \right)^2 - \left( \sum_{n=1}^\infty V_n \right)\right\},\\
\label{asymptotic-system-KK}
\frac{d}{dt}\left(\sum_{n=1}^\infty V_n \right) &= \theta(t)^{-1}\left\{ - 2\left( \sum_{n=1}^\infty V_n \right) + U_0^2\left( \sum_{n=1}^\infty U_n \right) + U_0\left( \sum_{n=1}^\infty U_n \right)^2 + \frac{1}{3}\left( \sum_{n=1}^\infty U_n \right)^3 \right\} \\
\notag
&\quad - \theta(t)\left(\sum_{n=0}^\infty U_n \right),
\end{align}
where the balance law (\ref{balance-KK}) is applied to eliminating the principal terms.
The governing system for $(U_1(t), V_1(t))$ is
\begin{align}
\label{2nd-KK}
\frac{d}{dt}\begin{pmatrix}
U_1 \\
V_1
\end{pmatrix} &= \theta(t)^{-1}\left\{ \begin{pmatrix}
-1 & 0 \\
0 & -2
\end{pmatrix} + \begin{pmatrix}
2U_0 & -1 \\
U_0^2 & 0
\end{pmatrix} \right\}\begin{pmatrix}
U_1 \\
V_1
\end{pmatrix} - \theta(t)\begin{pmatrix}
0 \\
U_0
\end{pmatrix},
\end{align}
where the lower order term $-\theta(t)(0,U_0)^T$ is added to (\ref{2nd-KK0}).
\KMd{Because} the linear (homogeneous) part is determined by the blow-up power-determining matrix $A$, the nonsingular matrix $P$ diagonalizing $A$ can be applied to (\ref{2nd-KK}). 
The concrete \KMg{forms} of solutions depend on the choice of $U_0$, like (\ref{KK0}).

\begin{description}
\item[Case 1. $U_0 = 3-\sqrt{3}$.] 
\end{description}
In this case, the matrix $P$ is given in (\ref{P-KK0})\KMg{,} and $\KMg{\rho} = 2\sqrt{3}-2 (>0)$ \KMg{is used to determine the order of $\theta(t)$}.
\KMd{ 
The general solution \KMg{of (\ref{2nd-KK})} is then
\begin{align*}
\begin{pmatrix}
U_1(t) \\ V_1(t) 
\end{pmatrix} = P
\begin{pmatrix}
\theta(t)^{-1} \left\{ \KMd{-\int_t^{t_{\max}}} \theta(s)^{1+1}(p^{11}0 + p^{12}(-U_0) ) ds\right\}\\
\theta(t)^r \left\{ \tilde c_2 + \KMd{ \int_0^t } \theta(s)^{-\KMg{\rho}+1}(p^{21}0 + p^{22}(-U_0) ) ds\right\}
\end{pmatrix}.
\end{align*}
}
The integrals are calculated to obtain
\begin{align*}
&\KMd{ -\int_t^{t_{\max}} } \theta(s)^{1+1}(p^{11}0 + p^{12}(-U_0) ) ds =  \KMd{ -p^{12}(-U_0)\int_t^{t_{\max}} }\theta(s)^2 ds\\
&\quad = -\frac{3-\sqrt{3}}{3(-1+2\sqrt{3})}\theta(t)^3\\
&\quad = -\frac{-3+5\sqrt{3}}{33}\theta(t)^3,
\end{align*}
and
\begin{align*}
&\KMd{ \int_0^t } \theta(s)^{-\KMg{\rho}+1}(p^{21}0 + p^{22}(-U_0) ) ds =  p^{22}(-U_0) \KMd{ \int_0^t } \theta(s)^{-\KMg{\rho}+1} ds\\
&\quad = \frac{3-\sqrt{3}}{(-1+2\sqrt{3})(-\KMg{\rho}+2)}\theta(t)^{-\KMg{\rho}+2} + C\\
&\quad = \frac{3-\sqrt{3}}{(-1+2\sqrt{3})(4-2\sqrt{3})}\theta(t)^{-\KMg{\rho}+2} + C\\
&\quad = \frac{9 + 7\sqrt{3}}{22}\theta(t)^{-r+2} + C
\end{align*}
\KMd{with the integral constant $C$.}
Therefore
\begin{align*}
\begin{pmatrix}
U_1(t) \\ V_1(t) 
\end{pmatrix} = P
\begin{pmatrix}
\theta(t)^{-1} \left\{ - \frac{-3+5\sqrt{3}}{33}\theta(t)^3 \right\}\\
\theta(t)^\KMg{\rho} \left\{ \KMg{C_{1u}} + \frac{9 + 7\sqrt{3}}{22}\theta(t)^{-\KMg{\rho}+2} \right\}
\end{pmatrix}
\end{align*}
\KMd{with a constant $\KMg{C_{1u}}$. }
In particular, 
\begin{align}
\label{sol-2nd-KK-sink}
\begin{pmatrix}
U_1(t) \\ V_1(t) 
\end{pmatrix} &=  \begin{pmatrix}
1 & 1 \\
4-2\sqrt{3} & 3
\end{pmatrix}
\begin{pmatrix}
\frac{3-5\sqrt{3}}{33}\theta(t)^2 \\
\KMg{C_{1u}}\theta(t)^\KMg{\rho} + \frac{9 + 7\sqrt{3}}{22}\theta(t)^2
\end{pmatrix}\\
\notag
&=  
\begin{pmatrix}
\KMg{C_{1u}} \theta(t)^\KMg{\rho} + (\frac{3-5\sqrt{3}}{33} +\frac{9 + 7\sqrt{3}}{22}) \theta(t)^2\\
3\KMg{C_{1u}} \theta(t)^\KMg{\rho} +  (\frac{(3-5\sqrt{3})(4-2\sqrt{3})}{33} +\frac{3(9 + 7\sqrt{3})}{22}) \theta(t)^2
\end{pmatrix}\\
\notag
&=  
\begin{pmatrix}
\KMg{C_{1u}} \theta(t)^\KMg{\rho} + (\frac{2(3-5\sqrt{3})}{66} +\frac{3(9 + 7\sqrt{3})}{66}) \theta(t)^2\\
3\KMg{C_{1u}} \theta(t)^\KMg{\rho} +  (\frac{2(42 - 26\sqrt{3})}{66} +\frac{9(9 + 7\sqrt{3})}{66}) \theta(t)^2
\end{pmatrix}\\
\notag
&=  
\begin{pmatrix}
\KMg{C_{1u}} \theta(t)^\KMg{\rho} + (\frac{(6-10\sqrt{3})}{66} +\frac{(27 + 21\sqrt{3})}{66}) \theta(t)^2\\
3\KMg{C_{1u}} \theta(t)^\KMg{\rho} +  (\frac{(84 - 52\sqrt{3})}{66} +\frac{(81 + 63\sqrt{3})}{66}) \theta(t)^2
\end{pmatrix}\\
\notag
&=  
\begin{pmatrix}
\KMg{C_{1u}} \theta(t)^\KMg{\rho} + \frac{3+\sqrt{3}}{6} \theta(t)^2\\
3\KMg{C_{1u}} \theta(t)^\KMg{\rho} +  \frac{15 + \sqrt{3}}{6} \theta(t)^2
\end{pmatrix}.
\end{align}
As a consequence, 
we have the second order asymptotic expansion of blow-up solutions $(u(t), v(t))$ with $U_0 = 3-\sqrt{3}$ as follows:
\begin{align}
\notag
u(t) &\sim (3 - \sqrt{3})\theta(t)^{-1} + C_{1u}\theta(t)^{-3+2\sqrt{3}} + \frac{3+\sqrt{3}}{6}\theta(t),\\
\label{2nd-asym-KK}
v(t) &\sim (9 - 5\sqrt{3})\theta(t)^{-2} + 3C_{1u}\theta(t)^{-4+2\sqrt{3}} + \frac{15 + \sqrt{3}}{6}
\end{align}
as $t\to t_{\max}$ with a constant $C_{1u}$.
In the present case, the additional terms of $O(\theta(t))$ in $u$ and of $O(1)$ in $v$, respectively, are added as the contribution of the lower-order term $f_{\rm res}$.
We observe that $\KMg{\rho}$ and $2 = k+\alpha_2 - 1$ are exponents of $\theta(t)$ determining $(U(t),V(t))$.
The exponent $\KMg{\rho}$ comes from blow-up power eigenvalues (\KMg{recall that $-\rho$ is one of such eigenvalues}), while $2$ comes from $f_{\rm res}$.
\KMg{Because} $\KMg{\rho}$ and $2$ are $\mathbb{Z}$-linearly independent, coefficients of all terms in (\ref{2nd-asym-KK}) are uniquely determined.
In fact, when we consider the governing system for the third term $(U_2(t), V_2(t))$ given by
\begin{align}
\label{3rd-KK}
\frac{d}{dt}\begin{pmatrix}
U_2 \\ V_2 
\end{pmatrix} \approx \theta(t)^{-1} \left\{ A\begin{pmatrix}
U_2 \\ V_2 
\end{pmatrix} + 
\begin{pmatrix}
U_1^2 \\ U_0 U_1^2
\end{pmatrix} \right\} - \theta(t)\begin{pmatrix}
0 \\
U_1
\end{pmatrix},
\end{align}
we see that the solution has the form
\begin{align*}
&P^{-1}\begin{pmatrix}
U_2(t) \\ V_2(t) 
\end{pmatrix}\\
&= \begin{pmatrix}
\theta(t)^{-1} [ -\int_t^{t_{\max}} \theta(s)^{1-1} (c_{11} \theta(s)^{2\KMg{\rho}} + c_{12} \theta(s)^{2+\KMg{\rho}} + c_{13} \theta(s)^{4}) ds -\int_t^{t_{\max}} \theta(s)^{1+1} (c_{14} \theta(s)^{\KMg{\rho}} + c_{15} \theta(s)^{2}) ds]\\
\theta(t)^{\KMg{\rho}} [ \KMd{\tilde c_2 + \int_0^t} \theta(s)^{-\KMg{\rho}-1} (c_{21} \theta(s)^{2\KMg{\rho}} + c_{22} \theta(s)^{2+\KMg{\rho}} + c_{23} \theta(s)^{4}) ds + \int_0^t \theta(s)^{-\KMg{\rho}+1} (c_{24} \theta(s)^{\KMg{\rho}} + c_{25} \theta(s)^{2}) ds]\\
\end{pmatrix}\\
&= \begin{pmatrix}
\tilde c_{11} \theta(t)^{2\KMg{\rho}} + \tilde c_{12} \theta(t)^{2+\KMg{\rho}} + \tilde c_{13} \theta(t)^{4} + \tilde c_{14} \theta(t)^{2+\KMg{\rho}} + \tilde c_{15} \theta(t)^{4} \\
\tilde c_{21} \theta(t)^{2\KMg{\rho}} + \tilde c_{22} \theta(t)^{2+\KMg{\rho}} + \tilde c_{23} \theta(t)^{4} + \tilde c_{24} \theta(t)^{2+\KMg{\rho}} + \tilde c_{25} \theta(t)^{4}\\
\end{pmatrix},
\end{align*}
where the constant $\tilde c_2$ is chosen so that the constant term vanishes in the second component (cf. (\ref{const-asym-j})).
All possible exponents appeared in the above solution does not contain $2$ and $\KMg{\rho}$. 
In fact, all components appeared here has the form $2\beta_1 + \KMg{\rho}\beta_2$, where $\beta_1, \beta_2\in \mathbb{Z}_{\geq 0}$ satisfying $\beta_1 + \beta_2 = 2$.
By induction, we can prove that all possible exponents appeared in ${\bf Y}_n(t) \equiv (U_n(t), V_n(t))$ as the form $2\beta_1 + \KMg{\rho} \beta_2$, where $\beta_1, \beta_2\in \mathbb{Z}_{\geq 0}$ satisfying $\beta_1 + \beta_2 = n$.
For example, the governing system determining ${\bf Y}_3(t) \equiv (U_3(t), V_3(t))$ is, combining the neglected terms in the derivation of ${\bf Y}_2(t) = (U_2(t), V_2(t))$,
\begin{align}
\label{4th-KK}
\frac{d}{dt}\begin{pmatrix}
U_3 \\ V_3 
\end{pmatrix} \approx \theta(t)^{-1} \left\{ A\begin{pmatrix}
U_3 \\ V_3 
\end{pmatrix} + 
\begin{pmatrix}
2U_1U_2 + U_2^2 \\ U_1^3 + 2U_0U_1U_2 + U_0U_2^2 + \frac{1}{3}U_1^3 + U_1^2U_2 + U_1 U_2^2
\end{pmatrix} \right\} - \theta(t)\begin{pmatrix}
0 \\
U_2
\end{pmatrix},
\end{align}
and all integrands appeared in solving ${\bf Y}_3(t)$ consist of $\theta(t)^\mu$ with $\mu = 2\beta_1 + \KMg{\rho} \beta_2-1$, where $\beta_1, \beta_2 \in \mathbb{Z}_{\geq 0}$ with $\beta_1 + \beta_2 = 3$.
Therefore the integration of (\ref{4th-KK}) yields our claim with $n=3$.
The general case can also be treated in the similar way.
By the $\mathbb{Z}$-linear independence of $2$ and $\KMg{\rho}$, all terms appeared in $(U_n(t), V_n(t))$ are uniquely determined for each $n \geq 0$.
In particular, (\ref{2nd-asym-KK}) is the second order exact asymptotic expansion of $(u(t), v(t))$ as $t\to t_{\max}$.

\begin{description}
\item[Case 2. $U_0 = 3+\sqrt{3}$.] 
\end{description}
In the similar way to the case $U_0 = 3- \sqrt{3}$, the asymptotic \KMg{expansion} of the blow-up solution with $U_0 = 3+\sqrt{3}$ can be \KMg{calculated}.
Unlike the fully quasi-homogeneous case, the lower-order part $f_{\rm res}$ makes a contribution to determine the (nontrivial) asymptotic behavior.
\par
To this end, calculate the nonsingular matrix $P_+$ diagonalizing $A$ with $U_0 = 3+\sqrt{3}$, which is constructed through calculations of eigenvectors:
\begin{align*}
\begin{pmatrix}
5 + 2\sqrt{3} & -1 \\
12 + 6\sqrt{3} & -2
\end{pmatrix}\begin{pmatrix}
a \\ b
\end{pmatrix} = \begin{pmatrix}
a \\ b
\end{pmatrix}\quad &\Rightarrow \quad \begin{pmatrix}
a \\ b
\end{pmatrix} = \begin{pmatrix}
1 \\ 4+2\sqrt{3}
\end{pmatrix},\\
\begin{pmatrix}
5 + 2\sqrt{3} & -1 \\
12 + 6\sqrt{3} & -2
\end{pmatrix}\begin{pmatrix}
a \\ b
\end{pmatrix} = (2+2\sqrt{3})\begin{pmatrix}
a \\ b
\end{pmatrix}\quad &\Rightarrow \quad \begin{pmatrix}
a \\ b
\end{pmatrix} = \begin{pmatrix}
1 \\ 3
\end{pmatrix}.
\end{align*}
Let $P_+$ be then the following matrix:
\begin{equation*}
P_+ = \begin{pmatrix}
1 & 1 \\ 
4+2\sqrt{3} & 3
\end{pmatrix}\quad \Leftrightarrow \quad 
P_+^{-1} \equiv \begin{pmatrix}
p_+^{11} & p_+^{12} \\ 
p_+^{21} & p_+^{22}
\end{pmatrix}
= \frac{-1}{1+2\sqrt{3}}\begin{pmatrix}
3 & -1 \\ 
-(4+2\sqrt{3}) & 1
\end{pmatrix}.
\end{equation*}
The general solution \KMg{of (\ref{2nd-KK})} is written by
\begin{align*}
\begin{pmatrix}
U_1(t) \\ V_1(t) 
\end{pmatrix} = P_+
\begin{pmatrix}
\theta(t)^{-1} \left\{ -\int_t^{t_{\max}} \theta(s)^{1+1}(p_+^{11}0 + p_+^{12}(-U_0) ) ds\right\}\\
\theta(t)^{-\KMg{\rho}_+} \left\{ -\int_t^{t_{\max}} \theta(s)^{\KMg{\rho}_+ + 1}(p_+^{21}0 + p_+^{22}(-U_0) ) ds\right\}
\end{pmatrix},
\end{align*}
where $\KMg{\rho}_+ = 2+2\sqrt{3}$.
The integrals are calculated to obtain
\begin{align*}
&\KMd{-\int_t^{t_{\max}}} \theta(s)^{1+1}(p_+^{11}0 + p_+^{12}(-U_0) ) ds =  \KMd{-} p_+^{12}(-U_0)\KMd{\int_t^{t_{\max}}} \theta(s)^2 ds\\
&\quad = \frac{(3+\sqrt{3})}{3(1+2\sqrt{3})}\theta(t)^3\\
&\quad = \frac{3+5\sqrt{3}}{33}\theta(t)^3
\end{align*}
and
\begin{align*}
&-\int_t^{t_{\max}} \theta(s)^{\KMg{\rho}_+ +1}(p_+^{21}0 + p_+^{22}(-U_0) ) ds = - p_+^{22}(-U_0) \KMd{\int_t^{t_{\max}}} \theta(s)^{\KMg{\rho}_+ +1} ds\\
&\quad = -\frac{3+\sqrt{3}}{(1+2\sqrt{3})(\KMg{\rho}_+ +2)}\theta(t)^{\KMg{\rho}_+ +2} \\
&\quad = -\frac{3+\sqrt{3}}{(1+2\sqrt{3})(4+2\sqrt{3})}\theta(t)^{\KMg{\rho}_+ +2} \\
&\quad = \frac{9-7\sqrt{3}}{22}\theta(t)^{\KMg{\rho}_+ +2}.
\end{align*}
Therefore
\begin{align*}
\begin{pmatrix}
U_1(t) \\ V_1(t) 
\end{pmatrix} &= P_+
\begin{pmatrix}
\theta(t)^{-1} \left\{ \frac{3+5\sqrt{3}}{33}\theta(t)^3 \right\}\\
\theta(t)^{-\KMg{\rho}_+} \left\{\frac{9 - 7\sqrt{3}}{22}\theta(t)^{\KMg{\rho}_+ +2} \right\}
\end{pmatrix}\\
 &=  \begin{pmatrix}
1 & 1 \\
4+2\sqrt{3} & 3
\end{pmatrix}
\begin{pmatrix}
\frac{3+5\sqrt{3}}{33}\theta(t)^2 \\
 \frac{9 - 7\sqrt{3}}{22}\theta(t)^2
\end{pmatrix}\\
\notag
&=  
\begin{pmatrix}
(\frac{3+5\sqrt{3}}{33} +\frac{9 - 7\sqrt{3}}{22}) \theta(t)^2\\
 (- \frac{(3+5\sqrt{3})(4+2\sqrt{3})}{33} +\frac{3(9 - 7\sqrt{3})}{22}) \theta(t)^2
\end{pmatrix}\\
\notag
&=  
\begin{pmatrix}
\frac{3-\sqrt{3}}{6} \theta(t)^2\\
 \frac{15 - \sqrt{3}}{6} \theta(t)^2
\end{pmatrix}.
\end{align*}
As a consequence, 
we have the second order asymptotic expansion of blow-up solutions $(u(t), v(t))$ with $U_0 = 3+\sqrt{3}$ as follows:
\begin{align*}
u(t) &\sim (3 + \sqrt{3})\theta(t)^{-1} + \frac{3 - \sqrt{3}}{6}\theta(t),\quad
v(t) \sim (9 + 5\sqrt{3})\theta(t)^{-2} + \frac{15 - \sqrt{3}}{6}
\end{align*}
as $t\to t_{\max}$.

\begin{thm}
The system (\ref{KK}) admits the following two families of blow-up solutions as $t\to t_{\max}$:
\begin{align}
\notag
u(t) &\sim (3 - \sqrt{3})\theta(t)^{-1} + C\theta(t)^{-3+2\sqrt{3}} + \frac{3+\sqrt{3}}{6}\theta(t),\\
\label{KK-asym-sink-2nd}
v(t) &\sim (9 - 5\sqrt{3})\theta(t)^{-2} + 3C\theta(t)^{-4+2\sqrt{3}} + \frac{15 + \sqrt{3}}{6}
\end{align}
with a free parameter $C\in \mathbb{R}$ and
\begin{equation}
\label{KK-asym-saddle-2nd}
u(t) \sim (3 + \sqrt{3})\theta(t)^{-1} + \frac{3 - \sqrt{3}}{6}\theta(t),\quad
v(t) \sim (9 + 5\sqrt{3})\theta(t)^{-2} + \frac{15 - \sqrt{3}}{6}
\end{equation}
\end{thm}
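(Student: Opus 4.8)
The plan is to apply the general procedure of Section~\ref{section-asym} to the system~(\ref{KK}) for each of the two balance-law roots, compute the second term via the integral formula of Proposition~\ref{prop-conv-2nd}, and then invoke the justification machinery of Section~\ref{section-justification} to promote the formal expansions to genuine asymptotic expansions. Since the structural data are already in place --- (\ref{KK}) is asymptotically quasi-homogeneous of type $\alpha=(1,2)$ and order $k+1=2$, with $f_{\alpha,k}(u,v)=(u^2-v,\tfrac13 u^3)^T$ and $f_{\rm res}(u,v)=(0,-u)^T$ --- I would begin by recording the rescaling $u(t)=\theta(t)^{-1}U(t)$, $v(t)=\theta(t)^{-2}V(t)$ of (\ref{blow-up-sol}), under which (\ref{blow-up-basic}) holds. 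The balance law (\ref{0-balance}) then reduces to $U_0=U_0^2-V_0$ and $2V_0=\tfrac13 U_0^3$, whose nonzero roots are $(U_0,V_0)=(3\pm\sqrt3,\,9\mp5\sqrt3)$ as in (\ref{balance-KK}); these are precisely the leading coefficients asserted in the statement.

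For each root I would form the blow-up power-determining matrix (\ref{blow-up-power-determining-matrix}), namely $A=\begin{pmatrix}2U_0-1&-1\\ U_0^2&-2\end{pmatrix}$, and read off its spectrum: ${\rm Spec}(A)=\{1,\,2-2\sqrt3\}$ for $U_0=3-\sqrt3$ and $\{1,\,2+2\sqrt3\}$ for $U_0=3+\sqrt3$. Both matrices are hyperbolic, so Theorem~\ref{thm-smoothness-Y} yields a convergent solution ${\bf Y}(t)$; the stable dimension $m_A$ of (\ref{mA}) equals $1$ in the first case and $0$ in the second, which explains why the free parameter $C$ appears only in the first family. The inhomogeneity of (\ref{2nd}) is ${\bf g}_1(t)=\theta(t)^{\frac1k\Lambda_\alpha}f_{\rm res}\big(\theta(t)^{-\frac1k\Lambda_\alpha}{\bf Y}_0\big)=(0,\,-U_0\theta(t))^T$, matching the lower-order term of (\ref{2nd-KK}). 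Plugging this into the formula (\ref{formula-2nd}), with $A$ diagonalized by the matrix $P$ of (\ref{P-KK0}) (resp. its analogue $P_+$ for the second root), the integrals are elementary: the $P_-$-projection of the homogeneous mode supplies the $C\,\theta(t)^{2\sqrt3-2}$ contribution (absent when $m_A=0$), while the particular response to ${\bf g}_1$ supplies a term of order $\theta(t)^{2}$. Reverting to $(u,v)$ via $u=\theta^{-1}U$, $v=\theta^{-2}V$ reproduces the stated coefficients $\tfrac{3+\sqrt3}{6},\tfrac{15+\sqrt3}{6}$ and $\tfrac{3-\sqrt3}{6},\tfrac{15-\sqrt3}{6}$.

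The hard part will be justifying that the displayed terms constitute an \emph{exact} second-order expansion, i.e. that no ${\bf Y}_n$ with $n\ge2$ perturbs the coefficients already fixed at the orders $\theta(t)^{2\sqrt3-2}$ and $\theta(t)^{2}$. I would first verify the hypotheses of Proposition~\ref{prop-order-increase}: since $f_{1;\rm res}\equiv0$ and $f_{2;\rm res}(\theta^{-\frac1k\Lambda_\alpha}\tilde{\bf x})=-\theta^{-1}\tilde x_1$, the degrees (\ref{order-fres}) are $\gamma_1=+\infty$ and $\gamma_2=-1$, so $\alpha_2/k+\gamma_2=1>-1$ and the derivative bound (\ref{order-fres-diff}) holds automatically; the resulting $\delta$ of (\ref{suff-asym}) is $\min\{2,\,2\sqrt3-2\}=2\sqrt3-2$ in the first case and $2$ in the second, both positive. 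Since $2\delta=4\sqrt3-4>2$, Theorem~\ref{thm-asym-exp} already shows $\deg_\theta({\bf Y}_n)>2$ for $n\ge2$, so the remainder $\theta^{-1}\sum_{n\ge2}U_n$ is $o(\theta)$ and the three terms form a bona fide asymptotic expansion. The decisive structural observation --- which I expect to be the genuinely delicate step and which I would establish by induction on $n$ using the recursion (\ref{gj})--(\ref{formula-(j+1)}) --- is that every exponent of $\theta(t)$ occurring in ${\bf Y}_n$ has the form $2\beta_1+(2\sqrt3-2)\beta_2$ with $\beta_1,\beta_2\in\mathbb{Z}_{\ge0}$ and $\beta_1+\beta_2=n$, where $2=k+\alpha_2-1$ is injected by $f_{\rm res}$ and $2\sqrt3-2=-\lambda$ by the stable blow-up power eigenvalue $\lambda=2-2\sqrt3$. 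The inductive step reduces to checking that the monomials of ${\bf g}_n$ built from $\{{\bf Y}_l\}_{l<n}$ carry exponents of this additive form and that the integral operator in (\ref{formula-(j+1)}) preserves it.

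Because $2\sqrt3-2$ is irrational, the exponents $2$ and $2\sqrt3-2$ are $\mathbb{Z}$-linearly independent, so for $n\ge2$ no sum $2\beta_1+(2\sqrt3-2)\beta_2$ with $\beta_1+\beta_2=n$ can equal $2\sqrt3-2$ or $2$; hence the second-order coefficients are uniquely determined and cannot be altered by higher terms. Combined with the convergence of the remainder from Proposition~\ref{prop-convergence-Y}, this verifies (\ref{asym-true-cond}) at second order, completing the proof for the first family. The second family $U_0=3+\sqrt3$ is handled identically, using $P_+$ in place of $P$ and with no homogeneous contribution (as $m_A=0$), yielding (\ref{KK-asym-saddle-2nd}).
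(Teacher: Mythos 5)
Your proposal is correct and, in its computational core, coincides with the paper's own treatment: the same balance law, the same matrix $A=\bigl(\begin{smallmatrix}2U_0-1 & -1\\ U_0^2 & -2\end{smallmatrix}\bigr)$ with spectrum $\{1,\,2-2\sqrt{3}\}$ resp.\ $\{1,\,2+2\sqrt{3}\}$, the same reading of $m_A=1$ resp.\ $m_A=0$ explaining the presence/absence of the free parameter, and the same evaluation of ${\bf Y}_1$ from ${\bf g}_1=(0,-U_0\theta(t))^T$ via (\ref{formula-2nd}). Where you genuinely diverge is the justification step: the paper does \emph{not} compute $\delta$ for this example, but instead proves exactness of the second-order expansion by an induction showing every exponent in ${\bf Y}_n$ lies in $\{2\beta_1+\rho\beta_2:\ \beta_1+\beta_2=n\}$ with $\rho=2\sqrt{3}-2$, followed by the $\mathbb{Z}$-linear independence of $2$ and $\rho$. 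Your observation that $\delta=2\sqrt{3}-2$ (resp.\ $2$) gives $\deg_\theta({\bf Y}_n)\ge 2\delta=4\sqrt{3}-4>2$ for $n\ge 2$ already protects both retained orders $\theta^{\rho}$ and $\theta^{2}$, so the exponent-lattice induction you single out as the delicate step is in fact unnecessary for the second-order claim --- your route is shorter, and the lattice argument only becomes essential if one wants to identify all higher exponents as the paper does for ${\bf Y}_2,{\bf Y}_3$. Two small corrections: the balance-law roots pair with \emph{matching} signs, $(U_0,V_0)=(3\pm\sqrt{3},\,9\pm 5\sqrt{3})$, not $9\mp 5\sqrt{3}$ as you wrote (since $(3-\sqrt{3})^3=54-30\sqrt{3}$ gives $V_0=\tfrac{1}{6}U_0^3=9-5\sqrt{3}$); and the bound $\deg_\theta({\bf Y}_N)\ge N\delta$ is Proposition \ref{prop-order-increase}, not Theorem \ref{thm-asym-exp}. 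Neither affects validity, but the integrals you defer as ``elementary'' are where the paper spends most of its effort, and they must be carried out to confirm the coefficients $\tfrac{3+\sqrt{3}}{6}$, $\tfrac{15+\sqrt{3}}{6}$, $\tfrac{3-\sqrt{3}}{6}$, $\tfrac{15-\sqrt{3}}{6}$.
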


\begin{rem}
Define $\iota: \mathbb{Q}[\sqrt{3}]\to \mathbb{Q}[\sqrt{3}]$ by
\begin{equation*}
\iota(1) = 1,\quad \iota(\sqrt{3}) = -\sqrt{3}
\end{equation*}
and extend linearly on $\mathbb{Q}[\sqrt{3}] \equiv \{a+b\sqrt{3} \mid a,b\in \mathbb{Q}\}$.
Then $\iota^2 = {\rm id}$ holds. That is, $\iota$ is an involution.
We first observe in (\ref{KK}) that roots of the balance law $(U_0, V_0) = (3\pm \sqrt{3}, 9\pm 5\sqrt{3})$ are mapped to each other via $\iota$.
(\ref{KK-asym-sink-2nd}) with $C=0$ and (\ref{KK-asym-saddle-2nd}) indicate that the second order asymptotic expansions are also mapped via $\iota$ by mapping individual coefficients, which would expect the involution, or more general symmetry correspondence among different blow-up solutions, possibly with a specific choice of free parameters.
\end{rem}

\subsection{An artificial system in the presence of Jordan blocks}
\label{section-ex-log}

The next example \KMg{concerns} with an artificial system \KMg{such that} the blow-up power-determining matrix has a non-trivial Jordan block.
We see through this example that multi-order asymptotic expansion of blow-ups can be executed regardless of the presence of Jordan blocks.

\subsubsection{The presence of terms of order $k+\alpha_i - 1$}
First we consider
\begin{equation}
\label{log}
	u' = u^2 + v, \quad v' = au^3 + 3uv - u^2\KMg{,}
\end{equation}
where $a\in\mathbb{R}$ is a parameter.
This system is asymptotically quasi-homogeneous of type $\alpha = (1,2)$ and order $k+1=2$, 
consisting of the quasi-homogeneous part $f_{\alpha, k}$ and the lower-order part $f_{\rm res}$ given as follows: 
\begin{equation}
\label{log-vf}
	f_{\alpha, k}(u,v) = \begin{pmatrix}
u^2 + v\\
 au^3 + 3uv
 \end{pmatrix}, \quad f_{\mathrm{res}}(u,v) = \begin{pmatrix}
 0\\
 -u^2	
 \end{pmatrix}.
\end{equation}
Assume that the system (\ref{log}) admits a blow-up solution with the following asymptotic behavior:
\begin{equation*}
u(t) = O(\theta(t)^{-1}),\quad v(t) = O(\theta(t)^{-2})\quad \text{ as }\quad t\to t_{\max}-0.
\end{equation*}
Under this assumption and the associated ansatz
\begin{align}
\notag
u(t) &= \theta(t)^{-1}U(t) \equiv \theta(t)^{-1}\sum_{n=0}^\infty U_n(t),\quad U_n(t) \ll U_{n-1}(t),\quad \lim_{t\to t_{\max}}U_n(t) = U_0, \\
\label{series-log}
v(t) &= \theta(t)^{-2}V(t) \equiv \theta(t)^{-2}\sum_{n=0}^\infty V_n(t),\quad V_n(t) \ll V_{n-1}(t), \quad \lim_{t\to t_{\max}}V_n(t) = V_0,
\end{align}
we solve the balance law and investigate the eigenstructure of the associated blow-up power eigenvalues.
The balance law is
\begin{equation*}
\begin{pmatrix}
U_0 \\ 2V_0
\end{pmatrix} = \begin{pmatrix}
U_0^2 + V_0\\
aU_0^3 + 3U_0V_0
 \end{pmatrix}.
\end{equation*}
Our particular interest here is the case $a=0$, in which case the root is $(U_0, V_0) = (1,0)$\KMk{.} 
We fix $a=0$ for a while.
The blow-up power-determining matrix at $(U_0, V_0)$ is
\begin{equation*}
A = \begin{pmatrix}
-1 & 0 \\ 0 & -2
\end{pmatrix} + \begin{pmatrix}
2U_0 & 1\\
3V_0 & 3U_0
 \end{pmatrix} = \begin{pmatrix}
1& 1\\
0 & 1
 \end{pmatrix},
\end{equation*}
that is, the matrix $A$ has nontrivial Jordan block.
The governing system for determining $(U_1(t), V_1(t))$ is therefore
\begin{equation}
\label{2nd-log}
\frac{d}{dt}\begin{pmatrix}
U_1 \\
V_1
 \end{pmatrix} = \theta(t)^{-1}\begin{pmatrix}
1& 1\\
0 & 1
 \end{pmatrix}\begin{pmatrix}
U_1 \\
V_1
 \end{pmatrix} - \begin{pmatrix}
0 \\
U_0^2
\end{pmatrix} =  \theta(t)^{-1}\begin{pmatrix}
1& 1\\
0 & 1
 \end{pmatrix}\begin{pmatrix}
U_1 \\
V_1
 \end{pmatrix} - \begin{pmatrix}
0 \\
1
\end{pmatrix}.
\end{equation}
The fundamental matrix of the homogeneous part and its inverse are
\begin{equation*}
\Phi_2(t;1) = \begin{pmatrix}
\theta(t)^{-1} & \theta(t)^{-1}\ln(\theta(t)^{-1}) \\
0 & \theta(t)^{-1} 
\end{pmatrix},\quad 
\Phi_2(t;1)^{-1} = \begin{pmatrix}
\theta(t) & -\theta(t)\ln(\theta(t)^{-1}) \\
0 & \theta(t)
\end{pmatrix},
\end{equation*}
respectively.
\KMg{Because} all eigenvalues of $A$ are positive, \KMg{${\rm Spec}(A)$ makes} no contribution \KMg{to determine the order of $\theta(t)$}, while the combination of $\Phi_2(t;1)$ and the inhomogeneous term $(0,-1)^T$ induce the successive terms of $(U(t),V(t))$.
The general solution of (\ref{2nd-log}) satisfying the asymptotic assumption (\ref{series-log}) is
\begin{align*}
\begin{pmatrix}
U_1(t) \\ V_1(t)
\end{pmatrix} &= \KMd{-} \Phi_2(t;1) \KMd{\int_t^{t_{\max}}} \Phi_2(s;1)^{-1}\begin{pmatrix}
0 \\ -1
\end{pmatrix} ds\\
	&= \KMd{-} \Phi_2(t;1) \KMd{\int_t^{t_{\max}}} \begin{pmatrix}
\theta(s)\ln(\theta(s)^{-1})  \\ -\theta(s)
\end{pmatrix}ds\\
	&= \begin{pmatrix}
\theta(t)^{-1} & \theta(t)^{-1}\ln(\theta(t)^{-1}) \\
0 & \theta(t)^{-1} 
\end{pmatrix}\begin{pmatrix}
-\frac{1}{4}\theta(t)^2 \{ 2\ln(\theta(t)^{-1})+1 \} \\ \frac{1}{2}\theta(t)^2
\end{pmatrix}\\
	&= \begin{pmatrix}
-\frac{1}{4}\theta(t) \{ 2\ln(\theta(t)^{-1})+1 \}  + \frac{1}{2}  \theta(t)\ln(\theta(t)^{-1}) \\ 
\frac{1}{2}\theta(t)
\end{pmatrix}\\
	&= \frac{1}{4}\begin{pmatrix}
-\theta(t) \\ 
2\theta(t)
\end{pmatrix}.
\end{align*}
\KMd{
Now the constant $\delta$ defined in (\ref{suff-asym}) is estimated as follows.
First, $\alpha = (1,2)$ and $k=1$.
From (\ref{log-vf}) we know that $(\gamma_1, \gamma_2) = (+\infty, -2)$.
Because there are no negative blow-up power eigenvalues, $\delta$ is evaluated as
\begin{equation*}
\delta = \min \left\{ \frac{1}{1} +\infty, \frac{2}{1} - 2 \right\} + 1 = 1.
\end{equation*}
\KMg{Proposition} \ref{prop-order-increase} indicates that ${\rm ord}_\theta({\bf Y}_m) \geq m$ for $m\geq 2$ and hence there is no term $\theta(t)^m$ with $m\leq 1$ in ${\bf Y}_n(t) = (U_n(t), V_n(t))$, $n\geq 2$.
}
In particular, we have the second order asymptotic expansion of $(u(t), v(t))$ as $t\to t_{\max}-0$:
\begin{equation*}
u(t) \sim \theta(t)^{-1} - \frac{1}{4},\quad v(t) \sim \frac{1}{2}\theta(t)^{-1}.
\end{equation*}
The governing equation for the third term $(U_2(t), V_2(t))$ is
\begin{align}
\notag
\frac{d}{dt}\begin{pmatrix}
U_2 \\
V_2
 \end{pmatrix} &= \theta(t)^{-1} \left\{ \begin{pmatrix}
1& 1\\
0 & 1
 \end{pmatrix}\begin{pmatrix}
U_2 \\
V_2
 \end{pmatrix} + \begin{pmatrix}
U_1^2 \\
3U_1V_1
 \end{pmatrix}\right\} - \begin{pmatrix}
0 \\
2U_1
\end{pmatrix}\\
\notag
	&=  \theta(t)^{-1} \left\{ \begin{pmatrix}
1& 1\\
0 & 1
 \end{pmatrix}\begin{pmatrix}
U_2 \\
V_2
 \end{pmatrix} + \begin{pmatrix}
\frac{1}{16}\theta(t)^2 \\
-\frac{3}{8}\theta(t)^2
 \end{pmatrix}\right\} + \frac{1}{2}\begin{pmatrix}
0 \\
\theta(t)
\end{pmatrix}\\
\label{3rd-log}
&=  \theta(t)^{-1}  \begin{pmatrix}
1& 1\\
0 & 1
 \end{pmatrix}\begin{pmatrix}
U_2 \\
V_2
 \end{pmatrix} + \frac{1}{16}\begin{pmatrix}
\theta(t) \\
2\theta(t)
 \end{pmatrix}.
\end{align}

The solution is therefore
\begin{align*}
\begin{pmatrix}
U_2(t) \\ V_2(t)
\end{pmatrix} &=  \KMd{-}\frac{1}{16}\Phi_2(t;1) \KMd{\int_t^{t_{\max}}} \begin{pmatrix}
\theta(s) & -\theta(s)\ln(\theta(s)^{-1}) \\
0 & \theta(s)
\end{pmatrix}\begin{pmatrix}
\theta(s) \\ 2\theta(s)
\end{pmatrix} ds\\
	&= \KMd{-\frac{1}{16}}\Phi_2(t;1) \KMd{\int_t^{t_{\max}}} \begin{pmatrix}
\theta(s)^2 -2\theta(s)^2\ln(\theta(s)^{-1})  \\ 2\theta(s)^2
\end{pmatrix}ds\\
	&= \KMd{\frac{1}{16}}\begin{pmatrix}
\theta(t)^{-1} & \theta(t)^{-1}\ln(\theta(t)^{-1}) \\
0 & \theta(t)^{-1} 
\end{pmatrix}\begin{pmatrix}
-\frac{1}{3}\theta(t)^3 -2 \left[ -\frac{1}{9} \theta(t)^3 \{ 3\ln(\theta(t)^{-1})+1\}\right] \\ -\frac{2}{3}\theta(t)^3
\end{pmatrix}\\
	&= \KMd{\frac{1}{16}}\begin{pmatrix}
-\frac{1}{3}\theta(t)^2 -2 \left[ -\frac{1}{9} \theta(t)^2 \{ 3\ln(\theta(t)^{-1})+1\}\right] -\frac{2}{3}\theta(t)^2 \ln(\theta(t)^{-1}) \\ 
-\frac{2}{3}\theta(t)^2
\end{pmatrix}\\
	&= \KMg{\frac{-1}{144}\begin{pmatrix}
\theta(t)^2 \\ 
6\theta(t)^2
\end{pmatrix}}.
\end{align*}
As a summary, we obtain the following result.

\begin{thm}
\label{thm-asym-log}
The system (\ref{log}) with $a=0$ admits a blow-up solution with the following third-order asymptotic expansion as  $t\to t_{\max}-0$:
\begin{equation*}
u(t) \sim \theta(t)^{-1} - \frac{1}{4} - \KMd{\frac{1}{144}}\theta(t),\quad v(t) \sim \frac{1}{2}\theta(t)^{-1} \KMg{-} \frac{1}{24}.
\end{equation*}
\end{thm}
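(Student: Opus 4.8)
The plan is to apply the general inductive machinery of Section~\ref{section-asym} to the concrete system (\ref{log}) with $a=0$, the distinctive feature being the nontrivial Jordan block in the blow-up power-determining matrix. First I would fix the blow-up ansatz (\ref{series-log}), substitute it into (\ref{log}), and read off the balance law, whose relevant root is $(U_0,V_0)=(1,0)$. Evaluating $-\tfrac{1}{k}\Lambda_\alpha + Df_{\alpha,k}$ at this root (with $k=1$, $\alpha=(1,2)$) gives $A = \left(\begin{smallmatrix} 1 & 1 \\ 0 & 1\end{smallmatrix}\right)$ as in Definition~\ref{dfn-blow-up-power-ev}. Since $\mathrm{Spec}(A)=\{1\}$ lies strictly in the right half-plane, $A$ is hyperbolic with $\mathbb{E}_A^s=\{0\}$, so $m_A=0$ and Theorem~\ref{thm} produces a \emph{parameter-free} formal expansion: the projector $P_-$ vanishes and $P_+=I$, whence every ${\bf Y}_j(t)$ is given purely by the convergent improper integral $-\int_t^{t_{\max}}(\theta(\eta)/\theta(t))^{A}{\bf g}_j(\eta)\,d\eta$.

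Next I would carry out the first two inductive steps. The homogeneous fundamental matrix is $\Psi(t)=\theta(t)^{-A}=\Phi_2(t;1)$, which by (\ref{Jordan-theta}) carries the logarithmic entry $\theta(t)^{-1}\ln(\theta(t)^{-1})$ coming from the Jordan nilpotent. For ${\bf Y}_1$ the inhomogeneity is ${\bf g}_1(t)=\theta(t)^{\frac{1}{k}\Lambda_\alpha} f_{\rm res}(\theta(t)^{-\frac{1}{k}\Lambda_\alpha}{\bf Y}_0)=(0,-1)^T$; applying (\ref{formula-2nd}) and integrating with the aid of Lemma~\ref{lem-theta-integral} and Lemma~\ref{lem-asym-poly-log} yields ${\bf Y}_1=\tfrac14(-\theta(t),\,2\theta(t))^T$. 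For ${\bf Y}_2$ I would assemble ${\bf g}_2$ from the quadratic remainder $R_{\alpha,k}$ of $f_{\alpha,k}$ together with the $Df_{\rm res}$ contribution as in (\ref{gj}), then evaluate (\ref{formula-(j+1)}) to obtain ${\bf Y}_2=-\tfrac{1}{144}(\theta(t)^2,\,6\theta(t)^2)^T$. Reassembling through $u=\theta^{-1}U$, $v=\theta^{-2}V$ with $U=U_0+U_1+U_2$, $V=V_0+V_1+V_2$ delivers the claimed coefficients.

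The final step is to promote these formal coefficients to a genuine asymptotic expansion. Here I would invoke Proposition~\ref{prop-order-increase}: reading $\gamma_1=+\infty$ and $\gamma_2=-2$ off $f_{\rm res}$ in (\ref{log-vf}), and noting that there are no negative blow-up power eigenvalues, the constant $\delta$ of (\ref{suff-asym}) equals $1$, so $\deg_\theta({\bf Y}_N)\geq N$ for all $N$. In particular no term of order $\theta(t)^m$ with $m\leq 1$ can arise in ${\bf Y}_n$ for $n\geq 2$, which pins down the coefficients of $\theta(t)^{-1}$, $1$, and $\theta(t)$ in $u$ (and of $\theta(t)^{-1}$, $1$ in $v$) as exact; Theorem~\ref{thm-asym-exp} then certifies (\ref{asym-true-cond}).

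I expect the main obstacle to be the bookkeeping of the logarithmic terms generated by the Jordan block. Both $\Phi_2(t;1)$ and $\Phi_2(t;1)^{-1}$ carry $\ln\theta(t)$ factors, so the integrands in (\ref{formula-2nd}) and (\ref{formula-(j+1)}) mix pure powers with $\theta(t)^p\ln\theta(t)$ pieces, whose primitives must be tracked via Lemma~\ref{lem-asym-poly-log}. The delicate point is verifying that these logarithmic contributions cancel identically in each ${\bf Y}_j$ after the final multiplication by $\Phi_2(t;1)$—as they do at orders $j=1,2$—so that the resulting expansion is a genuine power series in $\theta(t)$ free of spurious $\ln\theta(t)$ terms, consistently with $\delta=1$.
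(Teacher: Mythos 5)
Your proposal is correct and follows essentially the same route as the paper: the balance law root $(U_0,V_0)=(1,0)$, the Jordan-block matrix $A=\left(\begin{smallmatrix}1&1\\0&1\end{smallmatrix}\right)$ with both eigenvalues positive (so $P_-=0$ and each ${\bf Y}_j$ is the improper integral over $[t,t_{\max})$), the explicit computations ${\bf Y}_1=\tfrac14(-\theta,2\theta)^T$ and ${\bf Y}_2=-\tfrac{1}{144}(\theta^2,6\theta^2)^T$ with cancellation of the $\ln\theta$ terms, and the appeal to Proposition \ref{prop-order-increase} with $(\gamma_1,\gamma_2)=(+\infty,-2)$ giving $\delta=1$ to pin down the coefficients. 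No substantive differences from the paper's argument.
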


\subsubsection{The absence of terms of order $k+\alpha_i - 1$}

Here we consider
\begin{equation}
\label{log2}
	u' = u^2 + v, \quad v' = au^3 + 3uv - u
\end{equation}
with a real parameter $a\in\mathbb{R}$, instead of (\ref{log}).
The difference from (\ref{log}) is the replacement of $-u^2$ by $-u$ in the second component of the vector fields.
Because the quasi-homogeneous part is unchanged, \KMg{the balance law and blow-up power-determining matrix are the same} as those of (\ref{log})\KMj{.}
Now we consider the asymptopic expansion (\ref{series-log}) for (\ref{log2}):
\begin{equation}
\label{2nd-log2}
\frac{d}{dt}\begin{pmatrix}
U_1 \\
V_1
 \end{pmatrix} =  \theta(t)^{-1}\begin{pmatrix}
1& 1\\
0 & 1
 \end{pmatrix}\begin{pmatrix}
U_1 \\
V_1
 \end{pmatrix} - \begin{pmatrix}
0 \\
\theta(t)
\end{pmatrix}.
\end{equation}
The solution satisfying the asymptotic assumption (\ref{series-log}) is
\begin{align*}
\begin{pmatrix}
U_1(t) \\ V_1(t)
\end{pmatrix} &=  \KMd{-}\Phi_2(t;1)  \KMd{\int_t^{t_{\max}}} \Phi_2(s;1)^{-1}\begin{pmatrix}
0 \\ -\theta(s)
\end{pmatrix} ds\\
	&=  \KMd{-}\Phi_2(t;1) \KMd{\int_t^{t_{\max}}} \begin{pmatrix}
\theta(s)^2\ln(\theta(s)^{-1})  \\ -\theta(s)^2
\end{pmatrix}ds\\
	&= \begin{pmatrix}
\theta(t)^{-1} & \theta(t)^{-1}\ln(\theta(t)^{-1}) \\
0 & \theta(t)^{-1} 
\end{pmatrix}\begin{pmatrix}
-\frac{1}{9}\theta(t)^3 \{ 3\ln(\theta(t)^{-1})+1 \} \\ \frac{1}{3}\theta(t)^3
\end{pmatrix}\\
	&= \begin{pmatrix}
-\frac{1}{9}\theta(t)^2 \{ 3\ln(\theta(t)^{-1})+1 \}  + \frac{1}{3} \theta(t)^2\ln(\theta(t)^{-1}) \\ 
\frac{1}{3}\theta(t)^2
\end{pmatrix}\\
	&= \frac{1}{9}\begin{pmatrix}
-\theta(t)^2 \\ 
3\theta(t)^2
\end{pmatrix}.
\end{align*}
Similar to (\ref{log}), the constant $\delta$ in (\ref{suff-asym}) is estimated as 
\begin{equation*}
\delta = \min \left\{ \frac{1}{1} + \infty, \frac{2}{1} - 1 \right\} + 1 = 2,
\end{equation*}
where we have used $(\gamma_1, \gamma_2) = (+\infty, -1)$.
\KMg{Proposition} \ref{prop-order-increase} indicates that ${\rm ord}_\theta({\bf Y}_m) \geq 2m$ for $m\geq 2$ and hence there is no term $\theta(t)^m$ with $m\leq 2$ in ${\bf Y}_n(t) = (U_n(t), V_n(t))$, $n\geq 2$.
As a summary, we have the following result. 
\begin{thm}
\label{thm-asym-log2}
The system (\ref{log2}) with $a=0$ admits a blow-up solution with the following second-order asymptotic expansion as  $t\to t_{\max}-0$:
\begin{equation*}
u(t) \sim \theta(t)^{-1} - \frac{1}{9} \theta(t),\quad v(t) \sim \frac{1}{3}.
\end{equation*}
\end{thm}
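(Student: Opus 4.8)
The plan is to instantiate the general machinery of Section~\ref{section-asym} for the concrete vector field (\ref{log2}) with $a=0$, following exactly the route used for (\ref{log}); the only structural change is the order of the residual term, so I expect the bulk of the argument to be a careful bookkeeping of that change. First I would impose the ansatz (\ref{series-log}) dictated by Assumption~\ref{ass-fundamental} and substitute it into (\ref{log2}). Since the quasi-homogeneous part of (\ref{log2}) coincides with that of (\ref{log}), the balance law (\ref{0-balance}) is unchanged and supplies the root $(U_0,V_0)=(1,0)$, and the blow-up power-determining matrix of Definition~\ref{dfn-blow-up-power-ev} is again
\[
A = \begin{pmatrix} 1 & 1 \\ 0 & 1 \end{pmatrix},
\]
a single Jordan block with the double eigenvalue $1$. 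In particular ${\rm Spec}(A)\subset\{{\rm Re}\,\lambda>0\}$, so $A$ is hyperbolic with $m_A=0$: no free parameter appears and the expansion will be uniquely determined.

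The sole departure from (\ref{log}) is the inhomogeneous term. With $\alpha=(1,2)$, $k=1$ and $f_{\rm res}=(0,-u)^T$, the scaling $\theta(t)^{\frac{1}{k}\Lambda_\alpha}f_{\rm res}(\theta(t)^{-\frac{1}{k}\Lambda_\alpha}{\bf Y}_0)$ produces $-(0,\theta(t))^T$, rather than the constant vector $-(0,1)^T$ obtained for (\ref{log}); this is precisely the governing system (\ref{2nd-log2}). I would then solve this linear inhomogeneous system through the bounded-solution formula (\ref{formula-2nd}) of Proposition~\ref{prop-conv-2nd}, reusing the fundamental matrix $\Phi_2(t;1)$ and its inverse already displayed for (\ref{log}). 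Because all blow-up power eigenvalues are positive, $P_-=O$ and $P_+=I$, so the solution reduces to the single improper integral $-\Phi_2(t;1)\int_t^{t_{\max}}\Phi_2(s;1)^{-1}(0,-\theta(s))^T\,ds$. The main obstacle, as in (\ref{log}), is the logarithmic bookkeeping: the Jordan block forces $\ln\theta(t)$ into both $\Phi_2$ and $\Phi_2^{-1}$, and the integrand carries a $\theta(s)^2\ln(\theta(s)^{-1})$ contribution. The crucial point to verify is that, after integrating (via Lemma~\ref{lem-asym-poly-log}) and multiplying back by $\Phi_2(t;1)$, the logarithms cancel, leaving the purely polynomial answer $(U_1(t),V_1(t))=\tfrac19(-\theta(t)^2,3\theta(t)^2)$.

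Finally I would certify that this truncation is a genuine asymptotic expansion in the sense of (\ref{asym-true-cond}) by invoking Proposition~\ref{prop-order-increase}. Its hypotheses are checked directly: $f_{\rm res,1}\equiv 0$ gives $\gamma_1=+\infty$, while $f_{\rm res,2}(\theta(t)^{-\frac{1}{k}\Lambda_\alpha}\tilde{\bf x})=O_s(\theta(t)^{-1})$ gives $\gamma_2=-1$; the derivative condition (\ref{order-fres-diff}) holds with equality since $Df_{\rm res}$ is constant and lowers the degree no further, and $\alpha_i/k+\gamma_i>-1$ holds for each $i$. Hence (\ref{suff-asym}) yields $\delta=\min\{2,+\infty\}=2$, there being no negative blow-up power eigenvalues, and Proposition~\ref{prop-order-increase} gives $\deg_\theta({\bf Y}_N)\geq 2N$. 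Consequently no term $\theta(t)^m$ with $m\leq 2$ can arise from ${\bf Y}_n$ for $n\geq 2$, so the coefficients of $\theta(t)^{-1}$, $\theta(t)^0$ and $\theta(t)$ extracted above are final. Reverting to $(u,v)$ through (\ref{series-log}) and appealing to Theorem~\ref{thm-asym-exp} then delivers the stated second-order expansion.
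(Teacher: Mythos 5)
Your proposal is correct and follows essentially the same route as the paper: unchanged balance law and Jordan-block matrix $A$ from (\ref{log}), the rescaled residual giving the inhomogeneous term $-(0,\theta(t))^T$ in (\ref{2nd-log2}), the bounded-solution integral with $P_-=O$ and the cancellation of logarithms yielding $(U_1,V_1)=\tfrac19(-\theta(t)^2,3\theta(t)^2)$, and finally $\delta=2$ from Proposition \ref{prop-order-increase} (via $(\gamma_1,\gamma_2)=(+\infty,-1)$) to certify that no lower-order corrections arise from ${\bf Y}_n$, $n\geq 2$. The numerical coefficients and the resulting expansion $u(t)\sim\theta(t)^{-1}-\tfrac19\theta(t)$, $v(t)\sim\tfrac13$ agree with the paper's computation.
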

In contrast to Theorem \ref{thm-asym-log} for (\ref{log}), $v(t)$ remains bounded as $t\to t_{\max}$, while $u(t)$ blows up\footnote{
In some context in partial differential equations, this kind of blow-up behavior is referred to as {\em nonsimultaneous blow-up} (cf. \cite{Ha2016, Ha2017}).
}.
Another interesting observation here is the absence of logarithmic terms in the asymptotic expansion, although the fundamental matrix \KMg{$\Phi_2(t;1)$} includes the logarithmic terms.
In practical calculations, there are cancellations of logarithmic terms in integration of inhomogeneous terms, which can also occur in calculations of higher-order terms $(U_n(t), V_n(t))$, $n\geq 3$.



\section*{Concluding Remarks}

In this paper, we have provided a systematic methodology to derive asymptotic expansions of blow-up solutions \KMi{in} arbitrary orders for autonomous ODEs possessing quasi-homogeneity in an asymptotic sense\KMm{, as well as their with their justification, and parameter dependence under mild assumptions}.
\KMk{Through our proposed methodology with several examples, we see that}
\begin{itemize}
\item only fundamental solver of linear ODEs is necessary to derive asymptotic expansions of an arbitrary order, at least in the formal sense;
\item all possible powers of $\theta(t)$ can be extracted {\em even if the series is not a power series};
\item dependence of powers of $\theta(t)$ on parameters in systems can be clearly extracted;
\item low singular or regular behavior of solutions, corresponding to $0$-valued components in roots of the balance law\KMj{,}
 can be also extracted through the present methodology, while they have not been extracted from dynamics \KMj{at infinity} in preceding results (cf. \cite{Mat2018} whose details are briefly reviewed in Part II \cite{asym2}).
\end{itemize}
We end this paper by providing several discussions towards the extension of our present arguments and perspectives of blow-up solutions\KMj{, as well as a short introduction to Part II \cite{asym2}}.

\subsection*{Beyond Type-I Blow-Ups}
The present methodology for asymptotic expansions is based on Assumption \ref{ass-fundamental}, that is, the blow-up is assumed to be type-I.
There are another type of blow-up behavior whose singularity as a function of $\theta(t)$ is stronger than type-I blow-ups, which are often called {\em type-II} blow-ups\footnote{
This terminology is well known in blow-up studies for partial differential equations (e.g. \cite{HV1994}).
}.
In \cite{Mat2019}, dynamical aspects of type-II blow-ups and {\em grow-ups}; divergence of solutions with $t_{\max} = \infty$, are studied, where {\em non-hyperbolic} equilibria or invariant sets \KMk{at infinity} play key roles in characterization of such asymptotic behavior.
We have seen in the present arguments that {\em hyperbolicity} of linearized matrices for systems of our interests are essential to determine type-I blow-up behavior and their asymptotic expansions, indicating that non-hyperbolicity can induce completely different situation in the study of asymptotic expansions.
Moreover, according to \cite{Mat2019}, type-II blow-up mechanism has various aspects and hence studies for individual systems should be collected towards a comprehensive understandings of asymptotic expansions of blow-ups beyond type-I.

\KMj{
\subsection*{Short Introduction to Part II \cite{asym2}}

We briefly introduce arguments in Part II \cite{asym2}.
As mentioned in Introduction, a dynamical characterization of blow-up solutions in terms of dynamics at infinity is proposed by the third author and his collaborators. 
In the present paper, on the other hand, a systematic methodology for calculating multi-order asymptotic expansions of blow-up solutions is proposed, implying characterization of blow-up solutions from different viewpoints (e.g., \cite{Mat2018}. See Introduction for more references).
Now we have two characterizations for identical blow-up solutions.
It is then natural to ask the correspondence between these two characterizations.
\par
In Part II \cite{asym2}, we address a correspondence of blow-up characterizations.
More precisely, we correspond roots of the balance law (Definition \ref{dfn-balance}) and eigenstructures of the blow-up power-determining matrices (Definition \ref{dfn-blow-up-power-ev}) to \lq\lq equilibria at infinity" and the associated eigenstructure of linearized systems.
The correspondence implies that algebraic objects determining asymptotic expansions characterize  dynamical properties of blow-ups, such as existence, persistence under perturbation of initial points, and blow-up rates.
As a corollary, we shall prove that {\em the balance law and blow-up power-determining matrix themselves provide a criterion of the existence of blow-up solutions which we are interested in} (remark that, throughout our arguments in the present paper, the existence of blow-ups is {\em assumed}).
The dynamical interpretation of our present methodology through arguments in Part II \cite{asym2} will give a new insight into characterizations of blow-up solutions.
}

\section*{Acknowledgements}
The essential ideas in the present paper are inspired in Workshop of Unsolved Problems in Mathematics 2021 sponsored by Japan Science and Technology Agency (JST). 
All authors appreciate organizers and sponsors of the workshop for providing us with an opportunity to create essential ideas of the present study.
KM was partially supported by World Premier International Research Center Initiative (WPI), Ministry of Education, Culture, Sports, Science and Technology (MEXT), Japan.
KM \KMk{was also} partially supported by JSPS Grant-in-Aid for Scientists (B) (No. JP21H01001).
\bibliographystyle{plain}
\bibliography{blow_up_asymptotic}

\appendix
\section{Tools}

\subsection{Right-sided compactifications for asymptotically autonomous systems}
\label{section-appendix-right-cpt}

In several results, {\em time-variable compactifications for asymptotically autonomous systems} introduced in e.g. \cite{WXJ2021} are applied to completing our proofs discussed in the next section.
These advanced notions in dynamical systems are briefly summarized in the present section.
Note that simpler proofs can be constructed, which will yield the same statement in more general setting.
\par
Recall that our main issue is (\ref{blow-up-basic}). 
Now we introduce the new time variable
\begin{equation}
\label{time-trans-t-s}
s = -\ln\theta(t)\quad \Leftrightarrow \quad \theta(t) = e^{-s},\quad \frac{df}{dt} = \frac{df}{ds}\frac{ds}{dt} = \theta(t)^{-1}\frac{df}{ds}.
\end{equation}
Then the system (\ref{blow-up-basic}) is transformed into
\begin{equation}
\label{blow-up-basic-s}
\frac{d}{ds}{\bf Y} = -\KMf{ \frac{1}{k}\Lambda_\alpha }{\bf Y} + f_{\alpha, k}({\bf Y}) + e^{-s(\KMg{I}+ \KMf{ \frac{1}{k}\Lambda_\alpha })}f_{{\rm res}}(e^{ \KMf{ \frac{s}{k}\Lambda_\alpha }} {\bf Y}).
\end{equation}

By the asymptotic quasi-homogeneity of $f$, the nonautonomous system (\ref{blow-up-basic-s}) is {\em asymptotically autonomous} as $s\to +\infty$ (corresponding to $t\to t_{\max}-0$) in the sense that (\ref{blow-up-basic-s}) admits the {\em future limit system} as $s\to \infty$ (cf. \cite{WXJ2021}):  
\begin{equation}
\label{blow-up-basic-s-limit}
\frac{d}{ds}{\bf Y} = - \KMf{ \frac{1}{k}\Lambda_\alpha } {\bf Y} + f_{\alpha, k}({\bf Y}).
\end{equation}
In particular, $e^{-s(\KMg{I}+ \KMf{ \frac{1}{k}\Lambda_\alpha } )}f_{{\rm res}}(e^{\KMf{ \frac{s}{k}\Lambda_\alpha }} {\bf Y}) \to 0$ as $s\to +\infty$, which is locally uniform in ${\bf Y}$.
The root of the balance law ${\bf Y}_0$ is an equilibrium of the future limit system (\ref{blow-up-basic-s-limit}).
Here we briefly summarize the technique of {\em compactification of for asymptotically autonomous systems in the time variable} introduced in \cite{WXJ2021}.
In particular, {\em right-sided compactification} is reviewed.

\begin{ass}[\cite{WXJ2021}, Assumption 2.2]
The general system
\begin{equation*}
\frac{d{\bf y}}{ds} = f({\bf y}, \Gamma(s)),\quad \Gamma: \mathbb{R}\to V:\, C^1
\end{equation*}
is considered with an open interval $V\subset \mathbb{R}$, where $f:U\times V\to \mathbb{R}^n$ with an open subset $U\subset \mathbb{R}^n$ is $C^r$ with \KMc{$r \geq 1$}. 
\KMl{The function $\Gamma$ is assumed to be asymptotically constant in the sense that the future limit $\displaystyle{\lim_{t\to +\infty}\Gamma(t) = \Gamma_+ \in V}$ exists.}
A coordinate transform $\tilde s = \KMf{\phi}(s)$ satisfying all the following properties is chosen:
\begin{align*}
&\KMf{\phi}: [s_- , \infty) \to [\tilde s_-, 1),\quad \KMf{\phi}\in C^{k\geq 2},\quad \lim_{s\to +\infty} \KMf{\phi}(s) = 1,\\
&\frac{d\KMf{\phi}}{ds}(s) > 0\quad (s\geq s_-),\quad \lim_{s\to +\infty} \frac{d\KMf{\phi}}{ds}(s) = 0,
\end{align*}
with the inverse $h(\tilde s) = \KMf{\phi}^{-1}(\tilde s)$.
\end{ass}
The resulting autonomous right-sided compactified system is the following:
\begin{equation}
\label{comp-right}
\frac{d{\bf y}}{ds} = f({\bf y}, \Gamma(h(\tilde s))),\quad \frac{d\tilde s}{ds} = \gamma(\tilde s),
\end{equation}
where 
\begin{align*}
 f({\bf y}, \Gamma(h(\KMl{\tilde s}))) &= \begin{cases}
	f({\bf y}, \Gamma(h(\tilde s))) & \tilde s \in [\tilde s_-, 1),\\
	f({\bf y}, \Gamma^+) & \tilde s = 1,\\
\end{cases}\\
\gamma(\tilde s) &= \begin{cases}
	1/h'(\tilde s) & \tilde s \in [\tilde s_-, 1),\\
	0 & \tilde s = 1.\\
\end{cases}
\end{align*}
Then it follows from Proposition 2.1 and Theorem 2.2 in \cite{WXJ2021} that (\ref{comp-right}) is $C^1$-smooth on $U\times (\tilde s_-, 1]$ if and only if the following limits exist:
\begin{equation}
\label{limit-comp}
\lim_{s\to +\infty}\frac{\frac{d}{ds}\Gamma(s)}{\frac{d}{ds}\KMf{\phi}(s)} = \lim_{\tilde s\to +1-0}\frac{d}{d\tilde s}\Gamma(h(\tilde s)),\quad 
\lim_{s\to +\infty}\frac{\frac{d^2}{ds^2}\KMf{\phi}(s)}{\frac{d}{ds}\KMf{\phi}(s)} = -\lim_{\tilde s\to +1-0}\frac{\frac{d^2}{ds^2}h(\tilde s)}{ \left(\frac{d}{ds}h(\tilde s) \right)^2 }\KMi{.}
\end{equation}
In particular, under the existence of the above limits, the system (\ref{blow-up-basic-s}) is $C^1$-extended to the future-limit system (\ref{blow-up-basic-s-limit}) through the compactified system (\ref{comp-right}).
\par
\bigskip
By assumption of the blow-up power-determining matrix $A$, we have
\begin{equation*}
\sharp \{\lambda\in {\rm Spec}(A) \mid {\rm Re}\lambda < 0\} = m_A
\end{equation*}
and hence ${\bf Y}_0$ is a hyperbolic saddle of (\ref{blow-up-basic-s-limit}) admitting the $m_A$-dimensional stable manifold.
Here we apply the right-sided compactification in $s$ to (\ref{blow-up-basic-s}).
As a simple one, we apply the right-side exponential compactification (cf. Section 4.2 in \cite{WXJ2021})
\begin{equation}
\label{R-exp-comp}
\tilde s = \KMf{\phi}(s)\equiv \KMf{\phi}_{(\nu)}(s) = 1 - e^{-\nu s},\quad s = h(\tilde s)\equiv \KMf{\phi}_{(\nu)}^{-1}(\tilde s) = -\frac{1}{\nu}\ln (1-\tilde s)
\end{equation}
with $\nu \in (0,1)$, and let $\Gamma(s) = e^{-s}$.
\KMg{In particular, functions $\Gamma$, $\phi$ and $h$ are $C^\infty$-smooth for $s < \infty$, equivalently $\tilde s < 1$.}
Then limits in (\ref{limit-comp}) exist and the one-sided compactified {\em autonomous} system
\begin{align}
\label{Y-1side-compactified}
\frac{d}{ds}{\bf Y} &= - \KMf{ \frac{1}{k}\Lambda_\alpha } {\bf Y} + f_{\alpha, k}({\bf Y}) + (1-\tilde s)^{\nu^{-1}(\KMg{I} +\KMf{ \frac{1}{k}\Lambda_\alpha })} f_{{\rm res}}\left( (1-\tilde s)^{-\KMf{ \frac{1}{k\nu }\Lambda_\alpha }} {\bf Y} \right),\\
\notag
\frac{d\tilde s}{ds} &= \nu (1-\tilde s)
\end{align}
is $C^1$-smooth on $\mathbb{R}^n \times (-\infty,1]$ \KMl{jointly in $({\bf Y}, \tilde s)$} because $\nu < 1$, which is in fact $C^r$ in the present case by choosing $\nu$ sufficiently small, if necessary, where $r$ is the differentiability of $f$.
\KMi{This transformation} can be applied to asymptotically autonomous systems admitting the future limit system, such as (\ref{2nd}).

\section{Proofs of results}
\label{section-appendix-proof}

Proofs of statements in \KMc{Section \ref{section-asym}} are collected here.

\subsection{\KMd{Proof of Proposition \ref{prop-conv-2nd}}}
\label{section-app-proof-conv-2nd}

Apply the right-sided compactification to (\ref{2nd}) in the similar way to what we have discussed in Appendix \ref{section-appendix-right-cpt}, which yields
\begin{align}
\label{Y1-1side-compactified}
\frac{d}{ds}{\bf Y}_1 &= A{\bf Y}_1 + (1-\tilde s)^{\nu^{-1}(\KMg{I} + \KMf{ \frac{1}{k}\Lambda_\alpha } )} f_{{\rm res}}\left( (1-\tilde s)^{-\KMf{ \frac{1}{k\nu}\Lambda_\alpha } }  {\bf Y}_0 \right),\\
\notag
\frac{d\tilde s}{ds} &= \nu (1-\tilde s).
\end{align}
From the asymptotic quasi-homogeneity of $f$, this system possesses an equilibrium $\tilde {\bf Y}_1 \KMg{(}\equiv ({\bf Y}_1, \tilde s) \KMg{)}= ({\bf 0}, 1)$ on the future limit space $\{\tilde s = 1\}$, which is the only possible equilibrium so that the asymptotic relation (\ref{Y-asym}) is satisfied.
Moreover, we see  from the form of the system that eigenvalues of the linearized matrix at the equilibrium consists of ${\rm Spec}(A)$ and $\{-\nu\}$.
By assumption, the equilibrium $\tilde {\bf Y}_1$ is hyperbolic and hence, by the Stable Manifold Theorem (e.g. \cite{Rob}),  the $(m_A+1)$-dimensional local stable manifold $W^s_{\rm loc}(\tilde {\bf Y}_1)$ for (\ref{Y1-1side-compactified}) is constructed.
\KMi{Now} one variable generating $W^s_{\rm loc}(\tilde {\bf Y}_1)$ is $\tilde s$, which is identical with the time $s$ in the nonautonomous system
\begin{align}
\label{Y1-nonaut}
\frac{d}{ds}{\bf Y}_1 &= A{\bf Y}_1 + \tilde {\bf g}_1(s)
\end{align}
\KMi{before applying the right-sided compactification to obtaining (\ref{Y1-1side-compactified}),}
where
\begin{equation*}
\tilde {\bf g}_1(s) = e^{-s (\KMg{I} + \KMf{\frac{1}{k}\Lambda_\alpha })} f_{{\rm res}}\left( e^{ \KMf{\frac{s}{k}\Lambda_\alpha } } {\bf Y}_0 \right)\KMi{.}
\end{equation*}
\KMi{Therefore,} the remaining $m_A$ parameters generate the family of solutions ${\bf Y}_1(s)$.
\par
On the other hand, the solution ${\bf Y}_1(s)$ of (\ref{Y1-nonaut}) possesses the expression
\begin{equation}
\label{Y1_variation}
{\bf Y}_1(s) = e^{(s-s_0)A} \left\{ {\bf Y}_1^0 + \int_{s_0}^{s}e^{(s_0-\eta)A} \tilde {\bf g}_1(\eta) d\eta \right\}
\end{equation}
by the variation of constants formula, where $s_0 = -\ln \theta(t_0)$.
(\ref{Y1_variation}) is also written as
\begin{align}
\notag
{\bf Y}_1(s) &= e^{(s-s_0)A} \left\{ (P_- + P_+) {\bf Y}_1^0 + \int_{s_0}^{s}e^{-(\eta - s_0) A} (P_- + P_+)  \tilde {\bf g}_1(\eta) d\eta \right\}\\
\label{Y1_variation-2}
	&= e^{(s-s_0)A} \left\{ (P_- + P_+) {\bf Y}_1^0 + (P_- + P_+) \int_{s_0}^{s}e^{-(\eta - s_0) A} \tilde {\bf g}_1(\eta) d\eta \right\}
\end{align}
where we have used the $A$-invariance of $P_\pm$ and the fact that $P_\pm$ are independent of the time variable $s$ in the second equality.
Note that solutions ${\bf Y}_1(s)$ determining trajectories on $W^s_{\rm loc}(\tilde {\bf Y}_1)$ are bounded on $s\in [s_0, \infty)$.
In particular, such solutions ${\bf Y}_1(s)$ of the system (\ref{Y1-nonaut}) satisfy all requirements so that the {\em Lyapunov-Perron's method} (cf. Section III. 6 in \cite{Hale1969} or Proposition 6.4 in \cite{KR2011} for linear inhomogeneous systems) can be applied.
Consequently, a bounded solution ${\bf Y}_1(s)$ on $[s_0, \KMk{\infty})$ also admits the form
\begin{align}
\notag
\label{Y1_Perron}
{\bf Y}_1(s) &= e^{(s-s_0)A} \left\{ P_-{\bf Y}_1^0 + \int_{s_0}^s e^{-(\eta -s_0) A} P_-  \tilde {\bf g}_1(\eta) d\eta  -\int_{s}^{\infty} e^{ -(\eta -s_0) A} P_+  \tilde {\bf g}_1(\eta) d\eta \right\}
\end{align}
for {\em any} ${\bf Y}_1^0\in \mathbb{R}^n$.
This is also written by
\begin{align*}
\notag
{\bf Y}_1(s) &= e^{(s-s_0)A} \left\{ (P_- + P_+){\bf Y}_1^0 + \int_{s_0}^s e^{-(\eta -s_0) A} \tilde {\bf g}_1(\eta) d\eta \right. \\ 
	&\quad \left. - \left( P_+{\bf Y}_1^0 + P_+\int_{s_0}^s e^{-(\eta -s_0) A} \tilde {\bf g}_1(\eta) d\eta \right)  -\int_{s}^{\infty} e^{- (\eta -s_0) A} P_+ \tilde {\bf g}_1(\eta) d\eta \right\},
\end{align*}
which follows from (\ref{Y1_variation}) that
\begin{equation*}
P_+{\bf Y}_1^0 = -\int_{s_0}^{\infty} e^{-(\eta -s_0) A} P_+  \tilde {\bf g}_1(\eta) d\eta.
\end{equation*}
Substituting this into (\ref{Y1_variation-2}), we have
\begin{align}
\notag
{\bf Y}_1(s) &= e^{(s-s_0)A} \left\{ P_- \left( {\bf Y}_1^0 + \int_{s_0}^{s} e^{-(\eta -s_0)A} \tilde {\bf g}_1(\eta) d\eta \right) - P_+  \int_{s}^{\infty} e^{-(\eta -s_0) A} \tilde {\bf g}_1(\eta) d\eta \right\},\quad {\bf Y}_1^0 \in \mathbb{R}^n.
\end{align}
Back to the original time-scale $t$, we have
\begin{equation*}
{\bf Y}_1(t) = \left(\frac{\theta (t)}{\theta (t_0)}\right)^{-A} \left\{ P_- \left( {\bf Y}_1^0 + \int_{t_0}^t \left(\frac{\theta (\eta)}{\theta (t_0)}\right)^{A} {\bf g}_1(\eta) d\eta \right) - P_+  \int_{t}^{t_{\max}} \left(\frac{\theta (\eta)}{\theta (t_0)}\right)^{A} {\bf g}_1(\eta) d\eta \right\},
\end{equation*}
where $t_0 = t_{\max} - e^{-s_0}$.
By the uniqueness of solutions, this solution corresponds to a solution trajectory on $W^s_{\rm loc}(\tilde {\bf Y}_1)$ for (\ref{Y1-1side-compactified}) we have obtained first, whenever $\| P_- {\bf Y}_1^0\|$ is sufficiently small.

\subsection{Proof of Proposition \ref{prop-conv-(j+1)th}}
\label{section-app-proof-conv-(j+1)th}
The basic idea for proving the existence of a solution converging to ${\bf 0}\in \mathbb{R}^n$ is the same as the proof of Proposition \ref{prop-conv-2nd}.
That is, the right-sided compactification to the time-transformed system
\begin{equation}
\label{Yj-nonaut}
\frac{d}{ds}{\bf Y}_{j} = A {\bf Y}_j + \tilde {\bf g}_j(s).
\end{equation}
of (\ref{jth}) through $s = -\ln \theta(t)$ is discussed, where
\begin{align*}
 \tilde {\bf g}_j(s) &=  \left\{ R_{\alpha, k}({\bf S}_{j-1}({\bf Y})(s)) - R_{\alpha, k}({\bf S}_{j-2}({\bf Y})(s)) \right\}\\
	&\quad  + e^{-s(\KMg{I}+ \KMf{ \frac{1}{k}\Lambda_\alpha })}  \left\{ f_{{\rm res}}(e^{ \KMf{ \frac{s}{k}\Lambda_\alpha }}{\bf S}_{j-1}({\bf Y})(s)) - f_{{\rm res}}(e^{ \KMf{ \frac{s}{k}\Lambda_\alpha }} {\bf S}_{j-2}({\bf Y})(s) \right\}.
\end{align*}
The corresponding compactified system becomes
\begin{align}
\notag
\frac{d}{ds}{\bf Y}_j &= A{\bf Y}_j +\left\{ R_{\alpha, k}({\bf S}_{j-1}({\bf Y})(s)) - R_{\alpha, k}({\bf S}_{j-2}({\bf Y})(s)) \right\} \\
\label{Yj-1side-compactified}
	&\quad +  (1-\tilde s)^{\nu^{-1}(\KMg{I} + \KMf{ \frac{1}{k}\Lambda_\alpha })} \left\{ \KMg{ f_{{\rm res}} \left( (1-\tilde s)^{-\KMf{ \frac{1}{k\nu }\Lambda_\alpha }} {\bf S}_{j-1}({\bf Y})(s) \right) - f_{{\rm res}} \left( (1-\tilde s)^{-\KMf{ \frac{1}{k\nu }\Lambda_\alpha }}{\bf S}_{j-2}({\bf Y})(s) \right)}  \right\},\\
\notag
\frac{d\tilde s}{ds} &= \nu (1-\tilde s).
\end{align}
%
In particular, the $(m_A+1)$-dimensional local stable manifold $W^s_{\rm loc}(\tilde {\bf Y}_j)$ of the hyperbolic saddle $\tilde {\bf Y}_j = ({\bf Y}_j, \tilde s) = ({\bf 0}, 1)$ for (\ref{Yj-1side-compactified}) is constructed. 
Here recall that the second term ${\bf Y}_1(t)$ includes $m_A$ free parameters chosen from $\mathbb{E}_A^s$ and the power decay term \KMk{$\theta(\theta)^{-A}P_- {\bf Y}_1^0$}.
This \KMk{power decay} term also appears in ${\bf Y}_j(t)$ through the same arguments in those obtaining ${\bf Y}_1(t)$.
In contrast, this term has to \KMi{vanish so that} the asymptotic relation (\ref{Y-asym}) holds.

Now go back to the proof of Proposition \ref{prop-conv-(j+1)th}.
By the same arguments as Section \ref{section-app-proof-conv-2nd}, we know that (\ref{Yj-nonaut}) possesses the solution
\begin{equation}
\label{Yj_variation-stable}
{\bf Y}_j(s) = e^{(s-s_0)A} \left\{ P_- \left( {\bf Y}_j^0 + \int_{s_0}^{s} e^{-(\eta - s_0) A} \tilde {\bf g}_j(\eta) d\eta \right) - P_+  \int_{s}^{\infty} e^{-(\eta -s_0)A} \tilde {\bf g}_j(\eta) d\eta \right\},
\end{equation}
where ${\bf Y}_j^0 \in \mathbb{R}^n$, but the exponentially decaying terms characterized by ${\rm Spec}(A) \cap \{{\rm Re}\,\lambda < 0\}$ should be vanished by the asymptotic relation
\begin{equation*}
{\bf Y}_j(\KMk{s}) \ll {\bf Y}_1(\KMk{s}) \quad \text{ as }s\to \infty\KMi{,}
\end{equation*}
and hence the constant vector ${\bf Y}_j^0$ should be specified so that this requirement is satisfied.
\KMl{This constraint is considered below.}
\par
\bigskip
Back to the original time-scale $t$, we have
\begin{equation}
\label{Yj_variation-stable-proof}
{\bf Y}_j(t) = \left(\frac{\theta (t)}{\theta (t_0)}\right)^{-A} \left\{ P_- \left( {\bf Y}_j^0 + \int_{t_0}^t \left(\frac{\theta (\eta)}{\theta (t_0)}\right)^{A} {\bf g}_j(\eta) d\eta \right) - P_+  \int_{t}^{t_{\max}} \left(\frac{\theta (\eta)}{\theta (t_0)}\right)^{A} {\bf g}_j(\eta) d\eta \right\},
\end{equation}
where $t_0 = t_{\max} - e^{-s_0}$\KMl{.}
We shall determine the appropriate ${\bf Y}_j^0 = (Y_{j,1}^0, \ldots, Y_{j,n}^0)^T$.
To this end, consider the primitive function ${\bf H}_j(t)$ satisfying (\ref{integral-jth-H}).
Let 
\begin{equation*}
\nu_{j,i} := \KMj{\deg_\theta} \left( \left\{ \KMk{ \left(\frac{\theta (t)}{\theta (t_0)}\right)^{\Lambda} P^{-1} \KMj{P_-} {\bf g}_j(t) } \right\}_i \right),
\end{equation*}
\KMl{Now our determination of ${\bf Y}_j^0$ consists of two cases.}
\begin{description}
\item[Case 1. $\nu_{j,i} \leq -1$.] 
\end{description}
In this case, define $Z_{j,i} := ({\bf H}_j(\KMl{t_0}))_i$.
\begin{description}
\item[Case 2. $\nu_{j,i} > -1$.] 
\end{description}
In this case, similar to arguments in the proof of Proposition \ref{prop-deg-integral-theta-F}, we rewrite the rightmost integral in (\ref{integral-jth-H}) by
\begin{align}
\label{integral-jth-H-2}
&\left\{\int_{t_0}^{t_{\max}} \left(\frac{\theta (\eta)}{\theta (t_0)}\right)^{\Lambda} P^{-1}\KMk{P_-}  {\bf g}_j(\eta) d\eta - \int_{t}^{t_{\max}} \left(\frac{\theta (\eta)}{\theta (t_0)}\right)^{\Lambda} P^{-1}\KMk{P_-} {\bf g}_j(\eta) d\eta \right\}_i \\
\notag
	&\equiv I_{0; j,i} - \left\{\int_{t}^{t_{\max}} \left(\frac{\theta (\eta)}{\theta (t_0)}\right)^{\Lambda} P^{-1} \KMk{P_-} {\bf g}_j(\eta) d\eta \right\}_i \\
\notag
	&\KMl{= I_{0; j,i} + ({\bf H}_j(t))_i,}
\end{align}
\KMl{where we have used the fact that ${\bf H}_j(t)$ is chosen so that $\displaystyle{\lim_{t\to t_{\max}-0}} ({\bf H}_j(t)) = 0$. }
By the similar arguments to Proposition \ref{prop-ord-fundamental}, the first term converges to a finite value $I_{0;j,i}$ as the improper integral.
By definition of improper integrals and (\ref{integral-jth-H}), we have
\begin{equation*}
I_{0; j,i} = \lim_{t\to t_{\max}-0} ({\bf H}_j(t))_i - ({\bf H}_j(t_0))_i = -({\bf H}_j(t_0))_i.
\end{equation*}
In this case, define \KMl{$Z_{j,i} := -I_{0; j,i} = {\bf H}_j(t_0)$}.
Finally, define the vector ${\bf Z}_j$ by ${\bf Z}_j := (Z_{j,1}, \ldots, Z_{j,n})^T \equiv {\bf H}_j(t_0)$.
\par
\bigskip
Because ${\bf Z}_j$\KMl{, namely ${\bf H}_j(t_0)$,} is uniquely determined once \KMl{$t_0$,} ${\bf g}_j(\eta)$ is, in particular the lower terms $\{{\bf Y}_l(t)\}_{l=0}^{j-1}$ are determined, and hence letting ${\bf Y}_j^0$ so that
\begin{equation*}
P_- \KMi{{\bf Y}_j^0} =  P_- (P{\bf H}_j(t_0)),
\end{equation*}
the $\mathbb{E}_A^s$-component of the constant term in \KMk{(\ref{Yj_variation-stable-proof})} becomes $0$ \KMk{for all $t\in (t_0, t_{\max})$}.
The initial point at $t=t_0$ is uniquely determined by
\begin{equation*}
{\bf Y}_j(t_0) =  P_- {\bf Y}_j^0 - P_+  \int_{t_0}^{t_{\max}} \left(\frac{\theta (\eta)}{\theta (t_0)}\right)^{A} {\bf g}_j(\eta) d\eta,
\end{equation*}
which contains free parameters determined by ${\bf Y}_1(t)$ in ${\bf g}_j(t)$.
The proof is therefore completed.

\subsection{Proof of Theorem \ref{thm-smoothness-Y}}
\label{section-app-proof-smoothness-Y}

By the same argument as Theorem 3.2 in \cite{WXJ2021}, we know that the point $\tilde {\bf Y}_0 \equiv ({\bf Y}_0, 1)$ is a hyperbolic saddle for \KMk{the one-sided} compactified system \KMk{(\ref{Y-1side-compactified})}.
Then the saddle $\tilde {\bf Y}_0$ admits the $({m_A}+1)$-dimensional local stable manifold $W^{s}_{\rm loc}(\tilde {\bf Y}_0)$ in $\mathbb{R}^n \times (-\infty,1]$ by the Stable Manifold Theorem (e.g. \cite{Rob}).
From the $C^\infty$-smoothness of $\KMf{\phi_{(\nu)}}$, $\Gamma(s) = e^{-s}$ on $\mathbb{R}$ and $f$ on $\mathbb{R}^n$, the smooth dependence of solutions of (\ref{blow-up-basic-s}) on initial points and time as well as the uniqueness of solutions yield that $W^{s}_{\rm loc}(\tilde {\bf Y}_0)\cap (\mathbb{R}^n \times (-\infty,1))$ is indeed $C^r$-smooth in the $s$-\KMm{timescale}.
Moreover, the Stable Manifold Theorem implies that the convergence of solutions to ${\bf Y}_0$ is exponential, that is, for each $i = 1,\ldots, n$, we have
\begin{equation}
\label{Ysys-exp-decay}
Y_i(s) - Y_{i,0} = C_i e^{-\mu_i s}(1+o(1))\quad \text{ as }\quad s\to +\infty
\end{equation}
with a constant $C_i$ and an exponent $\mu_i$ with ${\rm Re}\,\mu_i > 0$.
Finally we go back to the system in the original $t$-\KMm{timescale}.
The exponential decay (\ref{Ysys-exp-decay}) implies that 
\begin{equation*}
Y_i(t) - Y_{i,0} = \tilde C_i (t_{\max}-t)^{\mu_i}(1+o(1))\quad \text{ as }\quad t\to t_{\max}\KMi{,}
\end{equation*}
which is continuous in $t$ including $t\to t_{\max}$, and $C^r$-smooth for $t < t_{\max}$.

\subsection{Proof of Proposition \ref{prop-convergence-Y}}
\label{section-app-proof-convergence-Y}

The basic idea is the same as the proof of Theorem \ref{thm-smoothness-Y} except the system we consider is 
\begin{align}
\notag
\frac{d}{dt} {\bf Y}_N^c &= \theta(t)^{-1} \left[ \left( -\KMf{ \frac{1}{k}\Lambda_\alpha } + Df_{\alpha, k}({\bf Y}_0) \right){\bf Y}_N^c + \left\{ R_{\alpha, k}( {\bf S}_N({\bf Y}) + {\bf Y}_N^c ) - R_{\alpha, k}( {\bf S}_N({\bf Y}) )\right\} \right]\\
\label{asym-eq-remainder}
	&\quad + \theta(t)^{\KMf{ \frac{1}{k}\Lambda_\alpha }} \left\{ f_{{\rm res}}\left( \theta(t)^{\KMf{ -\frac{1}{k}\Lambda_\alpha }} \left\{ {\bf S}_N({\bf Y}) +  {\bf Y}_N^c \right\} \right) - f_{{\rm res}}\left( \theta(t)^{\KMf{ -\frac{1}{k}\Lambda_\alpha }} {\bf S}_N({\bf Y}) \right) \right\}
\end{align}
\KMe{instead of (\ref{asym-eq}).}
Using (\ref{time-trans-t-s}), (\ref{asym-eq-remainder}) is transformed into
\begin{align}
\notag
\frac{d}{ds} {\bf Y}_N^c &=  \left( -\KMf{ \frac{1}{k}\Lambda_\alpha } + Df_{\alpha, k}({\bf Y}_0) \right) {\bf Y}_N^c + \left\{ R_{\alpha, k}( {\bf S}_N({\bf Y}) + {\bf Y}_N^c ) - R_{\alpha, k}( {\bf S}_N({\bf Y}) )\right\} \\
\label{blow-up-basic-remainder-s}
	&\quad + e^{-s(\KMg{I} + \KMf{ \frac{1}{k}\Lambda_\alpha } )} \left\{ f_{{\rm res}}\left( e^{ \KMf{ -\frac{s}{k}\Lambda_\alpha }} \{ {\bf S}_N({\bf Y}) + {\bf Y}_N^c \} \right) - f_{{\rm res}}\left( e^{ \KMf{ -\frac{s}{k}\Lambda_\alpha }} {\bf S}_N({\bf Y}) \right) \right\}\KMf{.}
\end{align}
Applying the right-side exponential compactification (\ref{R-exp-comp}) to (\ref{blow-up-basic-remainder-s}), the following system is obtained:
\begin{align*}
\frac{d}{ds}{\bf Y}_N^c &=  \left( -\KMf{ \frac{1}{k}\Lambda_\alpha } + Df_{\alpha, k}({\bf Y}_0) \right) {\bf Y}_N^c + \left\{ R_{\alpha, k}( {\bf S}_N({\bf Y}) + {\bf Y}_N^c ) - R_{\alpha, k}( {\bf S}_N({\bf Y}) )\right\} \\
	&\quad + (1-\tilde s)^{ \nu^{-1} (\KMg{I} + \frac{1}{k}\Lambda_\alpha ) } \left\{ f_{{\rm res}}\left( (1-\tilde s)^{ \frac{1}{k\nu}\Lambda_\alpha }  \{ {\bf S}_N({\bf Y}) + {\bf Y}_N^c \} \right) - f_{{\rm res}}\left( (1-\tilde s)^{ \frac{1}{k\nu}\Lambda_\alpha }  {\bf S}_N({\bf Y}) \right) \right\},\\
\notag
\frac{d\tilde s}{ds} &= \nu (1-\tilde s).
\end{align*}
By the definition of $R_{\alpha, k}$, ${\bf Y}_N^c = {\bf 0} \in \mathbb{R}^n$ is an equilibrium of the future limit system
\begin{align*}
\notag
\frac{d}{ds}{\bf Y}_N^c &=  \left( -\KMf{ \frac{1}{k}\Lambda_\alpha } + Df_{\alpha, k}({\bf Y}_0) \right) {\bf Y}_N^c + \left\{ R_{\alpha, k}( {\bf S}_N({\bf Y}) + {\bf Y}_N^c ) - R_{\alpha, k}( {\bf S}_N({\bf Y}) )\right\}
\end{align*}
whose linearized matrix at ${\bf Y}_N^c = {\bf 0}$ is $A = -\KMf{ \frac{1}{k}\Lambda_\alpha } + Df_{\alpha, k}({\bf Y}_0)$.
Now ${\bf Y}_j (t) \ll {\bf Y}_0 \in \mathbb{R}^n$ as $t\to t_{\max}$ is required for all $j\geq 1$ by our construction of (\ref{formula-asym}), which indicates that ${\bf 0}\in \mathbb{R}^n$ is the only possible limit of the remainder ${\bf Y}_N^c$. 
By assumption, \KMe{$({\bf Y}_N^c, \tilde s) = \KMk{({\bf 0}, 1)}$} is saddle and hence \KMe{it} admits $(m_A+1)$-dimensional local stable manifold\KMi{. In particular, any solution ${\bf Y}_N^c$ on the manifold converges to } ${\bf 0} \in \mathbb{R}^n$ as $s\to \KMk{\infty}$, equivalently $t\to t_{\max}$ for small free parameters $(C_1, \ldots, C_{m_A})$.

\subsection{\KMd{Proof of Theorem \ref{thm-asym-QH}}}
\label{section-app-proof-asym-QH}

Note that there is a nonsingular matrix $P$ such that $P^{-1}AP = \Lambda$ is the Jordan normal form.
It follows from this transformation that
\begin{equation*}
\theta(t)^A \equiv e^{(\ln \theta(t))A} = e^{(\ln \theta(t))P\Lambda P^{-1}} = Pe^{(\ln \theta(t))\Lambda }P^{-1} = P\theta(t)^\Lambda P^{-1}.
\end{equation*}
First assume that $A$ is diagonalizable, in which case
\begin{equation*}
\theta(t)^{\Lambda} = {\rm diag}\left( \theta(t)^{\lambda_1}, \ldots, \theta(t)^{\lambda_n} \right),\quad \{\lambda_i\}_{i=1}^n = {\rm Spec}(A).
\end{equation*}
All possible orders of $\theta(t)$ are obviously listed as \KMe{$-\beta\cdot_A \lambda$ when} $\beta\in \mathbb{Z}_{\geq 0}^n$ with $|\beta| = 1$.
Assume that, for given $N\geq 1$, all possible orders of $\theta(t)$ are listed as the collection \KMe{$\{-\beta\cdot_A \lambda\}$} for $\beta\in \mathbb{Z}_{\geq 0}^n$.
The system for determining ${\bf Y}_{N+1}$ is 
\begin{align*}
\frac{d}{dt} {\bf Y}_{N+1} &= \theta(t)^{-1} \left[ A{\bf Y}_{N+1} +  \left\{ R_{\alpha, k}({\bf S}_N({\bf Y})(t)) - R_{\alpha, k}({\bf S}_{N-1}({\bf Y})(t)) \right\} \right].
\end{align*}
\KMk{Because} $R_{\alpha,k}$ is the remainder of quasi-homogeneous vector field, then the remainder $R_{\alpha, k}({\bf S}_N({\bf Y})) - R_{\alpha, k}({\bf S}_{N-1}({\bf Y}))$ consists of powers of the form $\theta(t)^{\KMl{-\beta\cdot_A \lambda}}$ with $\beta\in \mathbb{Z}_{\geq 0}^n$.
The solution ${\bf Y}_{N+1} = {\bf Y}_{N+1}(t)$ is 
\begin{align}
\notag
&{\bf Y}_{N+1}(t) \\
\notag
&= P\left(\frac{\theta (t)}{\theta (t_0)}\right)^{-\Lambda}P^{-1} \left\{ P_- \left( {\bf Y}_{N+1}^0 + \int_{t_0}^t P \left(\frac{\theta (\eta)}{\theta (t_0)}\right)^{\Lambda}P^{-1} {\bf g}_{N+1} (\eta) d\eta \right) - P_+ \int_{t}^{t_{\max}} P \left(\frac{\theta (\eta)}{\theta (t_0)}\right)^{\Lambda}P^{-1} {\bf g}_{N+1}(\eta) d\eta \right\} \\
\notag
&=  P\left(\frac{\theta (t)}{\theta (t_0)}\right)^{-\Lambda}P^{-1} P_- \left( {\bf Y}_{N+1}^0 + \int_{t_0}^t P\left(\frac{\theta (\eta)}{\theta (t_0)}\right)^{\Lambda}P^{-1}  \left\{ \theta(\eta)^{-1} \left(R_{\alpha, k}({\bf S}_{N}({\bf Y})(\eta)) - R_{\alpha, k}({\bf S}_{N-1}({\bf Y})(\eta)) \right) \right\} d\eta \right) \\
\label{YNp1}
&\quad - P\left(\frac{\theta (t)}{\theta (t_0)}\right)^{-\Lambda}P^{-1} \left( P_+ \int_{t}^{t_{\max}} P\left(\frac{\theta (\eta)}{\theta (t_0)}\right)^{\Lambda}P^{-1}  \left\{ \theta(\eta)^{-1} \left(R_{\alpha, k}({\bf S}_{N}({\bf Y})(\eta)) - R_{\alpha, k}({\bf S}_{N-1}({\bf Y})(\eta)) \right) \right\}   d\eta \right).
\end{align}
Integrating all terms individually, we know that the resulting functions also consist of powers of the form \KMe{$\theta(t)^{-\beta\cdot_A \lambda}$} with $\beta\in \mathbb{Z}_{\geq 0}^n$.
\par
Next consider the general case where $A$ admits non-trivial Jordan blocks.
From the form of the fundamental matrix \KMe{$\theta(t)^A$}, we know that all components of ${\bf Y}_1(t)$ are linear combinations of functions of the form \KMe{$\theta(t)^{-\beta\cdot_A \lambda}(\ln \theta(t))^M$} with $\beta\in \mathbb{Z}_{\geq 0}^n$ satisfying $|\beta| = 1$ and $M\in \mathbb{Z}_{\geq 0}$ not greater than $\sum_{i=1}^d \sum_{l=1}^{\mu_i}(m_{i,l}-1)$.
Recall that $\{m_{i,l}\}_{l=1}^{\mu_i}$ denotes the collection of natural numbers corresponding to the size of Jordan blocks $J_\ast(\tilde \lambda_i)$, for $i=1,\ldots, d$, associated with mutually disjoint eigenvalues $\{\tilde \lambda_i\}_{i=1}^d$ of $A$\KMi{,} and $\mu_i$ denotes the geometric multiplicity of $\tilde \lambda_i$.
The exponent $M$ is determined by the number of eigenvalues admitting non-trivial Jordan blocks and their size.
Assume that, for $N\geq 1$, ${\bf Y}_N(t)$ consists of the linear combination of functions of the form \KMe{$\theta(t)^{-\beta\cdot_A \lambda}(\ln \theta(t))^M$} with $\beta\in \mathbb{Z}_{\geq 0}^n$ and $M\in \mathbb{Z}_{\geq 0}$.
Then the remainder $R_{\alpha, k}({\bf S}_N({\bf Y})) - R_{\alpha, k}({\bf S}_{N-1}({\bf Y}))$ also consists of linear combinations of powers of the form
\begin{equation*}
\KMe{\theta(t)^{-\beta\cdot_A \lambda} } (\ln \theta(t))^M,\quad \beta\in \mathbb{Z}_{\geq 0}^n,\quad M\in \mathbb{Z}_{\geq 0}.
\end{equation*}
Similar to the diagonalizable case, the solution ${\bf Y}_{N+1}(t)$ is written by (\ref{YNp1})
and we see that it is sufficient to verify \KMe{integrals} of \KMe{$\theta(\eta)^{ -\beta\cdot_A \lambda -1} (\ln \theta(\eta))^M$} \KMl{in $\eta$}, which are
\begin{align*}
\notag
\int_{t_0}^t \theta(\eta)^{-\beta\cdot_A \lambda -1} (\ln \theta(\eta))^M d\eta &= \left[ \frac{1}{\beta\cdot_A \lambda}\KMe{ \theta(\eta)^{-\beta\cdot_A \lambda}} (\ln \theta(\eta))^M\right]_{t_0}^t \KMe{-} \frac{1}{\beta\cdot_A \lambda} \int_{t_0}^t \KMe{ \theta(\eta)^{-\beta\cdot_A \lambda} } \left\{ \frac{d}{d\eta}(\ln \theta(\eta))^M \right\} d\eta \\
\notag
	&\quad + \KMe{ \frac{M(M-1)}{(\beta\cdot_A \lambda)^2} \int_{t_0}^t \theta(\eta)^{-\beta\cdot_A \lambda-1} } (\ln \theta(\eta))^{M-2} d\eta \\
\notag
	&= \cdots \\
	&= \left\{ \sum_{l=0}^M \frac{M!}{(M-l)!} \left( \KMe{ \frac{1}{\beta\cdot_A \lambda} } \right)^{l+1} (\ln \theta(t))^{M-l}
\right\} \KMe{\theta(t)^{-\beta\cdot_A \lambda}} + \text{(constant)}
\end{align*}
and
\begin{align*}
\int_t^{t_{\max}} \theta(\eta)^{-\beta\cdot_A \lambda -1} (\ln \theta(\eta))^M ds &= \left[ \frac{1}{\beta\cdot_A \lambda}\KMe{ \theta(\eta)^{-\beta\cdot_A \lambda}} (\ln \theta(\eta))^M\right]_t^{t_{\max}} \KMe{-} \frac{1}{\beta\cdot_A \lambda} \int_t^{t_{\max}} \KMe{ \theta(\eta)^{-\beta\cdot_A \lambda} } \left\{ \frac{d}{d\eta}(\ln \theta(\eta))^M \right\} d\eta \\
	&=  \frac{-1}{\beta\cdot_A \lambda} \KMe{ \theta(\eta)^{-\beta\cdot_A \lambda}} (\ln \theta(\eta))^M  \KMe{+ \frac{M}{\beta\cdot_A \lambda}} \int_t^{t_{\max}} \KMe{\theta(\eta)^{-\beta\cdot_A \lambda-1} } (\ln \theta(\eta))^{M-1} d\eta \\
	&= \cdots \\
	&= \KMe{-}\left\{ \sum_{l=0}^M \frac{M!}{(M-l)!} \left( \KMe{ \frac{1}{\beta\cdot_A \lambda} } \right)^{l+1} (\ln \theta(t))^{M-l}
\right\} \theta(t)^{-\beta\cdot_A \lambda}
\end{align*}
whenever $\beta\cdot_A \lambda \not = 0$, where we have used 
\begin{equation*}
\lim_{\KMf{t\to} t_{\max}-0} \frac{\log (t_{\max}- t)}{(t_{\max} - t)^a} = 0 \quad \text{ for }\KMl{a < 0}.
\end{equation*}
When $\beta\cdot_A \lambda = 0$ with $\beta \not \equiv 0$, this indicates that ${\rm Spec}(A)\subset \{{\rm Re}\,\lambda > 0\}$ and only integrals over $[t, t_{\max})$ are involved. 
The integral is then calculated as follows:
\begin{align*}
\int_t^{t_{\max}} \theta(\eta)^{\beta\cdot_A \lambda -1} (\ln \theta(\eta))^M ds &= \int_t^{t_{\max}} \theta(\eta)^{-1} (\ln \theta(\eta))^M d\eta \\
	&= \int_t^{t_{\max}} -\frac{d}{d\eta} (\ln \theta(\eta)) (\ln \theta(\eta))^M d\eta \\
	&= \KMi{\frac{-1}{M+1}}(\ln \theta(t))^{M+1}.
\end{align*}
In every cases, the integral is described by the linear combination of functions of the form $\KMe{ \theta(\eta)^{-\beta\cdot_A \lambda }} (\ln \theta(\eta))^M$ with $\beta\in \mathbb{Z}_{\geq 0}^n$ and $M\in \mathbb{Z}_{\geq 0}$ and hence all components of ${\bf Y}_{N+1}(t)$ are again linear combinations of functions of the form \KMi{$\theta(t)^{-\beta\cdot_A \lambda}(\ln \theta(\eta))^M$} with $\beta\in \mathbb{Z}_{\geq 0}^n$ and $M\in \mathbb{Z}_{\geq 0}$.
Exponents of $\ln \theta(t)$ in ${\bf Y}_{N+1}(t)$ is determined by $N+1, \alpha, k$ and $\{m_{i,l}\}_{l=1}^{\mu_i}$, $i=1,\ldots, d$.

\subsection{Proof of Proposition \ref{prop-order-increase}}
First we have
$\KMj{\deg_\theta}(Y_{0, i}) = 0$ for all $i = 1, \ldots, n$ when $Y_{0,i}\not = 0$, otherwise $\KMj{\deg_\theta}(Y_{0, i}) = +\infty$, by Example \ref{ex-ord}.
These identities imply (\ref{estimate-order-asym}) with $N=0$.
\par
\bigskip
Next consider ${\bf Y}_1(t)$.
By \KMi{definition of $\gamma_i$}, we have
\begin{align}
\label{estimate-g1-deg}
\KMj{\deg_\theta}\left( \{{\bf g}_1(t)\}_i \right)  \equiv \KMj{\deg_\theta}\left( \theta(t)^{\alpha_i/k} f_{i;{\rm res}}( \theta(t)^{-\KMf{ \frac{1}{k}\Lambda_\alpha }} {\bf Y}_0) \right) &\geq \frac{\alpha_i}{k} + \gamma_i 
\end{align}
and hence $\deg_\theta \left( {\bf g}_1(t) \right) >-1$ by assumption.
By (\ref{deg-integral-theta-F-minus}) in Proposition \ref{prop-deg-integral-theta-F}, we have
\begin{align}
\label{ord-Y1-last-Pminus}
\KMj{\deg_\theta} \left( \int_{t_0}^t \left( \frac{\theta(\eta)}{\theta(t)}\right)^{A} P_- {\bf g}_1(\eta) d\eta  \right)  &\geq \min \left\{ \min_{l=1,\ldots, n}\left\{ \frac{\alpha_l}{k} +  \gamma_l \right\} + 1, \min_{\substack{\lambda \in {\rm Spec}(A)\\ {\rm Re}\,\lambda < 0}} (-{\rm Re}\,\lambda) \right\}
\end{align}
as the estimate of $\KMj{\deg_\theta}$ for vector-valued functions.
Therefore we have
\begin{align}
\notag
&\KMj{\deg_\theta} \left( \left( \frac{\theta(t)}{\theta(t_0)}\right)^{-A} P_- \KMi{\left( {\bf Y}_1^0 + \int_{t_0}^t \left( \frac{\theta(\eta)}{\theta(t_0)}\right)^{A} {\bf g}_1(\eta) d\eta \right)} \right) \\
\notag
&\quad \geq \min\left\{  \KMj{\deg_\theta} \left( \left( \frac{\theta(t)}{\theta(t_0)}\right)^{-A} P_- {\bf Y}_1^0 \right),  \quad\KMj{\deg_\theta} \left(  \int_{t_0}^t \left( \frac{\theta(\eta)}{\theta(t)}\right)^{A} P_- {\bf g}_1(\eta) d\eta \right) \right\}\\
\notag
&\quad \quad \text{(from Proposition \ref{prop-ord-fundamental}-2)}\\
\label{ord-Y1-last-Pminus-2}
&\quad \geq \min\left\{ \min_{i=1,\ldots, n}\left\{ \frac{\alpha_i}{k} +  \gamma_i \right\} + 1,\quad  \min_{ \substack{ i=1,\ldots, n\\ {\rm Re}\, \lambda_i < 0}} {\rm Re}\, (-\lambda_i)  \right\} = \delta \quad \text{(from (\ref{ord-Y1-last-Pminus}))},
\end{align}
where we have used the commutativity $AP_- = P_-A$.
Similarly, from (\ref{deg-integral-theta-F-plus}) in Proposition \ref{prop-deg-integral-theta-F}, we have
\begin{align}
\label{ord-Y1-last-Pplus}
\KMj{\deg_\theta} \left( \int_{t}^{t_{\max}} \left( \frac{\theta(\eta)}{\theta(t)}\right)^{A} P_+ {\bf g}_1(\eta) d\eta  \right)  &\geq  \min_{l=1,\ldots, n}\left\{ \frac{\alpha_l}{k} +  \gamma_l \right\} + 1\geq \delta.
\end{align}
Combining (\ref{ord-Y1-last-Pminus-2}) and (\ref{ord-Y1-last-Pplus}), we have
\begin{align*}
\deg_\theta &({\bf Y}_1(t)) \\
&\geq \min\left\{  \KMj{\deg_\theta} \left( \left( \frac{\theta(t)}{\theta(t_0)}\right)^{-A} P_- \KMi{\left( {\bf Y}_1^0 + \int_{t_0}^t \left( \frac{\theta(\eta)}{\theta(t_0)}\right)^{A} {\bf g}_1(\eta) d\eta \right)}  \right), \KMl{ \deg_\theta \left(  -\int_{t}^{t_{\max}} \left( \frac{\theta(\eta)}{\theta(t)}\right)^{A} P_+ {\bf g}_1(\eta) d\eta \right)} \right\}\\
	&\geq \delta
\end{align*}
and hence our claim (\ref{estimate-order-asym}) is proved when $N=1$.
\par
\bigskip
Now we assume that (\ref{estimate-order-asym}) holds true for some $N\geq 1$.
Our next \KMl{issue is the case $N+1$.} 
\KMl{The} main investigation is the magnitude of $\theta(t)$ in the following two vector-valued functions:
\begin{align*}
{\bf g}_{N+1,1}(t)&= \theta(t)^{-1}\left\{ R_{\alpha, k}({\bf S}_N ({\bf Y})(t)) - R_{\alpha, k}({\bf S}_{N-1}({\bf Y})(t)) \right\},\\
{\bf g}_{N+1,2}(t) &= \theta(t)^{ \frac{1}{k}\Lambda_\alpha }  \left\{ f_{{\rm res}}\left( \theta(t)^{-\frac{1}{k}\Lambda_\alpha} {\bf S}_N({\bf Y})(t) \right) - f_{{\rm res}} \left( \theta(t)^{-\frac{1}{k}\Lambda_\alpha}  {\bf S}_{N-1}({\bf Y})(t) \right) \right\}.
\end{align*}
First note that, by the Taylor's theorem, there are constants $\eta_1, \eta_2 \in (0,1)$ such that
\begin{align*}
&R_{\alpha, k}({\bf S}_N ({\bf Y})(t)) - R_{\alpha, k}({\bf S}_{N-1}({\bf Y})(t)) \\
&= f_{\alpha, k}({\bf S}_N ({\bf Y})(t)) - f_{\alpha, k}({\bf S}_{N-1}({\bf Y})(t)) \KMl{-} Df_{\alpha, k}({\bf Y}_0){\bf Y}_N(t) \\
 	&= \{ Df_{\alpha, k}({\bf S}_{N-1}({\bf Y})(t) + \eta_1 {\bf Y}_N(t) ) - Df_{\alpha, k}({\bf Y}_0)\}{\bf Y}_N(t)\\
 	&= \left[ \frac{1}{2}D^2f_{\alpha, k} \left( {\bf Y}_0 + \eta_2 \left({\bf S}_{N-1}({\bf Y})(t) + \eta_1 {\bf Y}_N(t)- {\bf Y}_0\right) \right) \left\{ {\bf S}_{N-1}({\bf Y})(t) + \eta_1 {\bf Y}_N(t) - {\bf Y}_0 \right\} \right] {\bf Y}_N(t).
 \end{align*}
By assertions in (\ref{order-QH-derivatives}), we observe that $D^2f_{\alpha, k}({\bf Y}_0 + \eta_2 \left({\bf S}_{N-1}({\bf Y})(t) + \eta_1 {\bf Y}_N(t)- {\bf Y}_0\right) )$ is $O(1)$ as $t\to t_{\max}$ as a bilinear map.
By the assumption of induction and Proposition \ref{prop-ord-fundamental}-2 and 3, we have 
\begin{equation*}
\KMj{\deg_\theta}\left( R_{\alpha, k}({\bf S}_{\KMl{N}}({\bf Y})) - R_{\alpha, k}({\bf S}_{N-1}({\bf Y})) \right) \geq (N+1)\delta
\end{equation*}
and hence 
\begin{equation*}
\KMj{\deg_\theta}\left( {\bf g}_{N+1,1}(t) \right) \geq (N+1)\delta - 1.
\end{equation*}
Next consider ${\bf g}_{N+1, 2}(t)$.
First we have 
\begin{align*}
&f_{{\rm res}}( \theta(t)^{-\frac{1}{k}\Lambda_\alpha} {\bf S}_N ({\bf Y})(t)) - f_{{\rm res}}( \theta(t)^{-\frac{1}{k}\Lambda_\alpha} {\bf S}_{N-1}({\bf Y})(t)) \\
&= Df_{{\rm res}} \left( \theta(t)^{-\frac{1}{k}\Lambda_\alpha} {\bf S}_{N-1}({\bf Y})(t) + \eta_3 \theta^{- \frac{1}{k}\Lambda_\alpha } {\bf Y}_N(t) \right) \theta^{-\KMf{ \frac{1}{k}\Lambda_\alpha }}{\bf Y}_N(t)
\end{align*}
by the Taylor's theorem for some constant $\eta_3\in (0,1)$.
Under the constraint (\ref{order-fres-diff}) for $f_{\rm res}$, we have
\begin{equation*}
\KMj{\deg_\theta} \left( f_{i,{\rm res}}( \theta(t)^{-\frac{1}{k}\Lambda_\alpha} {\bf S}_N ({\bf Y})) - f_{i,{\rm res}}( \theta(t)^{-\frac{1}{k}\Lambda_\alpha} {\bf S}_{N-1}({\bf Y})) \right)  \geq \gamma_i + N\delta,
\end{equation*}
where we have also used the assumption of the induction $\KMj{\deg_\theta}({\bf Y}_N(t)) \geq N\delta$.
In particular, we have
\begin{align*}
\notag
\KMj{\deg_\theta} \left( {\bf g}_{N+1,2}(t) \right) & \geq \min_{l=1,\ldots, n}\left\{ \frac{\alpha_l}{k} + \gamma_l \right\} + N\delta \\
	&\geq (N+1)\delta - 1.
\end{align*}
In particular, we have
\begin{align}
\label{estimate-gN+1-deg}
\deg_\theta \left( {\bf g}_{N+1}(t) \right) &\geq \min \left\{\deg_\theta \left( {\bf g}_{N+1, 1}(t) \right), \deg_\theta \left( {\bf g}_{N+1,2}(t) \right)\right\} 
	\geq (N+1)\delta - 1,
\end{align}
which is larger than $-1$ because $\delta > 0$ by assumption.
Now the inequality (\ref{deg-integral-theta-F-minus-2}) in Proposition \ref{prop-deg-integral-theta-F} with ${\bf g}_{N+1,1}(t)$, ${\bf g}_{N+1,2}(t)$, and the projector $P_-$ yields
\begin{align*}
\deg_\theta \left( \KMl{\left(\frac{\theta (t)}{\theta (t_0)}\right)^{-A}} P_- \left( {\bf Y}_{N+1}^0 + \int_{t_0}^t \left(\frac{\theta (\eta)}{\theta (t_0)}\right)^{A} {\bf g}_{N+1}(\eta) d\eta \right) \right) \geq (N+1)\delta,
\end{align*}
where we have used the commutativity $AP_- = P_-A$.
Note that the constant ${\bf Y}_{N+1}^0$ is chosen \KMl{so that the requirements stated in Proposition \ref{prop-conv-(j+1)th} are satisfied.}
\par
Finally, similar to the case $N=1$, we have
\begin{align*}
\KMj{\deg_\theta} \left( \left( \frac{\theta(t)}{\theta(t_0)}\right)^{-A} \int_{t}^{t_{\max}} \left( \frac{\theta(\eta)}{\theta(t_0)}\right)^{A} P_+ {\bf g}_{N+1}(\eta) d\eta  \right)  \geq (N+1)\delta.
\end{align*}
As a summary, we have
\begin{equation*}
\deg_\theta ({\bf Y}_{N+1}(t)) \geq (N+1)\delta
\end{equation*}
and the proof is completed.


\end{document}